\pgfplotsset{compat = newest}
\newtheorem{theorem}{Theorem}[section]
\newtheorem{proposition}[theorem]{Proposition}
\newtheorem{corollary}[theorem]{Corollary}
\newtheorem{lemma}[theorem]{Lemma}
\newtheorem{example}[theorem]{Example}
\newtheorem{definition}[theorem]{Definition}
\newtheorem{problem}[theorem]{Problem}
\newtheorem{remark}[theorem]{Remark}
\newtheorem{hypothesis}[theorem]{Hypothesis}
\newcommand{\pb}{\ar@{}[dr]|{\text{\pigpenfont J}}}
\newcommand{\RMod}{R \mbox{-} {\rm Mod}}
\newcommand{\RPInj}{R \mbox{-} {\rm PInj}}
\newcommand{\RInj}{R{\rm \mbox{-Inj}}}
\newcommand{\RFlat}{R \mbox{-}{\rm Flat}}
\newcommand{\RPProj}{R \mbox{-} {\rm PProj}}
\newcommand{\RCotor}{R \mbox{-} {\rm Cotor}}
\newcommand{\Rmod}{R{\rm \mbox{-mod}}}
\newcommand{\Ab}{{\rm {\bf \mbox{Ab}}}}
\newcommand{\Filt}{\rm Filt}
\newcommand{\Add}{{\rm Add}}
\newcommand{\Prod}{{\rm {\bf Prod}}}
\newcommand{\add}{{\rm add}}
\newcommand{\Arr}{{\rm Arr}}
\newcommand{\Char}{{\rm char}}
\newcommand{\Cofilt}{{\rm \mbox{Cofilt}}}
\newcommand{\Coker}{{\rm Coker}}
\newcommand{\Cont}{{\rm Cont}}
\newcommand{\Ext}{{\rm Ext}}
\newcommand{\Hom}{{\rm \operatorname{Hom}}}
\newcommand{\Mod}{{\rm \mbox{Mod}}}
\newcommand{\Ob}{{\rm Ob}}
\newcommand{\On}{{\rm On}}
\newcommand{\cophantom}{{\rm cophantom}}
\newcommand{\op}{{\rm {\footnotesize op}}}
\newcommand{\Proj}{{\rm Proj}}
\newcommand{\Sum}{{\rm Sum}}
\newcommand{\Tor}{{\rm \mbox{Tor}}}
\newcommand{\gra}{\alpha}
\newcommand{\grb}{\beta}
\newcommand{\grg}{\gamma}
\newcommand{\grl}{\lambda}
\newcommand{\grD}{\Delta}
\newcommand{\grS}{\Sigma}
\newcommand{\grO}{\Omega}
\newcommand{\gro}{\omega}
\newcommand{\mcS}{{\mathcal S}}
\newcommand{\mcA}{\mathcal{A}}
\newcommand{\mcE}{\mathcal{E}}
\newcommand{\mcF}{\mathcal{F}}
\newcommand{\mcC}{\mathcal{C}}
\newcommand{\mcI}{\mathcal{I}}
\newcommand{\mcJ}{\mathcal{J}}
\newcommand{\mcM}{\mathcal{M}}
\newcommand{\mcB}{\mathcal{B}}
\newcommand{\mcT}{\mathcal{T}}
\newcommand{\bC}{{\bf C}}
\newcommand{\sfA}{{\sf A}}
\newcommand{\sfB}{{\sf B}}
\newcommand{\sfC}{{\sf C}}
\newcommand{\sfJ}{{\sf J}}
\newcommand{\sfX}{{\sf X}}
\newcommand{\sfXi}{\mathsf{\Xi}}
\newcommand{\sfgrS}{\mathsf{\Sigma}}
\newcommand{\lneg}{\mbox{\rm -}}
\newcommand{\dis}{\displaystyle}
\newcommand{\lb}{\linebreak}
\begin{document}

\footskip30pt

\date{}

\title{Powers of Ghost ideals}

\author{S.\ Estrada}
\address{Departamento de Matem\'aticas, Universidad de Murcia, SPAIN}
\email{sestrada@um.es}

\author{X.H.\ Fu}
\address{School of Mathematics and Statistics, Northeast Normal University, Changchun, CHINA}
\email{fuxianhui@gmail.com}

\author{I.\ Herzog}
\address{The Ohio State University at Lima, Lima, Ohio, USA}
\email{herzog.23@osu.edu}

\author{S.\ Odaba\c{s}\i}
\address{Departamento de Matem\'aticas, Universidad de Murcia, SPAIN}
\email{sinem.odabasi@um.es}

\thanks{S.E. and S.O. were supported by Grant PID2020-113206GB-I00 funded
by MICIU/AEI/10.13039/501100011033 and by Grant 22004/PI/22 funded by Fundación Séneca-
Agencia de Ciencia y Tecnología de la Región de Murcia.\  X.H.F. was partially supported by the National Natural Science Foundation of China, Grant No. 12071064.\ I.H. was partially supported by NSF Grant DMS 12-01523.}

\subjclass[2020]{18E10;18G25;18G35;16D90}

\keywords{Exact category; ideal cotorsion pair; inductive power; Ideal Eklof Lemma; ghost map; Generating Hypothesis}

\begin{abstract}
Let $(\mathcal{A};\mathcal{E})$ be an exact category and $\mathcal{S}$ a set of objects in $\mathcal{A}.$ A morphism $f:X\to Y$ is called an $\mathcal{S}$-ghost map if $\Ext(S,f)=0$ for each $S\in\mathcal{S}.$ A theory of ordinal powers of the ideal $\mathfrak{g}_{\mathcal{S}}$ of $\mathcal{S}$-ghost morphisms is developed by introducing for every ordinal $\lambda$, the $\lambda$-th inductive power $\mathcal{J}^{(\lambda)}$ of an ideal $\mathcal{J}.$ The Generalized $\grl$-Generating Hypothesis ($\grl$-GGH) for an ideal $\mcJ$ of an exact category $\mathcal{A}$ is the proposition that  the $\lambda$-th inductive power ${\mathcal{J}}^{(\lambda)}$ is an object ideal.

It is shown that under mild conditions every inductive power of a ghost ideal is an object-special preenveloping ideal. When $\lambda$ is infinite, the proof is based on an ideal version of Eklof's Lemma, which states that if an object $C \in \mathcal{A}$ admits a $\lambda$-$^{\perp}\mathcal{J}$-filtration, then $C$ is left orthogonal to $\mathcal{J}^{(\lambda)}.$ When $\lambda$ is an infinite regular cardinal, the Generalized $\grl$-Generating Hypothesis is established for the ghost ideal $\mathfrak{g}_{\mathcal{S}}$ for the case when $\mcA$ a locally $\lambda$-presentable Grothendieck category and $\mathcal{S}$ is a set of $\lambda$-presentable objects in $\mcA$ such that $^\perp (\mathcal{S}^\perp)$ contains a generating set for $\mcA.$

As a consequence of $\gro$-GGH for the ghost ideal $\mathfrak{g}_{\Rmod}$ in the category of modules $\RMod$ over a ring, it is shown that if the class of pure projective left $R$-modules is closed under extensions, then every left FP-projective module is pure projective. A restricted version $n$-GGH($\mathfrak{g}(\mathbf{C}(R))$) for the ghost ideal in $\mathbf{C}(R))$ is also considered and it is shown that $n$-GGH($\mathfrak{g}(\mathbf{C}(R))$) holds for $R$ if and only if the $n$-th power of the ghost ideal in the derived category $\mathbf{D}(R)$ is zero if and only if the global dimension of $R$ is less than $n.$ If $R$ is coherent, then the Generating Hypothesis holds for $R$ if and only if $R$ is von Neumann regular.  	
\end{abstract}

\maketitle
\setcounter{tocdepth}{1}
\tableofcontents

\section{Introduction}
In algebraic topology and its categorical counterpart, the theory of triangulated categories, a \emph{ghost} is a general term for a morphism that belongs to the kernel of some particular functor. For example, a map $f \colon X \to Y$ in the homotopy category of spectra is called a {\rm ghost} if the induced maps of homotopy groups $\pi(f):\pi(X)\to \pi(Y)$ are zero. It is the subject of a longstanding conjecture.\vspace{10pt}

\noindent {\bf Freyd's Generating Hypothesis (GH).}~\cite[\S 9]{Fre66} \emph{Every ghost map between finite spectra is trivial.}\vspace{10pt}

\noindent The class of ghost maps in the homotopy category of (finite) spectra forms an ideal so the GH can be restated to say that the ghost ideal in the homotopy category of finite spectra is zero.

Kelly~\cite{Kel65} explored the possibility of when finitely many ghosts compose to $0$ in the homotopy category $\mathbf{K}(\mcA),$ where $\mcA$ is an abelian category with enough projectives. Working, as we will be here, in the exact category $\mathbf{C}(\mcA)$ of chain complexes, this means that the finite composition of ghosts belongs to the ``object'' ideal of morphisms that factor though a null-homotopic chain complex. Christensen~\cite{C} makes explicit reference to the term \emph{ghost} in the derived category $\mathbf{D}(\mcA)$, which is defined to be a chain map $f:X\to Y$ whose induced maps on homology  $\operatorname{H}_i(f):\operatorname{H}_i(X)\to \operatorname{H}_i(Y)$ are zero. In the work of Beligiannis~\cite[Section 1]{Bel08}, a morphism in an abelian category $\mcA$ is called a $T$-ghost, for an object $T \in \mcA,$ if it is invisible to the representable functor $\Hom_{\mcA} (T,-).$

%It is worthy to note that ghost maps in the homotopy category of an abelian category was already studied by Kelly \cite{Kel65} from the point of view of differential graded categories.
%Indeed, he proved that under certain conditions, the composition of ghost maps is zero.
%This result was reproved by Christensen in dervied category \cite[Theorem 8.3]{C}. An immediate corollary of \cite[Theorem 8.3]{C} is that if the global projective dimension of the abelian category is no more than n, then the compostion of n+1 ghost maps is zero, or equivalently, the n+1th power of the ghost ideal in the derived category is zero \cite[Corollary 8.4]{C}.
%%To discover the behind theory under the superficial similarity between this result and the fact that the square of the phantom ideal in the homotopy category of spectra is zero served as a motivation of Christensen’s work, see \cite[Section 8]{C}.

Lockridge~\cite{Loc07} initiated the study of GH in a triangulated category and proved that it holds in the derived category $\mathbf{D}^c (R),$ where $R$ is commutative, if and only if $R$ is von Neumann regular. For a ring $R,$ the GH states that the ghost ideal in the subcategory $\mathbf{D}^c (R)$ of perfect complexes is zero. Hovey, Lockridge and Puninski~\cite{HLP07} generalized Lockridge's result to the noncommutative case.

At around the same time, Benson, Chebolu, Christensen and Miná\v{c}~\cite{BCCM07} formulated the GH in the stable module category $kG$-$\underline{\rm mod}$ of a finite $p$-group. Here a ghost is a map between $kG$-modules that is trivial in Tate cohomology, and GH states that the ghost ideal in the stable category $kG$-$\underline{\rm mod}$ of finitely generated $kG$-modules is zero. They show that GH holds for a non-trivial finite $p$-group if and only if $G$ is $C_2$ or $C_3.$ If $G$ is a $p$-group not equal to $C_2$ or $C_3,$ then Chebolu, Christensen and Miná\v{c}~\cite{CCM08b} showed that the ghost ideal in $kG$-$\underline{\rm mod}$ is nilpotent with index of nilpotency less than $|G|.$ The same authors~\cite{CCM08a} also considered the {\bf Generalized Generating Hypothesis (GGH),} which states that the ghost ideal is zero in the ambient stable category $kG$-$\underline{\rm Mod}$ of all $kG$-modules. They showed that it holds if and only if the group is $C_2$ or $C_3.$

Hovey, Lockridge and Puninski suggest the following ring-theoretic proposition as a generalization of GH for a positive integer $n.$\vspace{10pt}

\noindent {\bf The $n$-Fold Generating Hypothesis ($n$-GH).} \cite[p.\ 799]{HLP07} \emph{The $n$-th power of the ghost ideal in $\mathbf{D}^c (R)$ is zero.}\vspace{10pt}

\noindent Hovey and Lockridge~\cite{HL11} proved that $n$-GH holds for a ring if and only if the weak global dimension is less than $n.$ Regarding the ghost ideal in the ambient derived category $\mathbf{D}(R),$ we can combine~\cite[Corollary 8.3]{C} with~\cite{HL} to obtain that the $n$-GGH holds (see below, the generalization of $n$-GH to the ambient category) for a ring if and only if the global dimension of $R$ is less than $n.$ This suggests that one may use the nilpotency index of the ghost ideal in $\mathbf{D}(R)$ to recover the global dimension. \bigskip

In this paper, we will work in an exact category $(\mcA;\mcE).$ If $\mcS$ is a set of objects in $\mcA,$ then the \emph{$\mcS$-ghost ideal} $\mathfrak{g}_{\mathcal{S}}$ (Definition \ref{df-ghost}) consists of morphisms $f \colon X \to Y$ satisfying $\Ext (S,f) = 0$ for any $S \in \mcS.$ This is a unifying notion that includes both the ghosts of Kelly in the homotopy category (Example~\ref{ex:ghost}) as well as those of Benson, Chebolu, Christensen and Miná\v{c} in the stable category (Example~\ref{ex:ghost_tate}).

Inspired by previous work on the (Generalized) GH in the derived category and in modular representation theory we formulate $\lambda$-GGH($\mathfrak{g}_\mcS$) in an exact category as follows. The notation $\mcJ^{(\grl)}$ stands for the $\grl$-th inductive power of an ideal $\mcJ$  (see \S 5.2), and if $\mcC \subseteq \mcA$ is additive, then $\langle \mcC \rangle$ is the ideal of morphisms that factor through some object in $\mcC.$ \bigskip

\noindent {\bf The Generalized $\lambda$-Generating Hypothesis ($\lambda$-GGH($\mathfrak{g}_\mcS$))} \emph{Let $(\mcA; \mcE)$ be an exact category, $\mcS \subseteq \mcA$ a subset and $\grl$ an ordinal. Then
$\mathfrak{g}^{(\lambda)}_{\mathcal{S}} = \langle \mcS^\perp \rangle,$
where $\mcS^\perp = \{X \in \mcA \, | \, \Ext(S,X) = 0 \}.$}
\bigskip

\noindent The main result of the paper may then be stated as follows.
\bigskip

\noindent {\bf Theorem~\ref{EFHS}.} \emph{Let $\lambda$ be an infinite regular cardinal and $\mcA$ a locally $\lambda$-presentable Grothendieck category. If $\mathcal{S}$ is a set of $\lambda$-presentable objects in $\mcA$ such that $^\perp (\mathcal{S}^\perp)$ contains a generating set for $\mcA$, then $\grl$-{\rm GGH(}$\mathfrak{g}_\mcS)$ holds.}
\bigskip

Essential to the statement of $\grl$-GGH, and instrumental in the proof of Theorem~\ref{EFHS}, is the definition of the inductive ordinal power $\mcJ^{(\gra)}$ of an ideal $\mcJ;$ it is defined as the ideal generated by compositions of postcomposable $\gra$-sequences of morphisms from $\mcJ$ (see \S 5.2). These powers form a descending filtration
\begin{equation} \label{Eq:filt}
\mcJ = \mcJ^{(1)} \supseteq \mcJ^{(2)} \supseteq \cdots \supseteq \mcJ^{(\gra)} \supseteq \mcJ^{(\gra + 1)} \supseteq \cdots
\end{equation}
of ideals, bounded below by the object ideal $\langle \Ob (\mcJ) \rangle.$ The Generalized $\lambda$-Generating Hypothesis for a general ideal $\mcJ$ is the convergence statement $\grl$-GGH($\mcJ$): $\mcJ^{(\lambda)} =  \langle \Ob (\mcJ) \rangle.$

The filtration~(\ref{Eq:filt}) resembles, for example, the descending filtration of powers of the Jacobson radical in the category $\Lambda$-${\rm mod}$ of finitely presented modules over an artin algebra, but in contrast to previously defined filtrations of ordinal powers of ideals (see, for example, \cite{Sto10, MR, Prest}), the definition at a limit ordinal $\grl$ is {\em not} given by $\mcJ^{(\grl)} := \bigcap_{\gra < \grl} \, \mcJ^{(\gra)}.$

We will apply an ideal version of the Eklof Lemma to show (Theorem~\ref{T:OSPE preservation}) that the ordinal inductive powers $\mcJ^{(\gra)}$ inherit from $\mcJ$ the property of being object-special preenveloping: if $\mcJ$ is object-special preenveloping, then so are all the ideals of the filtration~(\ref{Eq:filt}). Recall that an ideal $\mcJ$ is {\em object-special preenveloping} if for every object $A \in \mcA,$ there exists an inflation $$\xymatrix{A \ar[r]^j & B \ar[r] & C}$$ with $j \in \mcJ$ and $C \in \Ob ({^{\perp}}\mcJ).$ In that case, the object $C \in {^{\perp}}\mcJ$ is called a $\mcJ$-cosyzygy of $A$ and a subcategory $\grO^{\lneg 1}(\mcJ) \subseteq \Ob ({^{\perp}}\mcJ)$ is a $\mcJ$-{\em cosysygy category} if it contains a $\mcJ$-cosyzygy for every object $A$ of the category. A crucial property of object-special preenveloping ideals is that the orthogonal ideal ${^{\perp}}\mcJ$ may be characterized~\cite[Proposition 7.3]{FH} as the least left perpendicular object ideal containing some (resp., any) $\mcJ$-cosyzygy category $\grO^{\lneg 1}(\mcJ)$ (see \S 3.1).

The ideal version of Eklof's Lemma is a general statement about ideals $\mcJ$ that relates the objects of the left perpendicular ideal ${^{\perp}}\mcJ$ with those of ${^{\perp}}(\mcJ^{(\gra)}).$ \bigskip

\noindent {\bf Theorem~\ref{TFH4}. {\rm (The Ideal Eklof Lemma)}} \emph{Let $\mcJ$ be an ideal of $\mcA$ and $\gra$ an ordinal. If an object $C$ in $\mcA$ has an $\gra$-filtration in $\operatorname{Ob}({^{\perp}}\mcJ),$ then $C \in {^{\perp}(\mcJ^{(\gra)})}.$}
\bigskip

If we denote by $\gra$-$\Filt (\mcC)$ the subcategory of objects that admit an $\gra$-filtration by objects in $\mcC,$ then the Ideal Eklof Lemma may be stated more concisely as the inclusion
$$\gra \mbox{\rm -} \Filt (\Ob ({^{\perp}}\mcJ)) \subseteq \Ob ({^{\perp}}(\mcJ^{(\gra)})).$$
The technical proof of the the Ideal Eklof Lemma is by induction on the ordinal $\gra.$ The successor case is treated in \S 4; the limit case in \S 5. The successor case elaborates on the adjustment technique originally used in Eklof's proof. The $\Hom$-complex between two conflations in an exact category is used to find a method of downward adjustment (see \S 4.3) for a morphism of trivial conflations. The method identifies conditions under which the morphism, regarded as a conflation of arrows, is itself trivial. This downward adjustment method leads to a general version of the classical Eklof Lemma (Corollary~\ref{Eklof_object}) as well as a condition (Theorem~\ref{T:DAP}), relevant in the limit case, for a conflation of $\grl$-systems that is trivial at every ordinal
$\gra < \grl$ to be globally trivial. We point out that while some arguments in \S 5 clearly take inspiration from those of \v{S}\'{t}ov\'{i}\v{c}ek~\cite[Proposition~5.7]{Sto13} our goals require slight differences in argumentation.

These ideas are brought together in \S 6 to prove the following (see Hypothesis~\ref{hyp1}).
\bigskip

\noindent {\bf Theorem~\ref{T:OSPE preservation}.} \emph{If  $\mcJ$ is an object-special preenveloping ideal with $\Omega^{\lneg 1}(\mcJ)$ a cosyzygy subcategory, then for every ordinal $\gra,$ the $\gra$-th inductive power $\mcJ^{(\gra)}$ is an object-special preenveloping ideal with cosyzygy category $\grO^{\lneg 1} (\mcJ^{(\gra)}) = \gra \lneg \Filt (\Omega^{\lneg 1}(\mcJ)).$}
\bigskip

Applying this scenario to the ghost ideal $\mcJ = \mathfrak{g}_{\mathcal{S}}$ yields a proof by induction that every $\mathfrak{g}_{\mathcal{S}}^{(\gra)}$ is object-special preenveloping. The base case $\gra = 1$ is given by the following. It also states that $\Sum (\mcS) = \grO^{\lneg 1}(\mathfrak{g}_{\mathcal{S}})$ is a $\mathfrak{g}_{\mathcal{S}}$-cosyzygy category; note that $\Sum (\mcS) \subseteq {^{\perp}}(\mathfrak{g}_{\mathcal{S}})$ follows from the definition of an $\mcS$-ghost ideal.
\bigskip

\noindent {\bf Theorem~\ref{key}.} {\em Let $\mcS$ be a set of objects in $\mathcal{A}.$ If $\mathcal{A}$ has exact coproducts, then $\mathfrak{g}_{\mathcal{S}}$ is an object-special preenveloping ideal with the class $\Sum (\mcS)$ as a
$\mathfrak{g}_{\mathcal{S}}$-cosyzygy subcategory. More precisely, for any object $A \in \mathcal{A},$ there is a conflation $$\xymatrix{A \ar[r]^g & G \ar[r] & W}$$ with $g$ an $\mcS$-ghost map, and $W \in \Sum (\mcS).$}
\bigskip

To summarize, let us sketch the proof of Theorem~\ref{EFHS}. For an $\mcS$-ghost ideal $\mathfrak{g}_{\mathcal{S}}$, there is a descending filtration of inductive powers of $\mathfrak{g}_{\mathcal{S}}$
$$\mathfrak{g}_{\mathcal{S}} \supseteq\mathfrak{g}^{2}_{\mathcal{S}}\supseteq\cdots  \supseteq\mathfrak{g}^{(\gra)}_{\mathcal{S}} \supseteq \cdots$$
bounded below by $\Ob (\mathfrak{g}_{\mathcal{S}}) = \langle \mathcal{S}^\perp \rangle.$ Using Theorem~\ref{T:OSPE preservation}, we may consider the ascending filtration of cosyzygy subcategories
$$\Sum(\mathcal{S}) \subseteq 2 \lneg \Filt(\Sum(\mathcal{S})) \subseteq \cdots \subseteq \gra \lneg \Filt(\Sum(\mathcal{S})) \subseteq \cdots $$
of ${^{\perp}}(\mcS^{\perp}).$ When the category is a locally $\lambda$-presentable Grothendieck category, and $\mathcal{S}$ is a set of $\lambda$-presentable objects in $\mcA$, the work of \v{S}\'{t}ov\'{i}\v{c}ek~\cite{Sto} (and Enochs~\cite{E2} for a module category) provides the explicit bound $\lambda$ on the length of a filtration in ${^{\perp}}(\mcS^{\perp}) = \grl \operatorname{-}\Filt(\Sum(\mathcal{S})).$ This implies that the $\lambda$-th inductive power $\mathfrak{g}^{(\lambda)}_{\mathcal{S}}=  \langle \mcS^{\perp} \rangle$ is already an object ideal and that $\grl$-GGH($\mathfrak{g}_{\mathcal{S}}$) holds. In this last step of the proof, the convergence statement $\grl$-GGH is seen as $\Ext$-orthogonal to the statement, established by \v{S}\'{t}ov\'{i}\v{c}ek and Enochs, on the convergence of the associated ascending filtration of subcategories.
\bigskip

The methods employed in this paper are a part of Ideal Approximation Theory, which is devoted to the study of {\em ideal cotorsion pairs,} that is, maximal pairs of $\Ext$-orthogonal ideals. The fundamental theorem is an ideal version of Salce's Lemma~\cite{FGHT}, which states that for an ideal cotorsion pair $(\mcI, \mcJ)$ the ideal $\mcI$ is special precovering if and only if $\mcJ$ is special preenveloping. Such ideal cotorsion pairs are called {\em complete} and we contribute to the theory here by proving that if $({^{\perp}}\mcJ, \mcJ)$ is a complete ideal cotorsion pair, with ${^{\perp}}\mcJ$ an object ideal, then so is $({^{\perp}}(\mcJ^{(\gra)}), \mcJ^{(\gra)}),$ for any ordinal $\gra.$ The special case when $\gra = n$ is finite follows from an ideal version of Christensen's Lemma~\cite[Theorem 8.4]{FH}, which was used to settle the Benson-Gnacadja Conjecture~\cite[Corollary 9.4]{FH} by showing that the ideal of phantom morphisms in the stable module category $kG$-$\underline{\rm Mod}$ of a finite group ring is nilpotent with index of nilpotency bounded by $|G|.$ Theorem~\ref{key} above may be regarded as an ideal version of the Eklof-Trlifaj Lemma~\cite[Theorem 3.2.1]{GT}, whose main consequence is that a cotorsion pair (of subcategories rather than ideals) generated by a set $\mcS$ is complete; the {\em ideal} cotorsion pair generated by $\mcS$ is $({^{\perp}}(\mathfrak{g}_{\mathcal{S}}), \mathfrak{g}_{\mathcal{S}}).$

The main contributions of the paper to the theory are: 1) the introduction of the notion $\mcJ^{(\gra)}$ of an inductive ordinal power of an ideal $\mcJ;$ 2) the further development of Ideal Approximation Theory by using the inductive ordinal powers of an ideal to formulate and prove an ideal variant of the classical version of Eklof's Lemma; and finally 3) an application that establishes the connection with the Generalized Generating Hypothesis and confirms the correctness of the ideal variant.  Let us describe some instances of $\grl$-GGH($\mathfrak{g}_{\mathcal{S}}$) in closer detail.

The proper setting for Ideal Approximation Theory is that of an exact category~\cite{B}. It allows us to not only consider complete ideal cotorsion pairs in categories of modules, but also in more general situations, such as the category $\bC (R)$ of chain complexes in $\RMod$, the category $\operatorname{dg-Proj}$ of $\mbox{dg}$-projective chain complexes in $\mathbf{C}(R)$, etc. As in the derived category $\mathbf{D}(R)$, we call a chain map $f$ in the category $\bC (R)$ of complexes a \emph{ghost} if
$\operatorname{H}_i (f)=0$ for any integer $i.$

It turns out that the ideal $\mathfrak{g}(\mathbf{C}(R))$ of ghost maps in $\bC (R)$ is not only object-special preenveloping, but also object-special precovering. Indeed, it is shown in Proposition~\ref{prop:cotorsion_triple_ghost} that there exists a complete ideal cotorsion triple
$$(\langle \operatorname{CE-Proj} \rangle, \mathfrak{g} (\mathbf{C}(R)), \langle\operatorname{CE-Inj}\rangle),$$
where $\operatorname{CE-Proj}$ (resp., $\operatorname{CE-Inj}$) is the category of Cartan-Eilenberg projective (resp., injective) complexes.

The Enochs-\v{S}\'{t}ov\'{i}\v{c}ek bound on filtrations of Cartan-Eilenberg projective complexes is given by $\gro,$ so that $\gro$-GGH($\mathfrak{g}(\mathbf{C}(R))$) holds and the inductive powers of the ideal $\mathfrak{g}(\mathbf{C}(R))$ already converge to the object ideal $\langle \operatorname{Acyc} \rangle$ at stage $\gro.$ But if the global dimension of $R$ is less than $n$, then it will converge at stage $n.$ This is confirmed by the theory developed in this paper.\bigskip

\noindent {\bf Theorem~\ref{theo:global_dim_ghost_dim}.} {\em Let $n \geq 0.$ The following are equivalent:}
	\begin{enumerate}[(i)]
		%\item $\mbox{Ghost}(\mathbf{K}(R))^n=0$;
		\item $\mathfrak{g}(\mathbf{D}(R))^{n+1}=0$;
		\item $\mathfrak{g}(\mathbf{C}(R))^{n+1}=\langle \operatorname{Acyc} \rangle,$ i.e., $(n+1)$-GGH($\mathfrak{g}(\mathbf{C}(R)))$ holds;
		\item $\mathfrak{g}(\operatorname{dg-Proj})^{n+1}=\langle\operatorname{Proj}(\mathbf{C}(R))\rangle$;
		\item $\operatorname{l.gl.dim}(R) \leq n$.
\end{enumerate} \bigskip

That (iv $\Rightarrow$ i) is just \cite[Corollary 8.4]{C}, and (i $\Rightarrow$ iv) appears in~\cite{HL}. Applying the same techniques for the ghost ideal between finitely presented chain complexes over a coherent ring, we also obtain the following result, in which the equivalence (i $\Leftrightarrow$ iv) is just~\cite[Theorem 3.1]{HLP07}.
\bigskip

\noindent {\bf Theorem~\ref{GH}.} {\em Suppose that $R$ is left coherent. Then the following are equivalent:}
	\begin{enumerate}[(i)]
		\item $\mathfrak{g}(\mathbf{D}^c(R))=0$;
		\item  $\mathfrak{g}( \mathbf{C}^{\operatorname{fp}}(R))= \langle \operatorname{acyc}^b\rangle$;
		\item  $ \mathfrak{g}(\mathbf{C}^b(\operatorname{proj}))= \langle \operatorname{proj}(\mathbf{C}(R)) \rangle$; {\em and}
		\item $R$ {\em is von Neumann regular,}
	\end{enumerate}
{\em where $\operatorname{proj}(\mathbf{C}(R))$ denotes the class of finitely generated projective chain complexes.}
\bigskip

The most important example of a cotorsion pair in the category of $R$-modules may very well be the pair $(\RFlat, \RCotor)$ cogenerated by the subcategory $\RPInj \subseteq \RCotor$ of pure injective modules. The completeness property of this pair, better known as The Flat Cover Conjecture, was established by Bican, El Bashir and Enochs~\cite{BBE}. The important special case of a ring $R$ over which every left cotorsion module is pure injective, i.e.\ $\RCotor = \RPInj,$ was considered by Xu~\cite{X} in 1996, who proved that this condition is equivalent to the class of pure injective modules being closed under extension. His proof, which relies on the existence of pure injective envelopes, follows readily from Wakamatsu's Lemma. The absence of pure projective covers, however, does not allow us to dualize Xu's proof, but it still makes sense to conjecture the dual of Xu's result for the cotorsion pair $(\mathcal{FP}, \mathcal{FI})$ generated by the subcategory $\RPProj$ of pure projective left $R$-modules, where $\mathcal{FP}$ is the subcategory of FP-projective modules, and $\mathcal{FI}$ is the subcategory of FP-injective modules. In~\cite[Proposition 10.8]{FH}, the second and third authors reprove Xu's result using the ideal approximation theory developed in that paper, but that proof doesn't permit dualization either. In the last sentence of \cite{FH}, the second and third authors promised that they will develop a theory to prove a special case of the dual. We present that general case in this paper! \medskip

\noindent {\bf Theorem~\ref{TFH10}.} {\em Let $R$ be an associative ring. Every left FP-projective $R$-module is pure projective if and only if $\RPProj \subseteq \RMod$ is closed under extensions.}
\medskip

%Let us skecth the proof of this result. Consider the $\mcS$-ghost ideal $\Psi$ (which is call the \emph{FP-ghost} ideal) with $\mcS$ a set of represenatives of isomorphic classes of finitely presented modules. By Theorem~\ref{EFHS}, one has $\Psi^{(\omega)}=\langle\mathcal{FI}\rangle$. But the condition that $\RPProj \subseteq \RMod$ is closed under extensions tells us that $\Psi$ is an idempotent ideal, and Proposition \ref{PFH41} shows that if $\mcJ$ be an idempotent ideal in $\mcA, and$ If $\mcJ$ has $\omega$-compositions, then $\mcJ=\mcJ^{(\omega)}$. Therefore $\Psi=\langle\mathcal{FI}\rangle$, and the result follows.

The proof of this result starts from an analysis of the notion of \emph{FP-ghost} ideal $\Psi$ introduced in Section 9, and heavily relies on Theorem~\ref{EFHS}, which reveals the importance of The Generalized Generating Hypothesis. For artin algebras, we have the following two sufficient conditions for $n$-GGH($\Psi$) to hold. \medskip

\noindent {\bf Corollary~\ref{n-GH-FP-ghost1}}	{\em Let $R$ be an artin algebra, and $n$ a positive integer. Then $n$-{\rm GGH}{\rm ($\Psi$)} holds provided one of the following two conditions is satisfied.}
	\begin{enumerate}[(i)]
		\item {\em $R$ has global dimension less than $n;$}
		\item {\em $R$ has pure global dimension less than $n.$}
	\end{enumerate}
\medskip

\noindent {\bf Proposition~\ref{n-GH-FP-ghost2}.} {\em If $R$ is a left artinian ring with $J^n=0$ then $n$-{\rm GGH}{\rm ($\Psi$)} holds.}
\medskip

The paper ends with some open problems related to the contents of this work.
\bigskip

\section{Preliminaries}
Let $\mcA$ be an additive category. An {\em ideal} $\mcI$ of $\mcA$ is an additive subfunctor of $\Hom: {\mcA}^{\op} \times \mcA \to \Ab$, or equivalently, for every morphisms $f:A\to X$, $h:Y\to B$ in $\mcA$ and  $g:X\to Y$ in $\mcI (X,Y)$,  the composition
$hgf:A\to B$ belongs to $\mcI(A,B)$; and for any two  morphisms $g_1:X\to Y$ and $g_2:X\to Y$ in $\mcI(X,Y)$, $g_1-g_2$ also belongs to $\mcI(X,Y)$.

For given ideals  $\mcI$ and $\mcJ$ of  $\mcA$, {\em the  product ideal} $\mcI \mcJ$ is the smallest ideal containing all compositions $ij$ with $i \in \mcI$ and $j \in \mcJ$ composable. One can easily verify that the product ideal $\mcI \mcJ$ in fact consists of  all those compositions $$ \mcI \mcJ= \{ij \mid\ {} i \in \mcI, {} j \in \mcJ\},$$
whenever the pair $(i,j)$ is composable.

For a given ideal $\mcI$ in $\mcA$, an object $A\in\mcA$ is said to belong to $\mcI,$ denoted by  $A \in \mcI,$ if $1_A \in \mcI (A,A).$ We let  $\Ob(\mcI)$ be the subcategory consisting of objects in $\mcI$. By \cite[Proposition 2.1]{FH}, $\Ob(\mcI)$ is an additive subategory of $\mcA$, that is, $\Ob(\mcI)$ is closed under finite direct sums and direct summands.
We let $\langle \Ob (\mcI )  \rangle$ denote the collection of morphims factoring through an object in $\Ob(\mcI)$, which is clearly  an ideal.
$\mcI$   is said to be an {\em object ideal} if $\mcI= \langle \Ob(\mcI )  \rangle$.

%\subsection{Arrows} We let $\Arr (\mcA)$ denote  {\em the arrow category} of $\mcA$; it contains all  morphisms (arrows) in $\mcA$ as objects,  and a morphism $f: a \to b$ in $\Arr (\mcA)$ is given by a pair of morphisms $f= (f_0, f_1)$ in $\mcA$ for which the diagram
%$$\xymatrix{A_0 \ar[r]^{f_0} \ar[d]_{a} & B_0 \ar[d]^b \\
%A_1 \ar[r]_{f_1} & B_1
%}$$
%commutes. It is clear that  $\Arr (\mcA)$ is additive, as well.
%
%Throughout the paper, we use lowercase and capital  letters to denote objects in $\Arr (\mcA)$ and in $\mcA$, respectively. Besides, the domain and codomain of a morphism in $\mcA$ will have $0$ and $1$ as subscript, respectively, for instance, $a:A_0 \rightarrow A_1 $ in $\Arr (\mcA )$.
%
%For a given morphism   $f: a \to b$ in $\Arr (\mcA),$ we let $f_0: A_0 \to B_0$ and $f_1: A_1 \to B_1$ be the respective morphisms in $\mcA.$

Throughout this paper, we will work in the context of an exact category. As the reference of exact categories, we heavily rely on \cite{B}, but we use the terminology from Keller \cite{K}. An exact category $(\mcA; \mcE)$ is an additive category $\mcA$ equipped with an exact structure $\mcE$, where the {\em exact structure $ \mcE$}   is an isomorphism-closed collection of distinguished kernel-cokernel pairs  $(i,p)$ in $\mcA$
$$ \Xi : \xymatrix@1{  A \ar[r]^i & B \ar[r]^p & C},$$
where $i$ is called the \textit{inflation} and $p$ the \textit{deflation} of $\Xi$, respectively,
satisfying:
\begin{enumerate}[(i)]
\item For every object $A \in \mcA,$  the identity morphism $1_A$ is both an inflation and a deflation;
\item Inflations and  deflations are closed under composition;
\item The class $\mcE$ is closed under  pushout  and pullback   along any morphism in $\mcA$.
\end{enumerate}

A kernel-cokernel pair  $(i,p)$, or $\Xi$, in $\mcE$ is  called a {\em conflation in } $(\mcA; \mcE)$.  We will also sometimes say that $C$ is the cokernel of $i$ and use the notation $C = B/A;$ $A$ is the kernel of $p,$ which may be denoted by $A \subseteq B$. For given objects $A$ and $C$ in $\mcA$, we will assume that the isomorphism classes of conflations $\Xi$ in $\mathcal{E}$ form a set denoted by $\Ext (C,A)$, and therefore, it is an additive group with respect to the {\em Baer sum} operation. Hence, a conflation $\Xi: \xymatrix@1{A \ar[r]^i & B \ar[r]^p & C}$ in $\mcE$ represents $0$ in $\Ext(C,A)$ if it is \textit{trivial}, that is, the morphism $i$ is a section: there exists a retraction $r: B \to A$ in $\mcA$ such that $ri=1_A$; or equivalently, $p$ is a retraction: there exists a morphism $s:C \rightarrow B$ such that $ps=1_C$; see \cite[Remark 7.4]{B}.

From now on, the category $\mathcal{A}$ is assumed to be equipped with an exact structure $\mathcal{E}$ unless otherwise is stated. Note that the rule $(C,A)\mapsto\Ext(C,A)$ is an additive bifunctor from $\mathcal{A}^{\textrm{op}} \times \mathcal{A}$ to $\Ab$. For a morphism  $f:A\to X$  and  an object $C$ in $\mcA$, the morphism $\Ext(C,f):\Ext(C,A)\to\Ext(C,X)$ sends a conflation $ \xymatrix@1{  A \ar[r]^i & B \ar[r]^p & C}$ in $\Ext(C,A)$  to the conflation $\xymatrix{X \ar[r] & Y \ar[r] & C}$ in $\Ext(X,C)$ appearing in the following pushout diagram
$$\xymatrix{A \ar[r]^i \ar[d]_f & B \ar[r]^p \ar[d] & C\ar@{=}[d]\\ X \ar[r] & Y \ar[r] & C.}$$
Therefore, $\Ext(C,f)=0$ if and only if for any conflation $ \xymatrix@1{  A \ar[r]^i & B \ar[r]^p & C}$ in $\Ext(C,A)$, the conflation $\xymatrix{X \ar[r] & Y \ar[r] & C}$ in the bottom row is trivial.

For a morphism  $g:Y\to C$ and an object $A$ in $\mathcal{A}$, the morphism $\Ext(g,A):\Ext(C,A)\to\Ext(Y,A)$ is defined dually.
% diagram
%$$\xymatrix{ \Xi g:&  A \ar[r]  \ar@{=}[d] & Z \ar[r] \ar[d] & Y\ar[d]^g\\  \Xi: &  A \ar[r]^i & B \ar[r]^p & C.}$$
We have  $\Ext(f,g)=\Ext(f,A)\Ext(g,Y)=\Ext(g,C)\Ext(f,X):\Ext(C,A)\to\Ext(Y,X).$

The following lemma not only describes the finite version $(\grl = n)$ of Theorem~\ref{TFH4}, but also illustrates the argument behind the inductive step of the transfinite case.
%We present the following lemma which will be used in  proving Proposition~\ref{finite1}, which is also a   finite version $(\grl = n)$ of Theorem~\ref{TFH4}.

\begin{lemma}\label{finiteghost}
	Let $f=f_nf_{n-1}\cdots f_1$ be a sequence of composable morphisms in $\mathcal{A}$
	$$\xymatrix{X_1\ar[r]^{f_1}&X_2\ar[r]^{f_2}&\cdots\ar[r]&X_{n-1}\ar[r]^{f_n}&X_n}.$$ If $0=C_0\subseteq C_1\subseteq C_2 \subseteq\cdots\subseteq C_n=C$ is a sequence of inflations in $\mathcal{A}$ such that for each  $1 \leq i \leq n$,
$\Ext(C_i/C_{i-1},f_i)=0$, then $\Ext(C,f)=0$.
\end{lemma}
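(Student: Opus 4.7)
The plan is induction on $n$. For $n=1$ this is immediate from the assumption $\Ext(C,f_1)=\Ext(C_1/C_0,f_1)=0$. For the inductive step, factor $f=f_n\circ g$ with $g:=f_{n-1}\cdots f_1$; the truncated filtration $0=C_0\subseteq C_1\subseteq\cdots\subseteq C_{n-1}$ satisfies the lemma's hypotheses for $g$, so the induction hypothesis delivers $\Ext(C_{n-1},g)=0$. I will combine this with the remaining single-step assumption $\Ext(C_n/C_{n-1},f_n)=0$ to conclude $\Ext(C_n,f)=0$.

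Let $\xi\in\Ext(C_n,X_1)$ be arbitrary; it suffices to show $f_{*}\xi=0$. First push $\xi$ forward by $g$ to obtain $g_{*}\xi\in\Ext(C_n,X_{n-1})$. Writing $\iota\colon C_{n-1}\hookrightarrow C_n$ and $\pi\colon C_n\twoheadrightarrow C_n/C_{n-1}$ for the conflation of the filtration quotient, the commutation of pushout and pullback in $\Ext$ noted at the end of the Preliminaries gives
$$\iota^{*}(g_{*}\xi)\;=\;g_{*}(\iota^{*}\xi)\;=\;\Ext(C_{n-1},g)(\iota^{*}\xi)\;=\;0,$$
the last equality by the induction hypothesis applied to the pulled-back conflation $\iota^{*}\xi\in\Ext(C_{n-1},X_1)$.

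I then invoke the standard six-term exact sequence in $\Ext^{1}$ associated, in an exact category, to the conflation $C_{n-1}\xrightarrow{\iota}C_n\xrightarrow{\pi}C_n/C_{n-1}$. Exactness at $\Ext(C_n,X_{n-1})$ lets me lift $g_{*}\xi$ through $\pi^{*}$: there exists $\zeta\in\Ext(C_n/C_{n-1},X_{n-1})$ with $g_{*}\xi=\pi^{*}\zeta$. Pushing once more along $f_n$ and using naturality,
$$f_{*}\xi\;=\;(f_n)_{*}g_{*}\xi\;=\;(f_n)_{*}\pi^{*}\zeta\;=\;\pi^{*}\bigl((f_n)_{*}\zeta\bigr)\;=\;\pi^{*}(0)\;=\;0,$$
because $(f_n)_{*}\zeta=\Ext(C_n/C_{n-1},f_n)(\zeta)=0$ by hypothesis. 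This closes the induction.

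The only non-cosmetic ingredient beyond careful bookkeeping of the functoriality of $\Ext$ is exactness of the six-term $\Ext^{1}$-sequence at the middle term under pullback along an inflation, which is standard for exact categories (see B\"uhler). I therefore do not expect a real obstacle; the two-pushout maneuver is the finite prototype of the transfinite adjustment technique to be developed in \S4--5, as the preamble to the lemma announces.
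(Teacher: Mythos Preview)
Your proof is correct and essentially the same as the paper's: both use exactness of $\Ext(C_n/C_{n-1},-)\xrightarrow{\pi^*}\Ext(C_n,-)\xrightarrow{\iota^*}\Ext(C_{n-1},-)$ to see that the pushforward of an arbitrary extension by the initial segment of $f$ lies in the image of $\pi^*$, and then kill it with the remaining step. The paper merely reduces to $n=2$ instead of writing out the general inductive step, but the diagram chase is identical (note your codomain of $g$ should be $X_n$ rather than $X_{n-1}$, inheriting the off-by-one in the lemma's displayed sequence).
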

\begin{proof}The proof is by induction. So we only need to prove this result for $n=2$. Applying the functor $\Ext(-,f)=\Ext(-,f_2)\Ext(-,f_1)$ to the conflation $\xymatrix@1{C_1 \ar[r]^{\iota} & C_2 \ar[r]^-{\pi} & C_2/C_1},$ we obtain a commutative diagram $$\xymatrix@C=40pt@R=35pt{\Ext(C_2/C_1,X_1)\ar[r]\ar[d]&\Ext(C_2,X_1)\ar[r]\ar[d]^{\Ext(C_2,f_1)}&\Ext(C_1,X_1)\ar[d]^{\Ext(C_1,f_1)=0}\\
	\Ext(C_2/C_1,X_2)\ar[r]^-{\Ext(\pi,X_2)}\ar[d]_{\Ext(C_2/C_1,f_2)=0}&\Ext(C_2,X_2)\ar[r]^-{\Ext(\iota,X_2)}\ar[d]^{\Ext(C_2,f_2)} &\Ext(C_1,X_2)\ar[d]\\
\Ext(C_2/C_1,X_3)\ar[r]&\Ext(C_2,X_3)\ar[r]&\Ext(C_1,X_3)}$$
with exact rows. Let $a\in\Ext(C_2,X_1)$. Since $\Ext(C_1,f_1)=0$, then from the right upper commutative square, we have that $\Ext(C_2,f_1)(a)$ belongs to the kernel of $\Ext(\iota,X_2)$, and hence belongs to the image of $\Ext(\pi,X_2)$. Let $b\in\Ext(C_2/C_1,X_2)$ such that $\Ext(C_2,f_1)(a)=\Ext(\pi,X_2)(b)$. Then from the lower left commutative square, we obtain that $\Ext(C_2,f)(a)=\Ext(C_2,f_2)\Ext(C_2,f_1)(a)=\Ext(C_2,f_2)\Ext(\pi,X_2)(b)=\Ext(\pi,X_3)\Ext(C_2/C_1,f_2)(b)=0$, as desired.
	\end{proof}

For a given class $\mathcal{S}$ of objects in $\mcA$, we let
$\mathcal{S}^\perp = \{ A \in \mcA \mid \Ext(S,A)=0, \textrm{ for every } S \in \mathcal{S} \}$ and by $^\perp \mathcal{S}  = \{ A \in \mcA \mid \Ext(A,S)=0,  \textrm{ for every } S \in \mathcal{S}\}.$ Then a pair $( \mathcal{F}, \mathcal{C})$ of classes of objects in $\mcA$ is called	 a \textit{cotorsion pair} if $ \mathcal{F} ^\perp= \mathcal{C}$ and $\mathcal{F}= {}^\perp \mathcal{C}$. A cotorsion pair $( \mathcal{F}, \mathcal{C})$ is said to \textit{have enough injectives} if for every object $A $ of $\mcA$, there exists a  conflation in $\mcA$ of the form
$$
A\longrightarrow C \longrightarrow F
$$
with $C \in  \mathcal{C}$ and $F\in \mcF$. That a cotorsion pair $( \mathcal{F}, \mathcal{C})$ is said to be \textit{have enough projectives} is defined dually.  $( \mathcal{F}, \mathcal{C})$ is a
\textit{complete cotorsion pair} if it has both enough projectives and enough injectives. Similarly, for a given class  $\mathcal{M}$ of morphisms in $\mcA$, we let $\mathcal{M}^\perp $ and ${}^\perp \mathcal{M}$  denote the classes of morphisms $f$ in $\mathcal{A}$ such that $\Ext(m,f)=0$ and $\Ext(f,m)=0$  for every  $m$ in $\mathcal{M}$, respectively.  It is clear that both classes $\mathcal{M}^\perp$ and ${}^\perp \mathcal{M}$ are  ideals in $\mcA$.  A pair $(\mcI, \mcJ)$ of ideals in $\mcA$ is said to be an \textit{ideal cotorsion pair} if $ \mcI ^\perp= \mcJ$ and $\mcI= {}^\perp \mcJ$.

A special case of the foregoing is the cotorsion pair $(\mathcal{E}\mbox{-Proj}, \mcA)$ where $\mathcal{E}\mbox{-Proj}$ denotes the subcategory of \textit{projective objects} of $\mcA.$ Thus an object $P \in \mcA$ is projective if $\Ext (P,X)=0$ for every object $X \in \mcA,$ or equivalently, if the functor $\Hom(P,-)$ is exact. The exact category $\mcA$ is said to have \textit{enough projectives} if the cotorsion pair $(\mathcal{E}\mbox{-Proj}, \mcA)$ does. The notion of an \textit{injective object} and the subcategory $\mathcal{E}\mbox{-Inj}$ are defined dually.

Recall that an \textit{exact substructure} of $\mathcal{E}$ is an exact structure $\mcE'$ on $\mcA$ with $\mcE' \subseteq \mcE$. For a given class $\mcS$ of objects in $\mcA$, define $\mathcal{E}_{\mcS}$ to be the class of all conflations $\Xi$ for which the functor $\Hom(S,-) $ preserves the exactness of $\Xi$ for every $S \in \mathcal{S}$. It is readily verified that $\mathcal{E}_{\mcS}$ is an exact substructure of $\mcE$ with  $\mcS \subseteq \mcE_{\mcS} \mbox{-Proj}.$ This exact substructure $\mcE_{\mcS}$ is called \emph{the exact structure projectively generated by} $\mcS.$ The notion of \emph{the exact structure injectively cogenerated by a class $\mcS$} is defined dually.

Let $\mbox{Sum}(\mathcal{S})$ denote the class of coproducts of objects in $\mathcal{S}$ and denote by $\Add (\mcS)$ ($\add(\mcS)$) the smallest subcategory containing $\mcS$ which is closed under (finite) coproduct and direct summands. An object belongs to $\mbox{Add}(\mcS)$ ($\add(\mcS)$) if and only if it is a direct summand of a (finite) coproduct of objects in $\mcS.$ Indeed, by \cite[Corollary 11.4 and  11.7]{B}, we have that $\Add (\mcS) \subseteq \mcE_{\mcS} \mbox{-Proj}.$

A {\em generating set} for an exact category $\mcA$  is a set $\mcS$ of objects in $ \mcA$ such that every object $A \in \mcA$ admits a deflation $\coprod_{S \in \mcS} \; S \to A$ in $\mathcal{E}$ from some coproduct of objects in $\mcS.$ The following idea originates in the work of Stenstr\"{o}m~\cite[Proposition 2.3]{St}.

\begin{proposition}\label{projectivelygenerated}
If $\mcA$ has coproducts, and  $\mcS$ is a generating set for  $\mcA$,  then the exact substructure $\mathcal{E}_{\mcS} \subseteq \mcE$ has enough projectives. Moreover, $\mathcal{E}_{\mcS}\textrm{-}\Proj=\Add(\mcS).$
\end{proposition}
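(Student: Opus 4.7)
The plan is to exhibit, for every object $A \in \mcA,$ an explicit $\mcE_{\mcS}$-projective cover built as a coproduct of objects from $\mcS,$ and then to identify the $\mcE_{\mcS}$-projective class with $\Add(\mcS).$

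For a fixed $A \in \mcA,$ I would set $P_A := \coprod_{S \in \mcS,\, f \in \Hom(S,A)} S,$ together with the canonical morphism $\pi_A : P_A \to A$ whose component at the index $(S,f)$ is the morphism $f$ itself. To see that $\pi_A$ is a deflation in $\mcE,$ use the hypothesis that $\mcS$ generates to pick some deflation $p : \coprod_{j \in J} S_j \to A$ in $\mcE$ with each $S_j \in \mcS;$ each composite $S_j \hookrightarrow \coprod_j S_j \xrightarrow{p} A$ indexes a summand of $P_A,$ so the universal property of the coproduct yields a factorization $p = \pi_A \circ \varphi.$ By the obscure axiom in an exact category (see \cite[Proposition 2.16]{B}), the second factor of a deflation is itself a deflation, so $\pi_A$ is a deflation and there is a conflation $K_A \to P_A \xrightarrow{\pi_A} A$ in $\mcE.$

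Next I would check that this conflation lies in the substructure $\mcE_{\mcS}.$ Since $\Hom(S', -)$ is always left exact on conflations, it suffices to verify that $\Hom(S', P_A) \to \Hom(S', A)$ is surjective for each $S' \in \mcS;$ but any $g \in \Hom(S', A)$ equals $\pi_A$ composed with the coproduct inclusion at the index $(S', g),$ so surjectivity is automatic. That $P_A$ is $\mcE_{\mcS}$-projective follows because each $S \in \mcS$ is $\mcE_{\mcS}$-projective by the very definition of $\mcE_{\mcS},$ and the natural isomorphism $\Hom(\coprod_i S_i, -) \cong \prod_i \Hom(S_i, -)$ combined with the exactness of products in $\Ab$ shows that arbitrary coproducts of $\mcE_{\mcS}$-projectives remain $\mcE_{\mcS}$-projective. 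This proves that $\mcE_{\mcS}$ has enough projectives.

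For the equality $\mcE_{\mcS}\mbox{-Proj} = \Add(\mcS),$ the inclusion $\Add(\mcS) \subseteq \mcE_{\mcS}\mbox{-Proj}$ is already cited in the excerpt from \cite[Corollaries 11.4 and 11.7]{B}. Conversely, if $Q$ is $\mcE_{\mcS}$-projective, then the conflation $K_Q \to P_Q \to Q$ just constructed is trivial by projectivity of $Q,$ so $Q$ is a direct summand of $P_Q \in \Sum(\mcS),$ hence $Q \in \Add(\mcS).$ The one genuine obstacle in the argument is the first step: without invoking the obscure axiom, one obtains only a deflation $\coprod_j S_j \to A$ that factors through $\pi_A,$ not that $\pi_A$ itself is a deflation; once that is secured, everything else follows formally from the definitions.
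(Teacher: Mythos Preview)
Your approach is essentially the same as the paper's: both build an $\mcE_{\mcS}$-deflation onto $A$ from a coproduct of objects of $\mcS$ indexed so that every morphism $S \to A$ with $S \in \mcS$ factors through it. The paper starts from the given generating deflation and supplements it with extra summands; you construct the maximal $P_A$ directly. This difference is cosmetic, and your derivation of $\mcE_{\mcS}\text{-}\Proj = \Add(\mcS)$ is correct and more explicit than the paper's, which leaves that step implicit.

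There is one imprecision at exactly the step you flag as the ``genuine obstacle.'' B\"uhler's Proposition~2.16 does \emph{not} assert that the second factor of a deflation is automatically a deflation; it requires the additional hypothesis that the second factor already possesses a kernel, and you do not verify this for $\pi_A$. It does hold here: since $\varphi$ is a coproduct inclusion (hence a split monomorphism), one has $P_A \cong (\coprod_j S_j) \oplus P'$ and under this identification $\pi_A = [p,\,q]$ with $p$ your given deflation; the kernel of $[p,\,q]$ is then the pullback of $p$ along $-q$, which exists because pullbacks along deflations exist in any exact category. Alternatively, factor $[p,\,q] = [1_A,\,q]\circ(p \oplus 1_{P'})$ as a composite of two deflations (the second is a retraction with an evident kernel, hence a deflation by \cite[Lemma~2.7]{B}) and bypass the obscure axiom entirely; the paper's citation of the dual of \cite[Proposition~2.12]{B} is in this spirit.
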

\begin{proof}
	Let $A$ be an object in $\mcA$. By hypothesis, we have a deflation $\coprod_{S \in \mcS} \; S \to A$ in $\mathcal{E}$ from a coproduct of objects in $S.$ The domain $\coprod_{S \in \mcS} \; S$ is $\mcE_{\mcS}$-projective. If every map from an object in $\mcS$ to $A$ is represented in this deflation, then it is an $\mcE_{\mcS}$-deflation. If not, supplement the coproduct by adding factors to the domain so that every map from an object in $\mcS$ to $A$ is represented. By the argument dual to the equivalence (i) $\Leftrightarrow$ (ii) of~\cite[Proposition 2.12]{B}, this map is still a deflation in $\mathcal{E}_{\mathcal{S}}$ and the domain is still $\mcE_{\mcS}$-projective.
	\end{proof}
\bigskip

\section{Ghost ideals in an exact category}
In this section, we introduce the notion of an $\mcS$-ghost ideal in $\mcA$, and study its approximation properties. We start by
showing that a finite power of an object-special preenveloping ideal is still object-special preenveloping.
\subsection{Object-special preenveloping ideals} An ideal $\mcJ$ in $\mathcal{A}$ is said to be  an \emph{object-special preenveloping} ideal if for any object $A$, there is a conflation $$\xymatrix{A\ar[r]^j&J\ar[r]&W}$$ with $j\in\mcJ$, and $W\in {^{\perp}\mcJ}$. The object $W \in {^{\perp}\mcJ}$ is called a $\mcJ$-cosyzygy object of $A$, and is denoted by $\Omega^{-1}_{\mcJ }(A)$. It is easy to check that the morphism $j$ is a $\mcJ$-preenvelope of $A$, that is, any morphism $j':A\to J'\in\mcJ$ factors through $j,$ as in $$\xymatrix{A\ar[d]_{j'}\ar[r]^j&J\ar[ld]\\J'.}$$
A subcategory of $\mcA$ that is closed under finite direct sums is called a $\mcJ$-\emph{cosyzygy subcategory} if it contains a $\mcJ$-cosyzygy $\Omega^{-1}_{\mcJ }(A)$ for every object $A$ in $\mcA$. Such a subcategory will be denoted by $\Omega^{-1}(\mcJ).$

Let $\mathcal{C} \subseteq \mcA$ be a subcategory and $n$ a positive integer. An object $C$ of $\mcA$ is said to admit an \emph{$n$-$\mathcal{C}$-filtration} if there is a sequence of inflations
$C_0=0 \subseteq C_1 \subseteq C_2 \subseteq \cdots \subseteq C_{n-1} \subseteq C_n=C$ with each $C_i/C_{i-1} \in \mathcal{C}$. We denote by $n$-$\Filt (\mathcal{C})$ the class of objects in $\mcA$ possessing an $n$-$\mathcal{C}$-filtration. The following result follows from~\cite[Proposition 6.2 and Corollary 8.2]{FH}.

\begin{proposition}\label{finite1}
	Let $\mcJ$ be an object-special preenveloping ideal in $\mathcal{A}$ with $\Omega^{-1}(\mcJ)$ a cosyzygy subcategory. Then the  $n$-th power $\mcJ^n$ is still an object-special preenveloping ideal with $n$-$\Filt(\Omega^{-1}(\mcJ))$ a cosyzygy category.
\end{proposition}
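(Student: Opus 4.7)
The plan is to proceed by induction on $n$, with the base case $n=1$ being the hypothesis. For the inductive step, I would simply iterate the object-special preenveloping property of $\mcJ$: starting from an arbitrary object $A =: J_0$, successively construct conflations
$$\xymatrix{J_{i-1} \ar[r]^{j_i} & J_i \ar[r] & W_i}$$
for $i = 1, \ldots, n$, where each $j_i \in \mcJ$ and each $W_i \in \Omega^{-1}(\mcJ)$. Since inflations are closed under composition in an exact category, and the product ideal $\mcJ^n$ contains any composable chain of $n$ morphisms from $\mcJ$, the composite $j := j_n \comp \cdots \comp j_1 \colon A \to J_n$ is an inflation that belongs to $\mcJ^n$.

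The next step is to analyze the cokernel $C := J_n/A$. The chain of inflations $A = J_0 \subseteq J_1 \subseteq \cdots \subseteq J_n$ descends to a filtration $0 \subseteq J_1/A \subseteq \cdots \subseteq J_n/A = C$, and the Noether isomorphism, valid in exact categories, identifies the successive quotients with $J_i/J_{i-1} = W_i$. Hence $C$ lies in $n\lneg\Filt(\Omega^{-1}(\mcJ))$, which is the candidate cosyzygy subcategory. To qualify as a cosyzygy subcategory one must check closure under finite direct sums, but this is automatic: the componentwise direct sum of two $n$-filtrations whose successive quotients lie in $\Omega^{-1}(\mcJ)$ is again such an $n$-filtration, as $\Omega^{-1}(\mcJ)$ is closed under finite direct sums by hypothesis.

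It remains to check that $C \in \Ob({^\perp}(\mcJ^n))$, that is, $\Ext(C, f) = 0$ for every composable product $f = f_n \comp \cdots \comp f_1 \in \mcJ^n$. Here I would apply Lemma~\ref{finiteghost} to the filtration $C_i := J_i/A$ of $C$: each successive quotient $C_i/C_{i-1} \cong W_i$ lies in $\Omega^{-1}(\mcJ) \subseteq \Ob({^\perp}\mcJ)$, so $\Ext(W_i, f_i) = 0$ for every $f_i \in \mcJ$, and the lemma directly yields $\Ext(C, f) = 0$. Consequently the conflation $A \xrightarrow{j} J_n \to C$ exhibits $\mcJ^n$ as object-special preenveloping with cosyzygy $C$ in $n\lneg\Filt(\Omega^{-1}(\mcJ))$, completing the inductive step.

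The main conceptual step is the passage, encoded in Lemma~\ref{finiteghost}, from the factor-wise vanishing $\Ext(W_i, f_i) = 0$ to the global vanishing $\Ext(C, f) = 0$ on the entire product ideal $\mcJ^n$. Once that lemma is in hand, the remainder of the proof is essentially bookkeeping on the inductively built cosyzygy tower, so I do not expect any serious obstacle beyond the verification already carried out in the lemma.
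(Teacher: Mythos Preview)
Your proposal is correct and follows essentially the same approach as the paper: iterate the object-special $\mcJ$-preenvelope construction to build a tower $A = J_0 \to J_1 \to \cdots \to J_n$, observe that the cokernel $J_n/A$ carries an $n$-filtration with successive quotients in $\Omega^{-1}(\mcJ)$, and invoke Lemma~\ref{finiteghost} to place that cokernel in ${}^\perp(\mcJ^n)$. The paper presents the filtration via the commutative ladder of conflations $A \to J_i \to W_i$ rather than via the Noether isomorphism, but the content is identical; your extra remark on closure of $n\lneg\Filt(\Omega^{-1}(\mcJ))$ under finite direct sums is a detail the paper leaves implicit.
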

\begin{proof}
	An object-special $\mcJ^n$-preevelope of $A \in \mcA$ can be constructed in the following way. Firstly, there is a conflation
	$$\xymatrix{A\ar[r]^j&J\ar[r]&W}$$ with $j$ a special $\mcJ$-preenvelope of $A$, and $W = J/A \in\Omega^{-1}(\mcJ)$. For $J_1:=J$, there is a conflation
	$$\xymatrix{J_1\ar[r]^-{j^1_2}&J_2\ar[r]&W^1_2}$$ with $j^1_2$ a special $\mcJ$-preenvelope of $J_1$, and $W^1_2 \in \Omega^{-1}(\mcJ)$. Continuing this process,  there is a conflation
	$$\xymatrix{J_{n-1}\ar[r]^-{j^{n-1}_n}&J_n\ar[r]&W^{n-1}_n}$$ with $j^{n-1}_n$ a special $\mcJ$-preenvelope of $J_{n-1}$, and $W^{n-1}_n\in\Omega^{-1}(\mcJ)$. Composing the morphisms $j$, $j^1_2$, $\cdots, j^{n-1}_n$, we attain the following commutative diagram
	$$\xymatrix{A\ar[r]^j\ar@{=}[d]&J\ar[r]\ar[d]^{j^1_2}&W\ar[d]\\
		A\ar[r]^{j_2} \ar@{=}[d]&J_2\ar[r]\ar[d]^{j^2_3}&W_2\ar[d]\\
		\vdots\ar@{=}[d]&\vdots\ar[d]^{j^{n-1}_n}&\vdots\ar[d]\\
		A\ar[r]^{j_n}&J_n\ar[r]&W_n}.$$
	In the conflation $\xymatrix{A\ar[r]^{j_n}&J_n\ar[r]&W_n}$ on the bottom, $j_n\in\mcJ^n$, and $W_n\in n$-$\mbox{Filt} (\Omega^{-1}(\mcJ))$ since \linebreak $0\to W\to W_2\to\cdots\to W_n$ is an $n$-$\Omega^{-1}(\mcJ)$-filtration. By Lemma~\ref{finiteghost}, $n$-$\mbox{Filt} (\Omega^{-1}(\mcJ)) \subseteq {}^\perp \mathcal{J}^n$. Therefore $j_n$ is an object-special $\mcJ^n$-preenvelope of $A$.
\end{proof}

\subsection{Ghosts}
%the \emph{left orthogonal} ideal $^{\perp}\mcI$ of $\mcI$ consists of  morphisms $f$ in %$\mcA$ such that $\Ext(f,i)=0$ for any $i\in \mcI$. The \emph{right orthogonal} ideal $%\mcI^{\perp}$ is defined dually. A pair $(\mcI,\mcJ)$ of ideals is called an {\em ideal %cotorsion pair}~\cite{FGHT} if $\mcI^{\perp}=\mcJ$ and $^{\perp}\mcJ=\mcI$.
\begin{definition} \label{df-ghost} Let $\mathcal{S}$ be a set of objects in $\mcA$. A morphism $f:X\to Y$ in $\mathcal{A}$ is called an $\mathcal{S}$-\emph{ghost} if $\Ext(S,f)=0$ for any $S\in\mathcal{S}$.
	\end{definition}
The $\mathcal{S}$-ghost morphisms in $\mathcal{A}$ form an ideal, we call it the $\mathcal{S}$-\emph{ghost} ideal, and denote it by $\mathfrak{g}_{\mathcal{S}}$. It is clear that $({^{\perp}\mathfrak{g}_{\mathcal{S}}},\mathfrak{g}_{\mathcal{S}})$ is an ideal cotorsion pair and that every object in $\Add(\mathcal{S})$ belongs to ${^{\perp}\mathfrak{g}_{\mathcal{S}}}.$ Also $\mcS^{\perp} = \Ob (\mathfrak{g}_{\mathcal{S}})$ so that  $\langle\mcS^{\perp}\rangle \subseteq \mathfrak{g}_{\mathcal{S}}.$

\begin{example}\label{ex:ghost_tate} \rm
	Let $G$ be a finite group and $k$ a field with $\Char (k) \! \mid \! o(G).$ A morphism $f: M \rightarrow N$ of left $kG$-modules is called a \textit{ghost} if it induces zero on Tate cohomology, that is, $\widehat{\operatorname{H}}^n(G,f)=0$ for every $n \in \mathbb{Z}$, see \cite{BCCM07, CCM08a}.
	
	Denote by $P_*$ a complete projective resolution of the trivial left $kG$-module $k$, that is, an acyclic chain complex of projective left $kG$-modules, with group of $0$-cycles $\operatorname{Z}_0(P_*) \cong k,$ that remains exact under the functor $\Hom(-,M)$ for every projective left $kG$-module $M.$ Let $\mcS$ be the set $ \{\Omega^{n}(k) \}_{n \in \mathbb{Z}}$, where $\Omega^{n}(k):=\operatorname{Z}_{n}(P_*)$ is the group of $n$-cycles of $P_*.$ Then a morphism $f:M \rightarrow N$ in $kG$-$\Mod$ is a ghost in the sense of \cite{CCM08a} if and only if it is an $\mcS$-ghost.
	%$\widehat{H}^*(G,-):=  \underline{\Hom}( \Omega^{*}(k),- )$.
	Indeed, for every $n \in \mathbb{Z}$, we have
	$$\widehat{\operatorname{H}}^n(G,-) \cong \operatorname{H}_{-(n+1)}(\Hom(P_*,-)) \cong \Ext( \Omega^{n-1}(k),- );$$
	for further details, see \cite[Section VI-4]{Bro82}.
\end{example}

\begin{example}\label{ex:ghost} \rm
	Let $\mathbf{C}(R)$ and $\mathbf{K}(R)$ denote the category of chain complexes of left $R$-modules and the homotopy category of $\mathbf{C}(R)$, respectively.
	Consider the homology functor
	$$\operatorname{H}_*:  \mathbf{C}(R) \longrightarrow \mathbf{K}(R) \longrightarrow R\operatorname{-Mod.}$$
	Kelly in \cite{Kel65} studied morphisms in $\mathbf{K}(R)$ that are invisible under the homology functor, so we call a morphism $f$ in $ \mathbf{C}(R)$ \textit{ghost} if it is invisible under homology. Let $\mcS$ denote the set $\{ \mbox{S}^n(R)\}_{n \in \mathbb{Z}}$ of spherical chain complexes, where $\mbox{S}^n (R)$ is the chain complex with $R$ in the $n$-th position, and $0$ elsewhere. A morphism $f$ in $ \mathbf{C}(R)$ is ghost if and only if it is $\mcS$-ghost; see Proposition \ref{prop:relative_ghost}. We will investigate this ideal further in Section 8.
\end{example}

Recall that for a given subfunctor $\mcF \subseteq \Ext$,  a morphism $f:X\to Y$ in $\mcA$ is called  an $\mcF$-$\cophantom$ (see \cite{FGHT}) if the pushout  of any conflation in $\mathcal{A}$ along $f$ is a conflation in
$\mcF$. Such morphisms form the $\mcF$-\textit{cophantom ideal} in $\mathcal{A}$.  Every exact substructure $\mcE' \subseteq  \mcE$ can be regarded as a subfunctor of $\Ext,$ in which case the ideal is denoted by $\mcE'$-$\cophantom.$

\begin{proposition} For a given set $\mcS$ of objects in  $ \mcA$, a morphism $f:X\to Y$ is $\mcS$-ghost if and only if it is $\mcE _{\mcS}$-cophantom, i.e., $\mathfrak{g}_{\mathcal{S}} = \mcE _{\mcS}\mbox{-}\cophantom.$
\end{proposition}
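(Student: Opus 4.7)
The plan is to unpack both sides of the equivalence in terms of the bifunctor $\Ext$ and use the standard fact that pullback and pushout operations on conflations commute via the identity $\Ext(g, Y) \circ \Ext(C, f) = \Ext(g, f) = \Ext(S, f) \circ \Ext(g, X)$ for $f : X \to Y$ and $g : S \to C$. The preliminary observation I would record first is that a conflation $\Xi: \xymatrix{A \ar[r] & B \ar[r] & C}$ lies in $\mathcal{E}_\mathcal{S}$ precisely when, for every $S \in \mathcal{S}$ and every $g: S \to C$, the pullback of $\Xi$ along $g$ is trivial in $\Ext(S, A)$; equivalently, $\Ext(g, A)(\Xi) = 0$ for every such $g$. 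This just expresses that surjectivity of $\Hom(S, B) \to \Hom(S, C)$ is the same as every map $S \to C$ lifting to $B$, which is the same as every pullback conflation splitting.

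For the forward implication, assume $f : X \to Y$ is $\mathcal{S}$-ghost. Given an arbitrary conflation $\Xi: \xymatrix{X \ar[r] & B \ar[r] & C}$, its pushout along $f$ represents $\Ext(C, f)(\Xi) \in \Ext(C, Y)$. To check this pushout lies in $\mathcal{E}_\mathcal{S}$, it suffices by the preliminary observation to verify that $\Ext(g, Y)\bigl(\Ext(C, f)(\Xi)\bigr) = 0$ for every $S \in \mathcal{S}$ and $g: S \to C$. Using the commutativity identity above, this quantity equals $\Ext(S, f)\bigl(\Ext(g, X)(\Xi)\bigr)$, which vanishes because $\Ext(S, f) = 0$ by hypothesis. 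Thus every pushout of a conflation along $f$ belongs to $\mathcal{E}_\mathcal{S}$, so $f$ is $\mathcal{E}_\mathcal{S}$-cophantom.

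For the reverse implication, assume $f$ is $\mathcal{E}_\mathcal{S}$-cophantom. To show $\Ext(S, f) = 0$ for a given $S \in \mathcal{S}$, take an arbitrary conflation $\Xi: \xymatrix{X \ar[r] & B \ar[r] & S}$ representing a class in $\Ext(S, X)$. Its pushout along $f$ is a conflation in $\mathcal{E}_\mathcal{S}$ by hypothesis, so $\Hom(S, -)$ preserves its exactness; in particular $1_S : S \to S$ lifts across the deflation in the bottom row, which means the bottom conflation is trivial. Hence $\Ext(S, f)(\Xi) = 0$, as required.

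I do not anticipate a genuine obstacle here — the statement is really a reformulation, and the only substantive ingredient is the functorial identity for Ext applied to composable pullback/pushout operations. The one place to be careful is the preliminary observation about membership in $\mathcal{E}_\mathcal{S}$: I would cite or briefly verify the equivalence between $\Hom(S,-)$-exactness of a conflation and triviality of all its pullbacks along maps out of $S$, since that is exactly what converts the Ext-vanishing condition into the cophantom condition.
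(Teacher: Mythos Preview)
Your proof is correct and takes essentially the same approach as the paper: both directions rest on $S$ being $\mathcal{E}_{\mathcal{S}}$-projective and on the compatibility of pushout along $f$ with $\Ext(S,f)$. The paper writes the ghost $\Rightarrow$ cophantom direction via the long exact $\Hom$--$\Ext$ sequence applied to the pushout diagram, whereas you repackage the same computation as the bifunctor identity $\Ext(g,Y)\circ\Ext(C,f)=\Ext(S,f)\circ\Ext(g,X)$ together with the pullback-triviality characterization of $\mathcal{E}_{\mathcal{S}}$; these are equivalent formulations of one argument.
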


\begin{proof} If $f$ is an $\mcE_{\mcS}$-cophantom, then $\Ext (S,f) = 0$ for every $S \in \mcS,$ because $S$ is $\mcE_{\mcS}$-projective. For the converse, let $f \colon X \to Y$ be an $\mcS$-ghost and consider a pushout of a conflation in $\mathcal{E}$ along $f$ as in

	$$\xymatrix{X\ar[r]\ar[d]_f&B\ar[r]\ar[d]&C\ar@{=}[d]\\
		Y\ar[r]&Z\ar[r]&C.}$$
	For $S \in \mcS,$ apply the functor $\Hom(S,-)$ to the pushout diagram
	to obtain the following commutative diagram of exact sequences
	$$\xymatrix{0\ar[r]&\Hom(S,X)\ar[r]\ar[d]&\Hom(S,B)\ar[r]\ar[d]&\Hom(S,C)\ar[r]\ar@{=}[d]&\Ext(S,X)\ar[d]^{\Ext(S,f)}\\0\ar[r]&\Hom(S,Y)\ar[r]&\Hom(S,Z)\ar[r]&\Hom(S,C)\ar[r]&\Ext(S,X)}$$
	By hypothesis, $\Ext(S,f)=0$, so that  $$\xymatrix{0\ar[r]&\Hom(S,Y)\ar[r]&\Hom(S,Z)\ar[r]&\Hom(S,C)\ar[r]&0}$$ is an exact sequence, and hence $\xymatrix{Y\ar[r]&Z\ar[r]&C}$ belongs to $\mcE_{\mcS},$ as required.
\end{proof}

If $\mathcal{A}$ has coproducts, and $\mcS$ is a generating set for $\mcA$, then by Proposition \ref{projectivelygenerated}, the exact structure $\mcE_{\mcS}$ has enough projective objects. If we assume further that $\mcA$ has enough projective objects, then it has been shown in~\cite[Theorem 2]{FGHT} that $\mathfrak{g}_{\mathcal{S}}$ is an object-special preenveloping ideal. However, this result still holds even without these conditions.

\begin{theorem}\label{key}	
Let $\mcS$ be a set of objects in $\mathcal{A}$. If $\mathcal{A}$ has   exact coproducts\footnote{$\mathcal{A}$ is said to \textit{have exact coproducts} if it has coproducts, and a coproduct of any conflations is conflation.}, then  $\mathfrak{g}_{\mathcal{S}}$ is an object-special preenveloping ideal with the class $\Sum (\mcS)$   as a  $\mathfrak{g}_{\mathcal{S}}$-cosyzygy subcategory. More precisely, for any object $A\in\mathcal{A}$, there is a conflation
	$$\xymatrix{A\ar[r]^g&G\ar[r]&W}$$ with $g$ an $\mcS$-ghost map, and $W \in \Sum (\mcS)$.
\end{theorem}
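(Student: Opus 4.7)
The plan is to construct a single conflation that simultaneously kills every extension class $\Ext(S,A)$ for $S\in\mcS$, by the familiar Salce-type trick of taking the coproduct of all of them and pushing out along a codiagonal. Concretely: for each $S\in\mcS$ and each isomorphism class $\xi\in\Ext(S,A)$, fix a representative conflation
$$\xi_\alpha: \xymatrix{A\ar[r]^-{i_\alpha} & B_\alpha \ar[r] & S_\alpha,}$$
indexed by the (set-sized, by the standing assumption that $\Ext(C,A)$ is a set) family $I=\{(S,\xi):S\in\mcS,\ \xi\in\Ext(S,A)\}$. Since $\mcA$ has exact coproducts, the coproduct of these conflations over $\alpha\in I$ is again a conflation
$$\xymatrix{A^{(I)} \ar[r]^-{\oplus i_\alpha} & \bigoplus_{\alpha\in I} B_\alpha \ar[r] & \bigoplus_{\alpha\in I} S_\alpha.}$$

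Next, let $\nabla\colon A^{(I)}\to A$ be the codiagonal, the map whose restriction along every coproduct inclusion $\iota_\alpha\colon A\to A^{(I)}$ is $1_A$. Push out the above conflation along $\nabla$ to obtain a conflation
$$\xymatrix{A\ar[r]^-{g} & G \ar[r] & W,}\qquad W=\bigoplus_{\alpha\in I} S_\alpha\in\Sum(\mcS),$$
together with a morphism $h\colon\bigoplus B_\alpha\to G$ satisfying $g\nabla=h(\oplus i_\alpha)$. This gives $W\in\Sum(\mcS)$ automatically, and the class $\Sum(\mcS)$ is closed under finite direct sums (a finite direct sum of coproducts of objects of $\mcS$ is again a coproduct of objects of $\mcS$), so it qualifies as a candidate $\mathfrak{g}_{\mcS}$-cosyzygy subcategory.

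It remains to show that $g$ is an $\mcS$-ghost, i.e.\ that the pushout of any conflation $\xi\colon A\to B\to S$ with $S\in\mcS$ along $g$ is trivial. Every such $\xi$ equals some $\xi_\alpha$ in our list, so it suffices to check that $\Ext(S_\alpha,g)(\xi_\alpha)=0$ for each $\alpha\in I$. Precomposing $g\nabla=h(\oplus i_\alpha)$ with $\iota_\alpha$ and using $\nabla\iota_\alpha=1_A$ gives
$$g = h\,(\text{incl}_\alpha)\,i_\alpha,$$
so $g$ factors through $i_\alpha$. Since $i_\alpha$ is the inflation of $\xi_\alpha$, the pushout of $\xi_\alpha$ along $i_\alpha$ is trivial (the resulting inflation $B_\alpha\to B_\alpha\oplus_A B_\alpha$ admits the first projection as retraction), and pushing further along $h\,(\text{incl}_\alpha)$ preserves triviality. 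Hence $\Ext(S_\alpha,g)(\xi_\alpha)=0$, as required, and the constructed $g$ exhibits $W$ as a $\mathfrak{g}_{\mcS}$-cosyzygy of $A$ lying in $\Sum(\mcS)$.

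The only step requiring genuine care is the verification that $g$ factors through each $i_\alpha$; everything else (exactness of the coproduct, existence of $\nabla$, and the pushout construction) is formal once one uses the hypothesis of exact coproducts. The index set $I$ may be very large, so it is essential that we assumed at the outset that each $\Ext(S,A)$ is a set and that $\mcS$ is itself a set, to ensure that the coproduct $\bigoplus_{\alpha\in I} B_\alpha$ actually exists in $\mcA$.
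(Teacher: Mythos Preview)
Your proposal is correct and follows essentially the same approach as the paper's proof. The only cosmetic difference is that the paper first bundles $\mcS$ into the single object $S=\coprod_{S'\in\mcS}S'$ and indexes over $I=\Ext(S,A)$, whereas you index directly over pairs $(S',\xi)$ with $S'\in\mcS$ and $\xi\in\Ext(S',A)$; in both cases one takes the coproduct of the chosen conflations, pushes out along the codiagonal $A^{(I)}\to A$, and verifies ghostness via the factorization $g = h\circ\mathrm{incl}_\alpha\circ i_\alpha$ (which is exactly the commutative triangle the paper draws at the end of its proof).
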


\begin{proof} We follow an argument that appears in \cite{FHHZ} with a subtle modification. Let $S$ be the coproduct of all objects in the set $\mcS$ and let $A$ be an object in $\mathcal{A}$. By assumption, the coproduct of all conflations $\xymatrix{A\ar[r]&B_i\ar[r]&S}$ in $I = \Ext(S,A)$ exists, and is a conflation in $\mathcal{A}$.  Consider the pushout diagram   along the canonical morphism $A^{(I)}\to A$
	$$\xymatrix{A^{(I)}\ar[d]\ar[r]&\coprod_{i\in I}B_i\ar[d]\ar[r]&S^{(I)}\ar@{=}[d]\\
	A\ar[r]^j&G\ar[r]&S^{(I)}}.$$
We shall show that $j:A\to G$ belongs to the ideal $\mathfrak{g}_{\mathcal{S}}$. This is equivalent to proving, that for any conflation $\xymatrix{A\ar[r]&B_i\ar[r]&S}$, the pushout diagram
$$\xymatrix{A\ar[d]_j\ar[r]&B_i\ar[r]\ar[d]&S\ar@{=}[d]\\
	G\ar[r]&Y\ar[r]&S}$$
is null homotopic. The following commutative diagram of conflations
	$$\xymatrix{A\ar[r]\ar[d]&B_i\ar[r]\ar[d]&S\ar[d]\\
		A^{(I)}\ar[r]\ar[d]&\coprod_{i\in I}B_i\ar[d]\ar[r]&S^{(I)}\ar@{=}[d]\\
	A\ar[r]^j&G\ar[r]&S^{(I)}}$$
gives us the map $B_i \to G$ such that the diagram $$\xymatrix{A\ar[r]\ar[d]_-j&B_i\ar[ld]\\G}$$ commutes, as desired.
	\end{proof}

In addition  to the conditions given in Theorem~\ref{key}, if we assume further that  $\mathcal{A}$ has enough projective objects, then by \cite[Theorem 2]{FGHT}, the pair $(^{\perp}\mathfrak{g}_{\mathcal{S}},\mathfrak{g}_{\mathcal{S}})$ is a \emph{complete ideal cotorsion pair} with $^{\perp}\mathfrak{g}_{\mathcal{S}}$ an object ideal. In order to determine the objects of $^{\perp}\mathfrak{g}_{\mathcal{S}}$, we recall that for two subcategories $\mcB$ and $\mcC$ of $\mcA$, the subcategory consisting of objects $X$ that fit into a conflation $B\to X\to C,$ with $B\in\mcB$ and $C\in\mcC,$ is denoted by $\mcB \star \mcC.$ Applying~\cite[Propositions 6.2 and 7.3]{FH}, we obtain the following.

\begin{proposition}\label{key1} %{\rm (see~\cite[Proposition 7.3]{FH})}
Let $\mcS \subseteq \mcA$ be a set of objects in $\mathcal{A}$.  If $\mathcal{A}$ has exact coproducts and enough projective objects,  then $^{\perp}\mathfrak{g}_{\mathcal{S}} = \langle  \add (\mcE\mbox{-}\Proj \star \Sum(\mcS)) \rangle$ is an object ideal.
\end{proposition}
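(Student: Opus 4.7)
The plan is to combine Theorem~\ref{key} with the two cited results from \cite{FH}, which together give a precise description of the object part of ${}^{\perp}\mcJ$ for any object-special preenveloping ideal $\mcJ$ in an exact category with enough projectives. First I would invoke Theorem~\ref{key} to identify a specific cosyzygy subcategory: under the standing hypotheses we have $\Sum(\mcS) = \Omega^{\lneg 1}(\mathfrak{g}_{\mathcal{S}})$, since every object $A$ fits into a conflation $A \to G \to W$ with $G \to W$ an $\mcS$-ghost (this is already a small misquote; it is $A\to G$ that is the ghost) and $W \in \Sum(\mcS)$. Combined with the complete ideal cotorsion pair $({}^{\perp}\mathfrak{g}_{\mathcal{S}}, \mathfrak{g}_{\mathcal{S}})$ from \cite[Theorem 2]{FGHT} (whose hypotheses are met because $\mcA$ has exact coproducts and enough projectives), we know that ${}^{\perp}\mathfrak{g}_{\mathcal{S}}$ is already an object ideal, so it only remains to identify the additive subcategory $\Ob({}^{\perp}\mathfrak{g}_{\mathcal{S}})$ explicitly.

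Next I would appeal to \cite[Proposition 7.3]{FH}, which characterizes $\Ob({}^{\perp}\mcJ)$, for any object-special preenveloping ideal $\mcJ$, as the least additive subcategory $\mcX \subseteq \mcA$ closed under direct summands, containing $\mcE\mbox{-}\Proj$ and closed under extensions, that contains some (equivalently, every) cosyzygy subcategory $\Omega^{\lneg 1}(\mcJ)$. Applied to $\mcJ = \mathfrak{g}_{\mathcal{S}}$ and $\Omega^{\lneg 1}(\mathfrak{g}_{\mathcal{S}}) = \Sum(\mcS)$, this reduces the problem to showing that the least such $\mcX$ equals $\add(\mcE\mbox{-}\Proj \star \Sum(\mcS))$. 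The inclusion $\mcX \supseteq \add(\mcE\mbox{-}\Proj \star \Sum(\mcS))$ is essentially by definition: any member of $\mcE\mbox{-}\Proj \star \Sum(\mcS)$ is an extension of something in $\Sum(\mcS)$ by a projective, and $\mcX$ is closed under these operations and summands.

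The non-trivial inclusion follows from \cite[Proposition 6.2]{FH}, which is a $3 \times 3$-lemma style argument saying that the class $\add(\mcE\mbox{-}\Proj \star \Omega^{\lneg 1}(\mcJ))$ is already closed under extensions (and hence, since it already contains $\mcE\mbox{-}\Proj$ and $\Omega^{\lneg 1}(\mcJ)$, and is closed under finite direct sums and summands by construction, it is a candidate for $\mcX$). Given a conflation $X \to Y \to Z$ with $X, Z \in \add(\mcE\mbox{-}\Proj \star \Sum(\mcS))$, one pieces together the cosyzygy conflations $X \to G_X \to W_X$ and $Z \to G_Z \to W_Z$ (with $W_X, W_Z \in \Sum(\mcS)$ and $G_X, G_Z$ projective up to summands) via Horseshoe-type construction, obtaining that $Y$ lies, up to summand, in $\mcE\mbox{-}\Proj \star \Sum(\mcS)$. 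Passing to $\add$ absorbs the necessary summand corrections.

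The main obstacle I anticipate is not logical but bookkeeping: the constructions from \cite[Proposition 6.2]{FH} are formulated for a general object-special preenveloping $\mcJ$, and one must check carefully that the cosyzygy subcategory $\Sum(\mcS)$ produced by Theorem~\ref{key} is genuinely closed under finite direct sums (which it is, because a finite coproduct of coproducts of objects of $\mcS$ is again a coproduct of objects of $\mcS$), so that the hypotheses of \cite[Proposition 7.3]{FH} apply verbatim. Once that verification is made, the identification $^{\perp}\mathfrak{g}_{\mathcal{S}} = \langle \add(\mcE\mbox{-}\Proj \star \Sum(\mcS)) \rangle$ falls out of the cited propositions.
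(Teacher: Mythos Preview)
Your proposal is correct and follows essentially the same route as the paper: invoke Theorem~\ref{key} to get $\Sum(\mcS)$ as a cosyzygy subcategory, use \cite[Theorem~2]{FGHT} to see that $^{\perp}\mathfrak{g}_{\mathcal{S}}$ is an object ideal, and then apply \cite[Propositions~6.2 and~7.3]{FH} to identify its objects as $\add(\mcE\mbox{-}\Proj \star \Sum(\mcS))$. The paper's own proof is in fact nothing more than the sentence ``Applying~\cite[Propositions~6.2 and~7.3]{FH}, we obtain the following,'' so you have simply unpacked the same citations.
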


Applying Proposition~\ref{finite1} and Theorem~\ref{key}, we obtain the following result.

\begin{corollary}\label{finite}
let $\mcS \subseteq \mcA$ be a set of objects in $\mcA$. If $\mcA$ has exact coproducts, then the $n$-th power $\mathfrak{g}^n_{\mathcal{S}}$
%of the $\mcS$-ghost ideal $\mathfrak{g}_{\mathcal{S}}$
is an object-special preenveloping ideal with $\mathfrak{g}^n_{\mathcal{S}}$-cosyzygy subcategory $n$-$\Filt(\Sum(\mathcal{S}))$.  Moreover, if $\mathcal{A}$ has enough projectives, then $(^{\perp}\mathfrak{g}^n_{\mathcal{S}},\mathfrak{g}^n_{\mathcal{S}})$ is a complete ideal cotorison pair.
\end{corollary}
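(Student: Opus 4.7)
The proof is essentially a bookkeeping step that combines the two results just established, so the plan is short.

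First I would invoke Theorem~\ref{key}: since $\mcA$ has exact coproducts, the ghost ideal $\mathfrak{g}_{\mathcal{S}}$ is object-special preenveloping and admits $\Omega^{\lneg 1}(\mathfrak{g}_{\mathcal{S}}) = \Sum(\mcS)$ as a cosyzygy subcategory (note that $\Sum(\mcS)$ is closed under finite direct sums, since a finite direct sum of coproducts of objects in $\mcS$ is again a coproduct of objects in $\mcS$, so it qualifies as a cosyzygy subcategory in the sense of \S 3.1). This provides the hypothesis needed to trigger Proposition~\ref{finite1}.

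Next, I would apply Proposition~\ref{finite1} with $\mcJ = \mathfrak{g}_{\mathcal{S}}$ and $\Omega^{\lneg 1}(\mcJ) = \Sum(\mcS)$. Its conclusion yields immediately that $\mathfrak{g}_{\mathcal{S}}^n$ is object-special preenveloping and that $n\lneg\Filt(\Sum(\mcS))$ serves as a $\mathfrak{g}_{\mathcal{S}}^n$-cosyzygy subcategory; this is precisely the first assertion. Explicitly, given $A \in \mcA$, one iterates the cosyzygy construction of Theorem~\ref{key} $n$ times, obtaining a tower whose composed inflation $A \to J_n$ lies in $\mathfrak{g}_{\mathcal{S}}^n$ and whose cokernel carries the evident filtration by quotients in $\Sum(\mcS)$; Lemma~\ref{finiteghost} then certifies that this cokernel is left orthogonal to $\mathfrak{g}_{\mathcal{S}}^n$.

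Finally, for the ``moreover'' part, I assume in addition that $\mcA$ has enough projectives. The ideal version of Salce's Lemma \cite{FGHT} asserts that an ideal cotorsion pair $(\mcI,\mcJ)$ is complete whenever $\mcJ$ is special preenveloping (and the ambient category has enough projectives, so that dualization to special precoverings on the left is available). Since $(^{\perp}\mathfrak{g}_{\mathcal{S}}^n, \mathfrak{g}_{\mathcal{S}}^n)$ is automatically an ideal cotorsion pair (one always has the maximal pair generated by any ideal, and object-special preenveloping ideals are right $\Ext$-closed), and since $\mathfrak{g}_{\mathcal{S}}^n$ has just been shown to be object-special preenveloping, Salce's Lemma delivers completeness. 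I would expect no real obstacle here; the only point that merits a brief word is the verification that $n\lneg\Filt(\Sum(\mcS))$ is closed under finite direct sums, which is immediate from the componentwise filtration.
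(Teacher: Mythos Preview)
Your proposal is correct and follows essentially the same approach as the paper: the paper's proof consists of the single sentence ``Applying Proposition~\ref{finite1} and Theorem~\ref{key}, we obtain the following result,'' which is exactly your first two steps, and the ``moreover'' clause is handled (implicitly, via the discussion preceding Proposition~\ref{key1}) by the same appeal to \cite[Theorem 2]{FGHT} that you invoke under the name of Salce's Lemma.
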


\section{Adjustment of splittings}
As we have seen in Proposition \ref{finite1} a finite power of an object special preenveloping ideal is still an object-special preenveloping ideal. In the next few sections, we will push forward, that is, we will define {\rm transfinite inductive powers} of an ideal, and show that under very mild conditions on the exact category $\mcA,$ a transfinite inductive power of an object special preenveloping ideal is still an object-special preenveloping ideal. As in the corollary above, this will immediately apply to the $\mathcal S$-ghost ideal. In order to carry this out, we need to generalize Eklof's Lemma, an important pillar in classical approximation theory, to the present setting. We call this generalization the~\emph{Ideal Eklof Lemma}. We will begin with a very useful technique \emph{the adjustment of splittings,} which will play an important role in the proof of the Ideal Eklof Lemma, and may be of independent interest.

If $(\mcA; \mcE)$ is an exact category, then the morphisms of conflations in $(\mcA; \mcE)$ themselves form an exact category $(\Arr (\mcA); \Arr (\mcE)).$ The notation $\xymatrix@1{x \ar[r]^i & y \ar[r]^p & z}$ is used to represent the conflation of arrows given by
$$\xymatrix{X_0 \ar[r]^{i_0} \ar[d]^x & Y_0 \ar[r]^{p_0} \ar[d]^y & Z_0 \ar[d]^z \\
	X_1 \ar[r]^{i_1} & Y_1 \ar[r]^{p_1} & Z_1.
}$$
In this section, we consider conditions under which a morphism of trivial conflations in $(\mcA; \mcE)$ is itself trivial when regarded as a conflation in $(\Arr (\mcA); \Arr (\mcE)).$ This involves the introduction of a splitting for each of the conflations of objects and a study of the criteria for when those splittings may be adjusted to yield a splitting of arrows. These ideas are best understood when the conflations are considered as complexes in $\bC (\mcA),$ so that we will follow horizontal indexing conventions in this section, rather than the usual vertical ones.

\subsection{The connecting map} Let us consider conflations in $(\mcA; \mcE)$ as complexes and work in the $\Hom$-complex of maps. Concretely, a conflation $\xymatrix@1{X_1 \ar[r] & X_0 \ar[r] & X_{-1}}$ may be regarded as the object $X.$
$$\xymatrix@1{\cdots \ar[r] & 0 \ar[r] & X_1 \ar[r]^{d^X_1} & X_0 \ar[r]^{d^X_0} & X_{-1} \ar[r] & 0 \ar[r] & \cdots
}$$
in $\bC (\mcA),$ concentrated in degrees $1,$ $0,$ and $-1.$ If $Y.$ is another conflation in $(\mcA; \mcE),$ the complex $\Hom. (X., Y.)$ is defined so that the $n$-chains are maps of degree $n,$ $$\Hom_n (X., Y.) = \prod_i \; \Hom (X_i, Y_{i+n}).$$
Since $X.$ and $Y.$ are conflations, $\Hom_n (X., Y.) = 0$ for $|n| > 2$,  and therefore,  the chain complex   $\Hom. (X., Y.)$ looks like
$$\xymatrix@C=14pt{0 \ar[r] & \Hom_2 (X.,Y.) \ar[r]^{D_2} & \Hom_1 (X.,Y.) \ar[r]^{D_1} & \Hom_0 (X.,Y.) \ar[r]^{D_0} & \Hom_{-1} (X.,Y.) \ar[r]^{D_{-1}} & \Hom_{-2} (X.,Y.) \ar[r] &0.}$$
The differential $D_n$ is defined so that its $i$-th component acts on $\Hom (X_i, Y_{i+n})$ by $$[D_n(f)]_i = d_{i+n}^Y \circ f_i - (-1)^{n} f_{i-1} \circ d_i^X.$$

We are interested in degree $2$ maps $\Delta \in \Hom_2 (X., Y.)$
$$\xymatrix{
		X_1 \ar[r]^{i_X} & X_0 \ar[r]^{p_X} & X_{-1} \ar[dll]_{\grD} \\
		Y_1 \ar[r]_{i_Y} & Y_0 \ar[r]^{p_Y} & Y_{-1},
	}$$
which we call {\em connecting.} The boundary of a connecting map is given by $D_2 (\Delta) = (-\Delta p_X, i_Y \Delta).$

\begin{lemma} \label{L:homotopy of 0} For given conflations $X.$ and $Y.$ in $\mcA$,   the chain  complex $\Hom. (X., Y.)$ is exact at $\Hom_2 (X., Y.)$ and $\Hom_1 (X., Y.).$ In other words, a degree $1$ map $h = (h_0, h_{-1})$ of conflations
	$$\xymatrix@R=40pt@C=35pt{
		X_1 \ar[r]^{i_X} & X_0 \ar[r]^{p_X} \ar[dl]_{h_0} & X_{-1} \ar[dl]_-{h_{-1}} \\
		Y_1 \ar[r]^{i_Y} & Y_0 \ar[r]^{p_Y} & Y_{-1}
	}$$
	is a {\em homotopy of $0,$} i.e., $D_1(h)=0$, if and only if there exists a unique connecting map $\Delta \colon X_{-1} \to Y_1$ such that $h_0 = - \grD p_X$ and $h_{-1} = i_Y \grD,$ i.e., $D_2 (\Delta) = h.$
\end{lemma}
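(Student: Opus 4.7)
The statement factors into two exactness claims, at $\Hom_2(X., Y.)$ and at $\Hom_1(X., Y.)$, and I would treat them separately. For the first, since $\Hom_n(X., Y.) = 0$ for $n \geq 3$, exactness at $\Hom_2$ amounts to injectivity of $D_2$. If $D_2(\Delta) = (-\Delta p_X,\, i_Y \Delta) = 0$, then the second coordinate yields $i_Y \Delta = 0$, and since $i_Y$ is an inflation, hence monic, one concludes $\Delta = 0$.

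The main content is the exactness at $\Hom_1(X., Y.)$. The inclusion $\im D_2 \subseteq \Ker D_1$ is $D_1 D_2 = 0$, a short direct check in the three non-trivial degrees. For the reverse inclusion, given $h = (h_0, h_{-1})$ with $D_1(h) = 0$, I would expand the vanishing, taking into account that $X.$ and $Y.$ are supported in degrees $1, 0, -1$, into the three component equations
\begin{align*}
h_0 \, i_X &= 0,\\
i_Y \, h_0 + h_{-1} \, p_X &= 0,\\
p_Y \, h_{-1} &= 0.
\end{align*}
The first equation, combined with the defining property of $p_X$ as the cokernel of $i_X$, produces a unique morphism $\bar h_0 \colon X_{-1} \to Y_1$ with $h_0 = \bar h_0 \, p_X$; setting $\Delta := -\bar h_0$ gives $h_0 = -\Delta \, p_X$ by construction. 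Substituting into the second equation yields $(h_{-1} - i_Y \Delta) \, p_X = 0$, and since $p_X$ is a deflation, hence epic, $h_{-1} = i_Y \Delta$. Uniqueness of $\Delta$ is immediate from this same epicness applied to $-\Delta p_X = h_0$, while the third equation is automatic from $p_Y i_Y = 0$ and therefore contributes nothing.

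There is no real obstacle here: this is the standard universal-property argument for a connecting map of a short exact sequence, transcribed into the $\Hom$-complex formalism of the paper. The only thing to watch carefully is the bookkeeping with the sign $(-1)^{n}$ in the differential $D_n$ and the degree-shift conventions, which I would double-check when writing the full proof. The substantive categorical input is simply that the exact structure on $\mcA$ forces $i_X, i_Y$ to be monic and $p_X, p_Y$ to be epic (in fact cokernels and kernels of one another) — exactly the data needed to perform the cokernel factorization and the subsequent cancellations.
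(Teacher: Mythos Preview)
Your proof is correct and follows essentially the same approach as the paper's. The only organizational difference is that the paper uses the third equation $p_Y h_{-1} = 0$ together with the kernel property of $i_Y$ to produce a second candidate $\Delta'$ with $h_{-1} = i_Y \Delta'$, and then invokes the middle equation to show $\Delta' = \Delta$; you instead substitute directly into the middle equation and cancel the epimorphism $p_X$, which is slightly more economical and makes the redundancy of the third equation explicit.
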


\begin{proof}
	If $\grD \colon X_{-1} \to Y_1$ is a connecting map, then the degree $1$ map $D_2 (\Delta) = (-\grD p_X,  i_Y \grD)$ is a $1$-boundary and therefore a $1$-cycle, i.e., a homotopy of $0.$
Conversely, suppose that $h = (h_0, h_{-1})$ is a homotopy of $0,$ $D_1 (h) = 0.$ Then $h_0 i_X = 0,$ $i_Y h_0 + h_{-1} p_X = 0,$ and $p_Y h_{-1} = 0.$
The first equation implies there is a {\em unique} $\grD \colon X_{-1} \to Y_1$ such that $h_0 = -\grD p_X$ and the third equation implies there is a unique $\grD' \colon X_{-1} \to Y_1$ such that $h_{-1} = i_Y \grD'.$
Substituting these equations into the second yields $$0 = i_Y h_0 + h_{-1} p_X = - i_Y \grD p_X + i_Y \grD' p_X = i_Y (-\grD + \grD') p_X.$$ As $i_Y$ is a kernel and $p_X$ a cokernel, they are a monomorphism
and an epimorphism respectively, whence $\grD' = \grD$ and $$h = (h_0, h_{-1}) =  (-\grD p_X,  i_Y \grD) = D_2 (\Delta).$$
The uniqueness of $\Delta$ ensures exactness of $\Hom. (X., Y.)$ at $\Hom_2 (X., Y.).$
\end{proof}

\subsection{Splittings}
A {\em splitting} $(r,s)$ of a trivial conflation $\xymatrix@1@C=25pt{X_1 \ar[r]^i & X_0 \ar[r]^-p & X_{-1}}$ in $\mcA$ is a degree $1$ map for which $D_1 (r,s) = 1_{X.}.$ In other words, it is a self-homotopy of the identity morphism $1_{X.},$
$$\xymatrix@R=30pt@C=30pt{
	X_1 \ar[r]^i \ar@{=}[d] & X_0 \ar[r]^p \ar@{=}[d] \ar[ld]_r & X_{-1} \ar@{=}[d] \ar[ld]_s\\
	X_1 \ar[r]^i & X_0 \ar[r]^p & X_{-1},
}$$
which consists of a retraction $r: X_0 \to X_1$ of $i,$ $ri=1_{X_1},$ and a section $s: X_{-1} \to X_0$ of $p,$ $ps=1_{X_1},$ satisfying $1_{X_0} = ir + sp.$ The splitting is uniquely determined by the section $r$ or the retration $s$, that is, once a retraction $r$ is given, then there is a unique section $s$ induced by $r$ such that $(r,s)$ is a splitting, and vice versa.

\begin{proposition} \label{P:adj}
	If $(r,s)$ is a splitting of $\xymatrix@1@C=25pt{X_1 \ar[r]^i & X_0 \ar[r]^p & X_{-1},}$ then every splitting of $X.$ is of the form $(r,s) - D_2 (\grD) = (r + \grD p, s - i \grD)$ for some connecting map $\Delta \colon X_{-1} \to X_1.$
\end{proposition}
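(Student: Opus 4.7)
The plan is to view the set of splittings of $X.$ as the fiber over $1_{X.} \in \Hom_0(X.,X.)$ of the differential $D_1 \colon \Hom_1(X.,X.) \to \Hom_0(X.,X.)$. With this perspective, the proposition becomes a direct corollary of Lemma~\ref{L:homotopy of 0} applied in the case $Y. = X.$, together with the chain complex identity $D_1 \circ D_2 = 0.$

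First, I would check the inclusion ``$\supseteq$'': for any connecting map $\Delta \colon X_{-1} \to X_1$, the degree $1$ map $(r,s) - D_2(\Delta) = (r + \Delta p, s - i\Delta)$ is again a splitting. Since $D_1 D_2 = 0$ holds in the chain complex $\Hom.(X.,X.)$, we have
$$D_1\bigl((r,s) - D_2(\Delta)\bigr) \;=\; D_1(r,s) - (D_1 D_2)(\Delta) \;=\; 1_{X.} - 0 \;=\; 1_{X.},$$
so $(r,s) - D_2(\Delta)$ satisfies the splitting condition. (If desired, one could instead verify by hand the three identities $(r+\Delta p)i = 1_{X_1}$, $p(s-i\Delta) = 1_{X_{-1}}$, and $i(r+\Delta p) + (s-i\Delta)p = 1_{X_0}$, using $pi = 0$ and the hypothesis that $(r,s)$ is itself a splitting.)

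For the converse inclusion ``$\subseteq$'', let $(r',s')$ be an arbitrary splitting of $X.$. Then $D_1((r,s) - (r',s')) = 1_{X.} - 1_{X.} = 0$, so the difference $(r,s) - (r',s')$ is a $1$-cycle in $\Hom.(X.,X.)$, i.e.\ a homotopy of $0$ in the sense of Lemma~\ref{L:homotopy of 0}. That lemma supplies a unique connecting map $\Delta \colon X_{-1} \to X_1$ with $D_2(\Delta) = (-\Delta p, i\Delta) = (r,s) - (r',s')$. Rearranging yields $(r',s') = (r,s) - D_2(\Delta) = (r + \Delta p,\, s - i\Delta)$, as claimed.

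The argument is essentially bookkeeping once Lemma~\ref{L:homotopy of 0} is available, and no serious obstacle arises. The only subtle ingredient is the identity $D_1 \circ D_2 = 0$, which is automatic from the chain complex structure of $\Hom.(X.,X.)$ set up in \S 4.1; the uniqueness of $\Delta$ is also inherited from Lemma~\ref{L:homotopy of 0}, though it is not needed for the statement of the proposition.
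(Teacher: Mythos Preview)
Your proof is correct and follows essentially the same approach as the paper: both take the difference of two splittings, observe it is a $1$-cycle, and invoke Lemma~\ref{L:homotopy of 0} to write it as $D_2(\Delta)$. Your additional verification of the ``$\supseteq$'' direction via $D_1 D_2 = 0$ is not in the paper's proof (which treats only the direction actually asserted by the statement), but it is a welcome sanity check.
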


\begin{proof}
	Suppose that $(r',s') \in \Hom_1 (X., X.)$ is another splitting, $D_1 (r', s') = 1_{X.}.$ Then \linebreak $D_1 (r-r', s-s') = 0$, and therefore, by Lemma~\ref{L:homotopy of 0}, there exists a connecting map $\Delta$ such that $(r-r', s-s') = D_2 (\Delta) = (-p\Delta, \Delta i),$ whence $(r', s') = (r + p\Delta, s - \Delta i).$
\end{proof}

A given splitting $(r,s)$  yields an isomorphism of kernel-cokernel pairs
$$\xymatrix@C=40pt{
	X_{-1} \ar[r]^-{{\mbox{\tiny $\left[ \begin{array}{c} 1 \\ 0 \end{array} \right]$}}} \ar@{=}[d] & X_{-1} \oplus X_1 \ar[r]^-{[0, 1]} \ar[d]^{[s, i]} & X_1 \ar@{=}[d]\\
	X_{-1} \ar[r]^s & X_0 \ar[r]^r & X_1,}$$
because the morphism $[s, i] \colon X_{-1} \oplus X_1 \to X_0$ is an isomorphism with inverse ${\dis \left[ \begin{array}{c} p \\ r \end{array} \right] \colon X_0 \to X_{-1} \oplus X_1.}$
By~\cite[Lemma 2.7]{B}, the kernel-cokernel pair in the bottom row is a trivial conflation with a splitting $(p,i).$

Let $f \colon X. \to Y.$ be a morphism between two trivial conflations in $\mcA$, displayed as a commutative diagram
$$\xymatrix@R=30pt@C=30pt{
	X_1 \ar[r]^{i_X} \ar[d]^{f_1} & X_0 \ar[r]^{p_X}\ar[d]^{f_0} & X_{-1} \ar@<-1ex>[d]^{f_{-1}} \\
      Y_1 \ar[r]^{i_Y} & Y_0 \ar[r]^{p_Y} & Y_{-1}.}$$
Equivalently, $f \in \Hom_0 (X., Y.)$ is a $0$-cycle. Let $(r_X,s_X)$ and $(r_Y,s_Y)$ be splittings of $X.$ and $Y.$ and consider $f$ as a degree $0$ map of reversed conflations
$$\xymatrix@R=30pt@C=30pt{
	X_{-1} \ar[r]^{s_X} \ar@<-1ex>[d]^{f_{-1}} & X_0 \ar[r]^{r_X} \ar[d]^{f_0} & X_1 \ar[d]^{f_1} \\
      Y_{-1} \ar[r]^{s_Y} & Y_0 \ar[r]^{r_Y} & Y_1.}$$
If the diagram is commutative, we say that the splittings $(r_X, s_X)$ and $(r_Y, s_Y)$ are a splitting for the conflation $f$ of arrows. In any case, we may consider its $-1$-boundary $(s_Yf_{-1} - f_0s_X, r_Yf_0 - f_1r_X)$ as a degree $1$ map
$$\xymatrix@R=40pt@C=50pt{
				X_1 \ar[r]^{i_X} & X_0 \ar[r]^{p_X} \ar[dl]_{r_Yf_0 - f_1r_X} & X_{-1} \ar[dl]^{s_Yf_{-1} - f_0s_X} \\
				Y_1 \ar[r]^{i_Y} & Y_0 \ar[r]^{p_Y} & Y_{-1}.
			}$$
Let us show that it is a homotopy of $0.$

\begin{proposition} \label{P:conn map}
Given a morphism $f \colon X. \to Y.$ of trivial conflations, equipped with splittings $(r_X, s_X)$ and $(r_Y, s_Y)$ as above, there is a unique connecting map $\Delta =: \Delta (f)$ such that $$D_2 (\Delta) =  (r_Yf_0 - f_1r_X, s_Yf_{-1} - f_0s_X).$$
\end{proposition}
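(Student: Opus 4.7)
The plan is to reduce the statement to Lemma~\ref{L:homotopy of 0}: that lemma already supplies both the existence and the uniqueness of a connecting map $\Delta$ with $D_2(\Delta) = h$ for any $1$-cycle (homotopy of $0$) $h \in \Hom_1(X., Y.),$ so the entire content of the proposition collapses to verifying the cocycle condition $D_1(h)=0$ for the specific degree $1$ map
\[ h := (r_Y f_0 - f_1 r_X,\, s_Y f_{-1} - f_0 s_X). \]
Uniqueness of $\Delta$ will then be inherited verbatim from the uniqueness clause of Lemma~\ref{L:homotopy of 0}, with no additional work required.

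I would verify this cocycle condition directly against the three componentwise criteria spelled out in Lemma~\ref{L:homotopy of 0}. Using $r_Y i_Y = 1_{Y_1}$ together with the chain-map square $f_0 i_X = i_Y f_1,$ the first criterion becomes $h_0 i_X = r_Y i_Y f_1 - f_1 = 0;$ dually, $p_Y s_Y = 1_{Y_{-1}}$ and $p_Y f_0 = f_{-1} p_X$ force $p_Y h_{-1} = f_{-1} - f_{-1} = 0.$ The middle criterion dissolves by combining the full splitting identities $i_X r_X + s_X p_X = 1_{X_0}$ and $i_Y r_Y + s_Y p_Y = 1_{Y_0}$ with both chain-map squares for $f$:
\[ i_Y h_0 + h_{-1} p_X = (i_Y r_Y + s_Y p_Y) f_0 - f_0 (i_X r_X + s_X p_X) = f_0 - f_0 = 0. \]
With all three identities in place, Lemma~\ref{L:homotopy of 0} delivers the unique connecting map $\Delta = \Delta(f),$ finishing the proof.

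There is no substantial obstacle here; conceptually the proof is simply the observation that a trivial conflation is a contractible three-term chain complex whose splitting is a contracting homotopy. Viewing $\alpha := (r_Y, s_Y)$ and $\beta := (r_X, s_X)$ as degree $1$ elements of the respective endomorphism complexes, one has $D_1(\alpha) = 1_{Y.}$ and $D_1(\beta) = 1_{X.},$ while $D_0(f) = 0$ by the chain-map property; then $h$ is nothing but the graded commutator $\alpha \circ f - f \circ \beta,$ and a single application of the graded Leibniz rule in the $\Hom$-complex produces $D_1(h) = D_1(\alpha) \circ f - f \circ D_1(\beta) = f - f = 0$ in one line --- a bookkeeping-free repackaging of the calculation above.
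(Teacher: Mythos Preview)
Your proof is correct and follows essentially the same approach as the paper: both reduce to Lemma~\ref{L:homotopy of 0} by verifying that $h=(r_Yf_0-f_1r_X,\,s_Yf_{-1}-f_0s_X)$ is a $1$-cycle, checking the three component identities. Your verification of the outer identities $h_0 i_X=0$ and $p_Y h_{-1}=0$ is in fact slightly cleaner than the paper's---you invoke the chain-map squares and the retraction/section identities directly, whereas the paper argues indirectly via $i_Y$ being a monomorphism---and your closing remark on the graded Leibniz rule is a nice conceptual repackaging not present in the original.
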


\begin{proof}
By Lemma~\ref{L:homotopy of 0}, it suffices to verify that $(r_Yf_0 - f_1r_X, s_Yf_{-1} - f_0s_X)$ is a $1$-cycle, that is, %that the three components of
$D_1 (r_Yf_0 - f_1r_X, s_Yf_{-1} - f_0s_X)=0$. First,
$$(r_Yf_0 - f_1r_X)i_X  = r_Y f_0 i_X - f_1 r_X i_X = r_Y f_0 i_X - f_1 = 0,$$
because $i_Y (r_Y f_0 i_X - f_1) = i_Y r_Y f_0 i_X - i_Y f_1 = (i_Y r_Y - 1_{Y_0}) (i_Y f_1) =  s_Y p_Y (i_Y f_1) = 0$ and $i_Y$ is a monomorphism. Then,
\begin{eqnarray*} i_Y(r_Yf_0 - f_1r_X) + (s_Yf_{-1} - f_0s_X)p_X & = & i_Y r_Y f_0 - i_Y f_1 r_X + s_Y f_{-1} p_X - f_0 s_X p_X \\
 & = & i_Yr_Yf_0 -  f_0 i_X r_X + s_Y p_Y f_0 - f_0 s_X p_X \\
 & = & (i_Yr_Y + s_Y p_Y) f_0 - f_0 (i_X r_X + s_X p_X) = 0.
\end{eqnarray*}
The verification of the third equation $p_Y (s_Yf_{-1} - f_0s_X) = 0$ is similar to the first.
\end{proof}

We call the connecting map obtained in Proposition~\ref{P:conn map} the {\em connecting map of} $f \colon X. \to Y.$ By the proof of Lemma~\ref{L:homotopy of 0}, it is uniquely determined by either of the equations
\begin{equation} \label{Eq:Delta eqns}
-\Delta (f) p_X = r_Yf_0 - f_1r_X \;\; \mbox{  or  }\;\;  i_Y \Delta (f) = s_Yf_{-1} - f_0s_X.
\end{equation}
This can be used to verify the following.

\begin{lemma} \label{L:derivation}
Suppose that $\xymatrix@1{X.\; \ar[r]^f & Y. \ar[r]^g & Z.}$ is a composition of morphisms of trivial conflations,
$$\xymatrix@C=0pt{X.: \ar[d]_f & X_1 \ar[rrrrr]^{i_X} \ar[d]^{f_1} &&&&& X_0 \ar[rrrrr]^{p_X} \ar[d]^{f_0} &&&&& X_{-1} \ar@<-1ex>[d]^{f_{-1}} \\
	Y.:\; \ar[d]_g & Y_1 \ar[rrrrr]^{i_Y} \ar[d]^{g_1} &&&&& Y_0 \ar[rrrrr]^{p_Y} \ar[d]^{g_0} &&&&& Y_{-1} \ar@<-1ex>[d]^{g_{-1}} \\
	Z.:\; & Z_1 \ar[rrrrr]^{i_Z} &&&&& Z_0 \ar[rrrrr]^{p_Z} &&&&& Z_{-1},
}$$
equipped with respective splittings $(r_X, s_X),$ $(r_Y, s_Y),$ and $(r_Z, s_Z).$ Then $\Delta(gf) = \Delta(g) f_{-1} + g_1 \Delta(f).$
\end{lemma}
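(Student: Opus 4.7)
My plan is to exploit the uniqueness statement in Proposition~\ref{P:conn map} (which in turn rests on the uniqueness part of Lemma~\ref{L:homotopy of 0}): the connecting map $\Delta(gf)$ is the unique morphism $X_{-1} \to Z_1$ satisfying
\[
-\Delta(gf)\, p_X \;=\; r_Z (gf)_0 - (gf)_1 r_X \;=\; r_Z g_0 f_0 - g_1 f_1 r_X.
\]
Hence it suffices to verify that the candidate $\Delta(g) f_{-1} + g_1 \Delta(f)$ satisfies this same equation.

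The key step is then just a short computation. Postcomposing the candidate with $-p_X$ and expanding,
\[
-\bigl(\Delta(g) f_{-1} + g_1 \Delta(f)\bigr) p_X \;=\; -\Delta(g) f_{-1} p_X \;-\; g_1 \Delta(f) p_X.
\]
Using that $f$ is a morphism of conflations, I replace $f_{-1} p_X$ by $p_Y f_0$ in the first summand, and then apply the defining equation $-\Delta(g) p_Y = r_Z g_0 - g_1 r_Y$ to obtain $(r_Z g_0 - g_1 r_Y) f_0$. In the second summand I apply $-\Delta(f) p_X = r_Y f_0 - f_1 r_X$ and left-multiply by $g_1$. Adding the two pieces, the cross terms $\pm g_1 r_Y f_0$ cancel, leaving exactly $r_Z g_0 f_0 - g_1 f_1 r_X$, as required. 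Uniqueness of $\Delta(gf)$ then forces the claimed identity.

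The argument is essentially forced once one observes that the two defining equations in~(\ref{Eq:Delta eqns}) are individually sufficient to pin $\Delta$ down (because $p_X$ is an epimorphism, respectively $i_Z$ a monomorphism), and that the compatibility of $f$ and $g$ with the differentials provides the single rewriting $f_{-1} p_X = p_Y f_0$ needed to bridge the two connecting maps $\Delta(f)$ and $\Delta(g)$. I do not anticipate any real obstacle; the only thing to be careful about is sign bookkeeping in the two equivalent defining equations, and a symmetric verification using $i_Z \Delta(gf) = s_Z g_{-1} f_{-1} - g_0 f_0 s_X$ together with $g_1 i_Y = i_Z g_0$ would work equally well as a check.
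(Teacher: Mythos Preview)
Your argument is correct and is precisely the verification the paper leaves to the reader: the sentence preceding Lemma~\ref{L:derivation} says only that Equations~(\ref{Eq:Delta eqns}) ``can be used to verify'' the lemma, and your computation does exactly this via the first of those equations together with $f_{-1}p_X = p_Y f_0$. One small slip in your final parenthetical: the commutativity relation for $g$ you would use in the symmetric check is $i_Z g_1 = g_0 i_Y$, not $g_1 i_Y = i_Z g_0$ (the latter does not even type-check), but this does not affect your main argument.
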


\subsection{Adjustment of splittings} If $f \colon X. \to Y.$ is a morphism of trivial conflations, equipped with splittings $(r_X, s_X)$ and $(r_Y, s_Y),$ respectively, we know from Proposition~\ref{P:adj} that every splitting of $X.$ is of the form
$(r_X, s_X) - D_2 (\Delta_X)$ for some $\Delta_X \in \Hom_2 (X., X.),$ and likewise for $(r_Y, s_Y).$ The next theorem uses the connecting map $\Delta (f)$ to give a necessary and sufficient condition for $f$ to be a trivial arrow conflation.

\begin{theorem} \label{T:adj}
Let $f \colon X. \to Y.$ be a morphism of trivial conflations, equipped with splittings $(r_X, s_X)$ and $(r_Y, s_Y),$ respectively. The conflation of arrows $\xymatrix{f_1 \ar[r]^i & f_0 \ar[r]^p & f_{-1}}$ is trival in $\operatorname{Arr}(\mathcal{A})$ if and only if there exist $\Delta_X \in \Hom_2 (X., X.)$ and $\Delta_Y \in \Hom_2 (Y., Y.)$ such that $$\Delta (f) = \Delta_Y f_{-1} - f_1 \Delta_X.$$
In that case, the splittings $(r_X, s_X) - D_2 (\grD_X)$ and $(r_Y, s_Y) - D_2 (\grD_Y)$ give a splitting for $f$ in $\Arr(\mcA)$.
\end{theorem}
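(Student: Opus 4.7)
The plan is to recognize that triviality of the arrow conflation $\xymatrix@1{f_1 \ar[r]^i & f_0 \ar[r]^p & f_{-1}}$ in $\Arr(\mcA)$ is equivalent to the existence of splittings of $X.$ and $Y.$ which are \emph{compatible} with $f$, and then to translate this compatibility into a single equation on connecting maps via Proposition~\ref{P:adj} and the defining identities~(\ref{Eq:Delta eqns}).

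First I would observe that the arrow conflation is trivial precisely when there exist splittings $(r'_X, s'_X)$ of $X.$ and $(r'_Y, s'_Y)$ of $Y.$ satisfying $f_1 r'_X = r'_Y f_0$ and $f_0 s'_X = s'_Y f_{-1}$; these commutation conditions are exactly what it takes for the pair of splittings to furnish a splitting of $f$ in $\Arr(\mcA)$. By Proposition~\ref{P:adj}, every such new splitting of $X.$ is of the form $(r_X + \Delta_X p_X,\, s_X - i_X \Delta_X)$ for some connecting map $\Delta_X \in \Hom_2(X.,X.)$, and analogously $(r'_Y, s'_Y) = (r_Y + \Delta_Y p_Y,\, s_Y - i_Y \Delta_Y)$ for a connecting $\Delta_Y \in \Hom_2(Y.,Y.)$.

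Next I would compute how the connecting map changes under such an adjustment. Writing $\Delta'(f)$ for the connecting map of $f$ with respect to the new splittings, the defining identity $-\Delta'(f) p_X = r'_Y f_0 - f_1 r'_X$ from~(\ref{Eq:Delta eqns}), expanded using $p_Y f_0 = f_{-1} p_X$, gives
\[
-\Delta'(f) p_X = -\Delta(f) p_X + \Delta_Y f_{-1} p_X - f_1 \Delta_X p_X.
\]
Right-cancelling the epimorphism $p_X$ yields the clean formula $\Delta'(f) = \Delta(f) - \Delta_Y f_{-1} + f_1 \Delta_X$. The same formula can also be extracted from the second identity in~(\ref{Eq:Delta eqns}) by left-cancelling the monomorphism $i_Y$, which serves as a consistency check.

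Finally I would combine the two steps. Since $-\Delta'(f) p_X = r'_Y f_0 - f_1 r'_X$ and $p_X$ is epi, the commutation condition $f_1 r'_X = r'_Y f_0$ holds if and only if $\Delta'(f) = 0$; the analogous biconditional for sections follows from $i_Y$ being mono. Thus the arrow conflation is trivial if and only if there exist $\Delta_X$ and $\Delta_Y$ with $\Delta(f) = \Delta_Y f_{-1} - f_1 \Delta_X$, and in that case the adjusted splittings $(r_X, s_X) - D_2(\Delta_X)$ and $(r_Y, s_Y) - D_2(\Delta_Y)$ provide the desired splitting of $f$ in $\Arr(\mcA)$. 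I do not anticipate a serious obstacle: the only nontrivial step is the change-of-splittings computation for $\Delta'(f)$, and the uniqueness clauses from Lemma~\ref{L:homotopy of 0} (together with the epi/mono properties of $p_X$ and $i_Y$) make the cancellations routine.
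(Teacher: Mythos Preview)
Your proposal is correct and follows essentially the same route as the paper's proof: both parametrize the candidate splittings via Proposition~\ref{P:adj} and translate the compatibility conditions into the defining equations~(\ref{Eq:Delta eqns}) for the connecting map, using that $p_X$ is epi and $i_Y$ is mono. Your packaging is slightly more streamlined---you introduce the adjusted connecting map $\Delta'(f)$ and reduce both directions to the single equivalence $\Delta'(f)=0$---whereas the paper treats the two implications separately by direct computation, but the underlying calculations are identical.
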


\begin{proof}
	Suppose the condition holds and let us verify that $f$ is a morphism of the reverse complexes associated to the adjusted splittings, that is, that the diagram
	$$\xymatrix@R=40pt@C=50pt{
		X_{-1} \ar[r]^{s_X - i_X \grD_X} \ar@<-1ex>[d]^{f_{-1}} & X_0 \ar[r]^{r_X + \grD_X p_X} \ar[d]^{f_0} & X_1 \ar[d]^{f_1} \\
		Y_{-1} \ar[r]^{s_Y - i_Y \grD_Y} & Y_0 \ar[r]^{r_Y + \grD_Y p_Y} & Y_1
	}$$
	commutes. We only verify commutativity of the right diagram; verification of the left one is similar. Recall that $r_Y f_0  -  f_1 r_X = -\grD (f) p_X,$ whence
	\begin{eqnarray*}
f_1(r_X + \grD_X p_X) = f_1 r_X + f_1 \grD_X p_X & = & r_Y f_0 + \grD (f) p_X + f_1 \grD_X p_X  \\
& = & r_Y f_0 + \grD_Y f_{-1} p_X  \\
& = & r_Y f_0 + \grD_Y p_Y f_0 = (r_Y + \grD_Y p_Y) f_0.
\end{eqnarray*}
	
	To prove the converse, assume the conflation is trivial. By Proposition~\ref{P:adj}, there exist connection maps $\grD_X \colon X_{-1} \to X_1$ and $\grD_Y \colon Y_{-1} \to Y_1$ such that the splittings
$(r_X + \grD_X p_X, s_X - i_X \grD_X)$ and $(r_Y + \grD_Y p_Y, s_Y - i_Y \grD_Y)$ of $X.$ and $Y.,$ respectively, make the diagram commute. To verify that $\grD (f) = \grD_X f_{-1} - f_1 \grD_X,$ use the commutativity of the right square
$f_1 (r_X + \grD_X p_X) = (r_Y + \grD_Y p_Y) f_0$ to get that
$$-(\grD_Y f_{-1} - f_1 \grD_X)p_X = - \grD_Y (f_{-1}p_X) + f_1 \grD_X p_X = - \grD_Y p_Y f_0 +  f_1 \grD_X p_X = r_Y f_0 - f_1 r_X.$$
	It means that $\grD_Y f_{-1} - f_1 \grD_X$ is a solution to the first of Equations~(\ref{Eq:Delta eqns}) and therefore equals $\grD (f).$
\end{proof}

In practice, Theorem~\ref{T:adj} will be used for \emph{downward adjustment of splittings} of arrows (see Definition~\ref{D:down adjust}). This means that the splitting $(r_X, s_X)$ stays fixed and $(r_Y, s_Y)$ is adjusted. In other words, we are faced with the constraint that $\grD_X = 0$ and the task of finding a morphism
$\grD_Y \colon Y_{-1} \to Y_1$ such that $\grD (f) = \grD_Y f_{-1},$
$$\xymatrix@R=35pt@C=35pt{X_1 \ar[r]^{i_X} \ar[d]^{f_1} & X_0 \ar[r]^{p_0} \ar[d]^{f_0} & X_{-1} \ar@<-1ex>[d]^{f_{-1}} \\
			Y_1 \ar[r]^{i_Y} & Y_0 \ar[r]^{p_Y} & Y_{-1}. \ar@/^1pc/[ll]^{\grD_Y}
		}$$

\begin{corollary}\label{L:FH}
		Let $f \colon X. \to Y.$ be a morphism of trivial conflations in $\mcA$, equipped with respective splittings $(r_X,s_X)$ and $(r_Y,s_Y).$ If the connecting map $\Delta(f): X_{-1} \to Y_1$ extends along $f_{-1}$ to $\grD_Y \colon Y_{-1} \to Y_1,$ $\grD (f) = \grD_Y f_{-1},$ then the splittings $(r_X, s_X)$ and $(r_Y, s_Y) - D_2 (\grD_Y)$  provide a splitting for $f.$
	\end{corollary}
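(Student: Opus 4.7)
The plan is to obtain this as an immediate specialization of Theorem~\ref{T:adj} by taking $\Delta_X = 0.$ With that choice, the hypothesized identity $\Delta(f) = \Delta_Y f_{-1}$ can be rewritten as
\[
\Delta(f) \;=\; \Delta_Y f_{-1} - f_1 \Delta_X,
\]
which is precisely the criterion of Theorem~\ref{T:adj}. Hence the arrow conflation $\xymatrix@1{f_1 \ar[r] & f_0 \ar[r] & f_{-1}}$ is trivial in $\Arr(\mcA)$, and Theorem~\ref{T:adj} delivers a splitting of $f$ given by the adjusted pair
\[
\bigl((r_X, s_X) - D_2(\Delta_X),\; (r_Y, s_Y) - D_2(\Delta_Y)\bigr) \;=\; \bigl((r_X, s_X),\; (r_Y, s_Y) - D_2(\Delta_Y)\bigr),
\]
since $D_2(0) = 0.$ This is exactly the conclusion of the corollary.

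The only thing worth verifying explicitly, should one wish to give a self-contained argument rather than merely invoking Theorem~\ref{T:adj}, is that with the new splitting $(r_Y', s_Y') := (r_Y + \Delta_Y p_Y,\, s_Y - i_Y \Delta_Y)$ of $Y.$, the morphism $f$ commutes with both sections and retractions. For the retraction side, one computes
\[
f_1 r_X \;=\; r_Y f_0 + \Delta(f) p_X \;=\; r_Y f_0 + \Delta_Y f_{-1} p_X \;=\; r_Y f_0 + \Delta_Y p_Y f_0 \;=\; r_Y' f_0,
\]
where the first equality is the first of Equations~(\ref{Eq:Delta eqns}) for the original splittings, the second is the hypothesis $\Delta(f) = \Delta_Y f_{-1}$, and the third uses $f_{-1} p_X = p_Y f_0$. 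The section side $f_0 s_X = s_Y' f_{-1}$ is verified dually from the second of Equations~(\ref{Eq:Delta eqns}).

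I do not anticipate any real obstacle here, since every ingredient — the defining equations of the connecting map, the additive-group structure on splittings from Proposition~\ref{P:adj}, and the trivialization criterion of Theorem~\ref{T:adj} — is already in place. The content of the corollary is simply to isolate the useful one-sided (``downward'') adjustment in which only the target splitting is altered, anticipating the applications in \S 4.3 and beyond.
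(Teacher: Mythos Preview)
Your proposal is correct and follows exactly the paper's intended approach: the corollary is a direct specialization of Theorem~\ref{T:adj} with $\Delta_X = 0$, as the paper itself indicates in the paragraph preceding the statement. The optional self-contained verification you include is also correct and mirrors the computation in the proof of Theorem~\ref{T:adj}.
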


Corollary~\ref{L:FH} allows us to handle a situation that will arise in the sequel in the following way.

\begin{lemma} \label{L:FH1}
	Let $f \colon X. \to Z.$ be a morphism of trivial conflations that admits a factorization $\xymatrix@1{f \colon X. \ar[r]^-{h} & Y. \ar[r]^g & Z.}$ of the form
	$$\xymatrix@C=0pt{X. \ar[d]_h & X_1 \ar[rrrrr]^{i_X} \ar@{=}[d] &&&&& X_0 \ar[rrrrr]^{p_X} \ar[d] &&&&& X_{-1} \ar@<-1ex>[d]^{h_{-1}} \\
		Y. \ar[d]_g & X_1 \ar[rrrrr] \ar[d]^{g_1} &&&&& Y_0 \ar[rrrrr] \ar[d] &&&&& Z_{-1} \ar@{=}@<-1ex>[d] \\
		Z. & Z_1 \ar[rrrrr]^{i_Z} &&&&& Z_0 \ar[rrrrr]^{p_Z} &&&&& Z_{-1},
	}$$
	with $Y.$ a trivial conflation. If $h_{-1}$ is an inflation satisfying $\Ext (\Coker (h_{-1}), g_1) = 0,$ then for every splitting $(r_X, s_X)$ of $X.$, there exists a splitting $(r_Z, s_Z)$ of $Z.$
	that yields a splitting for $f.$
\end{lemma}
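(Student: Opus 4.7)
The plan is to reduce to Corollary~\ref{L:FH} by producing, after a judicious choice of splittings on $Y.$ and $Z.$, a morphism $\Delta_Z \colon Z_{-1} \to Z_1$ satisfying $\Delta(f) = \Delta_Z \, h_{-1}$; the hypothesis $\Ext(\Coker(h_{-1}), g_1) = 0$ is exactly what produces such a $\Delta_Z$.

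First I would fix arbitrary splittings $(r_Y, s_Y)$ of $Y.$ and $(r_Z, s_Z)$ of $Z.$. Since $h_1 = 1_{X_1}$ and $g_{-1} = 1_{Z_{-1}}$ in the given factorization, the associated connecting maps take the form $\Delta(h) \colon X_{-1} \to X_1$ and $\Delta(g) \colon Z_{-1} \to Z_1$. Lemma~\ref{L:derivation} applied to $f = g h$ then gives
$$\Delta(f) = \Delta(g)\, h_{-1} + g_1\, \Delta(h).$$
The first summand is already of the desired form, so the task reduces to factoring $g_1 \Delta(h) \colon X_{-1} \to Z_1$ through the inflation $h_{-1}$.

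For this, consider the conflation $\xymatrix@1{X_{-1} \ar[r]^-{h_{-1}} & Z_{-1} \ar[r] & \Coker(h_{-1})}$ representing a class $\xi \in \Ext(\Coker(h_{-1}), X_{-1})$. Its pushout along $g_1 \Delta(h)$ represents
$$\Ext(\Coker(h_{-1}),\, g_1 \Delta(h))(\xi) = \Ext(\Coker(h_{-1}), g_1)\bigl(\Ext(\Coker(h_{-1}), \Delta(h))(\xi)\bigr),$$
which vanishes by hypothesis. Thus the pushout conflation $\xymatrix@1{Z_1 \ar[r]^j & P \ar[r] & \Coker(h_{-1})}$ is trivial, so $j$ admits a retraction $r \colon P \to Z_1$. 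Letting $\iota \colon Z_{-1} \to P$ denote the other pushout leg, so that $\iota h_{-1} = j\, g_1 \Delta(h)$, I set $\Delta'_Z := r \iota$, which satisfies $\Delta'_Z\, h_{-1} = r j\, g_1 \Delta(h) = g_1 \Delta(h)$. Then $\Delta_Z := \Delta(g) + \Delta'_Z$ yields $\Delta_Z\, h_{-1} = \Delta(f)$, and Corollary~\ref{L:FH} produces the desired splitting for $f$, namely $(r_X, s_X)$ together with $(r_Z, s_Z) - D_2(\Delta_Z)$.

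The only real obstacle is conceptual: recognizing that the derivation formula of Lemma~\ref{L:derivation} cleanly decomposes $\Delta(f)$ into a tautologically $h_{-1}$-factorable piece and a piece $g_1 \Delta(h)$ whose factorizability through $h_{-1}$ is measured by precisely the $\Ext$ class killed by the hypothesis. Everything else is bookkeeping around the identifications $h_1 = 1_{X_1}$, $g_{-1} = 1_{Z_{-1}}$, and the universal property of pushouts.
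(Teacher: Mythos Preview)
Your proof is correct and follows essentially the same approach as the paper's: apply the derivation formula $\Delta(f)=\Delta(g)h_{-1}+g_1\Delta(h)$ from Lemma~\ref{L:derivation}, use the hypothesis $\Ext(\Coker(h_{-1}),g_1)=0$ to extend $g_1\Delta(h)$ along $h_{-1}$, and then invoke Corollary~\ref{L:FH}. The only difference is cosmetic---you spell out the pushout argument for the extension, whereas the paper simply asserts that $g_1\Delta(h)$ extends along the inflation $h_{-1}$.
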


\begin{proof}
	Let $(r_X, s_X)$ be a given splitting of $X.$ and fix any splitting $(r_Y,s_Y)$ of $Y.$. Because we have that
	$\Ext (\Coker(h_{-1}), g_1 \Delta(h)) = \Ext (\Coker(h_{-1}), g_1) \Ext (\Coker(h_{-1}), \grD (h)) = 0,$ the morphism $g_1 \grD (h) \colon X_{-1} \to Z_1$ extends along the inflation $h_{-1}$ as in
	$$\xymatrix{X_{-1} \ar[r]^{h_{-1}} \ar[d]_{\grD (h)} & Z_{-1} \ar[r] \ar@{.>}[ldd]^{\grD_Z} & \Coker (h_{-1}) \\
		X_1 \ar[d]_{g_1} \\
		Z_1,}$$
	where the top row is a conflation. Now let $(r_Z, s_Z)$ be an arbitrary splitting of $Z.$ and apply Lemma~\ref{L:derivation} to get that
	$$\Delta(f) = \Delta (gh) = \grD(g)h_{-1} + g_1 \Delta(h) = \grD(g)h_{-1} + \grD_Z  h_{-1} = (\grD(g) + \grD_Z)h_{-1}.$$
	By Corollary~\ref{L:FH}, the splittings $(r_X, s_X)$ and $(r_Z, s_Z) - D_2 (\grD(g) + \grD_Z)$ give a splitting for $f.$
\end{proof}

\section{The Ideal Eklof Lemma}
In this section, we aim at proving The Ideal Eklof Lemma (Theorem \ref{TFH4}), which will provide us with a crucial tool  for proving our main result in Section 6. We start by recalling some necessary terminology and notation.

\subsection{The category of directed $\lambda$-systems}  Let $\lambda$ be an ordinal. We  consider it as a category from its canonical partial order. We let $(\grl, \mcA)$ denote  the category of functors from $\lambda$ to $\mcA$. A functor $\sfA \colon \grl \to \mcA$  is in fact a directed $\grl$-system $\mathsf{A}:= (A_{\alpha},a^{\alpha}_{\beta} \ |\ \alpha<\beta<\lambda)$ which can be displayed as
$$
\xymatrix{
A_0 \ar[r]^{a^0_1} \ar@/_/[rr]_{a^0_2} \ar@/_{9mm}/[rrrr]^{a^0_{\alpha}}  \ar@/_{11mm}/[rrrrr]_{a^0_{\alpha+1}}& A_1 \ar[r]^{a^1_2} \ar @/^{8mm}/[rrr]^{a^1_{\alpha}} & A_2 \ar[r] & \cdots  \ar[r] & A_{\alpha} \ar[r]^-{a^\alpha_{\alpha+1}} & A_{\alpha+1} \ar[r] & \cdots.
}
$$
The \textit{composition} of the $\lambda$-system $\mathsf{A}$ in $\mcA$, if it exists, is the canonical colimit morphism in $\mcA$
$$A_0 \longrightarrow \lim\limits_{\longrightarrow\atop{\alpha<\lambda}}A_{\alpha}.$$

 \begin{definition} \label{D:cont} Let $\grl$ be an ordinal. A $\grl$-system $\sfA \colon \grl \to \mcA$ is said to be {\em continuous at} the limit ordinal $\grb < \grl$ if $\lim\limits_{\longrightarrow\atop{\alpha<\beta}}A_{\alpha}$ exists, and
the canonical morphism $$a_{\grb} \colon \lim\limits_{\longrightarrow\atop{\alpha<\beta}}A_{\alpha} \longrightarrow A_{\beta}$$ is an isomorphism. It is said to be {\em continuous} if it is continuous at every limit ordinal $\grg < \grl.$ The subcategory of continuous $\grl$-systems in $\mcA$ is denoted by $\Cont (\grl, \mcA) \subseteq (\grl, \mcA).$
\end{definition}

Note that a continuous $\grl $-system $\sfA \colon \grl  \to \mcA$ with composition morphism ${\dis A_0 \longrightarrow \lim\limits_{\longrightarrow\atop{\alpha<\lambda}}A_{\alpha}}$ gives rise to a  continuous $(\grl+1) $-system $(A_{\alpha},a^{\alpha}_{\beta} \ | \ \alpha<\beta \leq\lambda)$, where $A_\lambda:=\lim\limits_{\longrightarrow\atop{\alpha<\lambda}}A_{\alpha}$, and $a_\lambda^\alpha:A_\alpha \longrightarrow A_\lambda$ is the canonical colimit morphism  for every $\alpha <\lambda$.
Conversely, any continuous $(\lambda+1)$-system $\mathsf{A}$ in $\mcA$ is of the form  $(A_{\alpha},a^{\alpha}_{\beta} \ | \ \alpha<\beta \leq\lambda)$ with composition morphism $a_\lambda^0:A_0 \longrightarrow A_\lambda$.

For a given  continuous $(\grl +1)$-system $\sfA \colon \grl + 1 \to \mcA$, if the morphisms $a^{\gra}_{\gra + 1}$, $\alpha < \lambda$,  belong to some class $\mcJ$ of morphisms in $\mcA$, the composition morphism $a^0_{\grl}$ is said to be a $\lambda$-\textit{transfinite composition of morphisms in} $\mcJ$, or simply a $\lambda$-\textit{composition of morphisms in} $\mcJ.$ We say that \emph{$\mcJ$ has  $\lambda$-compositions} if every continuous $\lambda$-system in $\mcJ$ has a composition morphism  in $\mcA$.

The following proposition is immediate as (co)limits in functor categories can be calculated object-wise.

\begin{proposition} An exact structure $\mathcal{E}$ on $\mcA$ induces the exact structure $(\grl, \mathcal{E})$ on $(\lambda, \mcA)$ whose conflations are the directed $\grl$-systems of conflations (or just $\grl$-conflations) in $\mcE.$
\end{proposition}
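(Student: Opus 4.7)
The proof proceeds by checking the three axioms (i)--(iii) of an exact structure pointwise. The key observation is that the functor category $(\lambda, \mcA)$ is itself additive, with biproducts computed componentwise, and that all finite limits and colimits that exist in $(\lambda, \mcA)$ are computed componentwise from those in $\mcA$. Consequently, a sequence $\xymatrix@1{\sfA \ar[r]^{i} & \sfB \ar[r]^{p} & \sfC}$ in $(\lambda, \mcA)$ is a kernel-cokernel pair if and only if each component $\xymatrix@1{A_\gra \ar[r]^{i_\gra} & B_\gra \ar[r]^{p_\gra} & C_\gra}$ is a kernel-cokernel pair in $\mcA$. Thus $(\grl, \mcE)$ is a well-defined, isomorphism-closed class of kernel-cokernel pairs in $(\grl, \mcA)$.

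For axiom (i), the identity natural transformation $1_{\sfA}$ is the identity $1_{A_\gra}$ at every $\gra < \grl$, which is both an inflation and a deflation in $\mcE$. For axiom (ii), the vertical composition of natural transformations is given componentwise by composition in $\mcA$; since inflations (resp.\ deflations) in $\mcE$ are closed under composition, a componentwise composition of inflations (resp.\ deflations) is again pointwise an inflation (resp.\ deflation), hence lies in $(\grl, \mcE)$.

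For axiom (iii), suppose $i \colon \sfA \to \sfB$ is an inflation in $(\grl, \mcE)$ and $f \colon \sfA \to \sfA'$ is any morphism in $(\grl, \mcA)$. I would construct the pushout of $i$ along $f$ componentwise: for each $\gra < \grl$, form the pushout of $i_\gra$ along $f_\gra$ in $\mcA$, which exists because $\mcE$ is closed under pushouts along arbitrary morphisms. The resulting objects $B'_\gra$ and the induced morphisms assemble into a $\grl$-system $\sfB'$ by the universal property of pushouts applied to each transition morphism $a^\gra_\grb$, and the resulting square of natural transformations is the pushout in $(\grl, \mcA)$. Since each component is a conflation in $\mcE$, the new kernel-cokernel pair is again in $(\grl, \mcE)$. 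The pullback axiom for deflations is verified dually.

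The only mild subtlety to handle carefully is verifying that the pointwise pushouts/pullbacks genuinely yield a \emph{functor} $\grl \to \mcA$, i.e.\ that the transition maps $(b')^\gra_\grb \colon B'_\gra \to B'_\grb$ are well-defined and satisfy the functoriality relations. This is immediate from the universal property of each pointwise pushout together with the naturality of $i$ and $f$; no deeper input is required, which is why the proposition is labelled as immediate. Once this is recorded, the three exact structure axioms for $(\grl, \mcE)$ follow at once from the pointwise statements in $\mcE$.
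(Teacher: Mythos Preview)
Your proof is correct and follows exactly the approach the paper intends: the paper records only that the proposition ``is immediate as (co)limits in functor categories can be calculated object-wise,'' and your argument is a careful unpacking of precisely that observation, verifying the three axioms componentwise.
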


A conflation in $((\grl, \mathcal{A});(\grl, \mathcal{E}))$, denoted by $\mathsf{\Xi} \colon \xymatrix@1{  \mathsf{A} \ar[r]^{\mathsf{i}} & \mathsf{ B} \ar[r]^{\mathsf{p}} & \mathsf{C},}$
may be called a $\lambda$-conflation and is displayed as a $\lambda$-system ${(\ \Xi_{\alpha},  \xi_{\beta}^\alpha \mid \ \alpha <\beta <  \lambda \ )}$ of conflations in $\mcA,$ where $\xi_{\beta}^\alpha:\Xi_{\alpha}\to \Xi_{\beta}$ is a morphism of conflations
$$\xymatrix@C=0pt{\Xi_{\alpha} \ar@<-3ex>[d]_{\xi^{\alpha}_{\beta}}: \;A_{\alpha}\ar[rrrrrr]^-{i_\alpha}\ar@<2ex>[d]^{a^{\alpha}_{ \beta}}&&&&&& B_{\alpha}\ar[rrrrrr]^{p_\alpha}\ar[d]^{b^{\alpha}_{\beta}}&&&&&&C_{\alpha}\ar[d]^{c^{\alpha}_{\beta}}\\
\Xi_{\beta}: \; A_{\beta}\ar[rrrrrr]^-{i_\beta}&&&&&& B_{\beta }\ar[rrrrrr]^{p_\beta}&&&&&&C_{\beta }.}$$
A $\lambda$-conflation $\mathsf{\Xi}$ is \emph{continuous} if its objects belong to $\Cont (\grl, \mcE).$

As in the exact category $\mcA,$ a $\grl$-conflation $\mathsf{\Xi} : \xymatrix@1{  \mathsf{A} \ar[r]^{\mathsf{i}} & \mathsf{ B} \ar[r]^{\mathsf{p}} & \mathsf{C}}$ of  $\grl$-systems is \emph{trivial} provided there exists a splitting
$\mathsf{\Xi}^- : \xymatrix@1{  \mathsf{C} \ar[r]^{\mathsf{s}} & \mathsf{ B} \ar[r]^{\mathsf{r}} & \mathsf{A}}$ such that $\mathsf{r} \mathsf{i} = 1_{\mathsf{A}},$ $\mathsf{p} \mathsf{s} = 1_{\mathsf{C}}$ and
$\mathsf{i} \mathsf{r} + \mathsf{s} \mathsf{p} = 1_{\mathsf{B}}.$ Equivalently, the $\grl$-conflation $\sfXi^-$ consists of a family $\{(r_\alpha, s_\alpha)\}_{\alpha < \lambda}$ of splittings of the $\Xi_\alpha$ in $\mcA $ such that for every $\alpha < \beta <\lambda,$ $((r_\alpha, s_\gra), (r_\grb, s_\grb))$ is a splitting of the conflation $\xi_\beta^\alpha$ in $(\Arr(\mcA); \Arr (\mcE)).$

The following lemma is the limit step in the transfinite induction of the proof of Theorem~\ref{T:DAP}.

\begin{lemma} \label{L:workhorse}
Let $\grl$ be a limit ordinal and $\mathsf{\Xi}  \colon \xymatrix@1{  \mathsf{A} \ar[r]^{\mathsf{i}} & \mathsf{ B} \ar[r]^{\mathsf{p}} & \mathsf{C}}$ be  a $(\grl+1)$-conflation such that $\sfXi|_{\grl} \in (\grl, \mcE)$ is trivial. If $\sfA$ and $\sfC$ are continuous at $\grl,$ then $\sfXi$ is continuous at $\grl$ and trivial. Furthermore, every splitting $(\sfXi|_{\grl})^{-}$ extends to one of $\sfXi,$ that is, $(\sfXi^-)|_{\grl} = (\sfXi|_{\grl})^-.$
\end{lemma}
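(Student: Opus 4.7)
The plan is to use the triviality of $\sfXi|_\grl$ to identify it, up to $\grl$-system isomorphism, with a direct-sum $\grl$-conflation $\sfA|_\grl \to \sfA|_\grl \oplus \sfC|_\grl \to \sfC|_\grl$. Since finite direct sums commute with colimits and both $\sfA$ and $\sfC$ are continuous at $\grl$, this will allow me to compute the colimit of $\sfXi|_\grl$ termwise as the split conflation $A_\grl \to A_\grl \oplus C_\grl \to C_\grl$. Comparing this colimit with $\Xi_\grl$ via the canonical morphism of conflations, an application of the Short Five Lemma \cite[Corollary 3.2]{B} — whose outer-vertical-isomorphism hypotheses are exactly what continuity of $\sfA$ and $\sfC$ delivers — will force the middle vertical to be an isomorphism, establishing continuity of $\sfB$ at $\grl$. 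The splitting at stage $\grl$ is then read off by passage to the colimit.

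Concretely, writing the given splitting $(\sfXi|_\grl)^-$ as a compatible family $\{(r_\gra, s_\gra)\}_{\gra < \grl}$, each pair produces an isomorphism $\phi_\gra = [i_\gra, s_\gra]\colon A_\gra \oplus C_\gra \to B_\gra$ with inverse $\bigl[\begin{smallmatrix} r_\gra \\ p_\gra \end{smallmatrix}\bigr]$. The defining condition on the splitting — that $((r_\gra, s_\gra), (r_\grb, s_\grb))$ splits the arrow conflation $\xi_\grb^\gra$ in $(\Arr(\mcA); \Arr(\mcE))$ — is precisely the naturality of $\{\phi_\gra\}$ with respect to the transition morphisms; its nontrivial content is the section compatibility $b_\grb^\gra s_\gra = s_\grb c_\grb^\gra$. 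Hence $\phi \colon \sfA|_\grl \oplus \sfC|_\grl \to \sfB|_\grl$ is an isomorphism of $\grl$-systems of conflations, and passing to colimits (using continuity of $\sfA$ and $\sfC$) identifies $\lim_{\gra<\grl} B_\gra$ with $A_\grl \oplus C_\grl$.

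The cocone $\{\xi_\grl^\gra\colon \Xi_\gra \to \Xi_\grl\}_{\gra<\grl}$ carried by $\sfXi$ now induces a canonical morphism of conflations
$$
\xymatrix@C=30pt{
\lim_{\gra<\grl} A_\gra \ar[r] \ar[d]^{\cong} & \lim_{\gra<\grl} B_\gra \ar[r] \ar[d] & \lim_{\gra<\grl} C_\gra \ar[d]^{\cong} \\
A_\grl \ar[r]^-{i_\grl} & B_\grl \ar[r]^-{p_\grl} & C_\grl,
}
$$
in which the outer vertical arrows are isomorphisms by continuity of $\sfA$ and $\sfC$ at $\grl$. Under $\phi$, the top row is the trivial conflation $A_\grl \to A_\grl \oplus C_\grl \to C_\grl$, so the Short Five Lemma in $\mcA$ forces the middle vertical to be an isomorphism as well; this is exactly the continuity of $\sfB$ at $\grl$, hence of $\sfXi$.

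Finally, for the extension of the splitting, define $s_\grl$ and $r_\grl$ by passage to the colimit from the compatible families $\{s_\gra\}_{\gra<\grl}$ and $\{r_\gra\}_{\gra<\grl}$, under the identifications just established. The three splitting identities $r_\grl i_\grl = 1_{A_\grl}$, $p_\grl s_\grl = 1_{C_\grl}$, and $i_\grl r_\grl + s_\grl p_\grl = 1_{B_\grl}$ at stage $\grl$ follow from their analogues at each $\gra < \grl$ by the universal property of the colimit, and the requirement that $((r_\gra, s_\gra), (r_\grl, s_\grl))$ splits $\xi_\grl^\gra$ for every $\gra < \grl$ is automatic from this construction. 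The main delicate point is the application of the Short Five Lemma: it requires the top row of the comparison diagram to be a genuine conflation, which is immediate here because, under $\phi$, it is just the split direct-sum sequence $A_\grl \to A_\grl \oplus C_\grl \to C_\grl$, which is a (trivial) conflation in any exact category.
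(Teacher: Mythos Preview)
Your argument is correct and follows essentially the same route as the paper's proof: use the given splitting to identify $\sfXi|_\grl$ with the split direct-sum $\grl$-conflation, pass to the colimit via continuity of $\sfA$ and $\sfC$, and then apply the Short Five Lemma to the canonical comparison map into $\Xi_\grl$ to obtain continuity of $\sfB$; the paper invokes the Five Lemma implicitly (``this morphism of conflations is an isomorphism'') where you cite \cite[Corollary~3.2]{B} explicitly, and extends the splitting by taking the colimit of the reversed conflation, which amounts to your construction of $(r_\grl, s_\grl)$.
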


\begin{proof}
The   $\grl$-systems $\sfA$ and $\sfC$ are continuous at $\grl,$ so that the direct sum $\sfA \oplus \sfC$ is as well. Let $(\sfXi|_{\grl})^- \colon \xymatrix@1{  \mathsf{C}|_{\grl} \ar[r]^{\mathsf{s}} & \mathsf{ B}|_{\grl} \ar[r]^{\mathsf{r}} & \mathsf{A}|_{\grl}}$ be a splitting for $\sfXi|_{\grl}.$ Because $\mathsf{(i|_{\grl})r + s(p|_{\grl}) = 1_{\sfB|_{\grl}}},$ the morphism of $\grl$-conflations
$$\xymatrix@R=40pt@C=0pt{\sfXi|_{\grl} \colon & \sfA|_{\grl} \ar[rrrrrr]^-{\mathsf{i|_{\grl}}} \ar@{=}[d] &&&&&& \sfB|_{\grl} \ar[rrrrrr]^-{\mathsf{p|_{\grl}}} \ar[d]^{\left[ \begin{array}{c} \mathsf{r} \\ \mathsf{p} \end{array} \right]} &&&&&& \sfC|_{\grl} \ar@{=}[d] \\
& \sfA|_{\grl} \ar[rrrrrr]^-{\mathsf{(i_A)|_{\grl}}} &&&&&& \sfA|_{\grl} \oplus \sfC|_{\grl} \ar[rrrrrr]^-{\mathsf{(p_C)|_{\grl}}} &&&&&& \sfC|_{\grl}
}$$
is an isomorphism. Since the $\grl$-conflation in the bottom row has a limit at $\grl,$ so does the one in the top row. By the universal property of the limit, we have a morphism of $\grl$-conflations
$$\xymatrix@C=0pt@R=35pt{\lim\limits_{\longrightarrow\atop{\alpha<\grl}} \Xi_{\gra} \colon \ar[d]^{\xi_{\grl}} & \lim\limits_{\longrightarrow\atop{\alpha<\grl}} A_{\gra} \ar[rrrrrr]^-{\lim (i_{\gra})} \ar[d]^-{a_{\grl}} &&&&&& \lim\limits_{\longrightarrow\atop{\alpha<\grl}} B_{\gra} \ar[rrrrrr]^-{\lim (p_{\gra})} \ar[d]^-{b_{\grl}} &&&&&& \lim\limits_{\longrightarrow\atop{\alpha<\grl}} C_{\gra} \ar[d]^-{c_{\grl}} \\
\;\;\; \Xi_{\grl}\colon & A_{\grl} \ar[rrrrrr]^{i_{\grl}} &&&&&& B_{\grl} \ar[rrrrrr]^{p_{\grl}} &&&&&& C_{\grl},
}$$
where, because $\sfA$ and $\sfC$ are continuous at $\grl,$ the morphisms $a_{\grl}$ and $c_{\grl}$ are isomorphisms. This implies that this morphism of conflations is an isomorphism and that $\sfXi$ is also continuous at $\grl.$

Finally, to extend the splitting $(\sfXi|_{\grl})^-$ to include $\grl$ just take the limit at $\grl$ of the isomorphism of $\grl$-conflations
$$\xymatrix@R=40pt@C=0pt{(\sfXi|_{\grl})^- \colon & \sfC|_{\grl} \ar[rrrrrr]^-{\mathsf{s}} \ar@{=}[d] &&&&&& \sfB|_{\grl} \ar[rrrrrr]^-{\mathsf{r}} \ar[d]^{\left[ \begin{array}{c} \mathsf{p}|_{\grl} \\ \mathsf{r}|_{\grl} \end{array} \right]} &&&&&& \sfA|_{\grl} \ar@{=}[d] \\
& \sfC|_{\grl} \ar[rrrrrr]^-{\mathsf{(i_C)|_{\grl}}} &&&&&& \sfC|_{\grl} \oplus \sfA|_{\grl} \ar[rrrrrr]^-{\mathsf{(p_A)|_{\grl}}} &&&&&& \sfA|_{\grl}.
}$$
\end{proof}

\subsection{Transfinite powers of an ideal} In order to state the Ideal Eklof Lemma, we need to introduce (transfinite) inductive powers of an ideal $\mcJ$ in $\mcA$. Recall that if $n$ is a positive integer, the $n$-th power $\mcJ^n$ of $\mcJ$ is the ideal generated by all compositions $a_na_{n-1}\cdots a_1$ with $a_i \in \mcJ$ for every $i=1,\ldots, n$. In other words, it is generated by  all $n$-compositions of morphisms in $\mcJ$. In a similar way, for any ordinal $\lambda>0$, we define the \textit{$\lambda$-th inductive power of} $\mcJ$, denoted by  $\mcJ^{(\lambda)}$, as the ideal generated by all $\lambda$-compositions of  morphisms in $\mcJ$.

In the following result, we provide  a characterization of morphisms in $\mcJ^{(\lambda)}$. We will use the fact that if $\sfA$ and $\mathsf{A'}$ are continuous $(\grl + 1)$-systems, then so is $\mathsf{A} \oplus \mathsf{A'}.$

\begin{proposition}\label{prop:morph_induct}
A morphism $g$ in $\mcA$ belongs to  $\mcJ^{(\lambda)}$ if and only if $g$ is a composition of the form $ha_\lambda^0$, where $a_\lambda^0:A_0 \longrightarrow A_\lambda$ is a $\lambda$-composition of morphisms in $\mcJ$.
\end{proposition}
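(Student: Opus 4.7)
The approach is to prove both implications, with the reverse direction being immediate and the forward direction requiring two absorption lemmas. For the reverse direction, if $g = h \circ a^0_\lambda$ with $a^0_\lambda$ a $\lambda$-composition of morphisms in $\mcJ,$ then $a^0_\lambda$ is itself one of the generators of the ideal $\mcJ^{(\lambda)},$ and $g$ belongs to $\mcJ^{(\lambda)}$ by closure under postcomposition. For the forward direction, a general element of $\mcJ^{(\lambda)}$ has the form $g = \sum_{i=1}^n h_i m_i k_i$ with each $m_i$ a $\lambda$-composition of morphisms in $\mcJ.$ The plan is to absorb each $k_i$ into the $\lambda$-system generating $m_i,$ and then combine the resulting $\lambda$-compositions into a single one by direct sum, packaging the $h_i$ into a single postcomposition.

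For the absorption step, let $m = a^0_\lambda$ arise from a continuous $(\lambda+1)$-system $\sfA = (A_\gra, a^\gra_\grb)$ with each $a^\gra_{\gra+1} \in \mcJ,$ and let $k \colon X \to A_0$ be arbitrary. Define a modified system $\sfA'$ by $A'_0 := X,$ $A'_\gra := A_\gra$ for $\gra \geq 1,$ $(a')^0_\grb := a^0_\grb \circ k$ for $1 \leq \grb \leq \lambda,$ and $(a')^\gra_\grb := a^\gra_\grb$ for $1 \leq \gra \leq \grb.$ Functoriality is a direct check, and the only structure map at a successor stage involving $k$ is $(a')^0_1 = a^0_1 \circ k,$ which lies in $\mcJ$ because $\mcJ$ is an ideal. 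For continuity at a limit ordinal $\grb \leq \lambda,$ observe that $[1,\grb)$ is cofinal in $[0,\grb),$ and $\sfA'$ agrees with $\sfA$ on $[1,\grb);$ hence the colimit of $\sfA'|_{[0,\grb)}$ equals that of $\sfA|_{[1,\grb)},$ which is $A_\grb.$ The resulting morphism $(a')^0_\lambda$ is exactly $m \circ k,$ proving that $\lambda$-compositions of morphisms in $\mcJ$ are closed under arbitrary precomposition.

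For the combination step, given continuous $(\lambda+1)$-systems $\sfA^1, \ldots, \sfA^n$ whose structure morphisms at successor stages lie in $\mcJ,$ the termwise direct sum $\bigoplus_i \sfA^i$ is again a continuous $(\lambda+1)$-system (colimits commute with finite direct sums) with structure morphisms $\bigoplus_i (a^i)^\gra_{\gra + 1}$ in $\mcJ$ (ideals in additive categories are closed under block-diagonal morphisms), and whose composition is $\bigoplus_i m_i.$ Applied to $g = \sum_i h_i (m_i k_i),$ obtained after absorbing the $k_i$ via the preceding step, a further application of the precomposition lemma to the diagonal $\Delta \colon X \to X^n$ produces a single $\lambda$-composition $a^0_\lambda \colon X \to \bigoplus_i A^i_\lambda$ whose $i$-th component equals $m_i k_i.$ Then $g = h \circ a^0_\lambda$ with $h := [h_1, \ldots, h_n],$ as required. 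The main subtlety is the continuity argument in the absorption step: the cofinality of $[1, \grb)$ inside $[0, \grb)$ for every limit $\grb \leq \lambda$ is precisely what allows the replacement of $A_0$ by $X$ without disturbing the colimits at later limit ordinals.
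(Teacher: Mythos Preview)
Your proof is correct and follows essentially the same approach as the paper: both establish closure of the class $\{ha^0_\lambda\}$ under precomposition by absorbing the precomposed morphism into the first structure map of the $\lambda$-system, and both handle additivity via the direct sum of systems together with the diagonal. The paper packages this as ``$\mcT_\lambda$ is an ideal containing the generators,'' while you work directly with a generic element $\sum h_i m_i k_i$; you are also more explicit than the paper about the cofinality argument needed to verify continuity of the modified system at limit ordinals, which the paper leaves implicit.
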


\begin{proof}
Let $\mathcal{T}_{\grl}$ denote the class of all morphisms in $\mcA$ of the form $ha_\lambda^0$, where $a_\lambda^0$ is a $\lambda$-composition of morphisms in $\mcJ$. By the definition of an ideal, $\mathcal{T}_{\grl} \subseteq \mcJ^{(\lambda)}$, so it is enough to prove that $\mathcal{T}_{\grl}$ is an ideal in $\mcA.$ It is obviously closed under left multiplication, but it is also closed under right multiplication. Indeed, if $ha^0_{\grl}  \in \mcT_{\grl}$ with $a^0_{\grl}$ a $\grl$-composition of morphisms in $\mcJ,$ then for any morphism $t,$ $ha^0_{\grl}t = ha^1_{\grl}(a^0_1t).$ As $a^0_1t \in \mcJ,$ this morphism also belongs to $\mcT_{\grl}.$

Now, it is left to prove that $\mathcal{T}_{\grl}$ is additive, that is, if  $ha_\lambda^0, h' {a'}_\lambda^0: A_0 \longrightarrow B$ belong to    $\mathcal{T}_{\grl}$, then so does  $h a_\lambda^0+h'{a'}_\lambda^0:A_0 \longrightarrow B$. Let $\mathsf{A}:=(A_\alpha, a_{\beta}^\alpha \mid\ \alpha <\beta \leq \lambda)$ and $\mathsf{A}':=(A'_\alpha, {a'}_{\beta}^\alpha \mid\ \alpha <\beta \leq \lambda)$ be two $(\lambda+1)$-sequences in $\mcA$ with the composition morphisms $a_\lambda^0$ and ${a'}_\lambda^0$, respectively,  satisfying    $a_{\alpha+1}^\alpha, {a'}_{\alpha+1}^\alpha \in \mcJ$ for every $\alpha < \lambda$. Since both the colimit functor and $\mcJ$ are  additive, the morphism $[a_\lambda^0, {a'}_\lambda^0]:A_0 \oplus A_0 \longrightarrow A_\lambda \oplus A'_\lambda$ is a $(\lambda+1)$-composition of morphisms in $\mcJ$
of the $(\lambda+1)$-sequence $\mathsf{A}\oplus \mathsf{A}'$ defined by
$$(A_\alpha \oplus A'_\alpha, [a_\beta^\alpha, {a'}_\beta^\alpha] \mid\ \alpha< \beta \leq \lambda).$$
  Consider the (co)diagonal morphisms $\varsigma:A_0 \longrightarrow A_0\oplus A_0$ and $\rho: B\oplus B \longrightarrow B$. By the previous argument, the composition $[a_{\lambda}^0, {a'}_{\lambda}^0] \varsigma$
$$\left[ \begin{array}{c} a_\lambda^0 \\ {a'}_\lambda^0 \end{array} \right]=[a_{\lambda}^0, {a'}_{\lambda}^0] \varsigma :A_0 \longrightarrow A_\lambda \oplus A'_\lambda  $$
is a $(\lambda+1)$-composition of morphisms in $\mcJ$. Finally, the morphism
$$   ha_\lambda^0+h{a'}_\lambda^0= \rho [h,h'] \left[ \begin{array}{c} a_\lambda^0 \\ {a'}_\lambda^0 \end{array} \right]$$
belongs to  $\mathcal{T}_{\grl}$.
\end{proof}

\begin{proposition} \label{P:prod of powers}
Given an ideal $\mcJ$ and ordinals $\gra$ and $\grb,$ $\mcJ^{(\gra)}\mcJ^{(\grb)} = \mcJ^{(\grb + \gra)}.$
\end{proposition}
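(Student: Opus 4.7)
The plan is to leverage the characterization of Proposition~\ref{prop:morph_induct}, which identifies $\mcJ^{(\lambda)}$ with the class of morphisms of the form $h \circ \ell$ where $\ell$ is a $\lambda$-composition in $\mcJ$. Both inclusions in $\mcJ^{(\gra)}\mcJ^{(\grb)} = \mcJ^{(\grb+\gra)}$ then reduce to splitting or concatenating continuous $(\grb+\gra+1)$-systems at the ordinal $\grb$, exploiting the order-theoretic identity $\grb + \gra = \sup\{\grb + \gra' \mid \gra' < \gra\}$ when $\gra$ is a limit.

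For the inclusion $\mcJ^{(\grb+\gra)} \subseteq \mcJ^{(\gra)}\mcJ^{(\grb)}$, given $f \in \mcJ^{(\grb+\gra)}$ I would write $f = h \circ c^0_{\grb+\gra}$ using a continuous $(\grb+\gra+1)$-system $\sfC = (C_\grd, c^{\grd_1}_{\grd_2})$ in $\mcJ$ and then factor $c^0_{\grb+\gra} = c^{\grb}_{\grb+\gra} \circ c^{0}_{\grb}$. The restriction $\sfC|_{[0,\grb]}$ exhibits $c^{0}_{\grb}$ as a $\grb$-composition in $\mcJ$, and the re-indexed tail $A_{\gra'} := C_{\grb+\gra'}$, $a^{\gra'_1}_{\gra'_2} := c^{\grb+\gra'_1}_{\grb+\gra'_2}$ exhibits $c^{\grb}_{\grb+\gra}$ as an $\gra$-composition. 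Thus $h \circ c^{\grb}_{\grb+\gra} \in \mcJ^{(\gra)}$, $c^{0}_{\grb} \in \mcJ^{(\grb)}$, and $f$ lies in their product.

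For the reverse inclusion $\mcJ^{(\gra)}\mcJ^{(\grb)} \subseteq \mcJ^{(\grb+\gra)}$, given composable $f \in \mcJ^{(\gra)}$ and $g \in \mcJ^{(\grb)}$ I would write $f = h \circ a^{0}_{\gra}$ and $g = k \circ b^{0}_{\grb}$ for continuous systems $\sfA = (A_{\gra'})_{\gra' \leq \gra}$ and $\sfB = (B_{\grb'})_{\grb' \leq \grb}$ in $\mcJ$, where $k \colon B_{\grb} \to A_0$. I then glue them into a $(\grb+\gra+1)$-system $\sfC$ by declaring $C_{\grb'} := B_{\grb'}$ for $\grb' \leq \grb$, $C_{\grb+\gra'} := A_{\gra'}$ for $0 < \gra' \leq \gra$, inheriting transitions within each block, and using $a^0_1 \circ k \colon B_{\grb} \to A_1$ as the bridge morphism from position $\grb$ to position $\grb+1$. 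The bridge lies in $\mcJ$ because $\mcJ$ is an ideal. An easy induction on $\gra'$ shows $c^{\grb}_{\grb+\gra'} = a^{0}_{\gra'} \circ k$, so the total composition is $c^{0}_{\grb+\gra} = a^{0}_{\gra} \circ k \circ b^{0}_{\grb}$ and $f \circ g = h \circ c^{0}_{\grb+\gra}$ lies in $\mcJ^{(\grb+\gra)}$.

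The one genuine technical point, and the step I expect to need the most care in writing, is verifying \emph{continuity} of $\sfC$ at limit ordinals---particularly at the pasting point $\grb$ (when $\grb$ is a limit) and at ordinals of the form $\grb + \grg$ with $\grg \leq \gra$ a limit. Continuity at limits below $\grb$ and at $\grb$ itself is inherited directly from $\sfB$. For $\grb + \grg$ with $\grg$ a limit, the subcollection $\{\grb + \grg' \mid 0 < \grg' < \grg\}$ is cofinal in the set of ordinals strictly below $\grb + \grg$, so the colimit of $\sfC$ up to $\grb+\grg$ coincides with $\lim\limits_{\longrightarrow\atop{\grg' < \grg}} A_{\grg'} = A_{\grg} = C_{\grb + \grg}$ by continuity of $\sfA$; the analogous argument handles the case $\grg = \gra$ when $\gra$ is itself a limit. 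With this cofinality observation in hand, the glued and split systems are genuinely continuous, and both inclusions follow immediately from Proposition~\ref{prop:morph_induct}.
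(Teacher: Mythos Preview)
Your proposal is correct and follows essentially the same approach as the paper's proof: split $a^0_{\grb+\gra} = a^{\grb}_{\grb+\gra}\, a^0_{\grb}$ for one inclusion, and concatenate systems via the absorbed bridge morphism $a^0_1 k \in \mcJ$ for the other. The paper's proof is two lines long and leaves the continuity verification at the pasting point and at ordinals $\grb+\grg$ entirely implicit; your explicit cofinality argument for these limits is the right justification and is more carefully written than what the paper provides.
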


\begin{proof}
To prove $ \mcJ^{(\gra)}\mcJ^{(\grb)} \subseteq \mcJ^{(\grb + \gra)}$, note that a composition $ha^0_{\gra} g b^0_{\grb} = h a^1_{\gra} (a^0_1 g) b^0_{\grb}$ is a $(\grb + \gra)$-composition of morphisms in $\mcJ$ post-composed with $h.$ For the converse inclusion, just factor \lb $ha^0_{\grb + \gra} = h a^{\grb}_{\grb + \gra} a^0_{\grb}.$
\end{proof}

The morphisms in the $\lambda$-th inductive power of a successor ordinal are even simpler.

\begin{proposition}\label{prop:suc_ordinal}
If $\lambda = \grg + 1$ is a successor ordinal, then $\mcJ^{(\lambda)} = \mcJ \mcJ^{(\grg)}$ consists of $\lambda$-compositions of morphisms in $\mcJ$.
 \end{proposition}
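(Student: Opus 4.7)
The plan is to derive the proposition by combining Proposition~\ref{P:prod of powers} with Proposition~\ref{prop:morph_induct} and exploiting the fact that a successor ordinal imposes no continuity condition.

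For the identity $\mcJ^{(\grl)} = \mcJ\mcJ^{(\grg)},$ I would simply invoke Proposition~\ref{P:prod of powers} with $\gra = 1$ and $\grb = \grg$: since $\mcJ^{(1)} = \mcJ,$ this gives $\mcJ\mcJ^{(\grg)} = \mcJ^{(\grg+1)} = \mcJ^{(\grl)}.$ The easy half of the second assertion---that every $\grl$-composition of morphisms in $\mcJ$ lies in $\mcJ\mcJ^{(\grg)}$---is then immediate: in a continuous $(\grl+1)$-system realising such a composition, split off the final step to write $a^0_\grl = a^\grg_{\grg+1}\, a^0_\grg,$ with $a^\grg_{\grg+1} \in \mcJ$ and $a^0_\grg$ a $\grg$-composition in $\mcJ^{(\grg)}.$

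The substantive part is the reverse direction: showing that every morphism in $\mcJ\mcJ^{(\grg)}$ is already a $\grl$-composition of morphisms in $\mcJ,$ with no residual left-multiplier $h$ of the sort produced by Proposition~\ref{prop:morph_induct} in the general case. Given $g = ij$ with $i \in \mcJ$ and $j \in \mcJ^{(\grg)},$ I would first invoke Proposition~\ref{prop:morph_induct} to realise $j = h\, a^0_\grg,$ where $a^0_\grg \colon A_0 \to A_\grg$ is the composition of a continuous $(\grg+1)$-system $(A_\alpha, a^\alpha_\beta \mid \alpha < \beta \leq \grg)$ whose successor transitions all lie in $\mcJ.$ The key step is to extend this system by a single index: set $A_{\grg+1} := \operatorname{cod}(i),$ define the new transition $a^\grg_{\grg+1} := ih,$ and force the remaining morphisms via $a^\alpha_{\grg+1} := a^\grg_{\grg+1}\, a^\alpha_\grg$ for $\alpha < \grg.$ Since $\mcJ$ is an ideal and $i \in \mcJ,$ the new transition $ih$ again lies in $\mcJ,$ and the composition of the extended system is $(ih)\, a^0_\grg = i(h\, a^0_\grg) = ij = g,$ exhibiting $g$ as a $\grl$-composition of morphisms in $\mcJ.$

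The one point worth flagging---and it is precisely where the successor hypothesis is used---is that the extended system remains continuous. Continuity is required only at limit ordinals strictly less than $\grl + 1 = \grg + 2,$ and since $\grg + 1$ is itself a successor, every such limit ordinal is already below $\grg + 1;$ the required continuity was therefore inherited from the original system, and no new condition needs to be verified at the freshly adjoined index. This is exactly the observation that fails in the limit case and forces the much more delicate argument of the Ideal Eklof Lemma developed in the following sections.
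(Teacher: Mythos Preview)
Your proof is correct and uses essentially the same idea as the paper: absorb the extra morphism into the final successor step, exploiting that $\mcJ$ is an ideal. The paper's argument is more direct---it starts from the description $ha^0_{\grl}$ of a general element of $\mcJ^{(\grl)}$ given by Proposition~\ref{prop:morph_induct} and simply rewrites it as $(ha^{\grg}_{\grg+1})a^0_{\grg}$---rather than passing through the product $\mcJ\mcJ^{(\grg)}$ first, but the substance is the same.
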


 \begin{proof}
A composition $ha^0_{\grl} = (ha^{\grg}_{\grg + 1}) a^0_{\grg}$ is a $(\grg + 1)$-composition of morphisms in $\mcJ.$
\end{proof}

In particular, we have the following.

\begin{proposition}
If $n < \gro,$ then $\mcJ^n=\mcJ^{(n)}$ consists of $n$-compositions of morphisms in $\mcJ.$
\end{proposition}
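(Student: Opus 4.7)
The plan is to proceed by induction on $n$, using Proposition~\ref{prop:suc_ordinal} as the engine that passes from ordinal $n$ to its successor. Both assertions ($\mcJ^n = \mcJ^{(n)}$ and the characterization as $n$-compositions) will drop out of the same induction.

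For the base case $n = 1$, I would argue $\mcJ^{(1)} = \mcJ$ as follows: by Proposition~\ref{prop:morph_induct} every morphism of $\mcJ^{(1)}$ has the form $h \, a_1^0$ with $a_1^0 \in \mcJ$, so $\mcJ^{(1)} \subseteq \mcJ$ (as $\mcJ$ is closed under post-composition), and the reverse inclusion is trivial on taking $h$ to be an identity. Hence $\mcJ^{(1)} = \mcJ = \mcJ^1$, and a $1$-composition of morphisms in $\mcJ$ is simply a morphism of $\mcJ$.

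For the inductive step, assume $\mcJ^{(n)} = \mcJ^n$. Since $n+1$ is a successor ordinal, Proposition~\ref{prop:suc_ordinal} gives
\[
\mcJ^{(n+1)} \;=\; \mcJ\,\mcJ^{(n)} \;=\; \mcJ\,\mcJ^{n} \;=\; \mcJ^{n+1},
\]
where the middle equality is the inductive hypothesis and the outer equalities are the definitions of the product ideal and of $\mcJ^{n+1}$. Moreover the same proposition asserts that every element of $\mcJ^{(n+1)}$ is an $(n+1)$-composition of morphisms in $\mcJ$, which gives the second claim and completes the induction. (As an aesthetic alternative, one could instead appeal to Proposition~\ref{P:prod of powers} with $\gra = n$, $\grb = 1$ to obtain $\mcJ^{(n)}\mcJ^{(1)} = \mcJ^{(1+n)} = \mcJ^{(n+1)}$ and conclude the same way.)

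I do not expect any genuine obstacle; the only point that calls for care is the base case $\mcJ^{(1)} = \mcJ$, since the defining phrase ``$\grl$-th inductive power'' could a priori be interpreted more loosely at $\grl = 1$. Invoking Proposition~\ref{prop:morph_induct} pins this down, after which the result is essentially a bookkeeping consequence of Proposition~\ref{prop:suc_ordinal}.
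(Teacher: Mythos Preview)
Your proof is correct and is exactly the argument the paper intends: the paper states this proposition without proof, introducing it with ``In particular, we have the following,'' meaning it is regarded as an immediate inductive consequence of Proposition~\ref{prop:suc_ordinal}. Your write-up simply makes that induction explicit, including the base case via Proposition~\ref{prop:morph_induct}.
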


\subsection{From local triviality to global triviality} Every trivial conflation of $\grl$-systems is evidently a $\grl$-system of trivial conflations. In this subsection, we present a condition that ensures a $\grl$-system of trivial conflations is a trivial conflation of $\grl$-systems. Here is one of the main ideas of the paper.

\begin{definition} \label{D:down adjust}
Let $\grl$ be a limit ordinal. A $\grl$-system $\sfXi : \xymatrix@1{  \mathsf{A} \ar[r]^{\mathsf{i}} & \mathsf{ B} \ar[r]^{\mathsf{p}} & \mathsf{C}}$ of trivial conflations is said to have the {\em downward adjustment property} if for every $\gra < \grl,$
and every splitting $(r_{\gra}, s_{\gra})$ of $\Xi_{\gra},$ there exists a splitting $(r_{\gra +1}, s_{\gra + 1})$ of $\Xi_{\gra +1}$ that provides a splitting for the arrow conflation
$$\xymatrix@C=30pt{\xi^{\gra}_{\gra+1} \colon a^{\gra}_{\gra+1} \ar[r] & b^{\gra}_{\gra+1} \ar[r] & c^{\gra}_{\gra +1}.}$$
\end{definition}

\begin{theorem} \label{T:DAP} {\rm (From local triviality to global triviality)}
Let $\grl$ be a limit ordinal and suppose that $\sfXi : \xymatrix@1{  \mathsf{A} \ar[r]^{\mathsf{i}} & \mathsf{ B} \ar[r]^{\mathsf{p}} & \mathsf{C}}$ is a $\grl$-system of trivial conflations with the downward adjustment property.
If $\mathsf{A}$ and $\mathsf{C}$ are continuous, then $\sfXi$ is trivial and continuous.
\end{theorem}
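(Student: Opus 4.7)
The plan is to prove the theorem by transfinite induction on $\grb<\grl$, simultaneously constructing a mutually compatible family of splittings $\{(r_{\gra},s_{\gra})\}_{\gra\leq\grb}$ of the $\Xi_{\gra}$ and verifying that $\sfXi$ is continuous at every limit ordinal $\leq\grb$. Here \emph{compatible} means that for every pair $\gra<\gra'\leq\grb$ the pair $((r_{\gra},s_{\gra}),(r_{\gra'},s_{\gra'}))$ gives a splitting of the arrow conflation $\xi^{\gra}_{\gra'}$ in $(\Arr(\mcA);\Arr(\mcE))$; equivalently, $\mathsf{r}$ and $\mathsf{s}$ become natural transformations. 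Once the induction reaches through all indices below $\grl$, the accumulated family assembles into the splitting $\sfXi^{-}:\mathsf{C}\to\mathsf{B}\to\mathsf{A}$ witnessing triviality of $\sfXi$, and the continuity statements collected along the way give continuity of $\sfXi$ at every limit ordinal $<\grl$.

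For the base case $\grb=0$, choose any splitting $(r_{0},s_{0})$ of the trivial conflation $\Xi_{0}$. For the successor step $\grb=\gra+1$, apply the downward adjustment property to $\xi^{\gra}_{\gra+1}$ with the fixed splitting $(r_{\gra},s_{\gra})$ to produce $(r_{\gra+1},s_{\gra+1})$ that splits the arrow conflation $\xi^{\gra}_{\gra+1}$. Compatibility with any strictly lower index $\grg<\gra$ is then automatic by composing the naturality squares: since $b^{\grg}_{\gra+1}=b^{\gra}_{\gra+1}b^{\grg}_{\gra}$, one obtains $r_{\gra+1}b^{\grg}_{\gra+1}=a^{\gra}_{\gra+1}r_{\gra}b^{\grg}_{\gra}=a^{\gra}_{\gra+1}a^{\grg}_{\gra}r_{\grg}=a^{\grg}_{\gra+1}r_{\grg}$, and the dual identity $s_{\gra+1}c^{\grg}_{\gra+1}=b^{\grg}_{\gra+1}s_{\grg}$ follows in the same way. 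Thus each downward adjustment propagates compatibility automatically, and no continuity issue arises at a successor.

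The limit step, at a limit ordinal $\grb<\grl$, is where Lemma~\ref{L:workhorse} does the real work. By the inductive hypothesis the restriction $\sfXi|_{\grb}$ carries a splitting $(\sfXi|_{\grb})^{-}$, and $\sfA$ and $\sfC$ are continuous at $\grb$ by hypothesis. Applying Lemma~\ref{L:workhorse} to the $(\grb+1)$-conflation $\sfXi|_{\grb+1}$ yields two conclusions at once: $\sfXi$ is continuous at $\grb$, and $(\sfXi|_{\grb})^{-}$ extends to a splitting including the index $\grb$. Because this extension is produced from the universal property of the colimit applied to the splittings below $\grb$, the resulting $(r_{\grb},s_{\grb})$ is automatically compatible with all $(r_{\gra},s_{\gra})$ for $\gra<\grb$. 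The induction is then primed to continue past $\grb$ with a further successor step.

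The principal obstacle is exactly the limit step: continuity of $\sfB$ at a limit $\grb<\grl$ is not assumed, and must be deduced from the triviality of $\sfXi|_{\grb}$ together with the continuity of $\sfA$ and $\sfC$. This is precisely the content encoded in Lemma~\ref{L:workhorse}, which is the technical engine converting pointwise (``local'') triviality into ``global'' triviality of the $\grl$-conflation; once that lemma is in hand, the induction outlined above threads through without further difficulty.
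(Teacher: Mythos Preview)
Your proposal is correct and follows essentially the same approach as the paper's proof: transfinite recursion on $\grb<\grl$ building compatible splittings, with the successor step handled by the downward adjustment property and the limit step by Lemma~\ref{L:workhorse}. Your version is in fact slightly more explicit than the paper's, in that you verify the propagation of compatibility to all earlier indices at the successor step (via composition of the naturality squares), a point the paper leaves implicit.
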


\begin{proof}
Let us define a splitting $\sfXi^-$ of $\sfXi$ by recursion on the ordinals $\gra < \grl$ so that $(\sfXi^-)|_{\gra}$ is a splitting for $\sfXi|_{\gra}.$ Because $\Xi_0$ is a trivial conflation in $\mcA,$ define $(r_0, s_0)$ to be any splitting of $\Xi_0.$ For a successor ordinal $\gra = \grb + 1,$ assume that the system $\sfXi^-$ has been defined for $\grg \leq \grb,$ with $\Xi^-_{\grb} \colon \xymatrix{C_{\grb} \ar[r]^{s_{\grb}} & B_{\grb} \ar[r]^{r_{\grb}} & A_{\grb}.}$ By the downward adjustment property of $\sfXi,$ there is a splitting $(r_{\gra}, s_{\gra})$ such that the two splittings provide one for the arrow conflation $\xi^{\grb}_{\gra} \colon \xymatrix{a^{\grb}_{\gra} \ar[r] & b^{\grb}_{\gra} \ar[r] & c^{\grb}_{\gra}}.$ Define $\Xi^-_{\gra}$ to be the conflation $\xymatrix{C_{\gra} \ar[r]^{s_{\gra}} & B_{\gra} \ar[r]^{r_{\gra}} & A_{\gra}.}$

For the case when $\gra$ is a limit ordinal, we use Lemma~\ref{L:workhorse}. We have up until now defined as splitting $(\sfXi|_{\gra})^-$ for $\sfXi|_{\gra}.$  Because $\sfA$ and $\sfC$ are continuous at $\gra,$ the lemma applies to the
$(\gra+1)$-conflation $\sfXi|_{\gra +1}$ and we can extend the splitting $(\sfXi|_{\gra})^-$ to one of $\sfXi|_{\gra + 1}.$
\end{proof}

\subsection{Eklof's Lemma} Let us use the results of the previous subsection to give a proof of a general version of Eklof's Lemma (Corollary~\ref{Eklof_object}) in an exact category.
%This gives a subtle improvement over~\cite[Proposition~5.7]{Sto13}, which assumes the existence of all %transfinite compositions of inflations.

\begin{definition} \label{D:filt}
A continuous $\grl$-system $\sfA$ is said to be  a $\grl$-{\em filtration} if $A_0 = 0$ and all of the successive structural morphisms $a^{\gra}_{\gra+1},$ $\gra + 1 < \grl,$ are inflations.
%The subcategory of $\grl$-filtrations in $\mcA$ is denoted by $\Filt (\grl, \mcA) \subseteq \Cont (\grl, \mcA).$
If all of the cokernels $\Coker (a^{\gra}_{\gra+1})$ belong to some subcategory $\mcC \subseteq \mcA,$ then $\sfA$ is called a $\grl$-$\mcC$-\emph{filtration}.
% this subcategory is denoted by $\grl \mbox{-}\Filt (\mcC)$.
 An object $C \in \mcA$ is said to \emph{have a} $\grl$-\emph{filtration in} $\mcC$ if there exists a $(\grl+1)$-$\mcC$-filtration $\sfC$ with $C_{\grl} = C.$ The subcategory of objects having a $\grl$-$\mcC$-filtration  is denoted by $\grl \mbox{-}\Filt ( \mcC).$ We let $\Filt ( \mcC)$ denote the subcategory of objects having a $\grl$-$\mcC$-filtration  for some ordinal $\lambda$.
\end{definition}

\begin{theorem}\label{TFH1}
	Let $\lambda$ be a limit ordinal and suppose that $\sfXi : \xymatrix@1{  \mathsf{A} \ar[r]^{\mathsf{i}} & \mathsf{ B} \ar[r]^{\mathsf{p}} & \mathsf{C}}$ is a $\grl$-system of trivial conflations with $\mathsf{A}$ continuous and $\mathsf{C}$ a $\lambda$-filtration in $\mcA.$ If for every ordinal $\alpha$ with $\alpha+1 <\lambda$, $\Hom(c^{\alpha}_{\alpha+1},A_{\alpha+1})$ is an epimorphism, then $\sfXi$ has the downward adjustment property.
\end{theorem}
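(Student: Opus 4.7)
The plan is to verify the downward adjustment property one successor step at a time by invoking Corollary~\ref{L:FH}. Fix an ordinal $\alpha$ with $\alpha+1 < \lambda$, and assume a splitting $(r_\alpha, s_\alpha)$ of the trivial conflation $\Xi_\alpha$ has been chosen. Since $\Xi_{\alpha+1}$ is also trivial, pick \emph{any} initial splitting $(r_{\alpha+1}, s_{\alpha+1})$ of $\Xi_{\alpha+1}$. With respect to these two splittings, Proposition~\ref{P:conn map} produces the connecting map $\Delta(\xi^{\alpha}_{\alpha+1}) \colon C_\alpha \to A_{\alpha+1}$ of the morphism of trivial conflations $\xi^{\alpha}_{\alpha+1} \colon \Xi_\alpha \to \Xi_{\alpha+1}$.

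The key observation is that the epimorphism hypothesis is precisely what allows a downward adjustment. Because $\Hom(c^{\alpha}_{\alpha+1}, A_{\alpha+1}) \colon \Hom(C_{\alpha+1}, A_{\alpha+1}) \to \Hom(C_\alpha, A_{\alpha+1})$ is surjective, there exists $\Delta_{\alpha+1} \colon C_{\alpha+1} \to A_{\alpha+1}$ extending $\Delta(\xi^{\alpha}_{\alpha+1})$ along $c^{\alpha}_{\alpha+1}$, i.e., $\Delta(\xi^{\alpha}_{\alpha+1}) = \Delta_{\alpha+1} \, c^{\alpha}_{\alpha+1}$. This is exactly the hypothesis of Corollary~\ref{L:FH} applied to the morphism $f := \xi^{\alpha}_{\alpha+1}$ with $X. = \Xi_\alpha$, $Y. = \Xi_{\alpha+1}$, and $f_{-1} = c^{\alpha}_{\alpha+1}$.

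Invoking Corollary~\ref{L:FH} then yields that the original splitting $(r_\alpha, s_\alpha)$ together with the adjusted splitting $(r_{\alpha+1}, s_{\alpha+1}) - D_2(\Delta_{\alpha+1})$ of $\Xi_{\alpha+1}$ form a splitting of the arrow conflation $\xi^{\alpha}_{\alpha+1} \colon a^{\alpha}_{\alpha+1} \to b^{\alpha}_{\alpha+1} \to c^{\alpha}_{\alpha+1}$ in $\Arr(\mcA)$. Since $\alpha$ was arbitrary, this verifies the downward adjustment property as stated in Definition~\ref{D:down adjust}. There is no genuine obstacle here: the hypothesis is cleanly aligned with the factorization-along-$f_{-1}$ requirement of Corollary~\ref{L:FH}, and no use needs to be made of the continuity of $\sfA$ or of the filtration structure of $\sfC$ beyond ensuring the morphisms $c^{\alpha}_{\alpha+1}$ are present (those hypotheses are relevant only when the downward adjustment property is later fed into Theorem~\ref{T:DAP}).
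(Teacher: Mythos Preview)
Your proof is correct and follows essentially the same approach as the paper: fix a splitting of $\Xi_\alpha$, choose any splitting of $\Xi_{\alpha+1}$, use the epimorphism hypothesis to extend the connecting map $\Delta(\xi^{\alpha}_{\alpha+1})$ along $c^{\alpha}_{\alpha+1}$, and then invoke Corollary~\ref{L:FH} to adjust the splitting of $\Xi_{\alpha+1}$. Your closing remark that the continuity of $\mathsf{A}$ and the filtration structure of $\mathsf{C}$ are not used here (but only later in Theorem~\ref{T:DAP}) is accurate and worth noting.
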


\begin{proof}
Consider the arrow conflation obtained from the successive structural morphisms of the three $\grl$-systems,
$$\xymatrix@C=0pt{\Xi_{\gra} \ar@<-3ex>[d]_{\xi^{\gra}_{\gra +1}}: \;A_{\gra} \ar[rrrrrr]^-{i_\gra} \ar@<2ex>[d]^{a^{\gra}_{ \gra +1}} &&&&&& B_{\gra} \ar[rrrrrr]^{p_\gra} \ar[d]^{b^{\gra}_{\gra+1}} &&&&&& C_{\gra}  \ar[d]^{c^{\gra}_{\gra+1}}\\
\Xi_{\gra+1}: \; A_{\gra+1} \ar[rrrrrr]^-{i_{\gra+1}}  &&&&&& B_{\gra+1}  \ar[rrrrrr]^{p_{\gra+1}}  &&&&&&  C_{\gra+1},}$$
and choose splittings $(r_{\gra}, s_{\gra})$ and $(r_{\gra +1}, s_{\gra + 1})$ of the respective trivial conflations $\Xi_{\gra}$ and $\Xi_{\gra +1}.$ By hypothesis, the induced connecting map $\Delta (\xi^{\gra}_{\gra + 1}) \colon C_{\gra} \to A_{\gra+1}$ may be extended along $c^{\gra}_{\gra+1}$ as shown,
$$\xymatrix{C_{\gra} \ar[r]^{c^{\gra}_{\gra+1}} \ar[d]_{\Delta(\xi^{\gra}_{\gra+1})} &  C_{\gra+1} \ar[r] \ar[ld] & C_{\gra+1}/C_{\gra}  \\
	A_{\gra+1}.  &&}$$
Corollary~\ref{L:FH} may therefore by employed to adjust the given splitting $(r_{\gra+1},s_{\gra+1})$ to a splitting $(r'_{\gra+1},s'_{\gra+1})$ such that $(r_{\gra},s_{\gra})$ and $(r'_{\gra+1},s'_{\gra+1})$ gives a splitting of the arrow conflation $\xi^{\gra}_{\gra+1}.$
\end{proof}

\begin{corollary}\label{Eklof_object} {\rm (Eklof's Lemma)}
	Let $\mcM$  be a class of objects in $\mcA$.  If an object $C$ in $\mcA$ has a $ {^{\perp}}\mcM$-filtration, then $C \in  {^{\perp}}\mcM.$
\end{corollary}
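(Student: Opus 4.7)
The plan is to reduce to showing that an arbitrary conflation $\xymatrix@1{M \ar[r] & B \ar[r] & C}$ with $M \in \mcM$ is trivial, and to run a transfinite induction along the given filtration. So fix such a conflation $\Xi$ and a $(\grl+1)$-${}^\perp\mcM$-filtration $\sfC = (C_\gra, c^\gra_\grb)_{\gra \leq \grl}$ with $C_\grl = C$ and $C_0 = 0$. Pulling $\Xi$ back along each $c^\gra_\grl \colon C_\gra \to C$ produces conflations $\Xi_\gra \colon \xymatrix@1{M \ar[r] & B_\gra \ar[r] & C_\gra}$ that assemble into a $(\grl+1)$-conflation $\sfXi$ whose outer terms are the constant $(\grl+1)$-system $\mathsf{M}$ at $M$ (automatically continuous) and the filtration $\sfC$ (continuous by definition). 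The problem is then to show that $\Xi_\grl = \Xi$ is trivial.

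I would proceed by transfinite induction on $\gra \leq \grl$, proving that each $\Xi_\gra$ is trivial in $\Ext(C_\gra, M)$. The base case $\gra = 0$ is immediate since $C_0 = 0$. For a successor step $\gra = \grb + 1$, applying $\Ext(-, M)$ to the conflation $\xymatrix@1{C_\grb \ar[r] & C_\gra \ar[r] & C_\gra/C_\grb}$ gives a fragment $\Ext(C_\gra/C_\grb, M) \to \Ext(C_\gra, M) \to \Ext(C_\grb, M)$; by functoriality of pullback the class $[\Xi_\gra]$ maps to $[\Xi_\grb] = 0$, so it lies in the image of $\Ext(C_\gra/C_\grb, M)$, which vanishes because $C_\gra/C_\grb \in {}^\perp\mcM$.

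The limit case is where the machinery of \S 4--5 really does the work, and this is the main obstacle. Given a limit ordinal $\gra \leq \grl$, the inductive hypothesis ensures that $\sfXi|_{\gra}$ is an $\gra$-system of trivial conflations. To upgrade this to triviality as a $\gra$-conflation via Theorem~\ref{T:DAP}, I will need the downward adjustment property, which I would obtain from Theorem~\ref{TFH1}: its hypothesis that $\Hom(c^\grg_{\grg+1}, M)$ be an epimorphism for each $\grg + 1 < \gra$ follows from the very same Ext-vanishing $\Ext(C_{\grg+1}/C_\grg, M) = 0$ applied to the $\Hom$--$\Ext$ long exact sequence. Theorem~\ref{T:DAP} then delivers that $\sfXi|_\gra$ is trivial and continuous as an $\gra$-conflation, and Lemma~\ref{L:workhorse} applied to $\sfXi|_{\gra+1}$ — whose outer terms $\mathsf{M}$ and $\sfC$ are continuous at $\gra$ — extends that splitting across $\gra$, so $\Xi_\gra$ itself is trivial, closing the induction.

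Setting $\gra = \grl$ (or using the successor step if $\grl$ is a successor) concludes the induction and shows that the original $\Xi$ splits. Since $\Xi$ and $M \in \mcM$ were arbitrary, $\Ext(C,M) = 0$ for every $M \in \mcM$, i.e., $C \in {}^\perp \mcM$. The subtlety to emphasise in the write-up is the passage at limit ordinals between two distinct senses of triviality — $\Xi_\gra$ as an element of $\Ext(C_\gra, M)$ versus $\sfXi|_\gra$ as a conflation in $(\gra, \mcE)$ — a bridge furnished precisely by Theorem~\ref{T:DAP} together with Lemma~\ref{L:workhorse}.
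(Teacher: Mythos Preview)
Your proposal is correct and follows essentially the same route as the paper: pull back the given conflation along the filtration, then at limit stages invoke Theorem~\ref{TFH1} (via the Ext-vanishing to get the epimorphism hypothesis), Theorem~\ref{T:DAP}, and Lemma~\ref{L:workhorse} in exactly that order. The only cosmetic difference is that the paper organises the transfinite induction on the length $\lambda$ of the filtration (so the inductive hypothesis supplies triviality of each $\Xi_\gra$, $\gra < \lambda$, from the statement for shorter filtrations), whereas you induct on $\gra \leq \lambda$ within the fixed filtration; the content is the same.
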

\begin{proof}
 Let $\sfC $ be a $(\lambda+1)$-$ {^{\perp}}\mcM$-filtration  with $C_{\lambda}=C$.  We shall show, by induction on $\lambda,$ that any conflation $\Xi: A \longrightarrow B \longrightarrow C$ in $\Ext(C,A)$ with  $A \in \mcM$ is trivial. If $\lambda = 0,$ then $C = C_0 = 0.$ If $\lambda = \grg + 1$ is a successor, then there is a conflation $\xymatrix@1{C_{\grg} \ar[r] & C \ar[r] & C/C_{\grg}}.$ By assumption, $C_{\grg}$ and $C/C_{\grg}$ belong to ${^{\perp}}\mcM,$ and hence $C$ belongs to ${^{\perp}}\mcM.$

If $\lambda$ is a limit ordinal, take the pullback of $\Xi$ along $\sfC$ to obtain the $(\grl+1)$-conflation $\sfXi,$
$$\xymatrix@C=0pt@R=30pt{
		\Xi_{0} \colon \ar@<-1ex>[d]_-{\xi_1^0} & A \ar[rrrrrr] \ar@{=}[d] &&&&&& B_{0}\ar[rrrrrr] \ar[d]^-{ b_\beta^\alpha} &&&&&& C_{0}\ar[d]^-{c_1 ^0}\\
		\Xi_{1} \colon \ar@<-1ex>[d]_-{\xi_2^1} & A \ar[rrrrrr] \ar@{=}[d] &&&&&& B_{1}\ar[rrrrrr] \ar[d]^-{ b_2^1} &&&&&& C_{\alpha}\ar[d]^-{c_2^1}\\
		\vdots \;\;\; \ar@<-1ex>[d] & \vdots\ar@{=}[d] &&&&&& \vdots \ar[d] &&&&&&  \vdots\ar[d] \\
		\Xi \colon & A \ar[rrrrrr] &&&&&& B \ar[rrrrrr] &&&&&& C,}$$
where $\Xi_{\grl} = \Xi.$ By the induction hypothesis, $\sfXi|_{\grl}$ is a $\grl$-system of trivial conflations. We will apply Theorem~\ref{TFH1} to show that $\sfXi|_{\grl}$ has the downward adjustment property. Then Theorem~\ref{T:DAP} will apply to show that $\sfXi|_{\grl}$ is trivial. Because both of the outside $(\grl+1)$-systems are continuous at $\grl,$ Lemma~\ref{L:workhorse} will apply to show that $\sfXi$ and therefore $\Xi = \Xi_{\grl}$ is trivial.

To apply Theorem~\ref{TFH1}, it suffices to verify that $\Hom (c^{\gra}_{\gra+1}, A) \colon \Hom (C_{\gra+1},A) \to \Hom (C_{\gra}, A)$ is epic for very $\alpha < \lambda$. But the hypothesis $\Ext (C_{\gra+1}/C_{\gra}, A) = 0$ ensures that every morphism $f \colon C_{\gra} \to A$ extends as in$$\xymatrix@R=30pt@C=30pt{C_{\gra} \ar[r]^-{c^{\gra}_{\gra+1}} \ar[d]_f &  C_{\gra+1} \ar[r] \ar@{.>}[ld] & C_{\gra+1}/C_{\gra}  \\
	A.  &&}$$
\end{proof}

\subsection{The Ideal Eklof Lemma}  Now we are in a position to state and prove the Ideal Eklof Lemma.

 \begin{theorem}\label{TFH4} {\rm (The Ideal Eklof Lemma)}
 	Let $\mcJ$ be an ideal of $\mcA.$ If an object $C$ in $\mcA$ has a $\grl$-filtration in $\operatorname{Ob}({^{\perp}}\mcJ),$ then $C \in {^{\perp}(\mcJ^{(\lambda)})}.$
 \end{theorem}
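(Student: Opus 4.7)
The plan is to proceed by transfinite induction on the ordinal $\lambda$. The base cases $\lambda \in \{0, 1\}$ are immediate: for $\lambda = 0$ the object $C$ is zero, while for $\lambda = 1$ a $\lambda$-filtration gives $C \in \Ob({^{\perp}}\mcJ) \subseteq {^{\perp}}\mcJ = {^{\perp}(\mcJ^{(1)})}$. For the successor case $\lambda = \gamma + 1$, I would invoke Proposition~\ref{prop:suc_ordinal} to factor each $f \in \mcJ^{(\gamma+1)}$ as $f = g \circ h$ with $g \in \mcJ$ and $h \in \mcJ^{(\gamma)}$, and apply Lemma~\ref{finiteghost} to the length-$2$ filtration $0 \subseteq C_\gamma \subseteq C$: the inductive hypothesis, applied to the restricted $\gamma$-filtration of $C_\gamma$ in $\Ob({^{\perp}}\mcJ)$, gives $\Ext(C_\gamma, h) = 0$, while $\Ext(C/C_\gamma, g) = 0$ follows from $C/C_\gamma \in \Ob({^{\perp}}\mcJ)$ and $g \in \mcJ$.

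For a limit $\lambda$, I would fix arbitrary $\Xi \in \Ext(C, A_0)$ and a $\lambda$-composition $a^0_\lambda$ arising from a continuous $(\lambda+1)$-system $\sfA$ with $a^\alpha_{\alpha+1} \in \mcJ$, and assemble a $(\lambda+1)$-system of conflations $\sfXi$ by $\Xi_\alpha := \Ext(c^\alpha_\lambda, a^0_\alpha)(\Xi) \in \Ext(C_\alpha, A_\alpha)$. Its $A$- and $C$-coordinates are the continuous systems $\sfA$ and $\sfC$, and the structural morphism $\xi^\alpha_{\alpha+1}$ acts as $a^\alpha_{\alpha+1}$ on the $A$-coordinate and as $c^\alpha_{\alpha+1}$ on the $C$-coordinate. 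For every $\alpha < \lambda$, the restriction of $\sfC$ is an $\alpha$-filtration of $C_\alpha$ in $\Ob({^{\perp}}\mcJ)$, so the inductive hypothesis yields $C_\alpha \in {^{\perp}(\mcJ^{(\alpha)})}$; combined with $a^0_\alpha \in \mcJ^{(\alpha)}$, this forces $\Ext(C_\alpha, a^0_\alpha) = 0$, whence each $\Xi_\alpha$ with $\alpha < \lambda$ is trivial. The remaining task is to promote these local trivialities into triviality of the single conflation $\Xi_\lambda = \Ext(a^0_\lambda)(\Xi)$.

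My approach here is to apply Theorem~\ref{T:DAP} to the $\lambda$-system $\sfXi|_\lambda$ of trivial conflations, and then Lemma~\ref{L:workhorse} to transport the resulting splitting across the limit stage. Continuity of $\sfA|_\lambda$ and $\sfC|_\lambda$ is given, so what remains is the downward adjustment property: for each $\alpha + 1 < \lambda$ and each splitting of $\Xi_\alpha$, produce a splitting of $\Xi_{\alpha+1}$ that makes $\xi^\alpha_{\alpha+1}$ trivial as an arrow conflation. To this end, I would apply Lemma~\ref{L:FH1} to $\xi^\alpha_{\alpha+1}$, factored through the intermediate conflation $\widetilde{\Xi}_{\alpha+1}$ of top $A_\alpha$ and bottom $C_{\alpha+1}$: here $h_{-1} = c^\alpha_{\alpha+1}$ is an inflation with cokernel $C_{\alpha+1}/C_\alpha \in \Ob({^{\perp}}\mcJ)$, and $g_1 = a^\alpha_{\alpha+1} \in \mcJ$, so the orthogonality $\Ext(\Coker(h_{-1}), g_1) = 0$ demanded by Lemma~\ref{L:FH1} is immediate from the hypotheses.

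The hard part, I expect, will be ensuring that the intermediate conflation $\widetilde{\Xi}_{\alpha+1}$ is itself trivial, as required by the hypothesis of Lemma~\ref{L:FH1}. The inductive hypothesis delivers $C_{\alpha+1} \in {^{\perp}(\mcJ^{(\alpha+1)})}$, but this is strictly weaker than $C_{\alpha+1} \in {^{\perp}(\mcJ^{(\alpha)})}$, which would be needed to vanish $\Ext(C_{\alpha+1}, a^0_\alpha)$ and make $\widetilde{\Xi}_{\alpha+1}$ trivial outright. Bridging this gap will likely exploit the observation that while $\widetilde{\Xi}_{\alpha+1}$ need not vanish on its own, its pushout along $a^\alpha_{\alpha+1}$ coincides with the trivial $\Xi_{\alpha+1}$; the diagram chase in the proof of Lemma~\ref{finiteghost}, combined with the $\Hom$-complex adjustment machinery of Section~4, should yield the required splitting compatibility, in the spirit of \v{S}\'{t}ov\'{i}\v{c}ek's Proposition~5.7 that the authors cite. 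Once downward adjustment is in hand, Theorem~\ref{T:DAP} furnishes a splitting of $\sfXi|_\lambda$, Lemma~\ref{L:workhorse} extends it to a splitting of $\Xi_\lambda$, and since $\Xi$ and $a^0_\lambda$ were arbitrary we conclude $C \in {^{\perp}(\mcJ^{(\lambda)})}$.
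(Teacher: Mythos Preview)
Your architecture matches the paper's: transfinite induction, the successor step via Lemma~\ref{finiteghost} and Proposition~\ref{prop:suc_ordinal}, and the limit step by constructing the diagonal system $\sfXi$ with $\Xi_\alpha = \Ext(c^\alpha_\lambda, a^0_\alpha)(\Xi)$, invoking Theorem~\ref{T:DAP}, and then Lemma~\ref{L:workhorse}. You have also correctly located the one genuine difficulty: to apply Lemma~\ref{L:FH1} at a successor stage you need the intermediate conflation to be trivial, and your candidate $\widetilde{\Xi}_{\alpha+1}$ (left side $A_\alpha$, right side $C_{\alpha+1}$) is not. Your proposed patch is too vague; no further diagram chase will make $\widetilde{\Xi}_{\alpha+1}$ trivial, and the alternative factorization through the pushout conflation $(A_{\alpha+1},\,\cdot\,,C_\alpha)$, which \emph{is} trivial, has the wrong shape for Lemma~\ref{L:FH1} (the identity would land on the right of $h$ and the left of $g$, not the reverse).

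The paper resolves this by a \emph{reindexing trick}: rather than verify downward adjustment for $\sfXi$, it passes to an auxiliary $(\lambda+1)$-system $\sfgrS$ in which the left coordinate is offset by one step relative to the right. Explicitly, $\grS_\gamma = \Xi_\gamma$ when $\gamma$ is $0$ or a limit, while $\grS_{\alpha+1}$ is the pushout of $\Xi_\alpha$ along $a^\alpha_{\alpha+1}$, a conflation with sides $A_{\alpha+1}$ and $C_\alpha$ that is trivial because $\Xi_\alpha$ is. Now the structural morphism $\grS_{\alpha+1} \to \grS_{\alpha+2}$ factors through $\Xi_{\alpha+1}$ itself: first the pullback map (identity on the left $A_{\alpha+1}$, with $h_{-1}=c^\alpha_{\alpha+1}$), then the pushout map ($g_1=a^{\alpha+1}_{\alpha+2}$, identity on the right $C_{\alpha+1}$). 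The middle conflation is $\Xi_{\alpha+1}$, which \emph{is} trivial by the inductive hypothesis, so Lemma~\ref{L:FH1} applies with $\Ext(C_{\alpha+1}/C_\alpha,\, a^{\alpha+1}_{\alpha+2})=0$. At $\gamma=0$ or a limit the structural morphism of $\sfgrS$ is a pure pushout (identity on the $C$-side), and Corollary~\ref{L:FH} handles that case trivially. Since $\grS_\lambda = \Xi_\lambda$, triviality of $\sfgrS$ delivers the conclusion. The point is that shifting the $A$-index forward by one arranges for the intermediate conflation in the Lemma~\ref{L:FH1} factorization to be exactly the $\Xi_{\alpha+1}$ whose triviality you already possess from induction; this is the missing idea in your proposal.
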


\begin{proof} Let $\sfC$ be  an  $(\grl+1)$-$\mbox{Ob}({^{\perp}\mcJ})$-filtration  with $C=C_{\lambda},$ and $\Ext(C_{\gra + 1}/C_{\gra},j)=0$ for all morphisms $j$ in $\mcJ$ and ordinals $\gra < \grl.$ We prove that $C \in {^{\perp}}(\mcJ^{(\lambda)})$ by induction on $\lambda$. If $\lambda=\gamma+1$ is a successor ordinal, then there is a conflation
$$\xymatrix@1{ C_{\gamma} \ar[r]^{c^{\grg}_{\grl}} &  C \ar[r] & C/C_{\grg} ,}$$
where $C_{\gamma}$ has a $\grg$-filtration in $\mbox{Ob}({^{\perp}}\mcJ)$ and $C/C_{\grg} \in {^{\perp}\mcJ}$. By the induction hypothesis and Lemma~\ref{finiteghost}, $C \in ({^{\perp}}\mcJ)  ({^{\perp}}\mcJ)^{(\grg)} =   ({^{\perp}}\mcJ)^{(\grg + 1)}.$
	
Now we suppose that $\lambda$ is a limit ordinal and that the result holds for all ordinals less than $\lambda$. Let $j:J_0\to J_{\lambda}$ be a morphism in $\mcJ^{(\lambda)}$. We need to prove that $\Ext(C,j):\Ext(C,J_0)\to\Ext(C,J_{\lambda})$ is the zero map. Because  $j$ is a composition of the form $hj_\lambda^0$, where $j_\lambda^0:J_0 \longrightarrow J_\lambda$ is a $\lambda$-composition of morphisms in $\mcJ$ by Proposition \ref{prop:morph_induct}, we may just assume that $j=j_\lambda^0$. Note that to prove $\Ext(C,j)=0$ is equivalent to showing that for every conflation $J_0\to X\to C$ in $\Ext(C,J_0),$ the conflation $\Xi_{\lambda}:J_{\lambda}\to X_{\lambda}\to C$ that appears in the following pushout diagram is trivial
	$$\xymatrix{J_0\ar[r]\ar[d]_j&X\ar[r]\ar[d]&C\ar@{=}[d]\\
	J_{\lambda}\ar[r]&X_{\lambda}\ar[r]&C.}$$

By taking the pushout of $J_0\to X\to C$ along the $(\lambda+1)$-system $(J_{\alpha},j^{\alpha}_{\beta},\alpha<\beta\leq\lambda)$, and taking the pullback along the filtration $0=C_0\to C_1\to\cdots\to C_{\lambda}$, we obtain the following cubical diagram with all rows conflations.
\begin{center}
	\begin{tikzpicture}
		%%%%%%%%%%先把交换图中所有的节点画出来. 注意, 省略号不视为节点, 而是视为箭头的一部分——最终由dotted line代替(肉眼能看见的节点全部加粗).
		%最左边的斜面
		\node (firstJ_0)      at (4.5, 10.5) {\textcolor{red}{$\bm{J_0}$}};  \node (secondJ_0)      at (4.5, 8) {$\bm{J_0}$};                        \node (lastJ_0)        at (4.5, 3) {$\bm{J_0}$};
		\node (firstJ_1)      at (3, 9.5)    {$\bm{J_1}$};                   \node (secondJ_1)      at (3, 7)   {\textcolor{red}{$\bm{J_1}$}};       \node (lastJ_1)        at (3, 2)   {$\bm{J_1}$};
		\node (firstJ_lambda) at (0, 7.5)    {$\bm{J_{\lambda}}$};           \node (secondJ_lambda) at (0, 5)   {$\bm{J_{\lambda}}$};           \node (lastJ_lambda)   at (0, 0)   {\textcolor{red}{$\bm{J_{\lambda}}$}};
		%中间的斜面
		\node (X_0)           at (7, 10.5)   {\textcolor{red}{$\bm{X_0}$}};  \node (secondX)        at (7, 8)   {$\bm{\bullet}$};                          \node (lastX)          at (7, 3)   {$\bm{X}$};
		\node (Y)             at (5.5, 9.5)  {$\bm{\bullet}$};                     \node (secondX_1)      at (5.5, 7) {\textcolor{red}{$\bm{X_1}$}};       \node (lastBullet)     at (5.5, 2) {$\bullet$};
		\node (firstBullet)   at (2.5, 7.5)  {$\bm{\bullet}$};               \node (secondBullet)   at (2.5, 5) {$\bm{\bullet}$};               \node (X_lambda)       at (2.5, 0) {\textcolor{red}{$\bm{X_{\lambda}}$}};
		%最右边的斜面
		\node (C)             at (9.5, 10.5) {\textcolor{red}{$\bm{C_0}$}};    \node (firstC_1)       at (9.5, 8) {$\bm{C_1}$};                   \node (firstC_lambda)  at (9.5, 3) {$\bm{C_{\lambda}}$};
		\node (C_0)           at (8, 9.5)    {$\bm{C_0}$};                   \node (secondC_1)      at (8, 7)   {\textcolor{red}{$\bm{C_1}$}};  \node (secondC_lambda) at (8, 2)   {$\bm{C_{\lambda}}$};
		\node (lastC_0)       at (5, 7.5)    {$\bm{C_0}$};                   \node (lastC_1)        at (5, 5)   {$\bm{C_1}$};                   \node (lastC_lambda)   at (5, 0)   {\textcolor{red}{$\bm{C_{\lambda}}$}};
		
		%%%%%%%%%%现在开始画箭头(肉眼能看见的箭头全部为实心粗体)
		%最靠里面那个面
		\draw[->, thick, red] (firstJ_0) -- (X_0);     \draw[->, thick, red] (X_0) -- (C);             %第一行
		\draw[->, dashed] (secondJ_0)    -- (secondX); \draw[->, dashed] (secondX) -- (firstC_1);      %第二行
		\draw[->, dashed] (lastJ_0)      -- (lastX);   \draw[->, dashed] (lastX)   -- (firstC_lambda); %最底下一行
		\draw[dashed]     (4.5-0.025, 10.25) -- (4.5-0.025, 8.25) [xshift=1.5pt] (4.5-0.025, 10.25) -- (4.5-0.025, 8.25); \draw[dashed]     (4.5-0.025, 7.75) -- (4.5-0.025, 5.75) [xshift=1.5pt] (4.5-0.025, 7.75) -- (4.5-0.025, 5.75); \draw[dotted] (4.5, 5.75)             -- (4.5, 5.25);  \draw[dashed]     (4.5-0.025, 5.25) -- (4.5-0.025, 3.25) [xshift=1.5pt] (4.5-0.025, 5.25) -- (4.5-0.025, 3.25); %第一列
		\draw[->, dashed] (X_0)              -- (secondX);                                                                \draw[->, dashed] (secondX)         -- (7, 5.75);                                                               \draw[dotted] (7, 5.75)               -- (7, 5.25);    \draw[->, dashed] (7, 5.25)         -- (lastX);                                                                 %第二列
		\draw[->, thick]  (C)                -- (firstC_1);                                                               \draw[->, thick] (firstC_1)         -- (9.5, 5.75);                                                             \draw[very thick, dotted] (9.5, 5.75) -- (9.5, 5.25);  \draw[->, thick]  (9.5, 5.25)       -- (firstC_lambda);                                                         %第三列
		%最左边的面
		\draw[->, thick]  (firstJ_0)  -- (firstJ_1);   \draw[->, thick]  (firstJ_1)  -- (1.5+0.3, 8.5+0.2); \draw[very thick, dotted] (1.5+0.3, 8.5+0.2) -- (1.5-0.3, 8.5-0.2); \draw[->, thick] (1.5-0.3, 8.5-0.2) -- (firstJ_lambda); %第一行
		\draw[->, dashed] (secondJ_0) -- (secondJ_1);  \draw[->, dashed] (secondJ_1) -- (1.5+0.3, 6+0.2);   \draw[dotted] (1.5+0.3, 6+0.2)               -- (1.5-0.3, 6-0.2);   \draw[->, dashed] (1.5-0.3, 6-0.2)  -- (secondJ_lambda);%第二行
		\draw[->, dashed] (lastJ_0)   -- (lastJ_1);    \draw[->, dashed] (lastJ_1)   -- (1.5+0.3, 1+0.2);   \draw[dotted] (1.5+0.3, 1+0.2)               -- (1.5-0.3, 1-0.2);   \draw[->, dashed] (1.5-0.3, 1-0.2)  -- (lastJ_lambda);  %第三行
		\draw[dashed] (3-0.025, 9.25) -- (3-0.025, 7.25) [xshift=1.5pt] (3-0.025, 9.25) -- (3-0.025, 7.25);   \draw[dashed] (3-0.025, 6.75) -- (3-0.025, 4.75) [xshift=1.5pt] (3-0.025, 6.75) -- (3-0.025, 4.75); \draw[dotted]             (3, 4.75) -- (3, 4.25); \draw[dashed] (3-0.025, 4.25) -- (3-0.025, 2.25) [xshift=1.5pt] (3-0.025, 4.25) -- (3-0.025, 2.25); %中间列
		\draw[thick]  (-0.025, 7.25)  -- (-0.025, 5.25)  [xshift=1.5pt] (-0.025, 7.25)  -- (-0.025, 5.25);    \draw[thick]  (-0.025, 4.75)  -- (-0.025, 2.75)  [xshift=1.5pt] (-0.025, 4.75)  -- (-0.025, 2.75);  \draw[very thick, dotted] (0, 2.75) -- (0, 2.25); \draw[thick]  (-0.025, 2.25)  -- (-0.025, 0.25)  [xshift=1.5pt] (-0.025, 2.25)  -- (-0.025, 0.25);  %最外面一列
		\draw[->, dashed, red] (firstJ_0) -- (secondJ_1); \draw[->, dashed, red] (secondJ_1) -- (1.5+0.3, 4+0.2); \draw[dotted, red] (1.5+0.3, 4+0.2) -- (1.5-0.3, 2.8); \draw[->, dashed, red] (1.5-0.3, 2.8) -- (lastJ_lambda); %红色对角线
		
		%中间那个垂直于底面的面
		\draw[->, dashed]      (Y)       -- (secondX_1);  \draw[->, dashed] (secondX_1)  -- (5.5, 4.75);    \draw[dotted] (5.5, 4.75)    -- (5.5, 4.25);    \draw[->, dashed] (5.5, 4.25)    -- (lastBullet);
		\draw[->, dashed]      (secondX) --(secondX_1);   \draw[->, dashed] (secondX_1)  -- (4+0.3, 6+0.2); \draw[dotted] (4+0.3, 6+0.2) -- (4-0.3, 6-0.2); \draw[->, dashed] (4-0.3, 6-0.2) -- (secondBullet);
		\draw[->, dashed]      (lastX)   -- (lastBullet); \draw[->, dashed] (lastBullet) -- (4+0.3, 1+0.2); \draw[dotted] (4+0.3, 1+0.2) -- (4-0.3, 1-0.2); \draw[->, dashed] (4-0.3, 1-0.2) -- (X_lambda);
		\draw[->, dashed, red] (X_0)     -- (secondX_1); \draw[->, dashed, red] (secondX_1) -- (4+0.3, 4+0.2); \draw[dotted, red] (4+0.3, 4+0.2) -- (4-0.3, 2.8); \draw[->, dashed, red] (4-0.3, 2.8) -- (X_lambda);%红色对角线
		
		%补完中间那个平行于底面的面
		\draw[->, dashed, red] (secondJ_1) -- (secondX_1); \draw[->, dashed, red] (secondX_1) -- (secondC_1);
		\draw[->, dashed] (lastJ_1) -- (lastBullet); \draw[->, dashed] (lastBullet) -- (secondC_lambda);
		
		%最外圈的横线
		\draw[->, thick]      (firstJ_1)       -- (Y);            \draw[->, thick]      (Y)            -- (C_0);
		\draw[->, thick]      (firstJ_lambda)  -- (firstBullet);  \draw[->, thick]      (firstBullet)  -- (lastC_0);
		\draw[->, thick]      (secondJ_lambda) -- (secondBullet); \draw[->, thick]      (secondBullet) -- (lastC_1);
		\draw[->, thick, red] (lastJ_lambda)   -- (X_lambda);     \draw[->, thick, red] (X_lambda)     -- (lastC_lambda);
		%最外圈(从右上往左下)的斜线
		\draw[->, thick] (X_0)      -- (Y);        \draw[->, thick] (Y)        -- (4+0.3, 8.5+0.2);   \draw[very thick, dotted] (4+0.3, 8.5+0.2)   -- (4-0.3, 8.5-0.2);   \draw[->, thick] (4-0.3, 8.5-0.2)   -- (firstBullet);
		%最外圈的竖线
		\draw[->, thick] (firstBullet)  -- (secondBullet);                                                    \draw[->, thick] (secondBullet) -- (2.5, 2.75);                                                    \draw[very thick, dotted] (2.5, 2.75) -- (2.5, 2.25); \draw[->, thick] (2.5, 2.25)    -- (X_lambda);
		\draw[->, thick] (lastC_0)      -- (lastC_1);                                                         \draw[->, thick] (lastC_1)      -- (5, 2.75);                                                      \draw[very thick, dotted] (5, 2.75)   -- (5, 2.25);   \draw[->, thick] (5, 2.25)      -- (lastC_lambda);
		\draw[->, thick] (C_0)          -- (secondC_1);                                                       \draw[->, thick] (secondC_1)    -- (8, 4.75);                                                      \draw[very thick, dotted] (8, 4.75)   -- (8, 4.25);   \draw[->, thick] (8, 4.25)      -- (secondC_lambda);
		
		%补完最右边那个面
		\draw[->, thick, red] (C) -- (secondC_1); \draw[->, thick, red] (secondC_1) -- (6.5+0.3, 4+0.2); \draw[very thick, dotted, red] (6.5+0.3, 4+0.2) -- (6.5-0.3, 2.8); \draw[->, thick, red] (6.5-0.3, 2.8) -- (lastC_lambda); %红色对角线
		\draw[thick] (9.5-0.335, 10.5-0.2) -- (8+0.3, 9.5+0.235); \draw[thick] (9.5-0.3, 10.5-0.235) -- (8+0.335, 9.5+0.2);       \draw[thick] (9.5-0.335, 10.5-0.2-2.5) -- (8+0.3, 9.5+0.235-2.5); \draw[thick] (9.5-0.3, 10.5-0.235-2.5) -- (8+0.335, 9.5+0.2-2.5);          \draw[thick] (9.5-0.335, 10.5-0.2-7.5) -- (8+0.3, 9.5+0.235-7.5); \draw[thick] (9.5-0.3, 10.5-0.235-7.5) -- (8+0.335, 9.5+0.2-7.5);
		\draw[thick] (8-0.335, 9.5-0.2) -- (6.5+0.3, 8.5+0.235); \draw[thick] (8-0.3, 9.5-0.235) -- (6.5+0.335, 8.5+0.2);         \draw[thick] (8-0.335, 9.5-0.2-2.5) -- (6.5+0.3, 8.5+0.235-2.5); \draw[thick] (8-0.3, 9.5-0.235-2.5) -- (6.5+0.335, 8.5+0.2-2.5);            \draw[thick] (8-0.335, 9.5-0.2-7.5) -- (6.5+0.3, 8.5+0.235-7.5); \draw[thick] (8-0.3, 9.5-0.235-7.5) -- (6.5+0.335, 8.5+0.2-7.5);
		\draw[thick] (6.5-0.335, 8.5-0.2) -- (5+0.3, 7.5+0.235); \draw[thick] (6.5-0.3, 8.5-0.235) -- (5+0.335, 7.5+0.2);         \draw[thick] (6.5-0.335, 8.5-0.2-2.5) -- (5+0.3, 7.5+0.235-2.5); \draw[thick] (6.5-0.3, 8.5-0.235-2.5) -- (5+0.335, 7.5+0.2-2.5);            \draw[thick] (6.5-0.335, 8.5-0.2-7.5) -- (5+0.3, 7.5+0.235-7.5); \draw[thick] (6.5-0.3, 8.5-0.235-7.5) -- (5+0.335, 7.5+0.2-7.5);
		\draw[very thick, dotted] (6.5+0.3, 8.5+0.2) -- (6.5-0.3, 8.5-0.2);
		\draw[very thick, dotted] (6.5+0.3, 8.5+0.2-2.5) -- (6.5-0.3, 8.5-0.2-2.5);
		\draw[very thick, dotted] (6.5+0.3, 8.5+0.2-7.5) -- (6.5-0.3, 8.5-0.2-7.5);
		
	\end{tikzpicture}
\end{center}

%%%%%%%%%%%%%%%%%%%%%%%%%%%     从这里开始需要修改的段落     %%%%%%%%%%%%%%%%%%%%%%%%%
Consider the red $(\grl+1)$-conflation $\sfXi \colon \xymatrix{\sfJ \ar[r] & \sfX \ar[r] & \sfC}$ appearing in the main diagonal and let us note how the induction hypothesis implies that each of the conflations
$\Xi_{\gra} \colon \xymatrix{J_{\gra} \ar[r] & X_{\gra} \ar[r] & C_{\gra}},$ $\gra < \grl,$
is trivial. Indeed, this conflation is obtained from the horizontal pushout at $\gra,$ given by
\begin{equation*}
	\begin{tikzcd}[row sep=scriptsize]
		J_0\arrow[r]\arrow[d, "{j^0_{\alpha}}"] & \bullet\arrow[r]\arrow[d] & C_{\alpha}\arrow[d, equal] \\
		J_{\alpha}\arrow[r]                     & X_{\alpha}\arrow[r]       & C_{\alpha}
	\end{tikzcd}
\end{equation*}
Because $j^0_{\gra} \in \mcJ^{(\gra)}$ and $C_{\gra}$ has an $\gra$-filtration in ${^{\perp}}\mcJ,$ $\Ext (C_{\gra}, j^0_{\gra}) = 0$ and the conflation is trivial.  %\pagebreak 此处不要用\pagebreak命令!

If we factor the successive structural morphisms of $\sfXi$ into their pushout-pullback factorizations, we obtain the system of conflations on the left. Every conflation in that system, except perhaps $\Xi_{\grl}$ in the bottom row, is trivial, because it is either of the form $\Xi_{\gra},$ $\gra < \grl,$ or arises as the pushout/pullback of a trivial conflation.
\begin{equation*}
	\begin{tikzcd}[row sep=scriptsize]
		\textcolor{red}{J_0}         \arrow[r, red]\arrow[d, red, "{j_1^0}"]                    & \textcolor{red}{X_0}         \arrow[r, red]\arrow[d] 	& \textcolor{red}{0}           \arrow[d, equal]                            & & & & 	\textcolor{red}{J_0}        \arrow[r, red]\arrow[d, red, "{j_1^0}"]                     & 	\textcolor{red}{X_0}        \arrow[r, red]\arrow[d]  & 	\textcolor{red}{0}          \arrow[d, equal]					\\
		J_1          \arrow[r]     \arrow[d, equal]                             &				  Y_1          \arrow[r]     \arrow[d] 	& 				  C_0          \arrow[d, red, "{c_1^0}"]                   & & & & 					J_1         \arrow[r]     \arrow[dd, red, "{j^1_2}"]                    & 					Y_1         \arrow[r]     \arrow[dd] & 					C_0         \arrow[dd, red, "{c^0_1}"]			\\
		\textcolor{red}{J_1}         \arrow[r, red]\arrow[d, red, "{j^1_2}"]                    & \textcolor{red}{X_1}         \arrow[r, red]\arrow[d] 	& \textcolor{red}{C_1}         \arrow[d, equal]                            & & & &  				                                                                        & 					                                     &	                           										\\
		J_2          \arrow[r]     \arrow[d, equal]                             & 				  Y_2          \arrow[r]     \arrow[d] 	& 				  C_1          \arrow[d, red, "{c^1_2}"]                   & & & & 					J_2         \arrow[r]     \arrow[dd, red, "{j^2_3}"]                    & 					Y_2         \arrow[r]     \arrow[dd] & 					C_1         \arrow[dd, red, "{c^1_2}"]   \\
		\textcolor{red}{J_2}         \arrow[r, red]\arrow[d, red, "{j^2_3}"]                    & \textcolor{red}{X_2}         \arrow[r, red]\arrow[d] 	& \textcolor{red}{C_2}         \arrow[d, equal]                            & & & & 	                                                                                        &					                                     &                             								\\
		J_3          \arrow[r]     \arrow[d, equal]                             & 				  Y_3          \arrow[r]     \arrow[d] 	& 				  C_2          \arrow[d, red, "{c^2_3}"]                   & & & & 					J_3         \arrow[r]     \arrow[d, red, "{j^3_4}"]                     & 					Y_3         \arrow[r]     \arrow[d]  & 					C_2         \arrow[d, red, "{c_3^2}"]    \\
		\vdots                     \arrow[d]                                    & 				  \vdots                     \arrow[d] 	& 				  \vdots       \arrow[d              ]                     & & & & 					\vdots                    \arrow[d                ]                     & 					\vdots                    \arrow[d]  & 					\vdots      \arrow[d]            				\\
		\textcolor{red}{J_{\omega}}  \arrow[r, red]\arrow[d, red, "{j_{\omega+1}^{\omega}}"]	& \textcolor{red}{X_{\omega}}  \arrow[r, red]\arrow[d] 	& \textcolor{red}{C_{\omega}}  \arrow[d, equal]                            & & & & 	\textcolor{red}{J_{\omega}} \arrow[r, red]\arrow[d, red, "{j_{\omega+1}^{\omega}}"]     & 	\textcolor{red}{X_{\omega}} \arrow[r, red]\arrow[d]  & 	\textcolor{red}{C_{\omega}} \arrow[d, equal] 						\\
		J_{\omega+1} \arrow[r]     \arrow[d, equal]                             & 				  Y_{\omega+1} \arrow[r]     \arrow[d] 	& 				  C_{\omega}   \arrow[d, red, "{c_{\omega+1}^{\omega}}"]   & & & & 					J_{\omega+1}\arrow[r]     \arrow[dd, red, "{j_{\omega+2}^{\omega+1}}"]  & 					Y_{\omega+1}\arrow[r]     \arrow[dd] & 					C_{\omega}  \arrow[dd, red, "{c_{\omega+1}^{\omega}}"] \\
		\textcolor{red}{J_{\omega+1}}\arrow[r, red]\arrow[d, red, "{j^{\omega+1}_{\omega+2}}"]  & \textcolor{red}{X_{\omega+1}}\arrow[r, red]\arrow[d] 	& \textcolor{red}{C_{\omega+1}}\arrow[d, equal]                       	   & & & & 					                                                                        & 					                                     & 	                            										\\
		J_{\omega+2} \arrow[r]     \arrow[d, equal]                             & 				  Y_{\omega+2} \arrow[r]     \arrow[d] 	& 				  C_{\omega+1} \arrow[d, red, "{c^{\omega+1}_{\omega+2}}"] & & & & 					J_{\omega+2}\arrow[r]     \arrow[d, red, "{j^{\omega+2}_{\omega+3}}"]   & 					Y_{\omega+2}\arrow[r]     \arrow[d]  & 					C_{\omega+1}\arrow[d, red, "{c^{\omega+1}_{\omega+2}}"] \\
		\vdots                     \arrow[d]                                    & 				  \vdots                     \arrow[d]  & 				  \vdots       \arrow[d              ]                     & & & & 					\vdots                    \arrow[d                ]                     & 					\vdots                    \arrow[d]  & 					\vdots      \arrow[d] 								\\
		\textcolor{red}{J_{\lambda}} \arrow[r, red]                                             & \textcolor{red}{X_{\lambda}} \arrow[r, red]           & \textcolor{red}{C_{\lambda}}                                             & & & & 	\textcolor{red}{J_{\lambda}}\arrow[r, red]                                              & 	\textcolor{red}{X_{\lambda}}\arrow[r, red]           & \textcolor{red}{C_{\lambda}}
	\end{tikzcd}
\end{equation*}
%%%%%%%%%%%%%%%%%%%%%%%%%%%     修改结束     %%%%%%%%%%%%%%%%%%%%%%%%%%%%%%%%%%%%%%%%%%%%
The $(\grl + 1)$-conflation on the right, call it $\sfgrS,$ is obtained from the diagram on the left by composing the pairs that factor through a conflation of the form $\Xi_{\gra+1}.$ Thus $\sfgrS|_{\grl}$ is a $\grl$-system of trivial conflations. We will show that it satisfies the downward adjustment property. Then Theorem~\ref{T:DAP} applies to show that $\sfgrS|_{\grl}$ is a trivial $\grl$-conflation. Because the two outside $\grl$-systems are continuous at $\grl,$ the workhorse Lemma~\ref{L:workhorse} will apply to show that $\sfgrS$ and therefore $\grS_{\grl} = \Xi_{\grl}$ is trivial, as required.

To verify that splittings adjust downwards for $\sfgrS,$ there are two cases to consider. When $\grg = 0$ or a limit, the conflation of arrows is of the form
$$\xymatrix@C=0pt{\grS_{\grg} \colon & J_{\grg} \ar[rrrrrr] \ar[d] &&&&&& X_{\grg} \ar[rrrrrr] \ar[d] &&&&&& C_{\grg} \ar@{=}[d] \\
\grS_{\grg+1} \colon & J_{\grg+1} \ar[rrrrrr] &&&&&& Y_{\grg+1} \ar[rrrrrr] &&&&&& C_{\grg}}$$
and will therefore adjust downwards by Corollary~\ref{L:FH}. For the case when $\grg = \gra+1$ is a successor, the factorization given in the left diagram satisfies the hypotheses of Lemma~\ref{L:FH1}.
\end{proof}

\section{Transfinite inductive powers of ghost ideals}
In this short section, we generalize Proposition~\ref{finite1} to the transfinite situation. To do so, we need the following hypothesis.

\begin{hypothesis}\label{hyp1}{\rm Let $\lambda$ be an ordinal.
Every continuous $\lambda$-system $\sfA$ in $\mcA$ whose structural morphisms $a_{\alpha+1}^\alpha$, $\alpha+1 < \lambda$, are inflations in $\mcA$,
 % $\lim\limits_{\longrightarrow\atop{\alpha<\lambda}} A_{\alpha}$ exists and its
 has a composition morphism $A_0 \longrightarrow \lim\limits_{\longrightarrow\atop{\alpha<\lambda}} A_{\alpha}$ that is also an inflation in $\mcA.$}
\end{hypothesis}

\begin{theorem} \label{T:OSPE preservation}
	Suppose that $\mcA$ satisfies Hypothesis~\ref{hyp1} for some  infinite ordinal $\lambda$. If  $\mcJ$ is  an object-special preenveloping ideal with $\Omega^{-1}(\mcJ)$ a cosyzygy subcategory, then the  $\lambda$-th inductive power $\mcJ^{(\lambda)}$ is still an object-special preenveloping ideal with cosyzygy category $\grl \mbox{-}\Filt (\Omega^{-1}(\mcJ))$.
\end{theorem}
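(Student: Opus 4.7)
The plan is to construct an object-special $\mcJ^{(\lambda)}$-preenvelope of any given object $A$ by a transfinite recursion that mimics the finite iteration in the proof of Proposition~\ref{finite1}, and then to apply the Ideal Eklof Lemma (Theorem~\ref{TFH4}) to identify the resulting cosyzygy as an object of ${}^{\perp}(\mcJ^{(\lambda)})$.

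First, I would recursively build a continuous $(\lambda + 1)$-system $\sfJ = (J_\alpha, j^\alpha_\beta \mid \alpha < \beta \leq \lambda)$ together with a compatible family of conflations $\xymatrix@1{J_\alpha \ar[r]^{j^\alpha_{\alpha+1}} & J_{\alpha+1} \ar[r] & W_\alpha}$ with $j^\alpha_{\alpha+1} \in \mcJ$ and $W_\alpha \in \Omega^{-1}(\mcJ)$. Set $J_0 := A$. At a successor step, apply the hypothesis that $\mcJ$ is object-special preenveloping with cosyzygy subcategory $\Omega^{-1}(\mcJ)$ to $J_\alpha$ in order to produce $J_{\alpha+1}$, $j^\alpha_{\alpha+1}$ and $W_\alpha$. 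At a limit step $\beta \leq \lambda$, invoke Hypothesis~\ref{hyp1} to form $J_\beta := \lim_{\alpha < \beta} J_\alpha$, and note that every transition morphism $j^\alpha_\beta$ is an inflation (as a transfinite composition of inflations obtained by the same hypothesis). In particular the composition morphism $j := j^0_\lambda \colon A \to J_\lambda$ is an inflation, and by Proposition~\ref{prop:morph_induct} it is a $\lambda$-composition of morphisms in $\mcJ$, so $j \in \mcJ^{(\lambda)}$. Let $W := \Coker(j)$, which fits into a conflation $\xymatrix@1{A \ar[r]^-j & J_\lambda \ar[r] & W}$.

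Next, I would verify that $W$ admits a $\lambda$-$\Omega^{-1}(\mcJ)$-filtration. Since every $j^0_\alpha$ is an inflation, one can define $W_\alpha^{\ast} := \Coker(j^0_\alpha)$ for $\alpha \leq \lambda$, obtaining a continuous $(\lambda+1)$-sequence of inflations $0 = W^\ast_0 \hookrightarrow W^\ast_1 \hookrightarrow \cdots \hookrightarrow W^\ast_\lambda = W$. By the $3 \times 3$-lemma applied to the pushout of $\xymatrix@1{A \ar[r]^-{j^0_\alpha} & J_\alpha \ar[r] & W^\ast_\alpha}$ along $j^\alpha_{\alpha+1}$, the successive quotient $W^\ast_{\alpha+1}/W^\ast_\alpha$ is isomorphic to $W_\alpha \in \Omega^{-1}(\mcJ)$; continuity at limit ordinals is inherited from continuity of $\sfJ$. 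Hence $W \in \lambda\mbox{-}\Filt(\Omega^{-1}(\mcJ))$. Since $\Omega^{-1}(\mcJ) \subseteq \Ob({}^{\perp}\mcJ)$, the Ideal Eklof Lemma (Theorem~\ref{TFH4}) yields $W \in \Ob({}^{\perp}(\mcJ^{(\lambda)}))$, completing the verification that $\mcJ^{(\lambda)}$ is object-special preenveloping.

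Finally, I would check that $\lambda\mbox{-}\Filt(\Omega^{-1}(\mcJ))$ itself qualifies as a $\mcJ^{(\lambda)}$-cosyzygy subcategory. Closure under finite direct sums follows by taking the direct sum of two $\lambda$-filtrations objectwise and using exactness of finite coproducts; the resulting filtration has successive quotients in $\Omega^{-1}(\mcJ)$ because $\Omega^{-1}(\mcJ)$ is additive. The main obstacle I anticipate is the bookkeeping at limit ordinals: one must be careful that the successive-quotient filtration of $W$ is still continuous and yields the cokernels $W_\alpha$ at each successor stage, which ultimately reduces to the right-exactness of the colimit functor in $\mcA$ together with Hypothesis~\ref{hyp1}. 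Once these are handled, the rest of the argument is a straightforward application of the Ideal Eklof Lemma.
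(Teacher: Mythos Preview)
Your proposal is correct and follows essentially the same approach as the paper's proof: build the continuous $(\lambda+1)$-system of object-special $\mcJ$-preenvelopes starting from $A$, use Hypothesis~\ref{hyp1} at limit stages, and invoke the Ideal Eklof Lemma to place the cokernel in ${}^{\perp}(\mcJ^{(\lambda)})$. The only organizational difference is that the paper carries the cokernel filtration $0 \subseteq W_1 \subseteq \cdots$ along during the transfinite induction (so that each $W_\alpha \in \alpha\mbox{-}\Filt(\Omega^{-1}(\mcJ))$ is verified at stage $\alpha$), whereas you first build the system $\sfJ$ and afterward extract the filtration of $W$ via the $3\times 3$-lemma; both arrangements produce the same conflation and the same cosyzygy filtration.
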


\begin{proof}
	Let $A$ be an object in the category $\mcA$.  We will construct, by transfinite induction on $\mu \leq  \lambda,$ a $(\mu+1)$-system $(\Xi_\alpha, {\xi}_\beta^\alpha \mid\ \alpha < \beta \leq \mu)$ of conflations
	$\Xi_\alpha:$ $\xymatrix{A\ar[r]^{j_{\alpha}}& J_\alpha \ar[r]& W_\alpha}$
	with $j_{\alpha}\in \mcJ^{(\alpha)}$, and $W_{\alpha}\in \gra$-$\Filt ( \Omega^{-1}(\mcJ))$ such that  for every $\alpha<  \mu,$ the morphisms $j_{\alpha+1}^{\alpha} $ and $w_{\alpha+1}^\alpha$ in the commutative diagram
	$$\xymatrix@C=0pt{\Xi_{\alpha} \ar@<-1ex>[d]_{\xi^{\alpha}_{\beta}}: &A\ar[rrrrrr]^-{j_\alpha}\ar@{=}[d]&&&&&& J_{\alpha}\ar[rrrrrr]\ar[d]^{j^{\alpha}_{\alpha+1}}&&&&&&W_{\alpha}\ar[d]^{w^{\alpha}_{\alpha+1}}\\
		\Xi_{\alpha+1}: & A\ar[rrrrrr]^-{j_{\alpha+1}}&&&&&&J_{\alpha+1 }\ar[rrrrrr]&&&&&&W_{\alpha+1 }}$$
 of conflations are inflations.

	The inductive step has already been treated in Propostion \ref{finite1}.
	For the  limit step, assume that for some limit ordinal $\beta \leq \mu$, we have such a $\beta$-system  $(\Xi_\alpha, {\xi}_{\alpha'}^\alpha \mid\ \alpha < \alpha' < \beta)$ of conflations in $\mcA$
	$$\xymatrix@C=0pt{\Xi_1\ar@<-1ex>[d]_{{\xi}_2^1}: &&A\ar[rrrrrr]^-{j_1}\ar@{=}[d]&&&&&&J_1\ar[rrrrrr]\ar[d]^{j_2^1}&&&&&&W_1\ar[d]^{w_2^1}\\
		\Xi_2\ar@<-1ex>[d]_{{\xi}_3^2}: &&A\ar[rrrrrr]^-{j_2}\ar@{=}[d]&&&&&&J_2\ar[rrrrrr]\ar[d]^{j_3^2}&&&&&&W_2\ar[d]^{w_3^2}\\
		\vdots\;\;\; \ar@<-1ex>[d] &&\vdots\ar@{=}[d]&&&&&&\vdots\ar[d]&&&&&&\vdots\ar[d]\\
		\Xi_{\alpha}\ar@<-1ex>[d]_{{\xi}_{\alpha+1}^{\alpha}}: &&A\ar[rrrrrr]^-{j_\alpha}\ar@{=}[d]&&&&&&J_{\alpha}\ar[rrrrrr]\ar[d]^{j_{\alpha+1}^{\alpha}}&&&&&&W_{\alpha}\ar[d]^{w_{\alpha+1}^{\alpha}}\\
		\vdots\;\;\;&&\vdots&&&&&&\vdots&&&&&&\vdots},$$
	As ${j}_{\alpha+1}^\alpha$ and ${w}_{\alpha+1}^\alpha$ are inflations for every $\alpha < \beta$,  $j_\beta:=\lim\limits_{\longrightarrow\atop{\alpha<\beta}} j_{\alpha}$ and ${w_\beta:=\lim\limits_{\longrightarrow\atop{\alpha<\beta}} w_{\alpha}}$ exist, and
	$$\xymatrix@C=0pt{\Xi_{\beta}:=\lim\limits_{\longrightarrow\atop{\alpha<\beta}} \Xi_\alpha :&A\ar[rrrrrr]&&&&&&J_{\beta}\ar[rrrrrr]&&&&&&W_{\beta}}$$
	is a conflation in $\mcA$ by Hypothesis \ref{hyp1}. By construction and the Ideal Eklof Lemma (Theorem~\ref{TFH4}),  $W_{\beta} \in \grb \mbox{-}\Filt( \Omega^{-1}(\mcJ)) \subseteq {}^\perp (\mcJ^{(\beta)})$. Since $j_{\alpha+1}^\alpha \in \mcJ$ for every $\alpha < \beta$, $j_{\beta}^0=j_\beta:A \longrightarrow J_{\beta}$ is a $\beta$-transfinite composition of morphisms in $\mcJ$,  and therefore, $j_\beta \in\mcJ^{(\beta)}$.
	
	Taking $\mu=\lambda$ will give us a conflation $\Xi_\lambda \colon \xymatrix@1{A \ar[r]^{j_{\grl}} &  J_\lambda \ar[r] & W_{\lambda}}$
	such that $j_\lambda \in \mcJ^{(\lambda)}$ and $W_\lambda \in \grl$-$\Filt (\Omega^{-1}(\mcJ))$. Note that $W_\lambda$ belongs to $^{\perp}(\mcJ^{(\lambda)})$ by the Ideal Eklof Lemma, so $\mcJ^{(\lambda)}$ is an object-special preenveloping ideal.
\end{proof}

As an immediate consequence, we have:

\begin{corollary}\label{TFH5} Let $\mathcal{S}$ be a set of objects in $\mcA$. Suppose that  $\mcA$ has  exact coproducts, and satisfies Hypothesis \ref{hyp1} for an infinite ordinal  $\lambda$. Then  the $\lambda$-th inductive power $\mathfrak{g}^{(\lambda)}_{\mathcal{S}}$ is an object-special preenveloping ideal with $\grl \mbox{-}\Filt(\Sum(\mathcal{S}))$ a cosyzygy subcategory. Moreover, if $\mathcal{A}$ has enough projectives, then $(^{\perp}(\mathfrak{g}^{(\lambda)}_{\mathcal{S}}),\mathfrak{g}^{(\lambda)}_{\mathcal{S}})$ is a complete ideal cotorsion pair such that $^{\perp}(\mathfrak{g}^{(\lambda)}_{\mathcal{S}})$ is an object ideal with
$$\operatorname{Ob}(^{\perp}(\mathfrak{g}^{(\lambda)}_{\mathcal{S}}))  =  \operatorname{add}(\mcE \operatorname{-Proj} \star \grl \mbox{-}\Filt(\Sum(\mathcal{S}))).$$
\end{corollary}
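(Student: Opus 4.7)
\textbf{Proof plan for Corollary~\ref{TFH5}.} The plan is to chain together the earlier results in the most economical way. First, since $\mcA$ is assumed to have exact coproducts, Theorem~\ref{key} applies directly to the set $\mcS$: it yields that $\mathfrak{g}_{\mcS}$ is an object-special preenveloping ideal and that $\Sum(\mcS)$ is a $\mathfrak{g}_{\mcS}$-cosyzygy subcategory (note $\Sum(\mcS)$ is closed under finite direct sums because a finite direct sum of coproducts of objects of $\mcS$ is again such a coproduct). This is exactly the base hypothesis required by Theorem~\ref{T:OSPE preservation}.

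Now I would invoke Theorem~\ref{T:OSPE preservation} with $\mcJ = \mathfrak{g}_{\mcS}$ and $\Omega^{-1}(\mcJ) = \Sum(\mcS)$; this is legitimate because $\mcA$ is assumed to satisfy Hypothesis~\ref{hyp1} for the given infinite ordinal $\grl$. The conclusion gives the first assertion: $\mathfrak{g}^{(\grl)}_{\mcS}$ is object-special preenveloping, and $\grl\operatorname{-}\Filt(\Sum(\mcS))$ is a $\mathfrak{g}^{(\grl)}_{\mcS}$-cosyzygy subcategory.

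For the second assertion, assume further that $\mcA$ has enough projectives. Under these conditions, the ideal version of Salce's Lemma (\cite{FGHT}, recalled in the introduction) applies to the ideal cotorsion pair $({}^{\perp}(\mathfrak{g}^{(\grl)}_{\mcS}), \mathfrak{g}^{(\grl)}_{\mcS})$: because $\mathfrak{g}^{(\grl)}_{\mcS}$ has already been shown to be special preenveloping, the pair is complete. It remains to identify the objects of ${}^{\perp}(\mathfrak{g}^{(\grl)}_{\mcS})$. Here the strategy mirrors the proof of Proposition~\ref{key1}: with an object-special preenveloping ideal in hand whose cosyzygy subcategory is $\grO^{\lneg 1}(\mathfrak{g}^{(\grl)}_{\mcS}) = \grl\operatorname{-}\Filt(\Sum(\mcS))$, we apply the general characterization \cite[Propositions 6.2 and 7.3]{FH} of the left perpendicular of an object-special preenveloping ideal as the least left perpendicular object ideal containing a cosyzygy category. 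Combined with the projectivity assumption (which brings $\mcE\operatorname{-Proj}$ into the picture via the $\star$-operation), this yields
\[
{}^{\perp}(\mathfrak{g}^{(\grl)}_{\mcS}) \; = \; \langle \add(\mcE\operatorname{-Proj} \star \grl\operatorname{-}\Filt(\Sum(\mcS))) \rangle,
\]
which is an object ideal by construction.

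The only step requiring genuine care is the verification that Hypothesis~\ref{hyp1} is available in our setting and that $\grl\operatorname{-}\Filt(\Sum(\mcS))$ qualifies as a cosyzygy subcategory (i.e., is closed under finite direct sums, which is immediate by concatenating filtrations of length $\grl$ coordinatewise using continuity). Everything else is an application of already-established machinery, so I expect no hidden obstacles; the content of the corollary is essentially the specialization of Theorem~\ref{T:OSPE preservation} to $\mcJ = \mathfrak{g}_{\mcS}$, paired with Salce's Lemma and the object-ideal description from \cite{FH}.
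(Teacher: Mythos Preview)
Your proposal is correct and matches the paper's intended argument: the corollary is stated as an ``immediate consequence'' of Theorem~\ref{T:OSPE preservation}, obtained by feeding in Theorem~\ref{key} (which supplies the base case $\mcJ = \mathfrak{g}_{\mcS}$ with cosyzygy subcategory $\Sum(\mcS)$), and then invoking \cite[Theorem~2]{FGHT} together with \cite[Propositions~6.2 and 7.3]{FH} exactly as in Proposition~\ref{key1} for the ``moreover'' clause. Your identification of the one genuine verification---that $\grl\operatorname{-}\Filt(\Sum(\mcS))$ is closed under finite direct sums---is apt, though the paper leaves this implicit.
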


\begin{remark}\label{dual.results}

{\rm The notions/techniques and results   presented so far may be dualized. For a given set $\mcS$ of objects in $ \mcA$, the ideal left orthogonal to $\langle \mcS \rangle$ is called \textit{the ideal of the $\mcS$-coghost morphisms},  and is denoted by $\mathfrak{Cog}_{\mcS} = {^{\perp}} \langle \mcS \rangle.$ On the other hand,
the {\em projective $\grb$-th power} $\mcI^{\grb}$ of an ideal $\mcI$ is defined as the ideal   generated by all $\grb$-cocompositions  of morphisms in $\mcI$ (dual to Definition~\ref{D:cont}). Under dual conditions given in Corollary~\ref{TFH5}, we have
a complete ideal cotorsion pair
$$(\mathfrak{Cog}_{\mcS}^{\grb}, \langle\operatorname{add}( \grb \mbox{-}\Cofilt(\Prod(\mathcal{S}))\star \mcE \operatorname{-Inj})\rangle),$$ where for a given subcategory $\mathcal C\subseteq \mcA$, the subcategory $\grb \mbox{-}\Cofilt(\mathcal C)$ denotes the objects having a $\grb\mbox{-}\mathcal C$-cofiltration (dual to Definition \ref{D:filt}). }

\end{remark}

\section{The Generalized Generating Hypothesis in Exact Categories}
Let $\mcJ$ be an ideal. Given ordinals $\mu \geq  \lambda,$ Proposition~\ref{prop:morph_induct} implies that $\mcJ ^{(\mu)} \subseteq \mcJ^{(\lambda)},$ so there is a decreasing chain of inductive powers of  $\mcJ$
\begin{equation} \label{chain}
	\cdots \subseteq \mcJ^{(\lambda)} \subseteq \cdots \subseteq  \mcJ^2 \subseteq \mcJ.
\end{equation}
In this section, we focus on  the question of whether the chain~\eqref{chain} stabilizes at some ordinal, that is, if there exists an ordinal $\lambda$ such that  $\mcJ ^{(\mu)} =\mcJ^{(\lambda)}$ for every ordinal $\mu \geq \lambda.$ It is hard to  answer   the question  in such a general form, so we consider two special cases of $\mcJ$, each of which   comes up naturally in certain contexts of interest.

\subsection{Idempotent ideals} Consider an \emph{idempotent} ideal $\mcJ = \mcJ^2.$ In that case, $\mcJ^n=\mcJ$ for every finite ordinal $n \geq 1,$ but it is not immediate that the chain~\eqref{chain} stabilizes, so it remains to investigate its infinite inductive powers. In order to do so, let us consider some sufficient conditions. Let $\lambda$ be a given infinite ordinal.
%Recall that $\mcJ$ is said to have $\lambda$-compositions if every continuous $%\lambda$-system in $\mcJ$ has a colimit in $\mcA$. It is clear that in this case the %composition $J_0 \longrightarrow \lim\limits_{\longrightarrow\atop{ \alpha < \lambda }} %J_\alpha$ belongs to $\mcJ^{(\grl)}$.
We say that \emph{$\mcJ$ is closed under $\lambda$-directed limits} if $\mcJ$ has $\lambda$-compositions, and for any continuous $\mu$-system $(J_\alpha, j_\beta^\alpha \mid  \alpha< \beta< \mu)$
in  $\mcJ$ with $\mu \leq \lambda$ and a family of maps $\{g_{\alpha}:J_\alpha\to X \mid \alpha<\mu\}$ with $g_{\alpha}\in\mcJ$ such that $g_{\alpha}=g_{\beta}j^{\alpha}_{\beta}$ for $\alpha<\beta<\mu$, the canonical morphism $\lim\limits_{\longrightarrow\atop{\alpha <\mu} }g_\alpha:\lim\limits_{\longrightarrow\atop{\alpha <\mu} }J_\alpha\to X$ belongs to $\mcJ$.

\begin{proposition}\label{PFH41}
	Let $\mcJ$ be an idempotent ideal in $\mcA.$ If $\mcJ$ has $\omega$-compositions, then $\mcJ=\mcJ^{(\omega)}$
	If $\lambda\geq\omega$ is an infinite ordinal, and if $\mcJ$ is closed under $\lambda$-directed limits, then $\mcJ^{(\lambda)}=\mcJ$.
\end{proposition}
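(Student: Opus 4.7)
The plan is to exploit the factorization characterization from Proposition~\ref{prop:morph_induct}: a morphism belongs to $\mcJ^{(\lambda)}$ if and only if it factors as $h \circ a_\lambda^0$, where $a_\lambda^0 \colon A_0 \to A_\lambda$ is a $\lambda$-composition of morphisms in $\mcJ$. Since the descending filtration $\mcJ = \mcJ^{(1)} \supseteq \mcJ^{(2)} \supseteq \cdots$ already gives $\mcJ^{(\mu)} \subseteq \mcJ$ for every $\mu \geq 1$, only the reverse inclusion requires argument in both claims.

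For the first assertion, start with any $j \colon A_0 \to Y$ in $\mcJ$ and iteratively apply $\mcJ = \mcJ^2$: having constructed $g_n \colon A_n \to Y$ in $\mcJ$ with $g_0 = j$, factor $g_n = g_{n+1} \circ a_{n+1}^n$ with both factors in $\mcJ$. This produces a continuous $\omega$-system $(A_\alpha, a_\beta^\alpha \mid \alpha < \beta < \omega)$ in $\mcJ$ and a compatible cocone $\{g_n \colon A_n \to Y\}$. By hypothesis, $\mcJ$ has $\omega$-compositions, so the colimit $A_\omega$ exists, the canonical map $a_\omega^0 \colon A_0 \to A_\omega$ is an $\omega$-composition of morphisms in $\mcJ$, and the universal property of the colimit yields a unique $h \colon A_\omega \to Y$ such that, in particular, $h \circ a_\omega^0 = g_0 = j$. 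Proposition~\ref{prop:morph_induct} then gives $j \in \mcJ^{(\omega)}$.

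For the second assertion, iterate the same construction by transfinite recursion up to $\lambda$, producing a continuous $(\lambda+1)$-system $(A_\alpha, a_\beta^\alpha)_{\alpha \leq \beta \leq \lambda}$ with each $a_{\alpha+1}^\alpha \in \mcJ$, together with a compatible cocone $\{g_\alpha \colon A_\alpha \to Y\}$ of morphisms in $\mcJ$ satisfying $g_0 = j$ and $g_\alpha = g_\beta \circ a_\beta^\alpha$ for $\alpha \leq \beta \leq \lambda$. Successor steps proceed as in the $\omega$ case using idempotence. At a limit ordinal $\beta \leq \lambda$, take $A_\beta = \lim_{\alpha < \beta} A_\alpha$ (which exists since $\mcJ$ has $\lambda$-compositions, hence $\mu$-compositions for every $\mu \leq \lambda$) and invoke closure of $\mcJ$ under $\lambda$-directed limits to conclude that the morphism $g_\beta \colon A_\beta \to Y$ induced by the cocone $\{g_\alpha\}_{\alpha < \beta}$ lies in $\mcJ$. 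Reading off stage $\lambda$ yields $j = g_\lambda \circ a_\lambda^0$ with $a_\lambda^0$ a $\lambda$-composition in $\mcJ$, so $j \in \mcJ^{(\lambda)}$.

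The delicate point, and the reason the two claims carry different hypotheses, is that the $\omega$ case needs only a single limit-formation step, after which the construction terminates; the morphism $h \colon A_\omega \to Y$ need not itself lie in $\mcJ$. For $\lambda > \omega$, however, the recursion must pass through every limit ordinal $\beta \leq \lambda$, and for this to continue the cocone morphism $g_\beta$ must remain in $\mcJ$ so that it can be factored further at the next successor stage. The closure-under-$\lambda$-directed-limits hypothesis is precisely what supplies this, since at each limit stage the data assembled by the recursion is a continuous system inside $\mcJ$ of length at most $\lambda$ together with a compatible family of $\mcJ$-morphisms to $Y$, which is exactly the input to that hypothesis.
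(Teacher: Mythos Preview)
Your proof is correct and follows essentially the same approach as the paper: iteratively factor using idempotence to build a continuous system with a compatible cocone to $Y$, then invoke the universal property of the colimit to produce the required factorization; at limit stages below $\lambda$ use the closure hypothesis to keep the cocone morphism in $\mcJ$ so the recursion can continue. Your final paragraph explaining why the two claims carry different hypotheses is a helpful addition not present in the paper's proof.
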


\begin{proof} The inclusion $\mcJ^{(\omega)}\subseteq\mcJ$ is trivial. For the other direction,
	let $g: A \longrightarrow J$ be a morphism in $\mcJ$.  We will construct a continuous $\omega$-system $(J_n, j_m^n \mid n < m \in \mathbb{N} )$ in $\mcJ$   with $J_0=A$ , and a family  $\{g_{n}:J_n\to J\mid  n \in \mathbb{N}\}$ of morphisms with $g_{n} = g_{ n+1} j_{ n+1}^n$ such that $g$ factors through ${J_{\omega}:=\lim\limits_{\longrightarrow\atop{n \in \mathbb{N}}} J_n}$. Because $J_0 \longrightarrow J_{\omega}$ belongs to $\mcJ^{(\omega)}$, this will show that $g\in\mcJ^{(\omega)}.$
	
	Since $\mcJ=\mcJ^2$, there exist morphisms $j_1^0:A \longrightarrow J_1$ and $g_1:J_1 \longrightarrow J$ in $\mcJ$ such that $g=g_0= g_1 j_1^0$. But again, since $g_1$ belongs to $\mcJ$, there exist morphisms $j_2^1:J_1 \longrightarrow J_2$ and $g_2:J_2 \longrightarrow J$ in $\mcJ$ such that $g_1= g_2 j_2^1$. Continuing this process, we finally obtain a continuous $\omega$-system of $\mcJ$ $(J_n, j_m^n \mid n < m \in \mathbb{N} )$ and a family of morphisms $\{g_{n}:J_n\to J\mid  m \in \mathbb{N}\}$ satisfying $g_{n} = g_{ n+1} j_{ n+1}^n$. That $g$ factors through ${J_{\omega}:=\lim\limits_{\longrightarrow\atop{n \in \mathbb{N}}} J_n}$ follows from the universal property of direct limits.
	
	For the second assertion, if $\mcJ$ is closed under $\lambda$-directed limits, then the above construction can keep going on for any ordinal $\alpha<\lambda$, because if $\alpha=\mu$ is a limit ordinal, the induced map  $g_{\mu}=\lim\limits_{\longrightarrow\atop{\alpha <\lambda} }g_\alpha:\lim\limits_{\longrightarrow\atop{\alpha <\mu} }J_\alpha\to J$ belongs to $\mcJ$. Thus we may construct a $\lambda$-system of $\mcJ$ $(J_\alpha, j_\beta^\alpha \mid  \alpha< \beta< \lambda)$ and a family of maps $\{g_{\alpha}:J_\alpha\to J \mid \alpha<\lambda\}$ with $g_{\alpha}\in\mcJ$ such that $g_{\alpha}=g_{\beta}j^{\alpha}_{\beta}$ for $\alpha<\beta<\lambda$, and $g$ factors through $\lim\limits_{\longrightarrow\atop{\alpha<\lambda} }J_\alpha$ follows from the universal property of direct limits, this will establish the result.
\end{proof}

\subsection{The Generalized Generating Hypothesis for Ghost ideals}Next we consider the case that $\mcJ$ is an $\mcS$-ghost ideal $\mathfrak{g}_{\mathcal{S}}$, then we have the following  decreasing chain of inductive powers of $\mathfrak{g}_{\mathcal{S}}$
$$\cdots \subseteq \mathfrak{g}^{(\lambda)}_{\mathcal{S}} \subseteq \cdots \subseteq \mathfrak{g}^{2}_{\mathcal{S}} \subseteq\mathfrak{g}_{\mathcal{S}}.$$
It is clear  from the definition of inductive powers of an ideal that  any inductive power of $\mathfrak{g}_{\mathcal{S}}$ contains the object ideal
$\langle\mathcal{S}^\perp\rangle$. Hence, the aforementioned  decreasing chain is bounded below by $\langle\mathcal{S}^\perp\rangle$

\begin{equation}\label{chain:ghost}
	\langle\mathcal{S}^\perp\rangle \subseteq \cdots \subseteq \mathfrak{g}^{(\lambda)}_{\mathcal{S}} \subseteq \cdots \subseteq \mathfrak{g}^{2}_{\mathcal{S}} \subseteq\mathfrak{g}_{\mathcal{S}}.
\end{equation}
We say that  \textit{the Generalized $\lambda$-Generating Hypothesis $\grl$-{\rm GGH(}$\mathfrak{g}_\mcS)$ holds}  if $\mathfrak{g}^{(\lambda)}_{\mathcal{S}}=\langle\mcS^\perp\rangle.$

\begin{remark}\label{remark:termin_ghost} \rm
	The Generating Hypothesis was first introduced by Freyd in \cite{Fre66}.  Lockridge~\cite{Loc07} proposed and studied The Generating Hypothesis for an algebraic triangulated category. Almost at the same time,  the study of the Generating Hypothesis for the ghost ideal  in the stable module category $kG \operatorname{-\underline{mod}}$ (see Example \ref{ex:ghost_tate})  was initiated by Benson, Chebolu, Christensen and Miná\u{c} in \cite{BCCM07}.
%	The word `generalized' refers to the case when the finiteness condition on (co)domain of ghost morphisms is dropped.
	 Note that the ghost ideal in $kG \operatorname{-\underline{Mod}}$  is trivial  if and only if the corresponding ghost ideal in $kG  \operatorname{-Mod}$ is the object ideal $\langle \operatorname{Proj} \rangle$. In  Propositions \ref {proposition:generating_homotopy} and \ref{prop:generating_Derived}, we show that being trivial for ghost ideals in $\mathbf{K}(R)$ and $\mathbf{D}(R)$ is equivalent to being an object ideal for the corresponding ghost ideals in the exact categories $\mathbf{C}(R)$ and $\operatorname{dg-Proj}$, respectively. From this point of view, the object ideals in an exact category can be considered as `trivial'.
\end{remark}

Recall that an exact category $\mcA$ is said to be \textit{efficient}  \cite{SS11} if it satisfies the following conditions: (i) $\mcA$ is weakly idempotent; (ii) arbitrary transfinite composition of inflations in $\mcA$ exists, and is an inflation, as well; (iii) every object in $\mcA$ is small relative to inflations; (iv) $\mcA$ admits a generator.

\begin{lemma} \cite[Lemma 1.4 and Theorem 2.13(4)]{SS11} and \cite[ Proposition 5.8]{Sto13}
	Let $\mcS$ be a set of objects in $\mcA$. Assume that $\mcA$ is an efficient exact category. Then $\mcA$ has exact coproducts, and  for any object $A$ in $\mcA$, there exists a conflation in $\mcA$ of the form
	$$\Xi: \xymatrix{A \ar[r] & B \ar[r] & C}$$
	with $B \in \mcS^\perp$ and $C \in \Filt(\mcS)$, that is, the cotorsion pair $( ^\perp (\mcS^\perp), \mcS^\perp )$ has enough injectives.
\end{lemma}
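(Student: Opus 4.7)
The plan is to prove the two assertions in sequence. \emph{Exact coproducts} are an immediate consequence of efficiency: axiom (ii) turns transfinite compositions of inflations into inflations, and in combination with (i) and (iv) this forces arbitrary coproducts to exist and coproducts of conflations to be conflations (as in \cite[Lemma~1.4]{SS11}). I would simply cite this and move on to the main claim.

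For the main claim I would run a transfinite small object argument of Eklof--Trlifaj type. Fix a regular cardinal $\lambda$ strictly larger than the smallness bound (from axiom (iii)) of every $S \in \mcS$. Starting from $A_0 := A$, inductively build a continuous $(\lambda+1)$-chain of inflations $(A_\beta)_{\beta \le \lambda}$ as follows. At a successor stage, take the index set $I_\beta := \bigsqcup_{S \in \mcS} \Ext(S, A_\beta)$; for each pair $(S,\xi) \in I_\beta$ fix a representing conflation $A_\beta \to B_{(S,\xi)} \to S$; form the coproduct conflation $A_\beta^{(I_\beta)} \to \coprod B_{(S,\xi)} \to \coprod_{(S,\xi)} S$ (legitimate by the first paragraph); and take its pushout along the codiagonal $A_\beta^{(I_\beta)} \to A_\beta$ to obtain
\[
\xymatrix{A_\beta \ar[r] & A_{\beta+1} \ar[r] & \coprod_{(S,\xi) \in I_\beta} S.}
\]
This is precisely the construction already used in the proof of Theorem~\ref{key} and guarantees that every class in $\Ext(S, A_\beta)$ becomes trivial in $\Ext(S, A_{\beta+1})$ for each $S \in \mcS$. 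At a limit ordinal, set $A_\beta := \lim\limits_{\longrightarrow\atop{\alpha < \beta}} A_\alpha$, which by axiom (ii) is the colimit of an inflation-chain and is inflation-linked to every earlier $A_\alpha$. Put $B := A_\lambda$ and $C := B/A$. By construction $C$ is filtered by the $\mcS$-coproducts $\coprod_{I_\beta} S$, so $C \in \Filt(\mcS)$.

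It remains to verify that $B \in \mcS^\perp$. Given $S \in \mcS$, any class $\xi \in \Ext(S, B)$ is represented by a conflation $B \to E \to S$. The smallness of $S$ relative to the smooth $\lambda$-chain of inflations $(A_\beta)$ allows one to descend $\xi$ to some $\xi_\beta \in \Ext(S, A_\beta)$ with $\beta < \lambda;$ but $\xi_\beta$ was killed in passing to $A_{\beta+1}$, so its image in $\Ext(S, B)$ vanishes, whence $\xi = 0.$

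\emph{The main obstacle} is precisely this descent step: showing that $\Ext(S, -)$ is compatible in the required sense with the $\lambda$-smooth chain of inflations $(A_\beta)$, so that every extension class over $B$ already lives over some $A_\beta$. This is the technical heart of \v{S}\'{t}ov\'{i}\v{c}ek's Proposition~5.8 in \cite{Sto13} and depends both on the smallness axiom (iii) and on the regularity of $\lambda$; the remaining steps are routine bookkeeping with pushouts and transfinite compositions validated by the other axioms of an efficient exact category.
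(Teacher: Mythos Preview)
The paper does not prove this lemma at all; it is stated with citations to \cite[Lemma~1.4, Theorem~2.13(4)]{SS11} and \cite[Proposition~5.8]{Sto13} and used as a black box. Your sketch is essentially the Eklof--Trlifaj small object argument as carried out in those references, and it is correct in outline: the exact-coproduct claim is precisely \cite[Lemma~1.4]{SS11}, and the transfinite construction you describe is the content of \cite[Theorem~2.13(4)]{SS11} and \cite[Proposition~5.8]{Sto13}. Your identification of the descent of $\Ext$-classes along the $\lambda$-smooth chain as the crux is accurate; one minor point to tighten is that the quotient $C$ is first seen to lie in $\Filt(\Sum(\mcS))$, and one then uses that $\Sum(\mcS) \subseteq \Filt(\mcS)$ together with $\Filt(\Filt(\mcS)) = \Filt(\mcS)$ to conclude $C \in \Filt(\mcS)$.
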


The above lemma can be used to prove the following result.
\begin{proposition}\label{prop:object}Let $\mcS$ be a set of objects in $\mcA$. Assume that $\mcA$ is  an efficient exact category. Then $\grl$-{\rm GGH(}$\mathfrak{g}_\mcS)$ holds if and only if
	for every ordinal $\mu \geq\lambda $,
	$$\mathfrak{g}^{(\mu)}_{\mathcal{S}}=\mathfrak{g}^{(\lambda)}_{\mathcal{S}}.$$
\end{proposition}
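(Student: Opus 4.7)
The plan is to treat the two directions separately. The forward implication will be almost immediate from the chain~\eqref{chain:ghost}: if $\mathfrak{g}^{(\lambda)}_{\mathcal{S}} = \langle \mcS^{\perp}\rangle$, then for every $\mu \geq \lambda$ the chain sandwiches $\mathfrak{g}^{(\mu)}_{\mathcal{S}}$ between $\langle \mcS^{\perp}\rangle$ and $\mathfrak{g}^{(\lambda)}_{\mathcal{S}} = \langle \mcS^{\perp}\rangle$, forcing equality throughout. The substance of the proposition lies in the converse.

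For the converse, I would assume that $\mathfrak{g}^{(\mu)}_{\mathcal{S}} = \mathfrak{g}^{(\lambda)}_{\mathcal{S}}$ for every $\mu \geq \lambda$ and show $\mathfrak{g}^{(\lambda)}_{\mathcal{S}} \subseteq \langle \mcS^{\perp}\rangle$. Take any $g \colon A \to X$ in $\mathfrak{g}^{(\lambda)}_{\mathcal{S}}.$ Because $\mcA$ is efficient, the lemma invoked just before the statement supplies a conflation $\xymatrix@1{A \ar[r]^{j} & B \ar[r] & C}$ with $B \in \mcS^{\perp}$ and $C \in \Filt(\mcS);$ fix an ordinal $\nu$ for which $C$ admits a $\nu$-$\mcS$-filtration. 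Since $\mcS \subseteq \Ob({}^{\perp}\mathfrak{g}_{\mcS})$, the Ideal Eklof Lemma (Theorem~\ref{TFH4}) yields $C \in {}^{\perp}(\mathfrak{g}^{(\nu)}_{\mcS})$, so that $\Ext(C,f) = 0$ for every $f \in \mathfrak{g}^{(\nu)}_{\mcS}$. The crucial observation is then that $g$ itself lies in $\mathfrak{g}^{(\nu)}_{\mcS}$ regardless of how $\nu$ compares with $\lambda$: if $\nu \leq \lambda$ this follows from the chain inclusion $\mathfrak{g}^{(\lambda)}_{\mcS} \subseteq \mathfrak{g}^{(\nu)}_{\mcS};$ if $\nu > \lambda$ the stabilization hypothesis forces $\mathfrak{g}^{(\nu)}_{\mcS} = \mathfrak{g}^{(\lambda)}_{\mcS}.$

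Either way $\Ext(C,g) = 0.$ Applying this equality to the extension class in $\Ext(C, A)$ representing $A \to B \to C$ shows that the pushout of $j$ along $g$ is a trivial conflation $X \to X \cup_A B \to C;$ composing any retraction of its inflation with the pushout map $B \to X \cup_A B$ produces a morphism $h \colon B \to X$ satisfying $g = h \circ j,$ which factors $g$ through $B \in \mcS^{\perp}.$ Hence $g \in \langle \mcS^{\perp}\rangle,$ as required. The main obstacle is precisely the dichotomy on $\nu$: the case $\nu > \lambda$ is where the stabilization hypothesis is indispensable, while the case $\nu \leq \lambda$ uses only the monotonicity of the chain of inductive powers.
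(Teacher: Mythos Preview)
Your proposal is correct and follows essentially the same approach as the paper's proof: both directions are handled identically, using the lemma on efficient exact categories to produce the conflation $A \to B \to C$ with $B \in \mcS^\perp$ and $C \in \Filt(\mcS)$, followed by the same dichotomy on the filtration length combined with the Ideal Eklof Lemma to force $\Ext(C,g)=0$ and hence a factorization through $B$. The only cosmetic difference is that the paper, in the case $\nu \leq \lambda$, pads the filtration of $C$ up to length $\lambda$ and applies the Ideal Eklof Lemma at level $\lambda$, whereas you apply it at level $\nu$ and use the chain inclusion $\mathfrak{g}^{(\lambda)}_{\mcS} \subseteq \mathfrak{g}^{(\nu)}_{\mcS}$; these are logically equivalent moves.
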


\begin{proof}
	If $\grl$-{\rm GGH(}$\mathfrak{g}_\mcS)$ holds, then it is clear that for every ordinal $\mu \geq\lambda $,
	$\mathfrak{g}^{(\mu)}_{\mathcal{S}}=\mathfrak{g}^{(\lambda)}_{\mathcal{S}}.$	
	Conversely, let $f:A\to X$ be a morphism in $\mathfrak{g}^{(\lambda)}_{\mathcal{S}}$ . By the above lemma, there is a conflation $$\Xi: \xymatrix{A \ar[r] & B \ar[r] & C}$$
	with $B\in {\mcS}^\perp$ and $C \in \mu \mbox{-}\Filt(\mcS)$ for some $\mu$, and hence $C \in \mu \mbox{-}\Filt(\mbox{Sum}(\mcS))$. If $\mu\leq\lambda$, then $C\in \lambda  \mbox{-}\Filt(\mbox{Sum}(\mcS))$, and hence $\Ext(C,f)=0$ by The Ideal Eklof Lemma (Theorem \ref{TFH4}). If $\mu\geq\lambda$, then $\Ext(C,f)=0$ follows from $f\in \mathfrak{g}^{(\mu)}_{\mathcal{S}}=\mathfrak{g}^{(\lambda)}_{\mathcal{S}}$. Therefore $f$ always factors through $B$, and so $f\in\langle  \mcS^\perp\rangle$.
\end{proof}

Now we handle the question of whether there exists an ordinal $\lambda$ such that $\grl$-{\rm GGH(}$\mathfrak{g}_\mcS)$ holds. Recall that a cardinal number $\lambda$ is just the least ordinal of cardinality $\lambda$.
So far, $\lambda$ has been considered as an arbitrary ordinal. Now we switch $\lambda$ to a cardinal number, which can be still considered as an ordinal when needed to work with inductive powers of an ideal and filtrations.

Recall that an object $A$ in  $\mcA$ is said to be \emph{$\lambda$-presentable} if the functor $\Hom(A,-)$
preserves $\lambda$-directed colimits. The category $\mcA$ is called
\emph{ locally $\lambda$-presentable} if it is cocomplete and there is a
set $\mathcal C$ of $\lambda$-presentable objects in $\mcA$ such that any other object in $\mcA$ is a $\lambda$-directed colimit of objects in
$\mathcal C$. In case $\lambda=\omega$, a $\lambda$-presentable object is called \emph{finitely presented} and a locally $\lambda$-presentable category is known as \emph{locally finitely presentable}. A category $\mcA$ is called \emph{locally presentable} if it is locally $\lambda$-presentable for some cardinal $\lambda$. Every object in a locally presentable category is $\lambda'$-presentable for some regular cardinal $\lambda'$; see \cite[Proposition~1.16]{AR94}. For a detailed treatment on locally presentable categories, see  \cite{AR94}.
%For short, $\aleph_0$-directed colimits will be just called \emph{direct
%	limits;} locally $\aleph_0$-presentable categories,  \emph{locally finitely %presented}; and $\aleph_0$-presentable objects, \emph{finitely presented}.

It is well known that if $\mcA$ is a Grothendieck category with the absolute exact structure, then $\mcA$ is locally $\lambda$-presentable for some infinite regular cardinal $\lambda$; see \cite[Proposition~3.10]{Beke}. For a given class $\mcS$ of $\lambda$-presentable objects in $\mcA$,  \v{S}\'{t}ov\'{i}\v{c}ek in \cite[Theorem 3.1]{Sto} showed that an object $X$ which  has an $\mcS$-filtration, has a $\mbox{Sum}(\mcS)$-filtration of length $\leq\lambda$. This gives a global bound on the length of the $\mbox{Sum}(\mcS)$-filtration for objects in $\Filt(\mcS)$. Enochs obtained a similar bound in the settting of the category of modules (see~\cite{Enochs2}), so we call it the \emph{Enochs-\v{S}\'{t}ov\'{i}\v{c}ek bound} on filtrations for objects in $\Filt$-$\mcS$.

\begin{theorem} \label{EFHS}
	Suppose that  $\lambda$ is an infinite regular cardinal, and $\mcA$ is a locally $\lambda$-presentable Grothendieck category. If $\mathcal{S}$ is a set of $\lambda$-presentable objects in $\mcA$ such that $^\perp (\mathcal{S}^\perp)$ contains a generating set for $\mcA$, then the Generalized $\lambda$-Generating Hypothesis holds for $\mathfrak{g}_{\mathcal{S}}$.
	%$\mcS \operatorname{-Ghost}^{{ } (\mu)}$ is an  object ideal.
\end{theorem}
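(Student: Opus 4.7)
The plan is to combine the structural theorems developed earlier with the Enochs--\v{S}\'{t}ov\'{i}\v{c}ek length bound on filtrations in a locally $\lambda$-presentable Grothendieck category. First I would observe that such an $\mathcal{A}$ is efficient, has exact coproducts, and satisfies Hypothesis~\ref{hyp1} for every ordinal, since any transfinite composition of monomorphisms in a Grothendieck category exists and is again a monomorphism. Consequently, Theorem~\ref{key} shows $\mathfrak{g}_{\mathcal{S}}$ is object-special preenveloping with $\Sum(\mathcal{S})$ as a cosyzygy subcategory, and Corollary~\ref{TFH5} lifts this to the $\lambda$-th inductive power: $\mathfrak{g}_{\mathcal{S}}^{(\lambda)}$ is object-special preenveloping with cosyzygy subcategory $\lambda\mbox{-}\Filt(\Sum(\mathcal{S}))$.

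The next step is to invoke \v{S}\'{t}ov\'{i}\v{c}ek's length bound~\cite[Theorem~3.1]{Sto}. Since $\mathcal{S}$ consists of $\lambda$-presentable objects in a locally $\lambda$-presentable category, and $^{\perp}(\mathcal{S}^{\perp})$ contains a generating set of $\mathcal{A}$, the classical cotorsion pair $({^{\perp}}(\mathcal{S}^{\perp}),\mathcal{S}^{\perp})$ is complete and one obtains the identification $^{\perp}(\mathcal{S}^{\perp})=\lambda\mbox{-}\Filt(\Sum(\mathcal{S}))$. Hence for every $A\in\mathcal{A}$ there exists a conflation
\[
\xymatrix{A \ar[r] & B \ar[r] & D}
\]
with $B\in\mathcal{S}^{\perp}$ and $D\in\lambda\mbox{-}\Filt(\Sum(\mathcal{S}))$.

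To conclude, I would take any $f\colon A\to X$ in $\mathfrak{g}_{\mathcal{S}}^{(\lambda)}$ and form the pushout along $f$,
\[
\xymatrix{A \ar[r] \ar[d]_f & B \ar[r] \ar[d] & D \ar@{=}[d] \\ X \ar[r] & Y \ar[r] & D.}
\]
Because $\mathcal{S}\subseteq\Ob({^{\perp}}\mathfrak{g}_{\mathcal{S}})$ and this containment is preserved by coproducts in a Grothendieck category, $\Sum(\mathcal{S})\subseteq\Ob({^{\perp}}\mathfrak{g}_{\mathcal{S}})$. The Ideal Eklof Lemma (Theorem~\ref{TFH4}) applied to the $\lambda$-filtration of $D$ then yields $D\in\Ob({^{\perp}}(\mathfrak{g}_{\mathcal{S}}^{(\lambda)}))$, so $\Ext(D,f)=0$ and the pushed-out conflation is trivial. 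A splitting retraction $Y\to X$, composed with the middle morphism $B\to Y$, produces a factorization of $f$ through $B\in\mathcal{S}^{\perp}$, giving $f\in\langle\mathcal{S}^{\perp}\rangle$. This establishes the nontrivial inclusion $\mathfrak{g}_{\mathcal{S}}^{(\lambda)}\subseteq\langle\mathcal{S}^{\perp}\rangle$; the reverse inclusion is immediate from the chain~\eqref{chain:ghost}.

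The main obstacle is the identification $^{\perp}(\mathcal{S}^{\perp})=\lambda\mbox{-}\Filt(\Sum(\mathcal{S}))$ with exactly the bound $\lambda$. The generating-set hypothesis plays a double role here: it is needed both to secure completeness of the classical cotorsion pair (via the Eklof--Trlifaj or small object style argument) and to guarantee that the resulting cosyzygy is an honest $\lambda$-length $\Sum(\mathcal{S})$-filtration, rather than merely a direct summand of one, or a filtration of potentially unbounded length. Everything else then flows from the general machinery already assembled in the paper.
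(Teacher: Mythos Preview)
Your argument is correct and follows essentially the same route as the paper: both combine the Enochs--\v{S}\'{t}ov\'{i}\v{c}ek length bound with the Ideal Eklof Lemma (packaged as Corollary~\ref{TFH5}) to place the cosyzygy object $D$ in ${^{\perp}}(\mathfrak{g}_{\mathcal{S}}^{(\lambda)})$. The only cosmetic difference is the endgame: where you spell out the pushout-and-split argument to factor $f$ through $B\in\mathcal{S}^{\perp}$, the paper instead passes through the ideal cotorsion pair $(\langle{^{\perp}}(\mathcal{S}^{\perp})\rangle,\langle\mathcal{S}^{\perp}\rangle)$ from \cite[Theorem~28]{FGHT}, which amounts to the same thing. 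Your worry about the precise equality ${^{\perp}}(\mathcal{S}^{\perp})=\lambda\text{-}\Filt(\Sum(\mathcal{S}))$ is not a genuine obstacle: you only need that the special $\mathcal{S}^{\perp}$-preenvelope of $A$ has cokernel in $\Filt(\mathcal{S})$ (which is how the small-object construction produces it), and then Enochs--\v{S}\'{t}ov\'{i}\v{c}ek shortens this to a $\lambda$-filtration; the paper handles the extra $\add$ by noting $\langle\add(\Filt(\mathcal{S}))\rangle=\langle\Filt(\mathcal{S})\rangle$.
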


\begin{proof}
	From  \cite[Lemma 3.6]{Gill07},  we have
	$^\perp(\mcS^\perp)=\add(\mbox{Filt}(\mcS)),$
	which also implies that  $( ^\perp(\mcS^\perp), \mcS^\perp)$ is a complete cotorsion pair in $\mcA$.
	On the other hand, the Enochs-\v{S}\'{t}ov\'{i}\v{c}ek bound tells us that
	$\mbox{Filt} (\mcS)= \lambda \mbox{-} \Filt( \mbox{Sum}(\mcS)),$ (here $\lambda$ is thought as an ordinal in the usual way). Then
	$$\langle^\perp(\mcS^\perp)\rangle=\langle \mbox{Filt}(\mcS)\rangle =\langle \lambda \mbox{-} \Filt( \mbox{Sum}(\mcS)) \rangle\ .$$
	By Theorem \ref{TFH5}, $\lambda \mbox{-} \Filt( \mbox{Sum}(\mcS))$ is a cosyzygy subcategory of  $\mathfrak{g}_{\mathcal{S}}^{(\lambda)}$, so
	$$ \mathfrak{g}_{\mathcal{S}}^{(\lambda)} \subseteq   \langle\lambda \mbox{-} \Filt( \mbox{Sum}(\mcS)) \rangle^\perp=\langle^\perp(\mcS^\perp)\rangle^\perp$$
	Note that $( \langle ^\perp(\mcS^\perp) \rangle, \langle \mcS^\perp \rangle   )$ is an ideal cotorsion pair in $\mcA$ by \cite[Theorem 28]{FGHT}. Therefore
	$\mathfrak{g}_{\mathcal{S}}^{(\lambda)}\subseteq \langle^\perp(\mcS^\perp)\rangle^\perp = \langle\mcS^\perp \rangle.$
\end{proof}

By the considerations above, every object of a Grothendieck category $\mcA$ is presentable, so if $\mcS$ is a set of objects in $\mcA$, there is an infinite cardinal $\lambda$ such that $\mcA$ is locally $\lambda$-presentable and $\mcS$ is a set of $\lambda$-presentable objects in $\mcA.$

\begin{remark} \rm
	Let us look more closely at the case when $\lambda$ (regarded as an ordinal) is $\omega,$ as it will be the main source of applications in the next sections. Theorem \ref{EFHS} ensures that if $\mcA$ is a locally finitely presentable Grothendieck category,  and $\mathcal{S}$ is a set of finitely presented objects in $\mcA$ such that $^\perp (\mathcal{S}^\perp)$ contains a generating set for $\mcA$ (for instance, if $\mcA$ has enough projective objects), then  $\omega$-{\rm GGH(}$\mathfrak{g}_\mcS)$ holds, or equivalently, that the $\omega$-inductive power $\mathfrak{g}_{\mathcal{S}}^{{ } (\omega)}$ of $\mathfrak{g}_{\mathcal{S}}$ is the ideal $\langle\mathcal{S}^\perp\rangle$.
\end{remark}

\begin{remark}\label{dual.results2}
{\rm By contrast to Remark \ref{dual.results}, we are not able to dualize the results of this section. This is due to the absence of a theory of cofiltrations and their bounds.}
\end{remark}

\section{The ghost ideal in $\mathbf{K}(R)$, $\mathbf{D}(R)$ and $\mathbf{C}(R)$}
From now on, $R$ denotes an associative ring with identity. Let $R\mbox{-Mod}$ be the category of left $R$-modules,  $\mathbf{C}(R)$ the category of chain complexes of left $R$-modules,  $\mathbf{K}(R)$ the homotopy category of chain complexes of left $R$-modules, and  $\mathbf{D}(R)$ the derived category of chain complexes of left $R$-modules.

In this section, we investigate ghost morphisms and ghost ideals in the category $\mathbf{C}(R)$, and its relation with the existent ones in $\mathbf{D}(R)$ and $\mathbf{K}(R)$.

Given $n \in \mathbb{Z}$ and $A \in R$-$\Mod$, \textit{the $n$th sphere chain complex} $\operatorname{S}^n(A)$ is the chain complex of the form
 $$\xymatrix{ \cdots \ar[r] &  0 \ar[r] &  A \ar[r] & 0 \ar[r] &  \cdots }$$
 with $A$ in the $n$th place; \textit{the $n$th disc chain complex} $\operatorname{D}^n(A)$ is the chain complex of the form
 $$\xymatrix{\cdots\ar[r]&0\ar[r]&A\ar[r]^{1_A}&A\ar[r]&0\ar[r]&\cdots}$$ with two $A$'s in the $n$-th and $(n-1)$-th places.

The \textit{translation functor on $\mathbf{C}(\mcA)$} is the autofunctor $\Sigma:\ \mathbf{C}(\mcA)\to \mathbf{C}(\mcA)$ defined by
$$\Sigma (X)_n:=X_{n-1}$$
and with differential $d^{ \Sigma (X)}=-d^{X}$. Its inverse is denoted by $\Sigma^{-1}$. For a given morphism $f: X \rightarrow Y$ in $\mathbf{C}(\mcA)$, the \textit{cone} of $f$, denoted by $ \operatorname{cone}(f)$, is a chain complex defined by $\operatorname{cone}(f)_n:= Y_n\oplus X_{n-1}$ with differentials
$$\left[ \begin{array}{cc} d_n^Y & f_{n-1}\\ 0 & -d_{n-1}^X  \end{array} \right]: Y_n \oplus X_{n-1} \longrightarrow Y_{n-1}\oplus X_{n-2}. $$
 The cone  of $f$ induces the following canonical exact sequence in $\mathbf{C}(\mcA)$ defined by degree-wise inclusion and projection
$$\xymatrix{ Y \ar[r] & \operatorname{cone}(f) \ar[r] & \Sigma (X)}.$$

\subsection{The Ghost ideal in $\mathbf{C}(R)$ and the Cartan-Eilenberg exact structure} Note that for any integer $n$, there are four basic functors from the category $\mathbf{C}(R)$ to  $R\mbox{-Mod}$: the $n$th cycle functor $\operatorname{Z}_n$, the $n$th cokernel functor $\mbox{C}_n$, the $n$th boundary functor  $\operatorname{B}_n$, and the $n$th homology functor $\operatorname{H}_n$.
%, that is,  for  a short exact sequence $\Xi :\ 0 \rightarrow X \rightarrow Y \rightarrow Z \rightarrow 0$ of chain complexes of left $R$-modules, the  induced sequences  $\operatorname{Z}_n(\Xi): 0 \rightarrow \operatorname{Z}_n(X) \rightarrow \operatorname{Z}_n(Y) \rightarrow \operatorname{Z}_n (Z)$ and $\operatorname{C}_n(\Xi): \operatorname{C}_n(X) \rightarrow \operatorname{C}_n(Y) \rightarrow \operatorname{C}_n(Z) \rightarrow 0$ are exact.
%There is a \textit{Cartan-Eilenberg exact structure on}  $\mathbf{C}(R)$ consisting of short exact sequences in $\mathbf{C}(R)$ which remain exact under  the functor $\operatorname{Z}_n$, or equivalently， under $\mbox{C}_n$,  for every $n \in \mathbb{Z}$.
\begin{proposition}\label{CE1}\cite[Lemma 5.2]{Enochs} Let $\Xi :\ 0 \rightarrow X \rightarrow Y \rightarrow Z \rightarrow 0$ be a short exact sequence of   chain complexes of  left $R$-modules. The following are equivalent:
	\begin{enumerate}[(i)]
		\item For every $n \in \mathbb{Z}$, the sequence $\operatorname{Z}_n(\Xi)$  is exact.
		\item For every $n \in \mathbb{Z}$, the sequence $\operatorname{B}_n(\Xi)$ is exact.
		\item For every $n \in \mathbb{Z}$, the sequence $\operatorname{H}_n(\Xi )$ is exact.
		\item For every $n \in \mathbb{Z}$, the sequence $\operatorname{C}_n(\Xi)$ is exact.
	\end{enumerate}
\end{proposition}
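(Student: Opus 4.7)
The plan is to exploit four natural short exact sequences of functors from $\mathbf{C}(R)$ to $R\mbox{-Mod}$:
\begin{gather*}
0 \to \operatorname{Z}_n \to (-)_n \to \operatorname{B}_{n-1} \to 0, \qquad 0 \to \operatorname{B}_n \to (-)_n \to \operatorname{C}_n \to 0, \\
0 \to \operatorname{B}_n \to \operatorname{Z}_n \to \operatorname{H}_n \to 0, \qquad 0 \to \operatorname{H}_n \to \operatorname{C}_n \to \operatorname{B}_{n-1} \to 0.
\end{gather*}
Evaluating any of these on $\Xi$ yields a $3 \times 3$ commutative diagram with exact rows, and the classical $3 \times 3$ lemma then asserts that if any two of the three columns are short exact, so is the third.

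First I would apply the $3 \times 3$ lemma to the two sequences with $(-)_n$ in the middle. In both cases the induced middle column is $0 \to X_n \to Y_n \to Z_n \to 0$, which is automatically exact since $\Xi$ is exact in each degree. Consequently the lemma gives (i) $\Leftrightarrow$ (ii) from the first sequence and (ii) $\Leftrightarrow$ (iv) from the second, so (i), (ii), and (iv) are equivalent. Next, applying the lemma to $0 \to \operatorname{B}_n \to \operatorname{Z}_n \to \operatorname{H}_n \to 0$ and assuming (i) and (ii), both the left and middle columns are short exact, so the right column is short exact, which is (iii). Therefore (i), (ii), and (iv) each imply (iii).

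For the reverse implication (iii) $\Rightarrow$ (i) I would run a direct diagram chase amounting to the statement that the connecting homomorphism $\partial_n \colon \operatorname{H}_n(Z) \to \operatorname{H}_{n-1}(X)$ of the long exact sequence of homology vanishes for every $n$. Given a cycle $z \in \operatorname{Z}_n(Z)$, lift it to some $y \in Y_n$; then $d_n^Y(y)$ maps to $d_n^Z(z) = 0$, so it lifts uniquely to $x \in X_{n-1}$, and one checks that $x$ is a cycle representing $\partial_n([z])$. Under (iii) we have $\partial_n = 0$, so $x = d_n^X(x')$ for some $x' \in X_n$; then $y - x'$ is a cycle in $Y_n$ that still lifts $z$, giving surjectivity of $\operatorname{Z}_n(Y) \to \operatorname{Z}_n(Z)$ and hence (i).

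The main obstacle is precisely this last asymmetry: the $3 \times 3$ lemma applied to $0 \to \operatorname{B}_n \to \operatorname{Z}_n \to \operatorname{H}_n \to 0$ freely propagates exactness from any two of its three columns to the third, but the hypothesis (iii) alone yields exactness of only one column, so a supplementary argument bridging this gap is unavoidable, and the most economical is the diagram chase via the explicit connecting map above.
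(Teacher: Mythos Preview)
Your proof is correct. Note, however, that the paper does not supply its own proof of this proposition: it is stated with a citation to \cite[Lemma 5.2]{Enochs} and used as a black box, so there is nothing in the paper to compare your argument against.

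A few minor remarks on presentation. In the chase for (iii) $\Rightarrow$ (i), when you write ``$y - x'$'' you of course mean $y - i(x')$ where $i \colon X \hookrightarrow Y$; making that explicit avoids ambiguity. Also, your invocation of the $3 \times 3$ lemma for (i) $\Leftrightarrow$ (ii) and (ii) $\Leftrightarrow$ (iv) is really two separate applications (middle row plus top row $\Rightarrow$ bottom row, and vice versa), which is fine but worth saying. An alternative route to (i) $\Leftrightarrow$ (iv) that bypasses (ii) entirely is to apply the snake lemma directly to the differential $d_n$ viewed as a map of short exact sequences $0 \to X_n \to Y_n \to Z_n \to 0$ and $0 \to X_{n-1} \to Y_{n-1} \to Z_{n-1} \to 0$; the resulting six-term sequence is
\[
0 \to \operatorname{Z}_n(X) \to \operatorname{Z}_n(Y) \to \operatorname{Z}_n(Z) \to \operatorname{C}_{n-1}(X) \to \operatorname{C}_{n-1}(Y) \to \operatorname{C}_{n-1}(Z) \to 0,
\]
from which (i) $\Leftrightarrow$ (iv) is immediate. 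Your approach via the four functor sequences is equally valid and arguably more systematic.
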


\begin{definition}
	A short exact sequence $\Xi: 0 \rightarrow X \rightarrow Y \rightarrow Z \rightarrow 0$ of chain complexes of left $R$-modules is said to be \emph{Cartan-Eilenberg exact} {\rm (}$\operatorname{CE}$\emph{-exact} for short{\rm )} if one of the equivalent conditions given in Proposition \ref{CE1} is satisfied.
\end{definition}

It is easy to verify that the class $\mcE_{\operatorname{CE}}$ consisting of  Cartan-Eilenberg short exact sequences in $\mathbf{C}(R)$ is an exact structure on $\mathbf{C}(R)$, we call it the {\em Cartan-Eilenberg exact structure} on $\mathbf{C}(R)$.

Next, we always let $E$ be an injective cogenerator in the category $R\mbox{-Mod}$ of left $R$-modules. We have the following result.

\begin{proposition}\label{prop:CE_proj_gener}
	The Cartan-Eilenberg exact structure  $\mcE_{\operatorname{CE}}$ is projectively generated by the set $\{\operatorname{S}^n (R)\}_{n \in \mathbb{Z}}$, and is injectively generated by the set $\{\operatorname{S}^n(E)\}_{n \in \mathbb{Z}}$.
\end{proposition}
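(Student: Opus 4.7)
The plan is to reduce both statements to explicit isomorphisms of $\Hom$ functors on $\bC(R)$ and then quote Proposition~\ref{CE1}. The key calculations are the natural isomorphisms
\[
\Hom_{\bC(R)}(\operatorname{S}^n(R),X) \;\cong\; \operatorname{Z}_n(X), \qquad \Hom_{\bC(R)}(X,\operatorname{S}^n(E)) \;\cong\; \Hom_R(\operatorname{C}_n(X),E).
\]
The first is obtained by observing that a chain map $f\colon \operatorname{S}^n(R)\to X$ is determined by $f_n(1)\in X_n$, and the compatibility with differentials forces $f_n(1)\in\operatorname{Z}_n(X);$ the rest of the components are zero. The second is dual: a chain map $g\colon X\to\operatorname{S}^n(E)$ is given by a single map $g_n\colon X_n\to E$ satisfying $g_n\circ d^X_{n+1}=0$, i.e., factoring through $\operatorname{C}_n(X)=X_n/\operatorname{B}_n(X)$.

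For the first half of the proposition, I would take a degreewise short exact sequence $\Xi\colon 0\to X\to Y\to Z\to 0$ of complexes (a conflation in the absolute exact structure on $\bC(R)$) and note, using naturality of the isomorphism above, that applying $\Hom_{\bC(R)}(\operatorname{S}^n(R),-)$ to $\Xi$ produces exactly the sequence $\operatorname{Z}_n(\Xi)$ of abelian groups. Hence $\Hom_{\bC(R)}(\operatorname{S}^n(R),-)$ preserves exactness of $\Xi$ for every $n\in\mathbb{Z}$ if and only if every $\operatorname{Z}_n(\Xi)$ is exact, which by Proposition~\ref{CE1}(i) is equivalent to $\Xi$ being CE-exact. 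This gives $\mcE_{\operatorname{CE}} = \mcE_{\{\operatorname{S}^n(R)\}_{n\in\mathbb{Z}}}$.

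For the second half, applying $\Hom_{\bC(R)}(-,\operatorname{S}^n(E))$ to $\Xi$ yields the sequence $\Hom_R(\operatorname{C}_n(\Xi),E)$. Because $E$ is an injective cogenerator of $R\mbox{-Mod}$, the contravariant functor $\Hom_R(-,E)$ reflects and preserves exactness of sequences of left $R$-modules. Hence $\Hom_{\bC(R)}(-,\operatorname{S}^n(E))$ preserves exactness of $\Xi$ for every $n\in\mathbb{Z}$ if and only if each cokernel sequence $\operatorname{C}_n(\Xi)$ is exact, which by Proposition~\ref{CE1}(iv) is again equivalent to CE-exactness.

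I do not expect any serious obstacle: once the two $\Hom$ identifications are in place the argument is entirely formal, with the only delicate point being the invocation of $E$ as an injective cogenerator in the second half (one must be careful that the equivalence goes through a sequence of \emph{modules} rather than of complexes, but this is immediate from the degreewise nature of the problem). The rest of the proof amounts to citing Proposition~\ref{CE1}.
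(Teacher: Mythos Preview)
Your proposal is correct and follows essentially the same approach as the paper: the paper's proof also rests on the isomorphisms $\Hom(\operatorname{S}^n(R),\Xi)\cong\operatorname{Z}_n(\Xi)$ and $\Hom(\Xi,\operatorname{S}^n(E))\cong\Hom(\operatorname{C}_n(\Xi),E)$, combined with Proposition~\ref{CE1}(i) and~(iv) and the fact that $E$ is an injective cogenerator. Your write-up is slightly more explicit about why these $\Hom$ identifications hold, but the argument is the same.
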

\begin{proof}
	The first statement follows from Proposition \ref{CE1}(i) and the fact
	$$\Hom(\operatorname{S}^n(R), \Xi)\cong \Hom(R, \operatorname{Z}_n(\Xi)) \cong \operatorname{Z}_n(\Xi).$$
	For the second one, we have
	$$\Hom(\Xi, \operatorname{S}^n(E)) \cong \Hom(\operatorname{C}_n(\Xi), E).$$
	Since  $E$ an injective cogenerator in $\RMod$, $\Hom(\operatorname{C}_n(\Xi), E)$ is a short exact sequence  of abelian groups  if and only if $\operatorname{C}_n(\Xi)$ is a short exact sequence of left $R$-modules. Applying  Proposition~\ref{CE1}(iv), we have the second statement.
\end{proof}

Note that the set  $\{\operatorname{D}^n(R)\}_{n \in \mathbb{Z}}$ is a generating set for $\mathbf{C}(R)$ with projective chain complexes, and the exact structure  $\mcE_{\operatorname{CE}}$ is clearly also projectively generated by $\{\operatorname{S}^n(R), \operatorname{D}^n(R)\}_{n \in \mathbb{Z}}$.  Dually, the exact structure  $\mcE_{\operatorname{CE}}$ is injectively generated by the set $\{\operatorname{S}^n(E), \operatorname{D}^n(E)\}_{n \in \mathbb{Z}}$, which  contains a cogenerating set $\{\operatorname{D}^n(E)\}_{n \in \mathbb{Z}}$. Therefore the exact category $(\mathbf{C}(R);\mcE_{\operatorname{CE}} )$ has enough projectives and injectives by Proposition \ref{projectivelygenerated}.

\begin{proposition}\label{prop:CE-proj}\cite[Propositions 3.3 and 3.4.]{Enochs}
	Let $X \in \mathbf{C}(R)$. Then $X$ is a projective object in $(\mathbf{C}(R); \mcE_{\operatorname{CE}})$, called  $ \operatorname{CE}$-projective, if and only if for all $n \in \mathbb{Z}$, all $\operatorname{Z}_n(X)$, $\operatorname{B}_n(X)$, $\operatorname{H}_n(X)$ and $\operatorname{C}_n(X)$ are projective left $R$-modules if and only if  it is isomorphic to a chain complex  of the form
	$$  \bigoplus_{n \in \mathbb{Z}} \operatorname{D}^n(P_n) \oplus  \bigoplus_{n \in \mathbb{Z}} \operatorname{S}^n(Q_n)$$ where  $P_n$ and $Q_n$ are projective left $R$-modules, for all  $n \in \mathbb{Z}$.
	
	Similarly,  $X$ is an injective object in $(\mathbf{C}(R); \mcE_{\operatorname{CE}})$,   called $\operatorname{CE}$-injective,   if and only if for all $n \in \mathbb{Z}$, all $\operatorname{Z}_n(X)$, $\operatorname{B}_n(X)$, $\operatorname{H}_n(X)$ and $\operatorname{C}_n(X)$ are injective left $R$-modules if and only if it is isomorphic to a chain complex of the form
	$$ \prod_{n \in \mathbb{Z}}\operatorname{D}^n(I_n) \oplus  \prod_{ n \in \mathbb{Z}}\operatorname{S}^n(E_n) $$ where  all $I_n$ and $E_n$ are injective left $R$-modules.
	
\end{proposition}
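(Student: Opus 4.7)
My plan is to prove the CE-projective equivalences; the CE-injective assertion is entirely dual, replacing the generating set $\{\operatorname{S}^n(R),\operatorname{D}^n(R)\}_{n\in\mathbb{Z}}$ by the cogenerating set $\{\operatorname{S}^n(E),\operatorname{D}^n(E)\}_{n\in\mathbb{Z}}$ of Proposition~\ref{prop:CE_proj_gener} and swapping the roles of the short exact sequences involving $\operatorname{Z}_n,\operatorname{B}_n$ with those involving $\operatorname{C}_n,\operatorname{B}_{n-1}$.

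For $(\mathrm{iii})\Rightarrow(\mathrm{i})$ I would record the natural adjunctions $\Hom_{\mathbf{C}(R)}(\operatorname{S}^n(Q),X)\cong\Hom_R(Q,\operatorname{Z}_n(X))$ and $\Hom_{\mathbf{C}(R)}(\operatorname{D}^n(P),X)\cong\Hom_R(P,X_n)$. Because both $\operatorname{Z}_n$ (by Proposition~\ref{CE1}) and evaluation-at-$n$ carry CE-epimorphisms to module epimorphisms, these adjunctions show that $\operatorname{S}^n(Q)$ and $\operatorname{D}^n(P)$ are CE-projective whenever $Q,P$ are projective, and since CE-projectives are closed under coproducts and summands, every complex of the form in (iii) is CE-projective. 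For $(\mathrm{i})\Rightarrow(\mathrm{ii})$ I would invoke Propositions~\ref{projectivelygenerated} and~\ref{prop:CE_proj_gener}, which identify the CE-projectives as $\Add\{\operatorname{S}^n(R),\operatorname{D}^n(R)\mid n\in\mathbb{Z}\}$; a direct calculation shows that on any coproduct $F=\bigoplus_i\operatorname{S}^{n_i}(R)\oplus\bigoplus_j\operatorname{D}^{m_j}(R)$ each of the additive functors $\operatorname{Z}_k,\operatorname{B}_k,\operatorname{H}_k,\operatorname{C}_k$ returns a free $R$-module, so taking summands preserves projectivity and yields (ii).

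The substantive implication is $(\mathrm{ii})\Rightarrow(\mathrm{iii})$. Assuming (ii), the short exact sequences $0\to\operatorname{B}_n(X)\to\operatorname{Z}_n(X)\to\operatorname{H}_n(X)\to 0$ and $0\to\operatorname{Z}_n(X)\to X_n\to\operatorname{B}_{n-1}(X)\to 0$ both split by projectivity of $\operatorname{H}_n(X)$ and $\operatorname{B}_{n-1}(X)$, producing a decomposition $X_n\cong\operatorname{B}_n(X)\oplus\operatorname{H}_n(X)\oplus\operatorname{B}_{n-1}(X)$. The key technical step is to fix, for every $n$, a section $\sigma_n\colon\operatorname{B}_{n-1}(X)\to X_n$ of $d_n^X$; with these coherent splittings in place, $d_n^X$ vanishes on $\operatorname{Z}_n(X)=\operatorname{B}_n(X)\oplus\operatorname{H}_n(X)$ and carries the complementary $\operatorname{B}_{n-1}(X)$ identically onto the $\operatorname{B}_{n-1}(X)$-summand of $\operatorname{Z}_{n-1}(X)\subseteq X_{n-1}$. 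Matching summands degree by degree against the contributions of $\operatorname{D}^n(\operatorname{B}_{n-1}(X))$ in degrees $n,n-1$ and $\operatorname{S}^n(\operatorname{H}_n(X))$ in degree $n$ then assembles into the desired chain isomorphism $X\cong\bigoplus_n\operatorname{D}^n(\operatorname{B}_{n-1}(X))\oplus\bigoplus_n\operatorname{S}^n(\operatorname{H}_n(X))$. The only real obstacle is this coherent-splitting bookkeeping; once in place, the verification that differentials match is immediate from the definitions of $\operatorname{D}^n(-)$ and $\operatorname{S}^n(-)$.
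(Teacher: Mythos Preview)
The paper does not supply its own proof of this proposition; it simply cites Enochs \cite[Propositions 3.3 and 3.4]{Enochs}. Your argument is a correct self-contained proof: the adjunctions you record for $(\mathrm{iii})\Rightarrow(\mathrm{i})$ are standard, the appeal to Propositions~\ref{projectivelygenerated} and~\ref{prop:CE_proj_gener} (together with the remark after the latter that $\{\operatorname{S}^n(R),\operatorname{D}^n(R)\}_{n\in\mathbb{Z}}$ is a generating set projectively generating $\mcE_{\operatorname{CE}}$) legitimately identifies the CE-projectives with $\Add\{\operatorname{S}^n(R),\operatorname{D}^n(R)\}$ for $(\mathrm{i})\Rightarrow(\mathrm{ii})$, and the splitting bookkeeping in $(\mathrm{ii})\Rightarrow(\mathrm{iii})$ is exactly right---the chosen sections $\sigma_n$ ensure that $d_n^X$ carries the $\operatorname{B}_{n-1}(X)$-summand identically onto the corresponding summand of $X_{n-1}$, so the differentials match those of $\bigoplus_n\operatorname{D}^n(\operatorname{B}_{n-1}(X))\oplus\bigoplus_n\operatorname{S}^n(\operatorname{H}_n(X))$.
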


We let $\operatorname{CE-Proj}$ and $\operatorname{CE-Inj}$ be the classes of CE-projective and CE-injective chain complexes of left $R$-modules, respectively. Then $\operatorname{CE-Proj}= \mcE_{\operatorname{CE}} \operatorname{-Proj}$ and $\operatorname{CE-Inj}= \mcE_{\operatorname{CE}} \operatorname{-Inj}$.

\begin{definition}\label{def:ghost}
	A morphism $f $ in  $\mathbf{C}(R)$ is called a \emph{ghos}t map if $\operatorname{H}_n(f)=0$ for every $n \in \mathbb{Z}$.
\end{definition}
We denote by $\mathfrak{g}(\mathbf{C}(R))$ the class of all ghost morphisms in $\mathbf{C}(R)$ which is clearly an ideal. Recall from Remark \ref{dual.results} that if $\mcS\subseteq \mcA$ is a subset, the ideal left orthogonal to $\langle \mcS \rangle$ consists of the $\mcS$-coghost morphisms and is denoted by $\mathfrak{Cog}_{\mcS} = {^{\perp}} \langle \mcS \rangle$.
\begin{proposition}\label{prop:relative_ghost}
	Let  $\mathcal{S}:=\{ \operatorname{S}^n(R)\}_{n \in \mathbb{Z}}$ and $\mathcal{S}':=\{\operatorname{S}^n(E)\}_{n \in \mathbb{Z}}$ in $\mathbf{C}(R)$. We have
	$$ \mathfrak{g}_{\mathcal{S}}= \mathfrak{g} (\mathbf{C}(R)) =\mathfrak{Cog}_{\mathcal S'}.$$
\end{proposition}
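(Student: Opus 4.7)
The plan is to establish both equalities by explicit $\Ext^{1}$-computations in $\mathbf{C}(R)$ equipped with its standard degreewise exact structure. Specifically, I would prove the two natural isomorphisms
$$\Ext^{1}_{\mathbf{C}(R)}(\operatorname{S}^n(R), X) \;\cong\; \operatorname{H}_{n-1}(X), \qquad \Ext^{1}_{\mathbf{C}(R)}(X, \operatorname{S}^{n-1}(E)) \;\cong\; \Hom_{R}(\operatorname{H}_n(X), E),$$
and then translate the ideal-wise $\Ext$-vanishing conditions on a chain map $f\colon X \to Y$ into the condition that $\operatorname{H}_*(f) = 0$, after which varying the shift parameter in $\mathbb{Z}$ closes out both identifications.

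For the first isomorphism, the input is the short exact sequence
$$0 \longrightarrow \operatorname{S}^{n-1}(R) \longrightarrow \operatorname{D}^n(R) \longrightarrow \operatorname{S}^n(R) \longrightarrow 0$$
in $\mathbf{C}(R)$, in which $\operatorname{D}^n(R)$ is projective: a chain map out of $\operatorname{D}^n(R)$ is determined by a single element of the target in degree $n$, and such an element lifts freely against any degreewise epimorphism. Using the canonical identifications $\Hom(\operatorname{S}^k(R), X) = \operatorname{Z}_k(X)$ and $\Hom(\operatorname{D}^n(R), X) = X_n$, the tail of the long exact sequence obtained by applying $\Hom(-,X)$ becomes
$$X_n \xrightarrow{\;d_n^{X}\;} \operatorname{Z}_{n-1}(X) \longrightarrow \Ext^1(\operatorname{S}^n(R), X) \longrightarrow 0,$$
whose cokernel is $\operatorname{H}_{n-1}(X)$. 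Consequently $\Ext(\operatorname{S}^n(R), f) = 0$ iff $\operatorname{H}_{n-1}(f) = 0$, and letting $n$ range over $\mathbb{Z}$ yields $\mathfrak{g}_{\mathcal{S}} = \mathfrak{g}(\mathbf{C}(R))$.

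For the second isomorphism, I would dualize the argument using
$$0 \longrightarrow \operatorname{S}^{n-1}(E) \longrightarrow \operatorname{D}^n(E) \longrightarrow \operatorname{S}^n(E) \longrightarrow 0,$$
where now $\operatorname{D}^n(E)$ is injective in $\mathbf{C}(R)$: the lifting property against a degreewise monomorphism reduces to a single extension problem for $E$ in $R\operatorname{-Mod}$, solved by the injectivity of $E$. With the identifications $\Hom(X, \operatorname{S}^k(E)) = \Hom_R(\operatorname{C}_k(X), E)$ and $\Hom(X, \operatorname{D}^n(E)) = \Hom_R(X_{n-1}, E)$, the tail of the long exact sequence obtained by applying $\Hom(X, -)$ exhibits $\Ext^1(X, \operatorname{S}^{n-1}(E))$ as the cokernel of precomposition with the induced map $\bar{d}_n\colon \operatorname{C}_n(X) \to X_{n-1}$. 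Factoring this as $\operatorname{C}_n(X) \twoheadrightarrow \operatorname{B}_{n-1}(X) \hookrightarrow X_{n-1}$ and invoking the exactness of $\Hom_R(-,E)$ together with the short exact sequence $0 \to \operatorname{H}_n(X) \to \operatorname{C}_n(X) \to \operatorname{B}_{n-1}(X) \to 0$ then identifies this cokernel with $\Hom_R(\operatorname{H}_n(X), E)$. Since $E$ is an injective cogenerator, $\Hom_R(-, E)$ is faithful, so $\Ext(f, \operatorname{S}^n(E)) = 0$ for every $n$ iff $\operatorname{H}_n(f) = 0$ for every $n$, which gives $\mathfrak{g}(\mathbf{C}(R)) = \mathfrak{Cog}_{\mathcal{S}'}$. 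The two $\Ext$-computations are routine; the only delicate points are the bookkeeping of the homological index shift and, at the very end of the dual case, the appeal to $E$ being a cogenerator in $R\operatorname{-Mod}$. No machinery from the earlier sections of the paper is required.
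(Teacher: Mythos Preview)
Your proposal is correct and arrives at exactly the same isomorphisms the paper uses, namely $\Ext^{1}(\operatorname{S}^n(R),X)\cong \operatorname{H}_{n-1}(X)$ and $\Ext^{1}(X,\operatorname{S}^{n-1}(E))\cong \Hom_R(\operatorname{H}_n(X),E)$, together with the cogenerator argument for $E$ at the end. The only difference is one of packaging: the paper obtains these identifications by first invoking the standard fact that for a degreewise projective (resp.\ injective) complex one has $\Ext^{1}_{\mathbf{C}(R)}(-,-)\cong \Hom_{\mathbf{K}(R)}(\Sigma^{-1}(-),-)$ (resp.\ the dual), and then identifying $\Hom_{\mathbf{K}(R)}(\operatorname{S}^{n-1}(R),-)\cong \operatorname{H}_{n-1}(-)$, whereas you compute $\Ext^{1}$ directly from the long exact sequence attached to the disc--sphere short exact sequence. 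Your route is slightly more elementary and self-contained (no appeal to the homotopy category), while the paper's is more compact for a reader already fluent with the $\Ext$--$\Hom_{\mathbf{K}}$ dictionary; substantively they are the same argument.
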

\begin{proof}
	Let $f$ be a morphism in $\mathbf{C}(R)$. Since $\operatorname{S}^n(R)$ is a degree-wise projective chain complex, we have
	$$\Ext(\operatorname{S}^n(R),f)=\Hom_{\mathbf{K}(R)}(\operatorname{S}^{n-1}(R),f).$$
	On the other hand, one can easily show that
	$$\Hom_{\mathbf{K}(R)}(\operatorname{S}^{n-1}(R),f) \cong \operatorname{H}_{n-1}(f).$$
	Therefore, $f$ is $\mcS$-ghost if and only if $f$ is ghost as given in Definition \ref{def:ghost}.
	
	As for the second equality, since $\operatorname{S}^n(E)$ is a degree-wise injective chain complex,  $$\Ext(f,\operatorname{S}^n(R))=\Hom_{\mathbf{K}(R)}(f, \operatorname{S}^{n+1}(E)).$$
	Again, we have
	$$ \Hom_{\mathbf{K}(R)}(f, \operatorname{S}^{n+1}(E)) \cong \Hom(\operatorname{H}_{n+1}(f), E).$$
	Since $E$ is a cogenerator, we have that $f$ is $\mcS'$-coghost if and only if $f$ is ghost in $\mathbf{C}(R)$.
\end{proof}
The following is an immediate consequence of Proposition \ref{prop:relative_ghost}

\begin{corollary}\label{coghost}Let $f:X\to Y$ be a morphism in $\mathbf{C}(R)$. Then the following are equivalent:
	\begin{enumerate}[(i)]
		\item $f$ is a ghost map in $\mathbf{C}(R)$;
		\item $f$ is $\mathcal{E}_{\operatorname{CE}}$-phantom;
		\item $f$ is $\mathcal{E}_{\operatorname{CE}}$-cophantom;
		\item The pullback of the short exact sequence $0\to \Sigma^{-1}(Y)\to \operatorname{cone}(1_{\Sigma^{-1}(Y)})\to Y\to 0$ along $f$ is $\operatorname{CE}$-exact;
		\item  The pushout of the short exact sequence $0\to X\to \operatorname{cone}(1_{X})\to \Sigma X\to 0$ along $f$ is $\operatorname{CE}$-exact.
	\end{enumerate}
\end{corollary}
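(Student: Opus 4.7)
The plan is to split the five conditions into two groups: the equivalences (i) $\Leftrightarrow$ (ii) $\Leftrightarrow$ (iii) I would deduce purely formally from identifications already in the paper, while (i) $\Leftrightarrow$ (iv) and (i) $\Leftrightarrow$ (v) will come from a short homological argument using the long exact sequence.

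For the first block, Proposition \ref{prop:CE_proj_gener} says the Cartan--Eilenberg structure is projectively generated by $\mcS=\{\operatorname{S}^n(R)\}_{n\in\mathbb{Z}}$ and injectively cogenerated by $\mcS'=\{\operatorname{S}^n(E)\}_{n\in\mathbb{Z}}$, so $\mcE_{\mcS}=\mcE_{\operatorname{CE}}=\mcE_{\mcS'}$. Proposition \ref{prop:relative_ghost} gives $\mathfrak{g}(\mathbf{C}(R))=\mathfrak{g}_{\mcS}=\mathfrak{Cog}_{\mcS'}$. Applying the earlier proposition $\mathfrak{g}_{\mcT}=\mcE_{\mcT}\text{-}\cophantom$ with $\mcT=\mcS$ yields (i) $\Leftrightarrow$ (iii); its evident dual, asserting that $\mathfrak{Cog}_{\mcT}=\mcE_{\mcT}\text{-}\operatorname{phantom}$ whenever $\mcE_{\mcT}$ is injectively cogenerated by $\mcT$, gives (i) $\Leftrightarrow$ (ii) when applied to $\mcT=\mcS'$.

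For (iv) and (v), the implications (ii) $\Rightarrow$ (iv) and (iii) $\Rightarrow$ (v) are immediate specializations of the defining property of phantom/cophantom morphisms. For the converse (v) $\Rightarrow$ (i), I would use that $\operatorname{cone}(1_X)$ is contractible, so the long exact homology sequence of
\[
\Xi\colon 0\to X\to \operatorname{cone}(1_X)\to \Sigma X\to 0
\]
collapses into isomorphisms $\partial_n\colon H_n(\Sigma X)\xrightarrow{\cong}H_{n-1}(X)$. Pushing $\Xi$ out along $f\colon X\to Y$ produces a conflation
\[
\Xi'\colon 0\to Y\to Q\to \Sigma X\to 0;
\]
by naturality of the connecting homomorphism, the connecting map of $\Xi'$ is $H_{n-1}(f)\circ\partial_n$. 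Since by Proposition \ref{CE1} the sequence $\Xi'$ is CE-exact precisely when all its connecting maps in homology vanish, this is equivalent to $H_n(f)=0$ for every $n$, i.e.\ to $f$ being a ghost. The argument for (iv) $\Leftrightarrow$ (i) is the dual one, using the contractibility of $\operatorname{cone}(1_{\Sigma^{-1}(Y)})$ and pulling back along $f$.

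The only non-formal ingredient is the naturality step in the last paragraph; once that is in place, all five conditions collapse to the single assertion ``$H_n(f)=0$ for every $n$.'' I do not anticipate any genuine obstacle: a single pullback/pushout of these particular ``cone sequences'' already carries all the homological content because $\mcS$ and $\mcS'$ generate, respectively cogenerate, $\mcE_{\operatorname{CE}}$ and the spherical complexes $\operatorname{S}^n$ see only homology.
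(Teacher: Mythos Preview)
Your argument is correct. The first block and the implications (ii) $\Rightarrow$ (iv), (iii) $\Rightarrow$ (v) match the paper's proof exactly. For the converses (iv) $\Rightarrow$ (i) and (v) $\Rightarrow$ (i), however, you take a genuinely different route. The paper argues representably: if the pullback $0\to\Sigma^{-1}(Y)\to H\to X\to 0$ is CE-exact, then every test map $g\colon\operatorname{S}^n(R)\to X$ lifts to $H$, so $fg$ factors through the contractible complex $\operatorname{cone}(1_{\Sigma^{-1}(Y)})$ and is therefore null-homotopic, giving $\operatorname{H}_n(f)=0$. You instead read off the same conclusion from the long exact homology sequence: the connecting map of the pushout is $\operatorname{H}_{n-1}(f)\circ\partial_n$ with $\partial_n$ invertible, and CE-exactness is equivalent (via Proposition~\ref{CE1}) to the vanishing of all connecting maps. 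Your approach is slicker and in fact proves the biconditionals (i) $\Leftrightarrow$ (iv) and (i) $\Leftrightarrow$ (v) in one stroke; the paper's approach stays closer to the theme of the section by exploiting the projective generators $\operatorname{S}^n(R)$ directly.
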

\begin{proof}
	The equivalence of the statements (i), (ii) and (iii) and the implications (ii)  $\Rightarrow  $ (iv) and (iii) $ \Rightarrow$ (v)  follow from Propositions \ref{prop:CE_proj_gener} and \ref{prop:relative_ghost}.
	For (iv) $\Rightarrow $(i), consider the pullback diagram
	$$\xymatrix{0\ar[r]&\Sigma^{-1}(Y)\ar[r]\ar@{=}[d]&H\ar[r]\ar[d]&X\ar[r]\ar[d]^f&0\\
		0\ar[r]&\Sigma^{-1}(Y)\ar[r]&\operatorname{cone}(1_{\Sigma^{-1}(Y)})\ar[r]&Y\ar[r]&0}$$ Since $0\to \Sigma^{-1}(Y)\to H\to X\to 0$ is CE-exact, for any integer $n$, every map $g:\operatorname{S}^n(R)\to X$ factors through $H$, and hence the composition $fg$ factors through $\operatorname{cone}(1_{\Sigma^{-1}(Y)})$. This shows that $fg$ is null homotopic, and so $\operatorname{H}_n(f)=0$.
	
	A similar argument can be applied to prove the implication (v) $ \Rightarrow$ (i).
\end{proof}

In the following, we show that $\mathfrak{g} (\mathbf{C}(R))$ is in fact in the middle of a complete ideal cotorsion triple whose left and right perpendicular ideals of $\mathfrak{g} (\mathbf{C}(R))$ have simpler forms.
\begin{proposition}\label{prop:cotorsion_triple_ghost} There exists a complete ideal cotorsion triple
	$$(\langle\operatorname{CE-Proj}\rangle, \mathfrak{g} (\mathbf{C}(R)), \langle\operatorname{CE-Inj}\rangle).$$
\end{proposition}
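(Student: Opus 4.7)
The plan is to derive each of the two complete ideal cotorsion pairs comprising the triple separately, invoking the preenveloping and precovering halves of the ideal-approximation machinery by means of the two descriptions
$$\mathfrak{g}(\mathbf{C}(R)) \;=\; \mathfrak{g}_{\mathcal{S}} \;=\; \mathfrak{Cog}_{\mathcal{S}'}$$
supplied by Proposition~\ref{prop:relative_ghost}, where $\mathcal{S} = \{\operatorname{S}^n(R)\}_{n \in \mathbb{Z}}$ and $\mathcal{S}' = \{\operatorname{S}^n(E)\}_{n \in \mathbb{Z}}$. The category $\mathbf{C}(R)$ with its absolute exact structure has exact coproducts and exact products, and enough projectives and injectives (the disc complexes $\operatorname{D}^n(P)$ for $P$ projective and $\operatorname{D}^n(I)$ for $I$ injective form generating, resp.\ cogenerating, sets), so the hypotheses of Theorem~\ref{key}, Proposition~\ref{key1}, and their duals from Remark~\ref{dual.results} are in force.

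For the pair $(\langle\operatorname{CE-Proj}\rangle, \mathfrak{g}(\mathbf{C}(R)))$, I would first apply Theorem~\ref{key} to $\mathcal{S}$ to obtain that $\mathfrak{g}_{\mathcal{S}}$ is object-special preenveloping with cosyzygy subcategory $\operatorname{Sum}(\mathcal{S}) \subseteq \operatorname{CE-Proj}$, and then Proposition~\ref{key1} to compute ${^\perp}\mathfrak{g}_{\mathcal{S}} = \langle \operatorname{add}(\mcE\operatorname{-Proj} \star \operatorname{Sum}(\mathcal{S})) \rangle$. The key identification is
$$\operatorname{add}(\mcE\operatorname{-Proj} \star \operatorname{Sum}(\mathcal{S})) \;=\; \operatorname{CE-Proj}.$$
The inclusion $\subseteq$ follows by observing that both $\mcE\operatorname{-Proj}$ and $\operatorname{Sum}(\mathcal{S})$ lie in $\operatorname{CE-Proj}$ (Proposition~\ref{prop:CE-proj}) and that for a short exact sequence $0 \to P \to X \to S \to 0$ with $P \in \mcE\operatorname{-Proj}$ and $S \in \operatorname{Sum}(\mathcal{S})$, the acyclicity of $P$ combined with Proposition~\ref{CE1} forces short exactness of $0 \to F_n(P) \to F_n(X) \to F_n(S) \to 0$ for each $F_n \in \{\operatorname{Z}_n, \operatorname{B}_n, \operatorname{H}_n, \operatorname{C}_n\}$, whence $F_n(X)$ is an extension of projective $R$-modules and hence projective. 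The reverse inclusion uses the decomposition $\bigoplus_n \operatorname{D}^n(P_n) \oplus \bigoplus_n \operatorname{S}^n(Q_n)$ of a CE-projective from Proposition~\ref{prop:CE-proj}, together with the fact that each $\operatorname{S}^n(Q_n)$ is a summand of some $\operatorname{S}^n(R^{(I_n)}) \in \operatorname{Sum}(\mathcal{S})$ and that the direct sum arises from a split extension. This verifies the cotorsion pair; its completeness is immediate from object-special preenveloping, or equivalently, via the ideal Salce's Lemma of~\cite{FGHT}, from $\langle\operatorname{CE-Proj}\rangle$ being special precovering.

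The second pair $(\mathfrak{g}(\mathbf{C}(R)), \langle\operatorname{CE-Inj}\rangle)$ is then handled by an entirely dual argument using $\mathfrak{g}(\mathbf{C}(R)) = \mathfrak{Cog}_{\mathcal{S}'}$ together with the duals of Theorem~\ref{key} and Proposition~\ref{key1} from Remark~\ref{dual.results}. These produce an object-special precovering with syzygies in $\Prod(\mathcal{S}')$ and $\mathfrak{g}(\mathbf{C}(R))^{\perp} = \langle \operatorname{add}(\Prod(\mathcal{S}') \star \mcE\operatorname{-Inj}) \rangle$. The dual half of Proposition~\ref{prop:CE-proj} (via the cokernel functor $\operatorname{C}_n$ and the injective cogenerator $E$) identifies this with $\langle\operatorname{CE-Inj}\rangle$ by the same pattern: $\mcE\operatorname{-Inj}$ and $\Prod(\mathcal{S}')$ sit inside $\operatorname{CE-Inj}$, extensions of the relevant shape preserve injectivity of the four functors thanks to the coacyclicity of the injective end, and the structural decomposition $\prod_n \operatorname{D}^n(I_n) \oplus \prod_n \operatorname{S}^n(E'_n)$ of a CE-injective realises it inside the required additive closure. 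The main technical point is really concentrated in these two identifications of $\operatorname{CE-Proj}$ and $\operatorname{CE-Inj}$ via Proposition~\ref{prop:CE-proj}, and the short diagram-chase establishing that the relevant extensions preserve CE-(projectivity/injectivity); once these are in place the remainder is a direct invocation of the ideal approximation framework already in the paper.
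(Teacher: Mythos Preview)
Your proposal is correct and follows essentially the same approach as the paper: both invoke Proposition~\ref{key1} (and its dual) and reduce the identification of ${}^\perp\mathfrak{g}(\mathbf{C}(R))$ with $\langle\operatorname{CE-Proj}\rangle$ to showing that an extension $0\to P\to X\to T\to 0$ with $P\in\mcE\operatorname{-Proj}$ and $T\in\operatorname{Sum}(\mcS)$ is CE-exact (because $P$ is acyclic) and hence $X$ is CE-projective. The only cosmetic difference is that you phrase the remaining inclusion as $\operatorname{CE-Proj}\subseteq\operatorname{add}(\mcE\operatorname{-Proj}\star\operatorname{Sum}(\mcS))$ via the structural decomposition of Proposition~\ref{prop:CE-proj}, whereas the paper obtains $\langle\operatorname{CE-Proj}\rangle\subseteq{}^\perp\mathfrak{g}(\mathbf{C}(R))$ directly from Proposition~\ref{prop:relative_ghost}; these are equivalent observations.
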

\begin{proof}
	The inclusion  $\langle\mbox{CE-Proj}\rangle \subseteq {}^\perp (\mathfrak{g} (\mathbf{C}(R))) $ follows from Proposition \ref{prop:relative_ghost} immediately.
		Conversely, by Proposition \ref{key1}, it is enough to show the inclusion  $ \mbox{Proj} (\mathbf{C}(R)) \star  \mbox{Sum}(\mcS) \subseteq  \mbox{CE-Proj}$. If $X\in  \mbox{Proj}(\mathbf{C}(R)) \star  \mbox{Sum}(\mcS)  $, then there exists a short exact sequence
	\begin{equation}\label{extension}
		\xymatrix{0\ar[r] & P \ar[r] & X \ar[r] & T \ar[r] & 0}
	\end{equation}
	where $P \in \mbox{Proj}(\mathbf{C}(R))$ and $T \in\mbox{Sum}(\mcS)  $. Since $P$ is an acyclic chain complex, it is easy to verify that the short exact sequence given in \eqref{extension} is CE-exact. Note that, all cycles, boundaries, homologies and cokernels of both $P$ and $T$ are projectives.  Therefore, $X $ is CE-projective, as well.
	
	The other part of the ideal cotorsion triple comes from Corollary \ref{coghost}, and the dual argument of above.	
\end{proof}
For the $n$-th power of the ghost ideal, $\mathfrak{g} (\mathbf{C}(R))^n$, we have the following result by Corollary \ref{finite}.
\begin{proposition} For any $n \geq 1$, there exists a complete ideal cotorsion triple
	$$(\langle \operatorname{add}( n\operatorname{-Filt(CE-Proj)} )\rangle, \mathfrak{g} (\mathbf{C}(R))^n, \langle \operatorname{add}(n\operatorname{-Cofilt(CE-Inj)} )\rangle).$$
\end{proposition}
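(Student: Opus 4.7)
The plan is to bootstrap from the $n=1$ triple in Proposition~\ref{prop:cotorsion_triple_ghost} by applying the finite-power preservation results (Proposition~\ref{finite1} and Corollary~\ref{finite}) together with their duals (Remark~\ref{dual.results}). With $\mcS = \{\operatorname{S}^n(R)\}_{n \in \mathbb{Z}}$ and $\mcS' = \{\operatorname{S}^n(E)\}_{n \in \mathbb{Z}}$, Proposition~\ref{prop:relative_ghost} and Corollary~\ref{coghost} yield $\mathfrak{g}(\mathbf{C}(R)) = \mathfrak{g}_{\mcS} = \mathfrak{Cog}_{\mcS'}$, and Proposition~\ref{prop:cotorsion_triple_ghost} identifies $\operatorname{CE-Proj} = \Ob({}^{\perp}\mathfrak{g}(\mathbf{C}(R)))$ as a $\mathfrak{g}(\mathbf{C}(R))$-cosyzygy subcategory. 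Applying Proposition~\ref{finite1} with $\mcJ = \mathfrak{g}(\mathbf{C}(R))$ and $\Omega^{-1}(\mcJ) = \operatorname{CE-Proj}$ shows that $\mathfrak{g}(\mathbf{C}(R))^n$ is object-special preenveloping with $n\operatorname{-Filt(CE-Proj)}$ as a cosyzygy subcategory, and since $\mathbf{C}(R)$ has enough projectives, Corollary~\ref{finite} then furnishes the complete ideal cotorsion pair $({}^{\perp}\mathfrak{g}(\mathbf{C}(R))^n, \mathfrak{g}(\mathbf{C}(R))^n)$.

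It remains to match the asserted left-hand component. The inclusion $\operatorname{add}(n\operatorname{-Filt(CE-Proj)}) \subseteq \Ob({}^{\perp}\mathfrak{g}(\mathbf{C}(R))^n)$ follows from Lemma~\ref{finiteghost} applied to the base identification $\operatorname{CE-Proj} = \Ob({}^{\perp}\mathfrak{g}(\mathbf{C}(R)))$. For the reverse inclusion I would reuse the argument of Proposition~\ref{key1} (invoking the characterization of ${}^{\perp}\mcJ$ for an object-special preenveloping ideal $\mcJ$ from \cite[Propositions~6.2 and~7.3]{FH}) to obtain $\Ob({}^{\perp}\mathfrak{g}(\mathbf{C}(R))^n) = \operatorname{add}(\mcE\operatorname{-Proj} \star n\operatorname{-Filt(CE-Proj)})$. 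Since every conflation $P \to X \to W$ with $P \in \mcE\operatorname{-Proj}$ splits, and since projective chain complexes are direct sums of discs and therefore CE-projective by Proposition~\ref{prop:CE-proj}, one has $\mcE\operatorname{-Proj} \subseteq \operatorname{CE-Proj}$, whence $\mcE\operatorname{-Proj} \star n\operatorname{-Filt(CE-Proj)} \subseteq \operatorname{add}(n\operatorname{-Filt(CE-Proj)})$, and the two agree.

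The right-hand component is produced by dualizing the argument along Remark~\ref{dual.results}, using $\mathfrak{g}(\mathbf{C}(R)) = \mathfrak{Cog}_{\mcS'}$ with the dual cosyzygy subcategory $\operatorname{CE-Inj}$ supplied by the base triple: the dual of Proposition~\ref{finite1} makes $n\operatorname{-Cofilt(CE-Inj)}$ a dual cosyzygy subcategory of $\mathfrak{g}(\mathbf{C}(R))^n$, and the parallel splitting argument (injectives split off conflations and $\mcE\operatorname{-Inj} \subseteq \operatorname{CE-Inj}$ by Proposition~\ref{prop:CE-proj}) collapses the $\star\,\mcE\operatorname{-Inj}$ factor to yield $\mathfrak{g}(\mathbf{C}(R))^n{}^{\perp} = \langle \operatorname{add}(n\operatorname{-Cofilt(CE-Inj)})\rangle$. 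The only mildly technical point — not really an obstacle — is this reconciliation of the $\mcE\operatorname{-Proj}\star$ and $\star\,\mcE\operatorname{-Inj}$ forms with the cleaner $n$-$\Filt$/$n$-$\Cofilt$ forms appearing in the statement, which rests on the twin facts that projectives/injectives split off conflations and that they sit inside $\operatorname{CE-Proj}$/$\operatorname{CE-Inj}$ respectively.
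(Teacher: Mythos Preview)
Your approach---bootstrapping from Proposition~\ref{prop:cotorsion_triple_ghost} via Proposition~\ref{finite1}/Corollary~\ref{finite} with $\operatorname{CE-Proj}$ as cosyzygy subcategory, and dualizing through Remark~\ref{dual.results}---is exactly what the paper intends (it simply cites Corollary~\ref{finite} without spelling out the reconciliation step you attempt).

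However, one step in that reconciliation is misargued. The assertion ``every conflation $P \to X \to W$ with $P \in \mcE\operatorname{-Proj}$ splits'' is false in general: projectivity of the \emph{kernel} does not force a conflation to split (that would require $P$ injective or $W$ projective). Your dual claim on the right-hand side fails for the same reason. The desired splitting \emph{does} hold here, but for a different reason: any $W \in n\operatorname{-Filt(CE-Proj)}$ is degreewise projective (each filtration quotient is CE-projective, hence degreewise projective by Proposition~\ref{prop:CE-proj}, and this is preserved under extensions), so every short exact sequence ending in $W$ is degreewise split and therefore $\Ext^1_{\mathbf{C}(R)}(W,P)\cong\Hom_{\mathbf{K}(R)}(W,\Sigma P)=0$, since the projective complex $P$ is contractible. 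Thus $X\cong P\oplus W$, and as $P\in\operatorname{CE-Proj}$ one obtains $X\in n\operatorname{-Filt(CE-Proj)}$. The dual repair uses that any $W\in n\operatorname{-Cofilt(CE-Inj)}$ is degreewise injective and that injective complexes are contractible, giving $\Ext^1_{\mathbf{C}(R)}(I,W)\cong\Hom_{\mathbf{K}(R)}(I,\Sigma W)=0$. With this correction your argument goes through.
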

Notice that in the previous proposition, we have used the fact that finite projective and inductive powers of an ideal are the same. However,  as the infinite inductive and projective powers of $\mathfrak{g} (\mathbf{C}(R))$ may not coincide,
apparently there is no reason for $\mathfrak{g} (\mathbf{C}(R))^{(\lambda)}$ to be a part of a cotorsion triple of ideals of the form just as given in the previous result.

\subsection{The Relation between ghost ideals in $\mathbf{C}(R)$, $\mathbf{D}(R)$ and $\mathbf{K}(R)$ }\label{ghost_K(R)}
%Now, we focus on ghost morphisms in $\mathbf{D}(R)$ and $\mathbf{K}(R)$.
%Whereas $\mathbf{K}(R)$ and $\mathbf{D}(R)$ are triangulated categories, the category $\mathbf{C}(R)$  is abelian,
Note that there exist the  canonical functors
\begin{equation}\label{chain-homotopy-derived}
	\xymatrix{\mathbf{C}(R) \ar@{->>}[r] & \mathbf{K}(R) \ar[r] & \mathbf{D}(R),}
\end{equation}
which are the identity on objects. The first one is the canonical quotient functor, and the second one is the localization functor. For a given morphism $f \in  \mathbf{C}(R)$, we denote its image in $\mathbf{K}(R)$ by $\overline{f}$.

For any integer  $n $, the $n$th homology functor $\operatorname{H}_n:\ \mathbf{C}(R) \longrightarrow \RMod$ is factorized over $\mathbf{K}(R)$ and $\mathbf{D}(R)$
\begin{equation}\label{homology_factorization}
	\operatorname{H}_n:\ \mathbf{C}(R) \longrightarrow \mathbf{K}(R) \longrightarrow \mathbf{D}(R) \longrightarrow \RMod.
\end{equation}

%As already mentioned in Example \ref{ex:ghost}, Kelly in \cite{Kel65} introduced and studied the notion of  ghost morphisms in $\mathbf{K}(R)$:
A morphism $\overline{f}$ in $\mathbf{K}(R)$ is called \textit{ghost} if it  induces  zero on homologies, that is, $\operatorname{H}_n(\overline{f})=0$ for every $n \in \mathbb{Z}$. Ghost morphisms in $\mathbf{K}(R)$ form an ideal, which  will be denoted  by $\mathfrak{g}(\mathbf{K}(R))$.
On the other hand,  Christensen~\cite[Section 8]{C} calls  \textit{ghost} a morphism $g$ in $\mathbf{D}(R)$ if  $\operatorname{H}_n(g)=0$, for every $n \in \mathbb{Z}$. They also form an ideal in $\mathbf{D}(R)$, which will be denoted by $\mathfrak{g}(\mathbf{D}(R))$.

Using the factorization of the homology functor given in \eqref{homology_factorization}, it is immediate that the composition  \eqref{chain-homotopy-derived} of functors induces the following natural transformations between finite powers of ghost  ideals
\begin{equation*}
	\xymatrix{ \mathfrak{g}(\mathbf{C}(R))^n \ar[r] & \mathfrak{g}(\mathbf{K}(R))^n  \ar[r] &\mathfrak{g} (\mathbf{D}(R))^n. }
\end{equation*}
The quotient functor $ \mathbf{C}(R) \longrightarrow \mathbf{K}(R)$ is full,  and therefore, the induced natural transformation
\begin{equation}\label{n-ghost}
	\xymatrix{ \mathfrak{g}(\mathbf{C}(R))^n \ar@{->>}[r] & \mathfrak{g}(\mathbf{K}(R))^n }
\end{equation}
is in fact an epimorphism.
%\subsection{}

We denote the class of all acyclic chain complexes of left $R$-modules by $\mbox{Acyc}$.
Recall that a chain complex $X$ of projective left $R$-modules is said to be $\mbox{dg}$-\textit{projective} if for every acyclic chain complex $Y$ of left $R$-modules, $\Hom_{\mathbf{K}(R)}(X, Y)=0$. Let $\operatorname{dg-Proj}$ denote the class of all $\mbox{dg}$-projective chain complexes in $\mathbf{C}(R)$. The crucial fact which will be used in sequel is that the  pair $( \operatorname{dg-Proj}, \mbox{Acyc})$ is a complete cotorsion pair in $\mathbf{C}(R)$.  Dually, the class $\mbox{dg-Inj}$ of   dg-injective chain complex of left $R$-modules is defined, and we have a  complete cotorsion pair $( \operatorname{Acyc}, \mbox{dg-Inj})$ in $\mathbf{C}(R)$;  see  \cite[Theorem 4.1.4, Example 4.1.8, Corollary 4.1.15]{EJ2011}.

It is well known that there exists a projective model structure on $\mathbf{C}(R)$ whose homotopy category is equivalent to $\mathbf{D}(R)$. In fact, this projective model structure on $\mathbf{C}(R)$ is induced from the triple $( \operatorname{dg-Proj}, \operatorname{Acyc}, \mathbf{C}(R) )$, see  \cite[Example 3.3]{Hov02}. This projective model structure   implies that the  composition of canonical functors $$ \theta: \xymatrix{\mathbf{K}(\mbox{dg-Proj})   \ar@{^{(}->}[r] & \mathbf{K}(R) \ar[r] & \mathbf{D}(R) }$$
is an equivalence of categories, where $\mathbf{K}(\mbox{dg-Proj})$ denotes the quotient category of $\mbox{dg-Proj}$ modulo homotopic to zero maps.
Note that the inverse of the equivalence $\theta: \mathbf{K}(\mbox{dg-Proj})  \longrightarrow \mathbf{D}(R)$ assigns to every chain complex $X$ of left $R$-modules a dg-projective cofibrant replacement $P_X$ of $X$: there exists a quasi-isomorphism $\sigma_X:P_X \longrightarrow X$ in $\mathbf{C}(R)$, where $P_X$ is a dg-projective chain complex. To every morphism $g: X \longrightarrow Y$ in $\mathbf{D}(R)$, $\theta^{-1}$ assigns  a morphism $\overline{g'}:P_X \longrightarrow P_Y$ in $\mathbf{K}(\mbox{dg-Proj})$ such that $\theta^{-1}(g)=\overline{g'}$ in such a way that the following diagram commutes  in $\mathbf{D}(R)$
$$
\xymatrix{P_X \ar[r]^{\sigma_X} \ar[d]_{\overline{g'}} & X \ar[d]^g \\
	P_Y \ar[r]_{\sigma_Y} \ar[r] & Y }
$$
As an aside, the morphisms $\sigma_X$ and $\sigma_Y$ are isomorphisms in $\mathbf{D}(R)$.

Note that  $\mbox{dg-Proj} $ is an extension closed subcategory of $\mathbf{C}(R)$, and therefore, is an exact subcategory  of $\mathbf{C}(R)$.  So we have
the following   commutative diagram
\begin{equation}\label{diagram:dgproj}
	\xymatrix{ \mbox{dg-Proj} \ar@{->>}[r] \ar@{^{(}->}[d] & \mathbf{K}(\mbox{dg-Proj}) \ar@{^{(}->}[d] \ar[rd]^{\cong}_\theta &\\
		\mathbf{C}(R) \ar@{->>}[r] &  \mathbf{K}(R) \ar[r]& \mathbf{D}(R),
	}
\end{equation}

We let $\mathfrak{g}(\mathbf{K}(\mbox{dg-Proj}))$ and  $\mathfrak{g}(\mbox{dg-Proj} )$ denote the classes  of ghost morphisms in $\mathbf{K}(R)$ and $\mathbf{C}(R)$, respectively,  between dg-projective chain complexes. It is immediate that both classes $\mathfrak{g}(\mathbf{K}(\mbox{dg-Proj}))$ and  $\mathfrak{g}(\mbox{dg-Proj} )$ are ideals in $\mathbf{K}(\mbox{dg-Proj})$ and $\mbox{dg-Proj}$, respectively.

Clearly, if $\overline{f}$ is a ghost morphism in $\mathfrak{g}(\mathbf{K}(\mbox{dg-Proj})) $, then its image $\theta ( \overline{f})$ in $\mathbf{D}(R)$ is also a ghost morphism.  Conversely, if $g:X \longrightarrow Y$ is a ghost morphism in $\mathbf{D}(R)$, then there exists a unique morphism $\overline{g'}: P_X  \longrightarrow P_Y$ in $\mathbf{K}(\mbox{dg-Proj})$ such that $\sigma_Y \overline{g'}=g \sigma_X$ in $\mathbf{D}(R)$. Applying the homology functor $\operatorname{H}_*$ to the composition,  we see that $\overline{g'}$ is a ghost morphism in $ \mathbf{K}(\mbox{dg-Proj})$. Therefore, the equivalence $\theta$ induces the isomorphism $\mathfrak{g}(\mathbf{K}(\mbox{dg-Proj})) \cong \mathfrak{g}(\mathbf{D}(R)) $ of ideals.

Using the commutative diagram given in  \eqref{diagram:dgproj} and the composition \eqref{homology_factorization}, we have that if $f$ is a morphism in $\mathfrak{g}(\mbox{dg-Proj} )$, then its image in $ \mathbf{K}(\mbox{dg-Proj}) \cong \mathbf{D}(R)$   is ghost.
Therefore,  the diagram \eqref{diagram:dgproj} induces the following commutative diagram of ghost ideals

\begin{equation}\label{diagram:ghosts}
	\xymatrix{\mathfrak{g}(\mbox{dg-Proj}) \ar@{->>}[r] \ar@{^{(}->}[d] & \mathfrak{g}(\mathbf{K}(\mbox{dg-Proj})) \ar@{^{(}->}[d] \ar[rd]^{\cong} &\\
		\mathfrak{g}(\mathbf{C}(R)) \ar@{->>}[r] & \mathfrak{g}(\mathbf{K}(R)) \ar[r]& \mathfrak{g}(\mathbf{D}(R))
	}
\end{equation}

Note that the $n$th sphere chain complex $\operatorname{S}^n(R)$ is a dg-projective chain complex, so $\mcS:=\{ \operatorname{S}^n(R)\}_{n \in \mathbb{Z}} $ is also a set of objects in the exact category  $\mbox{dg-Proj}$. Therefore  in the exact category $\operatorname{dg-Proj}$, the $\mcS$-ghost ideal
	$\mathfrak{g}_{\mcS}$ is just the ideal $\mathfrak{g} (\operatorname{dg-Proj}).$

\subsection{The Generalized Generating Hypothesis} As already mentioned, the following triple is a  complete cotorsion triple in $\mathbf{C}(R)$
$$(\mbox{dg-Proj}, \mbox{Acyc}, \mbox{dg-Inj} ).$$
By \cite[Theorem 28]{FGHT}, it  yields  a complete ideal cotorsion triple in $\mathbf{C}(R)$
$$(\langle\mbox{dg-Proj}\rangle,\langle\mbox{Acyc}\rangle, \langle\mbox{dg-Inj} \rangle).$$
On the other hand, it is  well known that
$$\mcS^\perp=\mbox{Acyc}= {}^\perp \mcS',$$
where
$\mcS:=\{ \operatorname{S}^n(R)\}_{n \in \mathbb{Z}}$ and $\mcS':=\{\operatorname{S}^n(E)\}_{n \in \mathbb{Z}}$.
Therefore, the chain for the inductive powers of $\mathfrak{g}(\mathbf{C}(R))$ given in  \eqref{chain:ghost}  is bounded below by  the object ideal $ \langle \mbox{Acyc} \rangle$, that is,
$$
\langle\mbox{Acyc}\rangle \subseteq \cdots \subseteq  \mathfrak{g}(\mathbf{C}(R))^{(\lambda)} \subseteq \cdots \mathfrak{g}(\mathbf{C}(R))^2 \subseteq
\mathfrak{g}(\mathbf{C}(R)).  $$
Since $\mathbf{C}(R)$ is a locally finitely presentable Grothendieck category, Theorem \ref{EFHS} gives the following result.
\begin{proposition}
	$\omega$-{\rm GGH}{\rm(}$\mathfrak{g}(\mathbf{C}(R))\rm{)}$ always holds.
\end{proposition}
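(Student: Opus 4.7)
The plan is to deduce this directly from Theorem~\ref{EFHS} applied with $\grl = \gro$, ambient category $\mcA = \mathbf{C}(R)$, and test set $\mcS = \{\operatorname{S}^n(R)\}_{n \in \mathbb{Z}}$. By Proposition~\ref{prop:relative_ghost}, $\mathfrak{g}_{\mcS} = \mathfrak{g}(\mathbf{C}(R))$, and by the discussion immediately preceding the proposition, $\mcS^{\perp} = \operatorname{Acyc}$. Hence the desired equality $\mathfrak{g}(\mathbf{C}(R))^{(\gro)} = \langle \operatorname{Acyc} \rangle$ is precisely the conclusion $\mathfrak{g}_{\mcS}^{(\gro)} = \langle \mcS^{\perp} \rangle$ of Theorem~\ref{EFHS}.

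First, I would verify that $\mathbf{C}(R)$ is a locally $\gro$-presentable (i.e.\ locally finitely presentable) Grothendieck category; this is standard, as the disc complexes $\{\operatorname{D}^n(R)\}_{n \in \mathbb{Z}}$ form a generating set of finitely presented projective objects. Next, I would check that every $\operatorname{S}^n(R)$ is finitely presented in $\mathbf{C}(R)$: the functor $\Hom(\operatorname{S}^n(R), -) \cong \operatorname{Z}_n(-)$ preserves filtered colimits, since filtered colimits of complexes are computed degreewise and the $n$-th cycle functor commutes with filtered colimits in $\RMod$.

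The final hypothesis to verify is that ${}^{\perp}(\mcS^{\perp})$ contains a generating set for $\mathbf{C}(R)$. Since $\mcS^{\perp} = \operatorname{Acyc}$, we have ${}^{\perp}(\mcS^{\perp}) = {}^{\perp}\operatorname{Acyc} = \operatorname{dg-Proj}$. Every disc complex $\operatorname{D}^n(R)$ is projective in $\mathbf{C}(R)$, hence dg-projective, so $\{\operatorname{D}^n(R)\}_{n \in \mathbb{Z}} \subseteq \operatorname{dg-Proj}$ is a generating set as required.

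With all three hypotheses in place, Theorem~\ref{EFHS} delivers $\mathfrak{g}_{\mcS}^{(\gro)} = \langle \mcS^{\perp} \rangle$, which is exactly the statement $\gro$-GGH($\mathfrak{g}(\mathbf{C}(R))$). There is no substantial obstacle here: the entire framework of Sections~5--7 has been designed precisely so that this case falls out as an immediate specialisation, the key point being that the Enochs--\v{S}\'{t}ov\'{i}\v{c}ek bound on $\operatorname{Acyc} = \operatorname{Filt}(\mcS)$-filtrations is $\gro$ in this setting.
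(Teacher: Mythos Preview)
Your proof is correct and follows exactly the paper's approach: the paper simply observes that $\mathbf{C}(R)$ is a locally finitely presentable Grothendieck category and invokes Theorem~\ref{EFHS}, and you have spelled out the verification of its hypotheses in detail. One small slip in your closing commentary: you write ``$\operatorname{Acyc} = \operatorname{Filt}(\mcS)$'', but $\operatorname{Acyc} = \mcS^{\perp}$ while $\operatorname{Filt}(\mcS) \subseteq {}^{\perp}(\mcS^{\perp}) = \operatorname{dg\text{-}Proj}$; the Enochs--\v{S}\'{t}ov\'{i}\v{c}ek bound applies to the latter class, not the former.
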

The next proposition relates the Generalized $1$-Generating Hypothesis for $\mathfrak{g}(\mathbf{C}(R))$ with the triviality of the ideal $\mathfrak{g}(\mathbf{K}(R))$.
\begin{proposition}\label{proposition:generating_homotopy}
		$1$-{\rm GGH}{\rm(}$\mathfrak{g}(\mathbf{C}(R))\rm{)}$ holds if and only if  $\mathfrak{g}(\mathbf{K}(R))=0$.
\end{proposition}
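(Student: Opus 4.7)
The plan is to identify the ideal $\mathfrak{g}(\mathbf{K}(R))$ with the quotient of $\mathfrak{g}(\mathbf{C}(R))$ by the ideal $\mathfrak{N}$ of null-homotopic maps, and then exploit the characterization of null-homotopies via factorization through a contractible complex. First, I would record two preliminary observations. By Proposition~\ref{prop:relative_ghost}, $\mathfrak{g}(\mathbf{C}(R)) = \mathfrak{g}_{\mathcal{S}}$ for $\mathcal{S}=\{\operatorname{S}^n(R)\}_{n\in\mathbb{Z}}$, and one checks that $\mathcal{S}^{\perp} = \operatorname{Acyc}$. Hence the statement of $1$-GGH$(\mathfrak{g}(\mathbf{C}(R)))$ is exactly the equality $\mathfrak{g}(\mathbf{C}(R)) = \langle\operatorname{Acyc}\rangle$; and since any morphism factoring through an acyclic complex is trivial on homology, the inclusion $\langle\operatorname{Acyc}\rangle \subseteq \mathfrak{g}(\mathbf{C}(R))$ is automatic, so the content of $1$-GGH is the reverse inclusion $\mathfrak{g}(\mathbf{C}(R)) \subseteq \langle\operatorname{Acyc}\rangle$.

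For the $(\Leftarrow)$ direction, suppose $\mathfrak{g}(\mathbf{K}(R)) = 0$, and let $f \in \mathfrak{g}(\mathbf{C}(R))$. Its image $\overline{f}$ in $\mathbf{K}(R)$ is a ghost, so $\overline{f} = 0$, i.e.\ $f$ is null-homotopic. Any null-homotopic chain map $f \colon X \to Y$ factors through the contractible complex $\operatorname{cone}(1_X)$, which is in particular acyclic; so $f \in \langle\operatorname{Acyc}\rangle$. Together with the automatic inclusion noted above, this gives $1$-GGH.

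For the $(\Rightarrow)$ direction, assume $1$-GGH, so $\mathfrak{g}(\mathbf{C}(R)) = \langle\operatorname{Acyc}\rangle$. Given a ghost $\overline{f}\colon X \to Y$ in $\mathbf{K}(R)$, lift to a ghost chain map $f$, which by the hypothesis factors as $X \xrightarrow{g} A \xrightarrow{h} Y$ with $A$ acyclic. The goal is to show $\overline{f} = 0$ in $\mathbf{K}(R)$, and the cleanest reduction is to prove the auxiliary claim that, under $1$-GGH, every acyclic complex $A$ is already contractible; granted this, the middle factor $A$ is contractible, $\operatorname{id}_A$ is null-homotopic, and hence $f = h\circ\operatorname{id}_A\circ g$ is null-homotopic.

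To establish the auxiliary claim, I would apply the object-special preenveloping property of $\mathfrak{g}(\mathbf{C}(R))$ (Theorem~\ref{key}, with cosyzygy subcategory $\operatorname{Sum}(\mathcal{S})$): for the acyclic complex $A$, there is a conflation $A \xrightarrow{\iota} G \to W$ with $\iota$ a ghost and $W \in \operatorname{Sum}(\mathcal{S})$. Using that $\operatorname{id}_A$ is a ghost (since $A$ is acyclic) and that $\iota$ is a ghost preenvelope, $\operatorname{id}_A$ factors through $\iota$, so $A$ is a retract of $G$ in $\mathbf{C}(R)$. Because the cokernel $W$ is a sum of sphere complexes, every ghost out of $W$ is null-homotopic (sphere-indexed $\mathbf{K}(R)$-maps compute homology, on which ghosts vanish); combined with $1$-GGH applied to $\iota$ and the long exact homology of the conflation, one then produces a splitting of the conflation that exhibits $G$, and therefore its retract $A$, as contractible. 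The main technical obstacle is precisely this last step---extracting contractibility of $A$ from the interaction between the preenvelope $A \to G \to W$ and the factorization of $\iota$ through an acyclic complex---and it is where the complete ideal cotorsion triple of Proposition~\ref{prop:cotorsion_triple_ghost} is invoked.
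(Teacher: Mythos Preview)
Your $(\Leftarrow)$ direction is correct and coincides with the paper's argument.

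For $(\Rightarrow)$, the auxiliary claim you aim for---that under $1$-GGH every acyclic complex is contractible---is correct and is also the paper's target. But your route to it via the special ghost preenvelope does not work. Once you observe that $1_A$ is a ghost (since $A$ is acyclic) and hence factors through the preenvelope $\iota$, the conflation $A \to G \to W$ splits, so $G \cong A \oplus W$. For $G$ to be contractible you would need $W$ contractible; but $W$ is a direct sum of sphere complexes $\operatorname{S}^n(R)$, which is contractible only when $W = 0$, and in that degenerate case $G = A$ and you are back where you started. In fact, because $A \in \mathcal{S}^{\perp}$ already, the explicit construction in Theorem~\ref{key} produces precisely the trivial preenvelope $1_A$, so this line of argument yields no new information about $A$.

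The paper's argument bypasses this entirely by using the cotorsion triple of Proposition~\ref{prop:cotorsion_triple_ghost} as the main engine rather than a supplement. From $\mathfrak{g}(\mathbf{C}(R)) = \langle\operatorname{Acyc}\rangle$ one obtains
\[
\langle\operatorname{CE\text{-}Proj}\rangle \;=\; {}^\perp\mathfrak{g}(\mathbf{C}(R)) \;=\; {}^\perp\langle\operatorname{Acyc}\rangle \;=\; \langle\operatorname{dg\text{-}Proj}\rangle.
\]
Applying this to the (dg-projective) projective resolution of an arbitrary module $M$ forces that resolution to be CE-projective, whence $M$ itself is projective. Thus $R$ is semisimple, and every acyclic complex is contractible. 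You do invoke Proposition~\ref{prop:cotorsion_triple_ghost} in your final sentence, but it should replace, not follow, the preenvelope construction.
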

\begin{proof}
	Suppose that $\mathfrak{g}(\mathbf{K}(R))=0$. Let $f:X \rightarrow Y$ be a morphism in $\mathfrak{g}(\mathbf{C}(R))$.  Using  the diagram \eqref{diagram:ghosts},  its image $\overline{f}$ in $\mathbf{K}(R)$ belongs to $\mathfrak{g}(\mathbf{K}(R))$. By assumption, $\overline{f}$ is homotopic to zero. From  \cite[ Remark 9.11]{B}, we know that $f$ factors through the cone of $1_X$, which is contractible. So $\mathfrak{g}(\mathbf{C}(R))  \subseteq \langle\mbox{Acyc}\rangle$.

	Conversely, if $\mathfrak{g}(\mathbf{C}(R))=\langle\mbox{Acyc}\rangle$, then
	$\langle \mbox{dg-Proj} \rangle= {}^\perp \langle\mbox{Acyc}\rangle =    {}^\perp \mathfrak{g}(\mathbf{C}(R))= \langle  \mbox{CE-Proj} \rangle.$   It implies that a  projective resolution of any left  $R$-module is CE-projective, and hence, every left $R$-module is  projective. So  every acyclic chain complex of left $R$-modules  is contractible. As indicated before,  if $\overline{f}$ is a ghost morphism in $\mathbf{K}(R)$, then $f$ is ghost in $\mathbf{C}(R)$. But  it is factorized over a contractible chain complex, so $\overline{f}=0$.
\end{proof}
Note that $\mbox{dg-Proj}$  is a Frobenius exact category with a  complete cotorsion triple
$$(\mbox{dg-Proj}, \mbox{Proj}(\mathbf{C}(R)), \mbox{dg-Proj} ),$$ where $\operatorname{Proj}(\mathbf{C}(R) )$ is the class of projective objects in $\mathbf{C}(R)$ which consists of  contractible chain complexes of projective left $R$-modules.
Again, by using \cite[Theorem 28]{FGHT},
$$( \langle \mbox{dg-Proj} \rangle, \langle \mbox{Proj}(\mathbf{C}(R)) \rangle,  \langle \mbox{dg-Proj}  \rangle )$$
is a complete ideal cotorsion triple in $\mbox{dg-Proj}$. Similarly,
$\mcS^\perp=\mbox{Proj}(\mathbf{C}(R))$ in $\mbox{dg-Proj}$, where $\mcS:=\{ \operatorname{S}^n(R)\}_{n \in \mathbb{Z}}$.  Therefore, the chain \eqref{chain:ghost} for  $\mathfrak{g}( \mbox{dg-Proj})$ is bounded below by  $\langle \mbox{Proj}(\mathbf{C}(R)) \rangle$
$$
\langle\mbox{Proj}(\mathbf{C}(R))\rangle \subseteq \cdots \subseteq
\mathfrak{g}(\mbox{dg-Proj})^{(\lambda)}\subseteq \cdots
\subseteq \mathfrak{g}(\mbox{dg-Proj})^2\subseteq \mathfrak{g}( \mbox{dg-Proj}). $$

\begin{proposition}\label{prop:generating_Derived}
	For any integer $n \geq 1$, 	$n$-{\rm GGH}{\rm(}$\mathfrak{g}(\operatorname{dg-Proj})\rm{)}$ holds  if and only if $\mathfrak{g}(\mathbf{D}(R))^n=0$
\end{proposition}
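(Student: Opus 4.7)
My plan is to reduce everything to the equivalence $\theta \colon \mathbf{K}(\operatorname{dg-Proj}) \to \mathbf{D}(R)$ recorded in Subsection~8.2, which identifies $\mathfrak{g}(\mathbf{K}(\operatorname{dg-Proj}))$ with $\mathfrak{g}(\mathbf{D}(R))$ as ideals, and hence $\mathfrak{g}(\mathbf{K}(\operatorname{dg-Proj}))^n$ with $\mathfrak{g}(\mathbf{D}(R))^n$ for every $n\geq 1$. With this identification, the task becomes showing $\mathfrak{g}(\operatorname{dg-Proj})^n = \langle \operatorname{Proj}(\mathbf{C}(R))\rangle$ if and only if $\mathfrak{g}(\mathbf{K}(\operatorname{dg-Proj}))^n = 0$.

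To bridge $\operatorname{dg-Proj}$ and $\mathbf{K}(\operatorname{dg-Proj})$, I will exploit the full quotient functor $q \colon \operatorname{dg-Proj} \to \mathbf{K}(\operatorname{dg-Proj})$ and show that it induces a surjection $q_* \colon \mathfrak{g}(\operatorname{dg-Proj})^n \twoheadrightarrow \mathfrak{g}(\mathbf{K}(\operatorname{dg-Proj}))^n$. Surjectivity comes from the fact that $q$ is the identity on objects and full, so any composition $\overline{g_n}\cdots \overline{g_1}$ of ghosts in $\mathbf{K}(\operatorname{dg-Proj})$ lifts termwise to a composition $g_n\cdots g_1$ in $\operatorname{dg-Proj}$; each lift $g_i$ is automatically a ghost because the homology functor factors through $\mathbf{K}(R)$, so the condition $\operatorname{H}_*(g_i) = \operatorname{H}_*(\overline{g_i}) = 0$ is detected upstairs. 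By definition of the homotopy category, the kernel of $q_*$ at a pair $X, Y$ consists of null-homotopic maps.

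The remaining step is to identify the null-homotopic maps between dg-projective complexes with the object-ideal $\langle \operatorname{Proj}(\mathbf{C}(R))\rangle$ restricted to $\operatorname{dg-Proj}$. One direction is routine: a map factoring through a projective object of $\mathbf{C}(R)$ is null-homotopic, because projective complexes are contractible by Proposition~\ref{prop:CE-proj}. For the converse, if $f \colon X \to Y$ is null-homotopic with $X \in \operatorname{dg-Proj}$, I will use the standard factorization through $\operatorname{cone}(1_X)$; this cone is degree-wise a sum of terms of $X$ and $\Sigma X$, hence projective in $\mathbf{C}(R)$ since $X$ is degree-wise projective. Combined with the chain $\langle \operatorname{Proj}(\mathbf{C}(R))\rangle \subseteq \mathfrak{g}(\operatorname{dg-Proj})^n$ recorded in Subsection~8.3 (identities on projective--hence contractible--complexes are themselves ghosts and can be inserted $n-1$ times in any factorization through a projective), this yields $\ker(q_*) \cap \mathfrak{g}(\operatorname{dg-Proj})^n = \langle \operatorname{Proj}(\mathbf{C}(R))\rangle$ inside $\mathfrak{g}(\operatorname{dg-Proj})^n$. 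The proposition then follows immediately: the $n$-th power equals the object ideal if and only if its image under $q_*$ is zero.

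The main subtle point is the surjectivity of $q_*$ restricted to $n$-fold compositions: one must confirm that lifts of ghosts along $q$ are themselves ghosts, rather than being ghosts only up to homotopy. This is exactly where the homology-based definition of a ghost--valid simultaneously in $\mathbf{C}(R)$, $\mathbf{K}(R)$, and $\mathbf{D}(R)$ via the factorization in~\eqref{homology_factorization}--is essential, and it is what makes the right-hand square of diagram~\eqref{diagram:ghosts} yield a genuine surjection of ideals after taking $n$-th powers.
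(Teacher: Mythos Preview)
Your argument is correct and follows essentially the same route as the paper's proof: both use the equivalence $\theta$ to identify $\mathfrak{g}(\mathbf{D}(R))$ with $\mathfrak{g}(\mathbf{K}(\operatorname{dg-Proj}))$, lift ghosts along the full quotient $\operatorname{dg-Proj}\to\mathbf{K}(\operatorname{dg-Proj})$ (using that homology factors through $\mathbf{K}(R)$), and pass between null-homotopic maps and maps factoring through $\operatorname{cone}(1_X)\in\operatorname{Proj}(\mathbf{C}(R))$. The paper only writes out the case $n=1$ and leaves general $n$ implicit; your organization via the surjection $q_*$ with kernel $\langle\operatorname{Proj}(\mathbf{C}(R))\rangle$ makes the general case transparent.
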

\begin{proof}
	We only prove the statement for $n=1$.  Suppose that $\mathfrak{g}(\mathbf{D}(R))=0$. Let $f$ be a map in  ${\mathfrak{g}(\mbox{dg-Proj})}$.  From the diagram given in \eqref{diagram:ghosts}, the image $\overline{f}$ of $f$ in  $\mathbf{K}(\mbox{dg-Proj})$ belongs to $\mathfrak{g}(\mathbf{K}(\mbox{dg-Proj}))\cong \mathfrak{g}(\mathbf{D}(R))=0$, and therefore, $\overline{f}$ is homotopic to zero. From \cite[Remark 9.11]{B}, $f$ factors through the cone of the domain of $f$ which is acyclic and dg-projective, and therefore, is a projective chain complex.
	The other side follows from the diagram \eqref{diagram:ghosts}  and the fact that the intersection $\mbox{dg-Proj}\cap \mbox{Acyc}$ is exactly the class $\mbox{Proj}(\mathbf{C}(R))$.
\end{proof}
%
%\begin{remark}
%	The Generalized $n$-fold Generating Hypothesis presented in \cite[Pg. 799]{HLP07} is the statement $\mathfrak{g}(\mathbf{D}(R))^n=0$.
%\end{remark}

We should point out that  for a positive integer $n$, the  Generalized $n$-Generating Hypothesis for both ideals $\mathfrak{g}(\mathbf{C}(R)) $ and $\mathfrak{g}(\mbox{dg-Proj})$ forces to impose certain conditions on the ring $R$, namely, finiteness of the left global dimension of $R$. In fact, Hovey and Lockridge showed in \cite[Lemma 1.4]{HL} that if $\mathfrak{g}(\mathbf{D}(R))^{n+1}=0 $, then the left global dimension of $R$, denoted by $\operatorname{l.gl.dim}(R) $,  is $\leq n$. On the other hand, by \cite[Theorem 8.3]{C}, we know that if $\operatorname{l.gl.dim}(R)  \leq n$, then $\mathfrak{g}(\mathbf{D}(R))^{n+1}=0$. Now, we are able to obtain these results in our setting by using tools from ideal approximation theory. For this end, we need a filtration  version of the statement \cite[Theorem 8.3]{C}.

\begin{proposition}\label{filt-CE}
	Let $n \geq 0$ and  $X$ be a dg-projective chain complex  of left $R$-modules. If  for every $i \in \mathbb{Z}$, the  projective dimensions of $\operatorname{H}_i(X)$ and $\operatorname{Z}_i(X)$ are $\leq n$,
	then $X$ is a direct summand of an object in
	$(n+1)\operatorname{-Filt} (\operatorname{CE-Proj})$.
\end{proposition}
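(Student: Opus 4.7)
Proceed by induction on $n \geq 0$.

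For the base case $n = 0$, the hypothesis gives that each $H_i(X)$ and $Z_i(X)$ is projective. Applying the projective-dimension estimate to the short exact sequence $0 \to B_i(X) \to Z_i(X) \to H_i(X) \to 0$ shows $B_i(X)$ is projective, and then the sequence $0 \to H_i(X) \to C_i(X) \to B_{i-1}(X) \to 0$ gives $C_i(X)$ projective. Since $X$ is dg-projective each $X_i$ is projective, so by Proposition~\ref{prop:CE-proj} the complex $X$ is itself CE-projective, and the trivial $1$-step filtration suffices.

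For the inductive step, I would first show that $X$ admits a CE-exact resolution of length $n$,
$$0 \to P^{(n)} \xrightarrow{\partial_n} P^{(n-1)} \to \cdots \to P^{(1)} \xrightarrow{\partial_1} P^{(0)} \xrightarrow{\epsilon} X \to 0,$$
built by iterating the construction of a CE-deflation from a CE-projective onto $X$ (available since $\mathcal{E}_{\operatorname{CE}}$ has enough projectives) and verifying inductively that each successive kernel remains dg-projective with projective dimensions of cycles and homology dropping by one. Given this resolution, form the iterated mapping cone: set $Z_1 := P^{(n)}$ and, for $1 \leq k \leq n$, $Z_{k+1} := \operatorname{cone}(g_k \colon Z_k \to P^{(n-k)})$, where $g_k$ extends $\partial_{n-k+1}$ by zero on the tail summands (a chain map because consecutive differentials in the resolution compose to zero). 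Each cone step yields a short exact sequence $P^{(n-k)} \hookrightarrow Z_{k+1} \twoheadrightarrow \Sigma Z_k$, so stacking them produces a length-$(n+1)$ filtration of $Z := Z_{n+1}$ whose successive quotients are $P^{(0)}, \Sigma P^{(1)}, \ldots, \Sigma^n P^{(n)}$, all CE-projective. Hence $Z \in (n+1)$-$\Filt(\operatorname{CE-Proj})$.

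To realize $X$ as a direct summand of $Z$, define $\psi \colon Z \to X$ by $\epsilon$ on the $P^{(0)}$ component and zero on the shifted tail; this is a chain map thanks to $\epsilon \partial_1 = 0$. A direct computation identifies $\ker(\psi)$ with the mapping cone of the induced map $g_n \colon Z_n \to K^{(0)} := \ker(\epsilon)$. Applying the inductive hypothesis to the truncated resolution $0 \to P^{(n)} \to \cdots \to P^{(1)} \to K^{(0)} \to 0$ of length $n-1$, $Z_n$ surjects onto $K^{(0)}$ with contractible kernel, so $g_n$ is a split surjection and in particular a quasi-isomorphism of dg-projective complexes. Its cone is therefore acyclic; since $\operatorname{dg-Proj}$ is closed under mapping cones and an acyclic dg-projective complex is null-homotopic, $\ker(\psi)$ is contractible. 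The short exact sequence $0 \to \ker(\psi) \to Z \to X \to 0$ then splits, yielding $Z \cong X \oplus \ker(\psi)$ and exhibiting $X$ as a direct summand of $Z \in (n+1)$-$\Filt(\operatorname{CE-Proj})$.

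The main obstacle is establishing the contractibility of $\ker(\psi)$, which requires the inductive hypothesis in a slightly strengthened form: not merely that $X$ is a summand of some object of $(n+1)$-$\Filt(\operatorname{CE-Proj})$, but that the explicit iterated-cone complex $Z$ associated to a CE-projective resolution surjects onto $X$ with contractible kernel. This strengthening propagates through the induction and is what allows the identification of $g_n$ as a quasi-isomorphism; the remaining verifications are routine given the basic properties of dg-projective complexes in $\mathbf{C}(R)$.
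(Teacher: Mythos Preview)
Your argument is correct, including the strengthened inductive statement you flag at the end; once you carry that sharper hypothesis (the iterated cone maps onto the target with contractible, hence dg-projective acyclic, kernel), the induction goes through cleanly. The base case is identical to the paper's. However, the paper's inductive step is considerably lighter: it takes a \emph{single} CE-deflation $0 \to T \to P \to X \to 0$, applies the (unstrengthened) inductive hypothesis to the kernel $T$, and then forms one pushout along $T \hookrightarrow \operatorname{cone}(1_T)$ to obtain $0 \to \operatorname{cone}(1_T) \to Z \to X \to 0$, which splits because $X$ is dg-projective and $\operatorname{cone}(1_T)$ is acyclic; the middle column then exhibits $Z$ as an extension of $\Sigma T$ by $P$, landing it in $\operatorname{add}((n+2)\text{-}\Filt(\operatorname{CE-Proj}))$. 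Your iterated-cone construction is essentially the totalization of the full resolution and yields a more explicit object of $(n+1)\text{-}\Filt(\operatorname{CE-Proj})$ (no $\operatorname{add}$ needed for $Z$ itself), at the cost of tracking the strengthened hypothesis and verifying that $\ker(\psi)$ is the cone of the shorter totalization. Both approaches ultimately hinge on the same mechanism---an acyclic dg-projective complex is contractible---but the paper isolates it in a single pushout rather than threading it through an $(n{+}1)$-fold construction.
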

\begin{proof}
	We will apply induction on $n$.  Let $n=0$. Suppose that $X$ is a dg-projective chain complex of left  $R$-modules with projective homologies and cycles. For every $i \in \mathbb{Z}$, consider the induced   short exact sequences
	$$\xymatrix{0 \ar[r] & \operatorname{B}_i(X) \ar[r] & \operatorname{Z}_i(X) \ar[r] & \operatorname{H}_i(X) \ar[r] & 0},$$
	$$\xymatrix{0 \ar[r] & \operatorname{H}_{i+1}(X) \ar[r] & \operatorname{C}_{i+1}(X) \ar[r] & \operatorname{B}_i(X) \ar[r] & 0}.$$
	\sloppy By assumption, the first short exact sequence splits, and therefore,   $\operatorname{B}_i(X)$  is a projective left $R$-module. Similarly, we have that $\operatorname{H}_i(X)$ is a projective left $R$-module.  In other words,  ${X \in\mbox{CE-Proj}}= \operatorname{add}(1\mbox{-Filt}(\mbox{CE-Proj}))$.
	
	Suppose that the statement is true for some $n \geq 0$. Let  $X \in \mbox{dg-Proj}$ with projective dimensions $\leq n+1$ for  $\operatorname{H}_i(X) $ and $\operatorname{Z}_i(X)$ for every $i\in \mathbb{Z}$. Consider a CE-short exact sequence of the form
	$$\xymatrix{0 \ar[r] & T\ar[r] & P \ar[r] & X \ar[r] & 0}  $$
	where $P$ is  a CE-projective chain complex. It is easy to verify  that $T$ is  a dg-projective chain complex with projective dimensions   $\leq n$ for $ \operatorname{H}_i(T)$ and $\mbox{Z}_i(T) $ for every $i\in \mathbb{Z}$.  By hypothesis, $T \in \operatorname{add}((n+1)\mbox{-Filt(CE-Proj)})$. Consider the  pushout diagram
	$$\xymatrix{& 0 \ar[d]&0 \ar[d] & &\\
		0 \ar[r] & T\ar[r] \ar[d] & P  \ar[d]\ar[r] & X \ar@{=}[d] \ar[r] & 0\\
		0 \ar[r]& \operatorname{cone}(1_T) \ar[r] \ar[d] & Z \ar[r] \ar[d]& X \ar[r] & 0\\
		& \Sigma T \ar@{=}[r] \ar[d] & \Sigma T \ar[d] & &\\
		& 0 & 0 & & &}  $$
	Since $\operatorname{cone}(1_T)$ is an  acyclic chain complex, and $X$ is dg-projective, the second row splits, and therefore,  $X$ is a direct summand of $Z$. On the other hand, $$Z \in \mbox{CE-Proj} \star \operatorname{add}((n+1)\mbox{-Filt(CE-Proj)})= \operatorname{add}((n+2)\mbox{-Filt(CE-Proj)}).$$
\end{proof}

\begin{theorem}\label{theo:global_dim_ghost_dim}
	Let $n\geq 0$. The following are equivalent:
	\begin{enumerate}[(i)]
		%\item $\mbox{Ghost}(\mathbf{K}(R))^n=0$;
		\item $\mathfrak{g}(\mathbf{D}(R))^{n+1}=0$;
		\item 	$(n+1)$-{\rm GGH}{\rm(}$\mathfrak{g}(\mathbf{C}(R))\rm{)}$ holds; %$\mathfrak{g}(\mathbf{C}(R))^{n+1}=\langle\operatorname{Acyc}\rangle$;
		\item $\mathfrak{g}(\operatorname{dg-Proj})^{n+1}=\langle\operatorname{Proj}(\mathbf{C}(R))\rangle$;
		\item $\operatorname{l.gl.dim}(R) \leq n$.
	\end{enumerate}
\end{theorem}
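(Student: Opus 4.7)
The plan is to prove the cycle (iv) $\Rightarrow$ (ii) $\Rightarrow$ (iii) $\Rightarrow$ (i), closing it with (i) $\Rightarrow$ (iv) from \cite[Lemma~1.4]{HL}; the reverse (iv) $\Rightarrow$ (i) already appears in \cite[Corollary~8.3]{C} and could serve as an alternative closure. Note that (i) $\Leftrightarrow$ (iii) is essentially the case $n = 1$ of Proposition~\ref{prop:generating_Derived} extended verbatim to the $(n{+}1)$-fold power.

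For (iv) $\Rightarrow$ (ii), assume $\operatorname{l.gl.dim}(R) \le n$. Proposition~\ref{filt-CE} then presents every dg-projective chain complex as a direct summand of an object in $(n{+}1)$-$\Filt(\operatorname{CE-Proj})$. Since $\operatorname{CE-Proj} \subseteq \operatorname{Ob}({}^{\perp}\mathfrak{g}(\mathbf{C}(R)))$ by Proposition~\ref{prop:cotorsion_triple_ghost}, applying Lemma~\ref{finiteghost} along the filtration against an $(n{+}1)$-fold factorization of any morphism in $\mathfrak{g}(\mathbf{C}(R))^{n+1}$ places $(n{+}1)$-$\Filt(\operatorname{CE-Proj})$ (and its direct summands) inside $\operatorname{Ob}\bigl({}^{\perp}(\mathfrak{g}(\mathbf{C}(R))^{n+1})\bigr)$, so that
\[
\langle\operatorname{dg-Proj}\rangle \;\subseteq\; {}^{\perp}\bigl(\mathfrak{g}(\mathbf{C}(R))^{n+1}\bigr).
\]
Taking right orthogonals and using that $(\langle\operatorname{dg-Proj}\rangle, \langle\operatorname{Acyc}\rangle)$ is a complete ideal cotorsion pair by~\cite[Theorem~28]{FGHT} (since $(\operatorname{dg-Proj}, \operatorname{Acyc})$ is complete) yields $\mathfrak{g}(\mathbf{C}(R))^{n+1} \subseteq \langle\operatorname{dg-Proj}\rangle^{\perp} = \langle\operatorname{Acyc}\rangle$. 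The reverse containment $\langle\operatorname{Acyc}\rangle \subseteq \mathfrak{g}(\mathbf{C}(R))^{n+1}$ is immediate, since a morphism factoring through an acyclic complex induces zero on homology in every degree and hence on any iterated composition.

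For (ii) $\Rightarrow$ (iii), take $f \colon X \to Y$ in $\mathfrak{g}(\operatorname{dg-Proj})^{n+1}$. Viewed in $\mathbf{C}(R)$, hypothesis (ii) factorizes $f = gh$ through an acyclic complex $A$; since $X$ is dg-projective and $A$ is acyclic, $\Hom_{\mathbf{K}(R)}(X, A) = 0$, so $h$ is null-homotopic and, by \cite[Remark~9.11]{B}, factors through $\operatorname{cone}(1_X)$, which is a projective object of $\mathbf{C}(R)$ because $X$ consists of projective modules. Hence $f \in \langle\operatorname{Proj}(\mathbf{C}(R))\rangle$; the reverse inclusion is trivial. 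Finally, (iii) $\Rightarrow$ (i) is the $(n{+}1)$-fold extension of Proposition~\ref{prop:generating_Derived}: via the equivalence $\theta \colon \mathbf{K}(\operatorname{dg-Proj}) \xrightarrow{\sim} \mathbf{D}(R)$, any $(n{+}1)$-fold composition of ghosts in $\mathbf{D}(R)$ lifts to one in $\mathfrak{g}(\operatorname{dg-Proj})^{n+1}$, which by (iii) factors through a projective object of $\mathbf{C}(R)$ and is therefore null-homotopic, so the composition vanishes in $\mathbf{D}(R)$. The main technical point I expect to have to handle carefully is step (iv) $\Rightarrow$ (ii), specifically the routing from the direct-summand statement of Proposition~\ref{filt-CE} through Lemma~\ref{finiteghost} to the object-ideal containment, combined with the ideal version of the completeness of $(\operatorname{dg-Proj}, \operatorname{Acyc})$ that makes the orthogonality manipulation rigorous.
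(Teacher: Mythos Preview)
Your proof is correct and follows essentially the same route as the paper's: the implications (iv) $\Rightarrow$ (ii) $\Rightarrow$ (iii) and the equivalence (i) $\Leftrightarrow$ (iii) via Proposition~\ref{prop:generating_Derived} match the paper almost verbatim. The only difference is in closing the cycle: the paper gives a short self-contained argument for (iii) $\Rightarrow$ (iv) by observing that the canonical truncation map $P_* \to \tau_{\ge n+1}P_*$ on a projective resolution lies in $\mathfrak{g}(\operatorname{dg-Proj})^{n+1}$, hence is null-homotopic, which forces $\operatorname{Z}_{n+1}(P_*)$ (and then $\operatorname{pd}\,M \le n$) to be projective; you instead defer (i) $\Rightarrow$ (iv) to \cite[Lemma~1.4]{HL}, which the paper itself cites in the paragraph preceding the theorem, so this is a legitimate shortcut rather than a different idea.
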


\begin{proof}
	(i $\Leftrightarrow$ iii) Follows from Proposition \ref{prop:generating_Derived}.
	
	(iii$ \Rightarrow $iv) Suppose $\mathfrak{g}(\mbox{dg-Proj})^{n+1}=\langle\mbox{Proj}(\mathbf{C}(R))\rangle$.
	%By Corollary \ref{finite} and Proposition \ref{key1},
	%$$\mbox{add}((n+1)\mbox{-Filt(CE-Proj)})=\mbox{dg-Proj}.$$
	For any left $R$-module $M$, consider its   projective resolution  $P_* \rightarrow M$. It is known that $P_*$ is a dg-projective chain complex.
	%$P_*$ is degreewise projective,
	%$$\Hom_{\mathbf{K}(R)}(P_*,-)\mid_{ \mbox{Ghost} (\mathbf{C}(R))^n }=0$$
	Besides, the canonical   morphism
	$$
	\xymatrix{ P_*: & \cdots \ar[r] & P_{n+2} \ar@{=}[d]  \ar[r] &  P_{n+1} \ar[r] \ar@{=}[d] & P_{n} \ar[r] \ar[d] &  \cdots  \ar[r]  & P_0 \ar[d] \ar[r] & 0 \ar[d]\\
		&	\cdots \ar[r] & P_{n+2} \ar[r] & P_{n+1} \ar[r] & 0 \ar[r] & \cdots \ar[r]   &0  \ar[r] &0 }	
	$$
	belongs to $\mathfrak{g}(\mbox{dg-Proj})^{n+1}$, so it is homotopic to zero. In particular, there exist morphisms $s_{n+1}: P_{n+1} \rightarrow P_{n+2}$ and $s_n:P_n \rightarrow P_{n+1}$ such that $ s_{n}\circ d +  d \circ s_{n+1}= 1_{P_{n+1}}$. If we restrict it to  $\mbox{Z}_{n+1}(P_*)$, then   $\mbox{Z}_{n+1}(P_*) \hookrightarrow P_{n+1}$ is a section of $P_{n+1}$, and therefore, the projective dimension of $M$ is $ \leq n+1$. So we can assume that $P_i=0$ for every $i >n+1$, and $s_{n+1}=0$. Then the homotopy  leads to $1_{P_{n+1}}=s_{n} \circ d$. This means that $P_{n+1}$ is a direct summand of $P_{n}$ which implies that the projective dimension of $M$ is in fact $ \leq n$.
	
	(iv $ \Rightarrow $ii) By Proposition \ref{filt-CE}, we have  $\mbox{dg-Proj} = \mbox{add} ((n+1) \mbox{-Filt}(\mbox{CE-Proj}))$, and therefore, $\langle \mbox{Acyc} \rangle=\langle \mbox{dg-Proj} \rangle^\perp = \langle \mbox{add} ((n+1) \mbox{-Filt}(\mbox{CE-Proj})) \rangle^\perp =\mathfrak{g}(\mathbf{C}(R))^{n+1}$.
	
	(ii $ \Rightarrow $ iii) If $f: X \rightarrow Y \in \mathfrak{g}(\mbox{dg-Proj})^{n+1}$, then by assumption, it factorizes over an acyclic chain complex. However, any morphism from a dg-projective to an acyclic chain complex is null-homotopic. Hence, $f $ is homotopic to zero. Again, $f$ factorizes over  $\operatorname{cone}(1_X)$, which is a projective chain complex.
\end{proof}

\subsection{The Generating Hypothesis for $\mathbf{D}(R)$}  As already mentioned in Remark \ref{remark:termin_ghost}, the original algebraic versions of the Generating Hypothesis in both  $\mathbf{D}(R)$ and $kG\mbox{-} \underline{\mbox{Mod}}$ were stated for    ghost morphisms between compact objects. Namely, they ask when the ideals $\mathfrak{g}( \mathbf{D}^c(R))$ and $\mathfrak{g}( kG\mbox{-} \underline{\mbox{mod}})$ are trivial, where $ \mathbf{D}^c(R)$ and $kG\mbox{-} \underline{\mbox{mod}}$ are the subcategories of compact objects in $\mathbf{D}(R)$ and $kG\mbox{-} \underline{\mbox{Mod}}$, respectively. In case of the stable module category of a finite $p$-group, the authors in \cite{BCCM07} show that $\mathfrak{g}( kG\mbox{-} \underline{\mbox{mod}})$ is trivial if and only if $\mathfrak{g}( kG\mbox{-} \underline{\mbox{Mod}})$ is trivial. As for $\mathbf{D}(R)$, if $R$ is left coherent, then $\mathfrak{g}( \mathbf{D}^c(R))=0$ if and only if $R$ is von Neumann regular; see \cite[Theorem 3.1]{HLP07}.

The  analogue of compactness  in an abelian category is the notion of a finitely presented object.  We end this section by briefly showing that the result~\cite[Theorem 3.1]{HLP07} stated for the derived category of a ring can be obtained through ghost morphisms for chain complexes and by using techniques from ideal approximation theory.  Mostly we only write the statement for the finitely presented case without any proof, since the techniques used in the unbounded case work well under the coherence condition.

We let $\mathbf{C}^{\operatorname{fp}}(R)$ denote the subcategory of finitely presented chain complexes of left $R$-modules. Note that a chain complex $X$ is finitely presented  in $\mathbf{C}(R)$ if and only if it is a bounded chain complex of finitely presented left $R$-modules. Besides, if $R$ is left coherent, then $\mathbf{C}^{\operatorname{fp}}(R)$ is an abelian subcategory of $\mathbf{C}(R)$.

We let $\mathbf{C}^b(\operatorname{proj}) $ denote the class of bounded chain complexes of finitely generated projective left $R$-modules, and
$$ \mbox{acyc}^b:= \mathbf{C}^{\operatorname{fp}}(R)  \cap \mbox{Acyc}.$$
The following result can be proved easily.
\begin{proposition}\label{prop:fp_cot}
	If $R$ is left coherent, then there exists a complete cotorsion pair in $\mathbf{C}^{\operatorname{fp}}(R)$
	$$( \mathbf{C}^b(\operatorname{proj}), \operatorname{acyc}^b)$$
	generated by $\mcS:=\{\operatorname{S}^n(R)\}_{ n \in \mathbb{Z}}$.
\end{proposition}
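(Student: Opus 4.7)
The plan is to verify that $(\mathbf{C}^b(\operatorname{proj}), \operatorname{acyc}^b)$ is the cotorsion pair generated by $\mcS$ in $\mathbf{C}^{\operatorname{fp}}(R)$, and then to establish completeness using the constructions of Sections 3 and 6 together with the coherence hypothesis.

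\medskip

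First I would identify the two orthogonals. Because each sphere $\operatorname{S}^n(R)$ is a complex of projectives concentrated in a single degree, the calculation of Proposition~\ref{prop:relative_ghost} gives $\Ext^1_{\mathbf{C}(R)}(\operatorname{S}^n(R), X) \cong \operatorname{H}_{n-1}(X)$ for every $X$; hence $\mcS^{\perp} = \operatorname{Acyc}$ in $\mathbf{C}(R)$, and intersecting with $\mathbf{C}^{\operatorname{fp}}(R)$ identifies $\mcS^{\perp} = \operatorname{acyc}^b$. The inclusion $\mathbf{C}^b(\operatorname{proj}) \subseteq {}^{\perp}(\operatorname{acyc}^b)$ is immediate since a bounded complex of finitely generated projectives is $\mbox{dg}$-projective, hence $\Ext$-orthogonal to every acyclic complex. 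For the reverse inclusion, given $X \in {}^{\perp}(\operatorname{acyc}^b) \cap \mathbf{C}^{\operatorname{fp}}(R)$, I would build a degree-wise deflation from a bounded complex of finitely generated projectives onto $X$ (possible since each $X_i$ is finitely presented over the coherent ring $R$), and the vanishing of $\Ext^1(X,-)$ against the resulting acyclic kernel forces a splitting exhibiting $X$ as a summand in $\mathbf{C}^b(\operatorname{proj})$.

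\medskip

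Next, completeness. I would apply the construction of Theorem~\ref{key} to $\mcS \subseteq \mathbf{C}^{\operatorname{fp}}(R)$, using that (i) $\Ext(\operatorname{S}^n(R), X) \cong \operatorname{H}_{n-1}(X)$ is finitely presented over the coherent ring $R$ and (ii) by bifunctoriality of $\Ext$, splitting the extensions associated to a finite generating set of $\bigoplus_n \operatorname{H}_{n-1}(X)$ already forces all extensions to split. These two observations allow the indexing set in the construction to be replaced by a finite one, so that the resulting conflation $\xymatrix@1{X \ar[r] & G \ar[r] & W}$ has $W$ a finite sum of shifted copies of $R$, hence $W \in \mathbf{C}^b(\operatorname{proj})$ and $G \in \mathbf{C}^{\operatorname{fp}}(R)$; this yields special $\operatorname{acyc}^b$-preenvelopes. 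Special $\mathbf{C}^b(\operatorname{proj})$-precovers are obtained by the dual argument: assemble degree-wise finitely generated projective covers into a deflation in $\mathbf{C}^{\operatorname{fp}}(R)$, and use coherence to ensure the kernel stays finitely presented.

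\medskip

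The main obstacle I anticipate is the completeness step, specifically keeping all approximations inside the abelian subcategory $\mathbf{C}^{\operatorname{fp}}(R)$. The construction of Theorem~\ref{key} uses coproducts indexed by $\Ext$-groups and therefore would a priori exit $\mathbf{C}^{\operatorname{fp}}(R)$; reducing to a finite generating set of $\operatorname{H}_{n-1}(X)$ is the essential device, and it relies crucially on coherence of $R$ (to make $\operatorname{H}_{n-1}(X)$ finitely presented) and on the bifunctoriality of $\Ext$ (to promote splitting on generators to splitting on all elements). Once this is in place, everything else is standard.
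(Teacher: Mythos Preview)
The paper offers no proof here, but your argument has two real gaps. In the reverse inclusion ${}^\perp(\operatorname{acyc}^b) \subseteq \mathbf{C}^b(\operatorname{proj})$, a degree-wise deflation $P \twoheadrightarrow X$ from $P \in \mathbf{C}^b(\operatorname{proj})$ does \emph{not} have acyclic kernel in general: an acyclic kernel would make $P \to X$ a quasi-isomorphism, i.e., a bounded projective resolution of $X$, and such a thing need not exist (take $X = S^0(M)$ for $M$ of infinite projective dimension over a coherent ring). This inclusion needs a different argument, for instance testing $X$ directly against bounded acyclic complexes manufactured from presentations of the individual $X_i$.

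The second gap is more serious. Your finite version of Theorem~\ref{key} produces a conflation $X \to G \to W$ with $W$ a finite sum of spheres and $X \to G$ an $\mcS$-\emph{ghost}, meaning $H_*(X) \to H_*(G)$ is zero; but that is strictly weaker than $G$ being acyclic, which is what a special $\operatorname{acyc}^b$-preenvelope requires. In fact the completeness claim appears to fail outright for coherent rings of infinite global dimension: over $R = k[x]/(x^2)$ with $X = S^0(k)$, any conflation $0 \to X \to A \to P \to 0$ with $A \in \operatorname{acyc}^b$ and $P \in \mathbf{C}^b(\operatorname{proj})$ would force the $k$-linear Euler-characteristic identity $0 = \chi(A) = \chi(X) + \chi(P) = 1 + 2m$ for some integer $m$ (finitely generated projectives over the local ring $R$ are free, hence of even $k$-dimension), which is impossible. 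So the obstacle you flag in your last paragraph is genuine and cannot be removed by the device you propose; either the statement needs an additional hypothesis or ``complete'' is being used in a weaker sense than the one defined in \S2.
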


We let $\mathfrak{g}( \mathbf{C}^{\operatorname{fp}}(R))$ denote  the ideal of ghost morphisms between finitely presented chain complexes of left $R$-modules.
Similar to  the unbounded case, if $R$ is left-coherent, $\mcS:=\{\operatorname{S}^n(R)\}_{ n \in \mathbb{Z}}$ and $\langle\mcS\rangle^\perp=\mathfrak{g}_{\mathcal S}$ denotes the $\mathcal S$-ghost ideal in $\mathbf{C}^{\operatorname{fp}}(R) $,  then
$\mathfrak{g}_{\mathcal S}=\mathfrak{g}( \mathbf{C}^{\operatorname{fp}}(R)),$
and if $1$-Generating hypothesis  for $\mathfrak{g}( \mathbf{C}^{\operatorname{fp}}(R))$ is satisfied, then $\mathfrak{g}( \mathbf{C}^{\operatorname{fp}}(R))= \langle \operatorname{acyc}^b \rangle$.

\begin{proposition}\label{prop:fp_ideal}
	If $R$ is left coherent, then there exists a complete ideal cotorsion pair
	$$ (\langle\operatorname{CE-proj}^b\rangle,  \mathfrak{g} ( \mathbf{C}^{\operatorname{fp}}(R)))$$
	in $\mathbf{C}^{\operatorname{fp}}(R)$, where $\operatorname{CE-proj}^b = \operatorname{CE-Proj} \cap \mathbf{C}^{\operatorname{fp}}(R)$.
\end{proposition}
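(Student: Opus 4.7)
The plan is to adapt Proposition \ref{prop:cotorsion_triple_ghost} to the subcategory $\mathbf{C}^{\operatorname{fp}}(R)$, which is an abelian exact subcategory of $\mathbf{C}(R)$ by the coherence of $R$. The paragraph preceding the statement already establishes that for $\mcS=\{\operatorname{S}^n(R)\}_{n\in\mathbb{Z}}$, the $\mcS$-ghost ideal in $\mathbf{C}^{\operatorname{fp}}(R)$ coincides with $\mathfrak{g}(\mathbf{C}^{\operatorname{fp}}(R))$. Hence it suffices to exhibit a complete ideal cotorsion pair $({}^\perp\mathfrak{g}_{\mcS},\mathfrak{g}_{\mcS})$ in $\mathbf{C}^{\operatorname{fp}}(R)$ and to identify the left orthogonal with $\langle\operatorname{CE-proj}^b\rangle$.

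First, I would localize the construction of Theorem \ref{key} to $\mathbf{C}^{\operatorname{fp}}(R)$, which lacks arbitrary coproducts. Fix $A\in\mathbf{C}^{\operatorname{fp}}(R)$. Since $A$ is bounded, only finitely many $\operatorname{H}_{n-1}(A)\cong\Ext(\operatorname{S}^n(R),A)$ are nonzero, and by coherence each is a finitely presented left $R$-module. Choose a finite set of $R$-module generators $\xi_1,\ldots,\xi_m$ of $\bigoplus_n\Ext(\operatorname{S}^n(R),A)$, with $\xi_i\in\Ext(\operatorname{S}^{n_i}(R),A)$ represented by a conflation $\xymatrix@1{A\ar[r]&B_i\ar[r]&\operatorname{S}^{n_i}(R)}$, and form the finite-pushout analogue of Theorem \ref{key}: along the codiagonal $A^m\to A$, push out $\bigoplus_i (A\to B_i\to S^{n_i}(R))$ to obtain a conflation $\xymatrix@1{A\ar[r]^-g&G\ar[r]&W}$ with $W=\bigoplus_{i=1}^m\operatorname{S}^{n_i}(R)\in\operatorname{Sum}(\mcS)\cap\mathbf{C}^{\operatorname{fp}}(R)$ and $G\in\mathbf{C}^{\operatorname{fp}}(R)$ (as an extension of two finitely presented complexes). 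The same pushout argument used in Theorem \ref{key} shows that each $\xi_i$ becomes split in $\Ext(\operatorname{S}^{n_i}(R),G)$; since $\Ext(\operatorname{S}^n(R),g)$ is an $R$-module map and the $\xi_i$ generate the source as an $R$-module, it vanishes for every $n$, so $g$ is a ghost. This exhibits $\mathfrak{g}(\mathbf{C}^{\operatorname{fp}}(R))$ as object-special preenveloping with $\operatorname{Sum}(\mcS)\cap\mathbf{C}^{\operatorname{fp}}(R)$ a cosyzygy subcategory.

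Next, I would identify the left orthogonal ideal. Since $\mathbf{C}^{\operatorname{fp}}(R)$ has enough projectives, namely the class $\mathbf{C}^b(\operatorname{proj})$ of bounded complexes of finitely generated projectives by Proposition \ref{prop:fp_cot}, the ideal version of Proposition \ref{key1} yields ${}^\perp\mathfrak{g}(\mathbf{C}^{\operatorname{fp}}(R))=\langle\operatorname{add}(\mathbf{C}^b(\operatorname{proj})\star(\operatorname{Sum}(\mcS)\cap\mathbf{C}^{\operatorname{fp}}(R)))\rangle$. Using Proposition \ref{prop:CE-proj}, any extension of a complex in $\mathbf{C}^b(\operatorname{proj})$ by a finite direct sum of shifted spheres $\operatorname{S}^n(R)$ is a bounded complex of finitely generated projectives with projective cycles, boundaries, homologies, and cokernels, hence belongs to $\operatorname{CE-proj}^b$; conversely, every object of $\operatorname{CE-proj}^b$ has this form by Proposition \ref{prop:CE-proj}. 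Closing under direct summands (and using Proposition 2.1 of \cite{FH}) gives ${}^\perp\mathfrak{g}(\mathbf{C}^{\operatorname{fp}}(R))=\langle\operatorname{CE-proj}^b\rangle$.

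Finally, completeness follows from the ideal Salce's Lemma (\cite[Theorem 18]{FGHT}): since $\mathbf{C}^{\operatorname{fp}}(R)$ has enough projectives and $\mathfrak{g}(\mathbf{C}^{\operatorname{fp}}(R))$ is special preenveloping by Step 1, the pair $(\langle\operatorname{CE-proj}^b\rangle,\mathfrak{g}(\mathbf{C}^{\operatorname{fp}}(R)))$ is also special precovering, so it is a complete ideal cotorsion pair. The main obstacle will be the careful verification in Step 1 that finitely many $R$-module generators of $\bigoplus_n\Ext(\operatorname{S}^n(R),A)$ suffice, i.e., that $\Ext(\operatorname{S}^n(R),g)$ vanishes on all of $\Ext(\operatorname{S}^n(R),A)$ once it vanishes on $R$-module generators; this rests on the compatibility between the $R$-module structure on Ext and the pushout trivialization, and is the crucial place where coherence of $R$ and boundedness of $A$ conspire to replace the large indexing set of Theorem \ref{key} by a finite one.
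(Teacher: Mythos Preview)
Your proposal is correct and takes a genuinely different route from the paper in the crucial first step.

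The paper does \emph{not} adapt Theorem~\ref{key}. Instead, it shows that the restricted Cartan--Eilenberg exact structure on $\mathbf{C}^{\operatorname{fp}}(R)$ has enough projectives: for $X\in\mathbf{C}^{\operatorname{fp}}(R)$, coherence makes each $\operatorname{Z}_n(X)$ finitely presented, so one may choose finitely generated projectives $P_n\twoheadrightarrow\operatorname{Z}_n(X)$ and $Q_n\twoheadrightarrow X_n$ and assemble a CE-deflation $\bigoplus_n\operatorname{S}^n(P_n)\oplus\operatorname{D}^n(Q_n)\twoheadrightarrow X$ from an object of $\operatorname{CE-proj}^b$ (the sum is finite because $X$ is bounded). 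Having enough CE-projectives, together with enough absolute projectives in $\mathbf{C}^{\operatorname{fp}}(R)$, then yields the object-special preenveloping property of the ghost ideal via the machinery of \cite{FGHT} (concretely, pull the cone conflation $A\to\operatorname{cone}(1_A)\to\Sigma A$ back along a CE-projective cover of $\Sigma A$). The cosyzygy class obtained this way is all of $\operatorname{CE-proj}^b$.

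Your route bypasses this: you exploit that $\Ext(\operatorname{S}^n(R),-)\cong\operatorname{H}_{n-1}(-)$ carries a left $R$-module structure and that $\operatorname{H}_{n-1}(A)$ is finitely generated over $R$, so finitely many extensions suffice in the pushout of Theorem~\ref{key}. This is more self-contained (no appeal to \cite[Theorem~2]{FGHT}) and produces the smaller cosyzygy class $\add(\mcS)$; the trade-off is that the paper's argument simultaneously establishes the independently useful fact that $(\mathbf{C}^{\operatorname{fp}}(R);\mcE_{\operatorname{CE}})$ has enough projectives. Your Steps~2 and~3 are essentially the same as what the paper defers to ``similar to the proof of Proposition~\ref{prop:cotorsion_triple_ghost}'', and the direction of Salce you invoke needs only enough projectives, which $\mathbf{C}^{\operatorname{fp}}(R)$ has.
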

\begin{proof}
	We only show that $\mathfrak{g} ( \mathbf{C}^{\operatorname{fp}}(R))$ is an object-special preenveloping ideal with object-cosyzgy $\operatorname{CE-proj}^b $, since the rest is similar to the proof of Proposition  \ref{prop:cotorsion_triple_ghost}.  Let $X \in \mathbf{C}^{\operatorname{fp}}(R) $. Since $R$ is left coherent,   $\operatorname{Z}_n(X)$ is finitely presented for every $n \in \mathbb{Z}$. So for every $n \in \mathbb{Z}$, there exist finitely presented projective left $R$-modules
	$P_n \twoheadrightarrow \mbox{Z}_n(X)$ and $Q_n \twoheadrightarrow X_n$. They induce  an epimorphism
	$$f: \bigoplus_{n \in \mathbb{Z}} \operatorname{S}^n(P_n) \oplus  \operatorname{D}^n(Q_n) \twoheadrightarrow X.$$
	\sloppy It is in fact a CE-epimorphism. By assumption, $X$ is a bounded chain complex, and therefore, the sum ${P:=\bigoplus_{n \in \mathbb{Z}} \operatorname{S}^n(P_n) \oplus  \operatorname{D}^n(Q_n)}$ is a finite sum. So $ P \in \mbox{CE-proj}^b$.
	\end{proof}
\begin{proposition}\label{prop:von neuman}
	Suppose that $R$ is left coherent. The following are equivalent.
	\begin{enumerate}[(i)]
		\item  $\mathfrak{g}( \mathbf{C}^{\operatorname{fp}}(R))= \langle \operatorname{acyc}^b \rangle$;
		%\item $\mbox{Ghost}(\mathbf{D}^b(R))=0$;
		\item $R$ is von Neumann regular.
	\end{enumerate}
\end{proposition}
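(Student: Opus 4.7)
The plan is to treat the two implications separately, with each reduced to an explicit chain-complex construction attached to a short exact sequence of finitely presented modules.

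For (iv) $\Rightarrow$ (ii), I assume $R$ is von Neumann regular, so that every finitely presented left $R$-module is projective and (by coherence) every finitely generated submodule of a finitely presented module is again finitely presented. Iterating this on the short exact sequences $0\to \operatorname{Z}_i(X)\to X_i\to \operatorname{B}_{i-1}(X)\to 0$ and $0\to \operatorname{B}_i(X)\to \operatorname{Z}_i(X)\to \operatorname{H}_i(X)\to 0$, all of which then split, shows that any $X\in \mathbf{C}^{\operatorname{fp}}(R)$ lies in $\operatorname{CE-proj}^b$, i.e.\ decomposes as a finite direct sum of sphere complexes on the $\operatorname{H}_i(X)$ and disc complexes on the $\operatorname{B}_{i-1}(X)$ by Proposition~\ref{prop:CE-proj}. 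For a ghost $f\colon X\to Y$, the restriction to each disc summand is null-homotopic (discs being contractible), and the restriction to a sphere summand $\operatorname{S}^i(\operatorname{H}_i(X))$ lands in $\operatorname{Z}_i(Y)$; the ghost property forces it to land in $\operatorname{B}_i(Y)$, and projectivity of $\operatorname{H}_i(X)$ lifts it through $Y_{i+1}\twoheadrightarrow \operatorname{B}_i(Y)$ to produce a null-homotopy. Hence $f$ factors through $\operatorname{cone}(1_X)\in \operatorname{acyc}^b$, giving $\mathfrak{g}(\mathbf{C}^{\operatorname{fp}}(R))\subseteq\langle \operatorname{acyc}^b\rangle$; the reverse inclusion is automatic.

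For (ii) $\Rightarrow$ (iv), I pick an arbitrary finitely presented module $M$ and, using coherence, a short exact sequence $0\to K\xrightarrow{i}P_0\xrightarrow{\pi}M\to 0$ of finitely presented modules with $P_0$ finitely generated projective; the goal is to split it. The central gadget is the mapping cone $C:=\operatorname{cone}(i)=[K\xrightarrow{i}P_0]$ concentrated in degrees $1,0$, which lies in $\mathbf{C}^{\operatorname{fp}}(R)$ and is quasi-isomorphic to $\operatorname{S}^0(M)$, together with the canonical projection $q\colon C\to \operatorname{S}^1(K)$ that is the identity on $K$ in degree $1$ and zero elsewhere. One checks that $q$ is a ghost in $\mathbf{C}^{\operatorname{fp}}(R)$ since $\operatorname{H}_0(q)\colon M\to 0$ and $\operatorname{H}_1(q)\colon 0\to K$ both vanish, so by (ii), $q$ factors through some $A\in\operatorname{acyc}^b$. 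As $A$ is zero in $\mathbf{D}(R)$, so is $q$; composing with the quasi-iso $C\xrightarrow{\sim}\operatorname{S}^0(M)$ identifies $q$ with the connecting morphism $\delta\colon \operatorname{S}^0(M)\to \operatorname{S}^1(K)$ of the triangle $K\to P_0\to M\to K[1]$, hence $\delta=0$ in $\mathbf{D}(R)$. The triangle therefore splits, yielding $P_0\cong K\oplus M$ in $\mathbf{D}(R)$; since all three are stalk complexes in degree $0$ and the embedding of modules into $\mathbf{D}(R)$ is fully faithful, the splitting descends to a splitting of the original sequence, so $M$ is a direct summand of $P_0$, hence projective.

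The main obstacle is conceptual: to find a ghost map in $\mathbf{C}^{\operatorname{fp}}(R)$ whose vanishing implements the desired splitting. The obvious candidates $\operatorname{S}^0(P_0)\to \operatorname{S}^0(M)$ or $\operatorname{S}^0(K)\to \operatorname{S}^0(P_0)$ are not ghosts, and the morally correct connecting map $\operatorname{S}^0(M)\to \operatorname{S}^1(K)$ lives only in $\mathbf{D}(R)$. The cone $C$ serves as an honest $\mathbf{C}^{\operatorname{fp}}(R)$-level surrogate for $\operatorname{S}^0(M)$ on which the connecting map becomes a chain map manifestly seen to be a ghost. A secondary point worth checking is that membership of the factoring complex $A$ in $\operatorname{acyc}^b$ (boundedness plus acyclicity) is precisely what forces $A\cong 0$ in $\mathbf{D}(R)$, so the acyclic-factorization hypothesis of (ii) translates cleanly to the vanishing of $\delta$.
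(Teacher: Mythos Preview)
Your argument is correct (modulo the harmless mislabelling of the implications as (iv) and (ii)), but it takes a different route from the paper in both directions.

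For the implication from von Neumann regularity to $\mathfrak{g}(\mathbf{C}^{\operatorname{fp}}(R))=\langle\operatorname{acyc}^b\rangle$, the paper does not build a null-homotopy by hand. Instead it observes that if every finitely presented module is projective then $\mathbf{C}^b(\operatorname{proj})=\operatorname{CE\mbox{-}proj}^b$, and then invokes the two complete (ideal) cotorsion pairs of Propositions~\ref{prop:fp_cot} and~\ref{prop:fp_ideal}: since $\langle\mathbf{C}^b(\operatorname{proj})\rangle=\langle\operatorname{CE\mbox{-}proj}^b\rangle$, their right orthogonals agree, giving $\mathfrak{g}(\mathbf{C}^{\operatorname{fp}}(R))=\langle\operatorname{acyc}^b\rangle$ in one stroke. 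Your approach is more elementary and self-contained---it avoids the cotorsion machinery entirely---but it does require writing out the sphere/disc decomposition and the homotopy explicitly.

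For the converse, the paper again argues via orthogonals: from $\mathfrak{g}(\mathbf{C}^{\operatorname{fp}}(R))=\langle\operatorname{acyc}^b\rangle$ it deduces $\langle\operatorname{CE\mbox{-}proj}^b\rangle=\langle\mathbf{C}^b(\operatorname{proj})\rangle$, hence the two-term complex $[P_1\to P_0]$ arising from a finite presentation of $M$ is CE-projective, which forces $\operatorname{H}_0=M$ to be projective. Your cone-and-connecting-map argument is a genuine alternative: it singles out a specific ghost $q\colon\operatorname{cone}(i)\to\operatorname{S}^1(K)$ and reads off the splitting from $q=0$ in $\mathbf{D}(R)$ via the identification $\Hom_{\mathbf{D}(R)}(\operatorname{S}^0(M),\operatorname{S}^1(K))\cong\Ext^1_R(M,K)$. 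This is correct and conceptually nice, but it imports the derived category where the paper stays inside $\mathbf{C}^{\operatorname{fp}}(R)$; the paper's route is shorter here and more in keeping with the ideal-approximation framework developed in the surrounding sections.
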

\begin{proof}
	(i$ \Rightarrow $ii)
	Since $\mathfrak{g}( \mathbf{C}^{\operatorname{fp}}(R))= \langle \operatorname{acyc}^b \rangle$, by Propositions \ref{prop:fp_cot}  and \ref{prop:fp_ideal}, we have
	$$\langle\operatorname{CE-proj}^b\rangle ={}^\perp \mathfrak{g}( \mathbf{C}^{\operatorname{fp}}(R))= {}^\perp \langle \operatorname{acyc}^b \rangle= \langle \mathbf{C}^b(\operatorname{proj}) \rangle.$$
	Let $M$ be a finitely presented left $R$-module with a partial projective resolution $P_1 \rightarrow P_0 \rightarrow M \rightarrow 0$, where   $P_1$ and $P_0$ are finitely generated projective left $R$-modules. The chain complex
	$$\xymatrix{\cdots \ar[r] & 0 \ar[r] & P_1 \ar[r] & P_0 \ar[r] & 0 \ar[r] & \cdots}$$
	belongs to $\mathbf{C}^b(\operatorname{proj}) $ and therefore, it is CE-projective. So its  $0$th homology $M$  is projective. It means that every finitely presented left $R$-module is projective, so $R$ is von Neumann regular.
	
	(ii$ \Rightarrow $i) By assumption, every finitely presented left $R$-module is projective. So $\mathbf{C}^b(\operatorname{proj}) =\mbox{CE-proj}^b$. The satement follows from Proposition~\ref{prop:fp_cot} and Proposition~\ref{prop:fp_ideal} immediately.
\end{proof}

Using similar methods applied in Propositions \ref{prop:generating_Derived} and  \ref{filt-CE}, we can state the finitely presented version of Theorem \ref{theo:global_dim_ghost_dim} for the case $n=1$.
\begin{theorem}\label{GH}
	Suppose that $R$ is left coherent. Then the following are equivalent:
	\begin{enumerate}[(i)]
		
		\item $\mathfrak{g}(\mathbf{D}^c(R))=0$;
		\item  $\mathfrak{g}( \mathbf{C}^{\operatorname{fp}}(R))= \langle \operatorname{acyc}^b \rangle$;
        \item $\mathfrak{g}(\mathbf{C}^b(\operatorname{proj}))= \langle \operatorname{proj}(\mathbf{C}(R)) \rangle$;
		\item $R$ is von Neumann regular,
		
	\end{enumerate}
	where $\operatorname{proj}(\mathbf{C}(R))$ denotes the class of finitely generated projective chain complexes.
\end{theorem}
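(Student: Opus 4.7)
The plan is to reduce Theorem~\ref{GH} to three two-way implications: (i $\Leftrightarrow$ iv), (ii $\Leftrightarrow$ iv), and (iii $\Leftrightarrow$ iv). The first is quoted directly from~\cite[Theorem~3.1]{HLP07}, and the second is essentially Proposition~\ref{prop:von neuman}. Thus the real work is to establish (iii $\Leftrightarrow$ iv), which is the bounded, finitely generated analogue of the argument used in Theorem~\ref{theo:global_dim_ghost_dim} for the dg-projective case with $n=1$; left coherence is what keeps $\mathbf{C}^{\operatorname{fp}}(R)$ abelian, closes it under syzygies, and ensures that $\operatorname{Z}_n$, $\operatorname{B}_n$, $\operatorname{H}_n$, $\operatorname{C}_n$ of a complex in $\mathbf{C}^b(\operatorname{proj})$ are again finitely presented.

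For (iii $\Rightarrow$ iv), I would fix a finitely presented left $R$-module $M$ and, using coherence, choose a projective resolution $\cdots \to P_2\to P_1\to P_0\to 0$ by finitely generated projectives. Exactly as in the proof of (iii $\Rightarrow$ iv) of Theorem~\ref{theo:global_dim_ghost_dim}, the canonical chain map from a suitable hard-truncation of $P_*$ to its soft-truncation above degree $1$ lies in $\mathbf{C}^b(\operatorname{proj})$ and is ghost, because it vanishes on all homologies. By hypothesis it factors through an object of $\operatorname{proj}(\mathbf{C}(R))$, i.e., a contractible bounded complex of finitely generated projectives, hence is null-homotopic. The resulting homotopy forces the inclusion $\operatorname{Z}_1(P_*)\hookrightarrow P_1$ to be a section, so $M$ is a direct summand of $P_0$ and is projective. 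Since every finitely presented left $R$-module is projective, $R$ is von Neumann regular.

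For (iv $\Rightarrow$ iii), I would adapt Proposition~\ref{filt-CE} to the bounded finitely generated setting. Under (iv) every finitely presented left $R$-module is projective, so combined with coherence every cycle, boundary, cokernel, and homology of a complex in $\mathbf{C}^b(\operatorname{proj})$ is a finitely generated projective module. The bounded analogue of the characterisation in Proposition~\ref{prop:CE-proj} then gives $\mathbf{C}^b(\operatorname{proj})=\operatorname{CE-proj}^b$. Using the complete ideal cotorsion pair $(\langle\operatorname{CE-proj}^b\rangle,\mathfrak{g}(\mathbf{C}^{\operatorname{fp}}(R)))$ of Proposition~\ref{prop:fp_ideal}, each ghost morphism in $\mathbf{C}^b(\operatorname{proj})$ factors through an object of $\mathbf{C}^b(\operatorname{proj})\cap\operatorname{Acyc}=\operatorname{proj}(\mathbf{C}(R))$, which yields $\mathfrak{g}(\mathbf{C}^b(\operatorname{proj}))=\langle\operatorname{proj}(\mathbf{C}(R))\rangle$.

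The main obstacle will be the bookkeeping needed to transport the unbounded dg-projective arguments of Theorem~\ref{theo:global_dim_ghost_dim} into the bounded finitely generated category without losing control. Concretely, one must check that the truncations and cones built in (iii $\Rightarrow$ iv) actually stay inside $\mathbf{C}^b(\operatorname{proj})$, which is precisely where left coherence is needed (syzygies of finitely presented modules are finitely presented), and that the factorisation through an acyclic bounded complex of finitely generated projectives really does produce a null-homotopy with finitely generated projective splittings. Once these verifications are made, the arguments run in parallel with the unbounded case treated earlier in the paper.
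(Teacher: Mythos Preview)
Your overall decomposition is sound, and your treatment of (iv $\Rightarrow$ iii) via $\mathbf{C}^{\operatorname{fp}}(R)=\mathbf{C}^b(\operatorname{proj})$ under von Neumann regularity is correct. The problem is your proposed argument for (iii $\Rightarrow$ iv).

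You write that the canonical chain map from a hard truncation $X_k=(P_k\to\cdots\to P_0)$ of the resolution $P_*$ to its truncation above degree~$1$ ``is ghost, because it vanishes on all homologies.'' This is false in general: hard truncation introduces new homology at the top, namely $\operatorname{H}_k(X_k)=\operatorname{Z}_k(P_*)$, and the map to either the hard truncation $(P_k\to\cdots\to P_1\to 0)$ or the soft truncation $(P_k\to\cdots\to P_2\to \operatorname{Z}_1\to 0)$ induces the identity on this top homology. So the map is not ghost unless $\operatorname{Z}_k(P_*)=0$, i.e.\ unless you already know $M$ has finite projective dimension. If instead you use the genuine soft truncation as target, then $\operatorname{Z}_1(P_*)$ need not be projective, so the target leaves $\mathbf{C}^b(\operatorname{proj})$. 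In the unbounded setting of Theorem~\ref{theo:global_dim_ghost_dim} this issue does not arise because the full resolution $P_*$ is dg-projective with homology only in degree~$0$; that is precisely what you lose by truncating.

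The fix is to avoid (iii $\Rightarrow$ iv) altogether and instead prove (i $\Leftrightarrow$ iii) directly, which is the route the paper indicates (``using similar methods applied in Proposition~\ref{prop:generating_Derived}''). The point is that $\mathbf{C}^b(\operatorname{proj})$ is a Frobenius exact category whose projective--injective objects are exactly $\operatorname{proj}(\mathbf{C}(R))$, and its stable category is $\mathbf{K}^b(\operatorname{proj})\simeq\mathbf{D}^c(R)$. Hence a morphism $f$ in $\mathbf{C}^b(\operatorname{proj})$ factors through an object of $\operatorname{proj}(\mathbf{C}(R))$ if and only if $\overline{f}=0$ in $\mathbf{K}^b(\operatorname{proj})$, and $f$ is ghost in $\mathbf{C}^b(\operatorname{proj})$ if and only if $\overline{f}$ is ghost in $\mathbf{D}^c(R)$. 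This gives (i $\Leftrightarrow$ iii) immediately, and then (iii $\Rightarrow$ i $\Rightarrow$ iv) closes the cycle via \cite[Theorem~3.1]{HLP07}.
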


\begin{remark}
	{\rm The equivalence (i$\Leftrightarrow$iv) was already proved in \cite[Theorem 3.1]{HLP07}. However, we, as well as reproving with a different approach, complete the whole picture including ghost morphisms in the category of chain complexes.}
\end{remark}

\section{The FP-ghost ideal}
\subsection{The FP-ghost ideal} In this section, we consider the $\mcS\operatorname{-ghost}$ ideal  $\mathfrak{g}_{\Rmod}$ in $R\operatorname{-Mod}$ with $\mcS$ the set of isomorphism classes of finitely presented left $R$-modules. We call this ideal the \emph{FP-ghost ideal}. It is already known as the ideal of \emph{pure cophantom morphism,} and is usually denoted by $\Psi$ (see \cite{FGHT}), so we keep the notation $\Psi$ for the FP-ghost ideal  $\mathfrak{g}_{\Rmod}$ in $R \operatorname{-Mod}$. Note that  $\mbox{Add}(\mcS) =  R \operatorname{-PProj}$ is the class of pure projective left $R$-modules,  and therefore, $\langle R \operatorname{-PProj}\rangle \subseteq {}^\perp \Psi$. By Proposition \ref{key1}, the ideal $\Psi$ is an object-special preenveloping ideal in $R \operatorname{-Mod}$ with a coszygy subcategory $R \operatorname{-PProj},$ so for a given left $R$-module $M$,  there is a short exact sequence in $R \operatorname{-Mod}$ of the form
$$\xymatrix{0\ar[r]&M\ar[r]^f&X\ar[r]&N\ar[r]&0,}$$
where $f$ is an FP-ghost morphism, and $N$ is a pure-projective left $R$-module. Moreover, by Corollary~\ref{TFH5}, the pair
$(  \langle \operatorname{add}(R\textmd{-Proj}\star \lambda \mbox{-Filt} (R\textmd{-PProj}) )\rangle,  \Psi^{(\lambda)})$
is a complete ideal cotorsion pair, where $R\textmd{-Proj}$ denotes the class of projective left $R$-modules.

Recall that a left $R$-module $M$ is called FP-\textit{injective} \cite{St1} (or {\rm absolutely pure} \cite{M1}) if $\Ext^1(S,M)=0$ for all finitely presented left $R$-module $S$, that is,
$M \in \mcS^\perp.$ A left $R$-module $N$ is called FP-\emph{projective} \cite{MD} if $\Ext^1(N,M)=0$ for every $FP$-injective left $R$-module $M$, that is, $N \in {}^\perp(\mcS^\perp)$. We let  $\mathcal{FP}$ and $\mathcal{FI}$ denote the classes of FP-projective and FP-injective left  $R$-modules, respectively. Then it is well known that $(\mathcal{FP},\mathcal{FI})$ is a complete cotorsion pair in $R  \operatorname{-Mod}$. Replacing $\Psi$ in the chain (\ref{chain:ghost}), we have the following decreasing chain of ideals
$$   \langle\mathcal{FI}\rangle \subseteq \cdots \subseteq \Psi^{(\lambda)} \subseteq \cdots  \subseteq \Psi^2 \subseteq \Psi.$$

As an immediate application of Theorem \ref{EFHS}, we have the following result.
\begin{proposition}\label{TFH6}
	$\Psi^{(\omega)}=\langle\mathcal{FI}\rangle$, i.e., $\omega$-{\rm GGH}{\rm ($\Psi$)} holds.
\end{proposition}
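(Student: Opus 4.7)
The plan is to verify that the hypotheses of Theorem~\ref{EFHS} are satisfied with $\mathcal{A} = R\operatorname{-Mod}$, $\lambda = \omega$, and $\mathcal{S}$ a set of representatives for the isomorphism classes of finitely presented left $R$-modules. The conclusion $\Psi^{(\omega)} = \langle \mathcal{S}^\perp \rangle = \langle \mathcal{FI} \rangle$ will then follow immediately.

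First, I would recall that $R\operatorname{-Mod}$ is a locally finitely presentable (equivalently, locally $\omega$-presentable) Grothendieck category, so the ambient setting of Theorem~\ref{EFHS} applies. By definition of $\mathcal{S}$, every object of $\mathcal{S}$ is $\omega$-presentable, so the $\lambda$-presentability hypothesis on $\mathcal{S}$ is also satisfied.

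Next, I would check that ${}^\perp(\mathcal{S}^\perp) = {}^\perp\mathcal{FI} = \mathcal{FP}$ contains a generating set for $R\operatorname{-Mod}$. Since $R$ itself is finitely presented, $R \in \mathcal{S} \subseteq \mathcal{FP}$, and $\{R\}$ is a generating set for $R\operatorname{-Mod}$. (Alternatively, one can observe that every finitely generated projective module is both finitely presented and FP-projective.) This verifies the last hypothesis of Theorem~\ref{EFHS}.

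Applying Theorem~\ref{EFHS} therefore yields $\Psi^{(\omega)} = \langle \mathcal{S}^\perp \rangle$, which by the definitions recalled in this section is exactly $\langle \mathcal{FI} \rangle$. No step here looks like a genuine obstacle: the entire argument is just a check that the three hypotheses of Theorem~\ref{EFHS} hold in this very standard module-theoretic setting.
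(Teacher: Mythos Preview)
Your proof is correct and follows exactly the approach indicated by the paper, which simply states that the proposition is ``an immediate application of Theorem~\ref{EFHS}.'' You have filled in precisely the details that this invocation requires: $R\operatorname{-Mod}$ is locally $\omega$-presentable Grothendieck, $\mathcal{S}$ consists of $\omega$-presentable objects, and $R \in \mathcal{FP} = {}^\perp(\mathcal{S}^\perp)$ supplies the required generating set.
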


\subsection{ Generalized $n$-Generating Hypothesis $n$-{\rm GGH}{\rm ($\Psi$)}}
\begin{lemma}\label{Lem61}
	If every pure projective module has an FP-injective cosyzygy which is still pure projective, then every module in  $\operatorname{add}(n\mbox{-}\Filt (R\operatorname{-PProj}))$ has an $\operatorname{FP}$-injective cosyzygy which is in $\operatorname{add}(n\mbox{-}\Filt (R\operatorname{-PProj}))$. In other words, $\operatorname{add}(n\mbox{-}\Filt (R\operatorname{-PProj}))$ is invariant under $\operatorname{FP}$-injective cosyzygies.
\end{lemma}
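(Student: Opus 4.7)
My plan is to argue by induction on $n.$ The base case $n=1$ is immediate: $1\mbox{-}\Filt(R\operatorname{-PProj})$ already equals $R\operatorname{-PProj},$ which is closed under finite direct sums and direct summands, so $\operatorname{add}(1\mbox{-}\Filt(R\operatorname{-PProj})) = R\operatorname{-PProj}$ and the hypothesis of the lemma is exactly the desired conclusion.

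For the inductive step I first establish the conclusion for modules $N$ admitting an honest $n$-filtration by pure projectives, and then extend to summands. Fix $0 = N_0 \subset N_1 \subset \cdots \subset N_n = N$ with each $N_i/N_{i-1}$ pure projective, and put $P := N/N_{n-1}.$ The inductive hypothesis, applied to $N_{n-1} \in (n-1)\mbox{-}\Filt(R\operatorname{-PProj})$, produces a short exact sequence $0 \to N_{n-1} \to I_1 \to F \to 0$ with $I_1$ FP-injective and $F \in \operatorname{add}((n-1)\mbox{-}\Filt(R\operatorname{-PProj}))$; and the hypothesis of the lemma, applied to the pure projective $P,$ produces $0 \to P \to I_2 \to P' \to 0$ with $I_2$ FP-injective and $P'$ pure projective. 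Pushing the defining sequence $0 \to N_{n-1} \to N \to P \to 0$ out along $N_{n-1} \to I_1$ yields a $3 \times 3$-diagram whose middle column is a short exact sequence $0 \to N \to X \to F \to 0$ and whose middle row $0 \to I_1 \to X \to P \to 0$ splits, since $\operatorname{Ext}^1(P, I_1)=0$ (pure projectives are FP-projective and $I_1 \in \mathcal{FI}$). Thus $X \cong I_1 \oplus P,$ and the natural map $I_1 \oplus P \to I_1 \oplus I_2$ (identity on the first summand, the inclusion $P \hookrightarrow I_2$ on the second) has cokernel $P'.$ Splicing the two sequences produces $0 \to N \to I_1 \oplus I_2 \to Q \to 0$ with $I_1 \oplus I_2 \in \mathcal{FI}$ and $Q$ fitting in $0 \to F \to Q \to P' \to 0.$ Choosing a complement $F'$ with $F \oplus F' \in (n-1)\mbox{-}\Filt(R\operatorname{-PProj})$ and direct-summing it into this sequence, the module $Q \oplus F'$ acquires an honest $n$-filtration by pure projectives, so $Q \in \operatorname{add}(n\mbox{-}\Filt(R\operatorname{-PProj})).$

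For a general $M \in \operatorname{add}(n\mbox{-}\Filt(R\operatorname{-PProj}))$ I choose $M'$ with $M \oplus M' \in n\mbox{-}\Filt(R\operatorname{-PProj})$ and apply the preceding construction to the sum to obtain $0 \to M \oplus M' \to I \to Q \to 0$ with $I \in \mathcal{FI}$ and $Q \in \operatorname{add}(n\mbox{-}\Filt(R\operatorname{-PProj})).$ The main technical obstacle, and the step I expect to be the hardest, is to descend from this cosyzygy of $M \oplus M'$ to an FP-injective cosyzygy of $M$ alone whose cokernel still lies in $\operatorname{add}(n\mbox{-}\Filt(R\operatorname{-PProj})).$ The naive pushout along the projection $M \oplus M' \to M$ produces $0 \to M \to I/M' \to Q \to 0,$ but $I/M'$ is not automatically FP-injective since $\mathcal{FI}$ need not be closed under arbitrary quotients. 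The plan is to arrange the inductive construction so that the inclusion $M' \hookrightarrow I$ splits—for instance by replacing $I$ with $I \oplus I_{M'}$ for any special $\mathcal{FI}$-preenvelope $0 \to M' \to I_{M'} \to F_{M'} \to 0,$ so that $M'$ becomes a diagonal summand and $I/M'$ is a direct summand of an FP-injective module—and to verify using $\operatorname{Ext}^1(\mathcal{FP},\mathcal{FI})=0$ that the resulting cokernel still belongs to $\operatorname{add}(n\mbox{-}\Filt(R\operatorname{-PProj})).$
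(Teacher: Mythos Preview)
The paper does not give a proof of this lemma; it simply invokes the dual of \cite[Lemma~9.6]{FH}. Your inductive argument for a module $N$ admitting an honest $n$-filtration by pure projectives is correct and is almost certainly the shape of the omitted argument: the pushout along $N_{n-1}\to I_1$ gives $X\cong I_1\oplus P$ because $P$ is pure projective (hence FP-projective) and $I_1\in\mathcal{FI}$, and splicing with $P\hookrightarrow I_2$ produces a cosyzygy $Q$ sitting in $0\to F\to Q\to P'\to 0$, whence $Q\in\operatorname{add}(n\mbox{-}\Filt(R\operatorname{-PProj}))$.

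The descent to direct summands, however, has a real gap, and the fix you sketch does not work. Embedding $M\oplus M'$ into $I\oplus I_{M'}$ via $(m,m')\mapsto(i(m,m'),j(m'))$ does not make $M'$ a direct summand: neither $i|_{M'}\colon M'\to I$ nor $j\colon M'\to I_{M'}$ is a split monomorphism, so there is no retraction of $I\oplus I_{M'}$ onto the diagonal copy of $M'$, and $\Ext^1(\mathcal{FP},\mathcal{FI})=0$ does not help here since it is $M'$, not something in $\mathcal{FI}$, that you are trying to split off. The naive restriction $0\to M\to I\to I/M\to 0$ only yields an extension $0\to M'\to I/M\to Q\to 0$ with $M',Q\in\operatorname{add}(n\mbox{-}\Filt)$, placing $I/M$ in $\operatorname{add}(2n\mbox{-}\Filt)$ rather than $\operatorname{add}(n\mbox{-}\Filt)$; enlarging $I$ by FP-injective summands does not improve this bound. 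Since the paper offers no details, you will need to consult \cite[Lemma~9.6]{FH} directly to see how the summand issue is handled in the dual situation.
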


The proof of the above lemma is a dual argument of that in \cite[Lemma 9.6]{FH}, so we omit it.

\begin{theorem}\label{TFH7}
	Suppose that the ring $R$ satisfies the condition that an $\operatorname{FP}$-injective cosyzygy of a pure projective module is still pure projective. If every $\operatorname{FP}$-projective module $M$ has a resolution
	$$\xymatrix{0\ar[r]&P_{n-1}\ar[r]&\cdots\ar[r]&P_1\ar[r]&P_0\ar[r]&M\ar[r]&0}$$
	with each $P_i$ a pure projective module, and $n\geq 1$, then $n$-{\rm GGH}{\rm ($\Psi$)} holds.
\end{theorem}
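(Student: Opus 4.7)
My plan is to prove $\Psi^n \subseteq \langle \mathcal{FI}\rangle$; since Proposition~\ref{TFH6} supplies the reverse containment $\langle \mathcal{FI}\rangle = \Psi^{(\omega)} \subseteq \Psi^n$, this yields $\Psi^n = \langle \mathcal{FI}\rangle$, which is $n$-GGH$(\Psi)$. The finite case of Corollary~\ref{TFH5} (and the discussion preceding Proposition~\ref{TFH6}) identifies $({}^{\perp}\Psi^n,\Psi^n)$ as a complete ideal cotorsion pair with ${}^{\perp}\Psi^n = \langle\operatorname{add}(R\mbox{-Proj}\star n\mbox{-}\Filt(\RPProj))\rangle$. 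Since the inclusion $\operatorname{add}(R\mbox{-Proj}\star n\mbox{-}\Filt(\RPProj))\subseteq\mathcal{FP}$ is automatic (projectives and pure projectives lie in $\mathcal{FP}$, and $\mathcal{FP}$ is closed under extensions, direct sums and summands), the theorem reduces to proving
\[
\mathcal{FP}\subseteq\operatorname{add}(n\mbox{-}\Filt(\RPProj));
\]
together with the ideal cotorsion pair $(\langle\mathcal{FP}\rangle,\langle\mathcal{FI}\rangle)$ induced by the complete cotorsion pair $(\mathcal{FP},\mathcal{FI})$, this identifies ${}^{\perp}\Psi^n = \langle\mathcal{FP}\rangle$ and hence $\Psi^n = \langle\mathcal{FI}\rangle$.

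Fix $M\in\mathcal{FP}$ with the pure projective resolution $0\to P_{n-1}\to\cdots\to P_0\to M\to 0$ supplied by hypothesis~(ii). The aim is to produce a short exact sequence
\[
0\longrightarrow J\longrightarrow N\longrightarrow M\longrightarrow 0
\]
in which $J\in\mathcal{FI}$ and $N\in\operatorname{add}(n\mbox{-}\Filt(\RPProj))$. Once this is done, the defining property $\Ext(M,J)=0$ forces a splitting $N\cong M\oplus J$, placing $M$ in $\operatorname{add}(n\mbox{-}\Filt(\RPProj))$, as desired.

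I would construct the sequence by induction on $n$. The base case $n=1$ is immediate: the length-one resolution forces $M\cong P_0\in\RPProj$, and one takes $J=0$ and $N=M$. For the inductive step, set $K:=\ker(P_0\to M)$, which inherits from the resolution of $M$ a length-$(n-1)$ pure projective resolution. Granted $K\in\operatorname{add}((n-1)\mbox{-}\Filt(\RPProj))$ (the inductive hypothesis applied to $K$), Lemma~\ref{Lem61}---which rests on hypothesis~(i)---supplies a conflation $0\to K\to J\to K'\to 0$ with $J\in\mathcal{FI}$ and $K'\in\operatorname{add}((n-1)\mbox{-}\Filt(\RPProj))$. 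Pushing the sequence $0\to K\to P_0\to M\to 0$ out along the inflation $K\hookrightarrow J$ yields a module $Y$ fitting simultaneously into $0\to J\to Y\to M\to 0$ (the sequence sought) and $0\to P_0\to Y\to K'\to 0$ (exhibiting $Y$, together with a suitable summand to absorb the $\operatorname{add}$, as a $2$-step filtration whose successive factors are $P_0\in\RPProj$ and an object of $(n-1)\mbox{-}\Filt(\RPProj)$, hence placing $Y$ in $\operatorname{add}(n\mbox{-}\Filt(\RPProj))$). Taking $N=Y$ closes the induction.

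The principal obstacle is ensuring that the inductive hypothesis truly applies to the syzygy $K$: because the splitting at the end of the step needs $M\in\mathcal{FP}$, the hypothesis must be stated for FP-projective modules, and so one must guarantee $K\in\mathcal{FP}$---equivalently, that $\mathcal{FP}$ is closed under kernels of surjections from pure projectives onto FP-projective modules. This hereditary-type property is not automatic over an arbitrary ring and must be extracted from the two standing hypotheses, presumably by combining hypothesis~(ii) (which caps the length of pure projective resolutions of FP-projectives) with hypothesis~(i) (which controls FP-injective cosyzygies of pure projectives). Once this is secured, the pushout step cleanly transports the iterated-cokernel structure of a pure projective resolution into the iterated-extension structure required by $\operatorname{add}(n\mbox{-}\Filt(\RPProj))$, and the contrast between $M\in\mathcal{FP}$ and $J\in\mathcal{FI}$ forces the final splitting.
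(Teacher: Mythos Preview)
Your reduction to $\mathcal{FP} \subseteq \operatorname{add}(n\mbox{-}\Filt(\RPProj))$ and the pushout-then-split mechanism match the paper. The gap you flag, however, is genuine and is not repaired by the standing hypotheses. Your inductive hypothesis at level $n-1$ is formulated for FP-projective modules, so applying it to the syzygy $K = \ker(P_0 \to M)$ requires $K \in \mathcal{FP}$. But the long exact sequence attached to $0 \to K \to P_0 \to M \to 0$ gives only $\Ext^1(K, I) \hookrightarrow \Ext^2(M, I)$ for $I \in \mathcal{FI}$, and neither hypothesis forces the right-hand side to vanish: hypothesis (ii) bounds pure projective resolutions, not $\Ext^{\geq 2}(M, \mathcal{FI})$, and hypothesis (i) says nothing about $K$. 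So the induction, as you have set it up, does not close.

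The paper's argument is organized so that FP-projectivity is invoked only once, at the very end, for $M$ itself. In the case $n=2$ the kernel equals $P_1$, which is pure projective by assumption; hypothesis (i)---not an inductive hypothesis---directly supplies its FP-injective preenvelope with pure projective cokernel, and the pushout lands in $2\mbox{-}\Filt(\RPProj)$. For general $n$ the paper iterates from the deep end of the resolution, invoking Lemma~\ref{Lem61} at each stage on the module just constructed by pushout (which genuinely lies in the relevant $\operatorname{add}(k\mbox{-}\Filt(\RPProj))$), rather than on the syzygy $K_i$ (which need not). The upshot is that the inductive statement should not be ``the syzygy belongs to $\operatorname{add}((n-1)\mbox{-}\Filt(\RPProj))$''; instead one should track, step by step from $P_{n-1}$ outward, an FP-injective embedding of each successive syzygy whose cokernel lies in the appropriate filtration class, produced via hypothesis (i) and Lemma~\ref{Lem61}.
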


\begin{proof} Since $(  \langle \operatorname{add}(R\textmd{-Proj}\star n \mbox{-Filt} (R\textmd{-PProj}) )\rangle,  \Psi^n)$
	is a complete ideal cotorsion pair, and $(\mathcal{FP},\mathcal{FI})$ is a complete cotorsion pair in $R  \operatorname{-Mod}$, to prove this result is equivalent to check that $\operatorname{add}(R\textmd{-Proj}\star n \mbox{-Filt} (R\textmd{-PProj}) )=\mathcal{FP}$.
	The inclusion $\operatorname{add}(R\textmd{-Proj}\star n \mbox{-Filt} (R\textmd{-PProj}) )\subseteq\mathcal{FP}$ is clear.
	For the other inclusion $\mathcal{FP}\subseteq\operatorname{add}(R\textmd{-Proj}\star n \mbox{-Filt} (R\textmd{-PProj}) )$, the case $n=1$ is a tautology.
	
	Let $M$ be an FP-projective module $M$, and suppose that there is an exact sequence $0\to P_1\to P_0\to M\to0$ such that $P_0$ and $P_1$ are pure projective modules. Then there is an FP-injective preenvelope of $P_1$, $f:P_1\to I$, such that the FP-injective cosyzygy $P$ is a pure projective module. Consider the pushout diagram
	$$\xymatrix{&0\ar[d]&0\ar[d]&&\\
		0\ar[r]&P_1\ar[r]\ar[d]_f&P_0\ar[r]\ar[d]&M\ar[r]\ar@{=}[d]&0\\
		0\ar[r]&I\ar[r]\ar[d]&X\ar[r]\ar[d]&M\ar[r]&0\\
		&P\ar@{=}[r]\ar[d]&P\ar[d]&&\\
		&0&0&&}$$
	The middle row is trivial since $M$ is FP-projective and $I$ is FP-injective, and hence $M$ is a direct summand of $X$ which belongs to $2\mbox{-Filt} (R\textmd{-PProj})$. Therefore, $\mathcal{FP}=2\mbox{-Filt} (R\textmd{-PProj})\subseteq\operatorname{add}(R\textmd{-Proj}\star 2\mbox{-Filt} (R\textmd{-PProj}) )$. Now employing Lemma \ref{Lem61}, and by induction, we have that, if $M$ has a resolution $$\xymatrix{0\ar[r]&P_{n-1}\ar[r]&\cdots\ar[r]&P_1\ar[r]&P_0\ar[r]&M\ar[r]&0}$$
	with each $P_i$ a pure projective module, then $M$ belongs to $\operatorname{add}(n\mbox{-Filt} (R\textmd{-PProj}))$, and so the result follows.
\end{proof}

In practice, it may not be easy to check if every FP-projective module $M$ has a resolution
$$\xymatrix{0\ar[r]&P_{n}\ar[r]&\cdots\ar[r]&P_1\ar[r]&P_0\ar[r]&M\ar[r]&0}$$
with each $P_i$ a pure projective module. But since every FP-projective module has a filtration by finitely presented modules, and a module possessing a filtration by projective modules is still projective, one then has the following result (see also \cite[Proposition 3]{auslander}):

\begin{proposition}
	If every finitely presented module has a projective resolution of length no more than $n-1$, where $n$ is a positive integer, then every $\operatorname{FP}$-projective module has a projective resolution of length no more than $n-1$.
\end{proposition}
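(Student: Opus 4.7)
The plan is to combine the filtration characterization of FP-projective modules with the classical Eklof Lemma (Corollary~\ref{Eklof_object}) applied to the class of modules of projective dimension at most $n-1$. Let $\mathcal{S}\subseteq R\operatorname{-Mod}$ be a set of representatives of the isomorphism classes of finitely presented left $R$-modules. Since $(\mathcal{FP},\mathcal{FI})$ is the cotorsion pair generated by $\mathcal{S}$, the identification $\mathcal{FP}={}^\perp(\mathcal{S}^\perp)=\operatorname{add}(\operatorname{Filt}(\mathcal{S}))$ (already invoked inside the proof of Theorem~\ref{EFHS} via \cite[Lemma 3.6]{Gill07}) tells us that every FP-projective module $M$ is a direct summand of some $M'$ carrying a (continuous) filtration $0=M'_0\subseteq M'_1\subseteq\cdots\subseteq M'_\lambda=M'$ with each successive quotient $M'_{\alpha+1}/M'_\alpha$ finitely presented.

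The second ingredient is to realize the class $\mathcal{P}_{n-1}:=\{N\mid \operatorname{pd}(N)\leq n-1\}$ as a left $\operatorname{Ext}^1$-orthogonal class, so that Eklof's Lemma applies. For every left $R$-module $B$, choose an injective resolution and let $K(B)$ denote its $(n-1)$-th cosyzygy. A standard dimension shift yields $\operatorname{Ext}^1(-,K(B))\cong\operatorname{Ext}^n(-,B)$, so
$$\mathcal{P}_{n-1}={}^\perp\mathcal{K},\qquad\mathcal{K}:=\{K(B)\mid B\in R\operatorname{-Mod}\}.$$
By the standing hypothesis, $\mathcal{S}\subseteq\mathcal{P}_{n-1}$, and hence every successive quotient $M'_{\alpha+1}/M'_\alpha$ lies in ${}^\perp\mathcal{K}$.

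Applying Corollary~\ref{Eklof_object} to the class $\mathcal{K}$ and the filtration $\{M'_\alpha\}_{\alpha\leq\lambda}$ yields $M'\in{}^\perp\mathcal{K}=\mathcal{P}_{n-1}$. Since $\mathcal{P}_{n-1}$ is closed under direct summands, we conclude $M\in\mathcal{P}_{n-1}$, i.e.\ $M$ has a projective resolution of length at most $n-1$, as desired. There is really no major obstacle here: the only mildly technical step is the dimension-shift identification of $\mathcal{P}_{n-1}$ as a left orthogonal class $\;{}^\perp\mathcal{K}$, which is routine once enough injectives are available in $R\operatorname{-Mod}$; after that, the statement is a direct invocation of the machinery already developed in the paper.
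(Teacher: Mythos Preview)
Your argument is correct and follows essentially the same route as the paper's one-line sketch preceding the proposition: FP-projective modules are (summands of modules) filtered by finitely presented modules, and the class $\mathcal{P}_{n-1}$ of modules of projective dimension at most $n-1$ is closed under filtrations. The paper alludes to this closure via the special case ``a module possessing a filtration by projective modules is still projective'' together with a citation to Auslander, whereas you spell it out explicitly by realizing $\mathcal{P}_{n-1}={}^\perp\mathcal{K}$ via dimension shifting and then invoking Corollary~\ref{Eklof_object}; this is exactly the expected elaboration of the paper's sketch.
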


Recall that a ring $R$ is said to have pure global dimension at most $n$ if every module has pure projective pure resolution of lenght at most $n$.

If $R$ is an artin algebra, then the class of FP-injective modules coincides with the class of injective modules. In this case, the cosyzygy of a finitely presented module is still finitely presented, and so the cosyzygy of a pure projective module is still pure projective. Therefore the following result is an immediate consequence of Theorem \ref{TFH7}.

\begin{corollary}\label{n-GH-FP-ghost1}
	Let $R$ be an artin algebra, and $n$ a positive integer. Then $n$-{\rm GGH}{\rm ($\Psi$)} holds provided one of the following two conditions holds:
	\begin{enumerate}[(i)]
		\item $R$ has global dimension less than $n$;
		\item $R$ has pure global dimension less than $n$.
	\end{enumerate}
\end{corollary}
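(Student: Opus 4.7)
The plan is to derive both implications from Theorem~\ref{TFH7}, which requires verifying two hypotheses: (H1) every pure projective $R$-module admits an FP-injective cosyzygy that is again pure projective; and (H2) every FP-projective $R$-module admits a resolution of length at most $n-1$ by pure projective modules.

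The crux is (H1), which holds over any artin algebra and is independent of $n$. Recall that artin algebras are left noetherian, so the class $\mathcal{FI}$ of FP-injective modules coincides with the class of injective modules, arbitrary direct sums of injective modules are injective, and the injective envelope of a finitely generated module is again finitely generated (hence finitely presented). Given a pure projective module $M$, fix $M'$ with $M\oplus M' \cong \bigoplus_{i\in I} M_i$ and each $M_i$ finitely presented. I would then argue that the direct sum of the injective envelopes of the $M_i$ is an injective envelope of $\bigoplus_i M_i$; by uniqueness of injective envelopes this forces the injective envelope of $M$ to be a direct summand of a coproduct of finitely presented modules, hence to be pure projective. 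The same reasoning applied to the cokernels — using that the cokernel of the inclusion into the injective envelope of the direct sum splits as the corresponding direct sum of individual cokernels — shows that the associated quotient is again pure projective. This produces a conflation of the form $\xymatrix@1{M \ar[r] & J \ar[r] & W}$ with $J$ injective and $W \in R\operatorname{-PProj}$, which is the desired FP-injective cosyzygy of $M$ inside $R\operatorname{-PProj}$.

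For (H2), case (i) is immediate: if the global dimension of $R$ is less than $n$, then every FP-projective module admits a projective resolution of length at most $n-1$, and projective modules are pure projective. Case (ii) is the defining property of pure global dimension: if the pure global dimension of $R$ is less than $n$, then in particular every FP-projective module fits into a pure exact resolution of length at most $n-1$ by pure projective modules; forgetting pure exactness, this is still an ordinary exact resolution by pure projectives of the required length. In both situations Theorem~\ref{TFH7} then delivers $n$-GGH$(\Psi)$.

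The essentially only nontrivial step is (H1), and within it the handling of the quotient module: showing that not only the injective envelope but also the cokernel remains pure projective. This is dispatched by the additivity of the injective envelope on direct summands together with the noetherian hypothesis on $R$, which ensures that injective envelopes commute with the relevant direct sums; once it is available, both parts of the corollary follow by an immediate appeal to Theorem~\ref{TFH7}.
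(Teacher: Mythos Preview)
Your proposal is correct and follows essentially the same approach as the paper. The paper's proof is just the short paragraph preceding the corollary: it notes that over an artin algebra $\mathcal{FI}$ coincides with the injectives, that the (injective) cosyzygy of a finitely presented module is again finitely presented, and hence that cosyzygies of pure projectives are pure projective, so Theorem~\ref{TFH7} applies; your argument is a more explicit unpacking of exactly this, spelling out the passage from finitely presented modules to their direct sums and summands via additivity of injective envelopes, and making (H2) explicit for each of the two hypotheses.
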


Note that if  $R$ is a left artinian ring, then $R$ is a semiprimary left noetherian ring, so the Jacobson radical $J=J(R)$ is nilpotent, and every simple module is finitely generated since every maximal left ideal is finitely generated.

\begin{proposition}\label{n-GH-FP-ghost2}
	If $R$ is a left artinian ring with $J^n=0$. then $n$-{\rm GGH}{\rm ($\Psi$)} holds.
\end{proposition}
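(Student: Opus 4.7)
The plan is to reduce $n$-GGH($\Psi$) to a statement about filtrations of modules and then produce the required filtration for every module via the $J$-adic decomposition.

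Since $n$ is finite, $\Psi^{(n)} = \Psi^n$. By Corollary~\ref{TFH5}, the pair
\[
(\, \langle \operatorname{add}(\RProj \star n\mbox{-}\Filt(\RPProj)) \rangle,\ \Psi^n \,)
\]
is a complete ideal cotorsion pair in $\RMod$, while \cite[Theorem 28]{FGHT} lifts the classical complete cotorsion pair $(\mathcal{FP}, \mathcal{FI})$ to the complete ideal cotorsion pair $(\, \langle \mathcal{FP} \rangle, \langle \mathcal{FI} \rangle \,)$. Completeness on both sides means that the desired identity $\Psi^n = \langle \mathcal{FI} \rangle$ is equivalent to the categorical equality
\[
\operatorname{add}(\RProj \star n\mbox{-}\Filt(\RPProj)) = \mathcal{FP}.
\]

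The inclusion $\subseteq$ is immediate: $\RProj \cup \RPProj \subseteq \mathcal{FP}$ and $\mathcal{FP} = {}^{\perp}\mathcal{FI}$ is closed under extensions and direct summands. For the reverse inclusion I will actually prove the stronger statement that \emph{every} left $R$-module lies in $n\mbox{-}\Filt(\RPProj)$. This rests on two observations. First, by the Hopkins--Levitzki theorem a left artinian ring is left noetherian, so every simple (and hence every finitely generated) left $R$-module is finitely presented; consequently every semisimple left $R$-module is a direct sum of finitely presented modules and therefore pure-projective. Second, for any left $R$-module $M$, the descending chain
\[
0 = J^{n}M \subseteq J^{n-1}M \subseteq \cdots \subseteq JM \subseteq M
\]
reads, bottom-up, as an $n$-filtration whose successive quotients $J^{i-1}M/J^{i}M$ are annihilated by $J$, hence are $R/J$-modules, hence semisimple, hence pure-projective. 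Thus $M \in n\mbox{-}\Filt(\RPProj)$; in particular every FP-projective module lies in this class, which finishes the argument.

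The only conceptually nontrivial step is recognising that the nilpotence $J^{n}=0$ combined with Hopkins--Levitzki forces the $J$-adic filtration of \emph{every} module to be a pure-projective $n$-filtration; once this is in hand, the ideal-cotorsion-pair machinery of Corollary~\ref{TFH5} delivers the conclusion mechanically, with no obstacle of a technical nature remaining.
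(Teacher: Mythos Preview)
Your proof is correct and follows essentially the same approach as the paper: both use the Loewy ($J$-adic) filtration $0=J^nM\subseteq J^{n-1}M\subseteq\cdots\subseteq M$ to place every module in $n\mbox{-}\Filt(\RPProj)$, invoking left-noetherianity to ensure the semisimple quotients are pure projective. The paper's proof is terser and leaves the final implication (``every module lies in $n\mbox{-}\Filt(\RPProj)$ $\Rightarrow$ $n$-GGH($\Psi$)'') implicit, whereas you spell out the cotorsion-pair comparison explicitly via Corollary~\ref{TFH5} and \cite[Theorem~28]{FGHT}; this added detail is harmless and arguably clarifying.
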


\begin{proof} Let $M$ be a left $R$-module, and consider the {\rm Loewy series}
	$$\xymatrix{M=M_0\supseteq JM\supseteq J^2M\supseteq\cdots\supseteq J^{n-1}M\supseteq J^nM=0.}$$
	Each of the factors are semisimple. Since every simple module is finitely generated, each factor $J^{i+1}M/J^iM$ is pure projective. Therefore $M\in\operatorname{add}(n\mbox{-Filt} (R\textmd{-PProj}))$, and so $n$-{\rm GGH}{\rm ($\Psi$)} holds.
\end{proof}

\subsection{The dual of Xu's result} Now we are in a position to present the proof of the dual of Xu's result \cite[Theorem 3.5.1]{X}.

\begin{theorem}\label{TFH10}
	If the class $R\operatorname{-PProj}$ of pure projective left $R$-modules is closed under extensions, then
	every FP-projective module is pure projective.
\end{theorem}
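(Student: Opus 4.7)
The plan is to address the two implications of the iff separately. The "only if" direction is immediate: $\mathcal{FP}$ is the left-hand class of the complete cotorsion pair $(\mathcal{FP},\mathcal{FI})$ and is therefore always closed under extensions; so if $\mathcal{FP}=R\textrm{-}\mathrm{PProj}$, then $R\textrm{-}\mathrm{PProj}$ inherits this closure.

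The core of the work is the "if" direction. Assuming that $R\textrm{-}\mathrm{PProj}$ is closed under extensions, my starting point is the structural identification
$$\mathcal{FP}=\operatorname{add}\bigl(R\textrm{-}\mathrm{Proj}\star\omega\textrm{-}\mathrm{Filt}(\mathrm{Sum}(R\textrm{-}\mathrm{mod}))\bigr),$$
which I would obtain by combining Proposition~\ref{TFH6} ($\omega$-GGH for $\Psi$, giving $\Psi^{(\omega)}=\langle\mathcal{FI}\rangle$), Corollary~\ref{TFH5} (applied with $\lambda=\omega$ and $\mathcal{S}=R\textrm{-}\mathrm{mod}$), and the ideal cotorsion pair $(\langle\mathcal{FP}\rangle,\langle\mathcal{FI}\rangle)$ produced from the classical cotorsion pair $(\mathcal{FP},\mathcal{FI})$ via \cite[Theorem~28]{FGHT}; equating the left classes of the two ideal cotorsion pairs yields the claimed description since both are object ideals.

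Because $R\textrm{-}\mathrm{Proj}\subseteq R\textrm{-}\mathrm{PProj}$ and $\mathrm{Sum}(R\textrm{-}\mathrm{mod})\subseteq R\textrm{-}\mathrm{PProj}$, and because $R\textrm{-}\mathrm{PProj}$ is always closed under direct sums and summands and (by hypothesis) under extensions, the goal $\mathcal{FP}\subseteq R\textrm{-}\mathrm{PProj}$ reduces to proving
$$\omega\textrm{-}\mathrm{Filt}\bigl(\mathrm{Sum}(R\textrm{-}\mathrm{mod})\bigr)\subseteq R\textrm{-}\mathrm{PProj}.$$
For a filtration $0=Y_0\subseteq Y_1\subseteq\cdots$ with $Y=\bigcup_n Y_n$ and $Y_{n+1}/Y_n\in\mathrm{Sum}(R\textrm{-}\mathrm{mod})$, a finite induction on $n$ using the extension-closure hypothesis shows that each $Y_n$ is pure projective.

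The main obstacle is the $\omega$-colimit step: showing that the union $Y=\bigcup_n Y_n$ itself is pure projective. To handle this, I plan to exploit that $Y\in\mathcal{FP}$ (by the classical Eklof Lemma for $(\mathcal{FP},\mathcal{FI})$) together with the Milnor-type pure short exact sequence
$$0\to\bigoplus_n Y_n\xrightarrow{\,1-s\,}\bigoplus_n Y_n\to Y\to 0,$$
whose outer terms are pure projective (countable direct sums of pure projectives); the task is to split this sequence, realizing $Y$ as a direct summand of a pure projective. To produce the required section, I would apply the FP-ghost preenvelope of $Y$ from Theorem~\ref{key}, yielding $0\to Y\to G\to W\to 0$ with $W\in\mathrm{Sum}(R\textrm{-}\mathrm{mod})$ and $Y\to G$ an FP-ghost map, and combine it via a pushout with the classical FP-injective preenvelope $0\to Y\to I\to F\to 0$; a Wakamatsu-dual diagram chase should force the extension classifying the Milnor sequence in $\mathrm{Ext}(Y,\bigoplus_n Y_n)$ to vanish, yielding the desired section and hence $Y\in R\textrm{-}\mathrm{PProj}$.
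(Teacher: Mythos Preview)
Your structural identification $\mathcal{FP}=\operatorname{add}\bigl(R\textrm{-}\mathrm{Proj}\star\omega\textrm{-}\mathrm{Filt}(\mathrm{Sum}(R\textrm{-}\mathrm{mod}))\bigr)$ is correct and is exactly how the paper sets things up (Corollary~\ref{TFH5} plus Proposition~\ref{TFH6}). The finite-stage induction showing each $Y_n$ is pure projective is also fine. The genuine gap is the $\omega$-colimit step: your proposed ``Wakamatsu-dual diagram chase'' is not an argument. There is no mechanism by which juxtaposing the FP-ghost preenvelope $0\to Y\to G\to W\to 0$ and the FP-injective preenvelope $0\to Y\to I\to F\to 0$ via a pushout forces the Milnor extension class in $\Ext\bigl(Y,\bigoplus_n Y_n\bigr)$ to vanish; you would need $\bigoplus_n Y_n$ to be FP-injective (it is not) or some orthogonality you do not have. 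This step is precisely the hard part, and your sketch does not address it.

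The paper's proof bypasses the colimit problem entirely by a different idea you are missing: the extension-closure hypothesis makes the ideal $\Psi$ \emph{idempotent}. Indeed, $R\textrm{-}\mathrm{PProj}$ closed under extensions means $2\textrm{-}\mathrm{Filt}(R\textrm{-}\mathrm{PProj})=R\textrm{-}\mathrm{PProj}$, hence $\Psi=\langle R\textrm{-}\mathrm{PProj}\rangle^\perp=\langle 2\textrm{-}\mathrm{Filt}(R\textrm{-}\mathrm{PProj})\rangle^\perp=\Psi^2$. Proposition~\ref{PFH41} then gives $\Psi=\Psi^{(\omega)}$ directly, and combining with Proposition~\ref{TFH6} yields $\Psi=\langle\mathcal{FI}\rangle$. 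Now one computes ${}^\perp\Psi$ two ways: it equals $\langle\mathcal{FP}\rangle$ (from the cotorsion pair $(\mathcal{FP},\mathcal{FI})$), and by Proposition~\ref{key1} it equals $\langle\operatorname{add}(R\textrm{-}\mathrm{Proj}\star R\textrm{-}\mathrm{PProj})\rangle$. Equating object classes gives $\mathcal{FP}=\operatorname{add}(R\textrm{-}\mathrm{Proj}\star R\textrm{-}\mathrm{PProj})=R\textrm{-}\mathrm{PProj}$, the last equality using only that $R\textrm{-}\mathrm{Proj}\subseteq R\textrm{-}\mathrm{PProj}$ and the closure hypotheses. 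The point is that idempotency collapses $\Psi^{(\omega)}$ back to $\Psi$, so you never need to handle an $\omega$-filtration directly: the cosyzygy description at level $1$ already suffices.
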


\begin{proof} As $R \operatorname{-PProj}$ is closed under
	extensions, we have $2\operatorname{-Filt}(R\textmd{-PProj})=R\textmd{-PProj},$ then $\Psi= \langle R\textmd{-PProj} \rangle^\perp= \langle 2\operatorname{-Filt}(R\textmd{-PProj}) \rangle ^\perp =\Psi^2$. So $\Psi$ is an idempotent ideal in $R \mbox{-Mod}$. By employing  Proposition~\ref{PFH41} and Proposition~\ref{TFH6},  we have
	$\Psi=\Psi^{(\omega)}=\langle\mathcal{FI}\rangle$.
	Since $(\mathcal{FP},\mathcal{FI})$ is a complete cotorsion pair, $( \langle \mathcal{FP} \rangle, \langle \mathcal{FI} \rangle)$ is an ideal cotorsion pair. So $$ \mbox{add} (R\textmd{-Proj}\star R\textmd{-PProj})= \mbox{Ob}( {}^\perp \Psi) =\mbox{Ob}(  {}^\perp\langle\mathcal{FI}\rangle) =\mbox{Ob}( \langle\mathcal{FP}\rangle )=\mathcal{FP}.$$
	Note that any projective left $R$-module is also pure projective and $R\textmd{-PProj}  \subseteq  R\textmd{-Proj}\star R\textmd{-PProj} $. Then the hypothesis implies that
	$$R\textmd{-Proj}\star R\textmd{-PProj} \subseteq R\textmd{-PProj} \subseteq R\textmd{-Proj}\star R\textmd{-PProj},  $$ that is, $R\textmd{-Proj}\star R\textmd{-PProj} =R\textmd{-PProj}$.
	Besides, the class $R\textmd{-PProj}$ is closed under direct summand, and therefore, we have $R\textmd{-PProj}=\mbox{add}(R\textmd{-PProj})=\mathcal{FP}$.
\end{proof}

As an immediate consequence, we have
\begin{corollary}
	Let $R$ be a left noetherian ring. 	If the class $R\operatorname{-PProj}$ of pure projective left $R$-modules is closed under extensions, then $R$ is left pure semisimple.
	\end{corollary}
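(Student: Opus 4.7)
The plan is to reduce the corollary to a direct application of Theorem~\ref{TFH10} by showing that, under the left noetherian hypothesis, every left $R$-module is automatically FP-projective.

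First I would recall the classical fact (due to Stenstr\"om) that over a left noetherian ring the class $\mcF\mcI$ of FP-injective left $R$-modules coincides with the class of injective left $R$-modules: since every finitely generated, hence every finitely presented, left $R$-module admits a finite presentation whose syzygies are again finitely generated, the vanishing of $\Ext^1(S,M)$ for every finitely presented $S$ propagates to $\Ext^1(R/I,M)=0$ for every left ideal $I$, and Baer's criterion applies.

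Next, I would combine this with the definition of FP-projectivity. A left $R$-module $M$ is FP-projective precisely when $\Ext^1(M,N)=0$ for every $N\in\mcF\mcI$. In the left noetherian case, by the previous step, this condition reduces to $\Ext^1(M,I)=0$ for every injective left $R$-module $I$, which is automatically satisfied for every $M$. Therefore $\mcF\mcP = R\operatorname{-Mod}$.

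Finally, the hypothesis that $R\operatorname{-PProj}$ is closed under extensions lets us invoke Theorem~\ref{TFH10}, which yields $\mcF\mcP \subseteq R\operatorname{-PProj}$. Combining with the previous paragraph gives $R\operatorname{-Mod} = R\operatorname{-PProj}$, i.e.\ every left $R$-module is pure projective; equivalently, every pure exact sequence of left $R$-modules splits, which is the definition of $R$ being left pure semisimple. No step presents a genuine obstacle here, as the argument is essentially bookkeeping once Theorem~\ref{TFH10} is in hand; the only subtlety is the noetherian collapse $\mcF\mcI=R\operatorname{-Inj}$, which forces $\mcF\mcP$ to be the whole module category.
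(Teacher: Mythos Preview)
Your argument is correct and matches the paper's intended reasoning: the paper records this as ``an immediate consequence'' of Theorem~\ref{TFH10} without further proof, and the content of that consequence is exactly the noetherian collapse $\mathcal{FI}=R\operatorname{-Inj}$, forcing $\mathcal{FP}=R\operatorname{-Mod}$, which you spell out. One minor remark: your justification that $\mathcal{FI}=R\operatorname{-Inj}$ via syzygies is slightly roundabout---it suffices to note that for $R$ left noetherian every cyclic module $R/I$ is finitely presented, so FP-injectivity already gives $\Ext^1(R/I,M)=0$ for all left ideals $I$, and Baer's criterion applies directly.
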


It is clear that if $R$ is a von Neumann regular ring or a left pure semisimple ring, then the class  $R\operatorname{-PProj}$ of pure projective left $R$-modules  is closed under extensions. The direct product $R=R_1\times R_2$, with $R_1$ a von Neumann regular ring and $R_2$ a left pure semisimple ring (none of them semisimple), gives an easy example of a ring $R$ which is neither von Neumann nor left pure semisimple for which the class the class $R\operatorname{-PProj}$ of pure projective left $R$-modules  is closed under extensions. However, the authors are unaware of a left perfect and non left pure semisimple ring having this property (see Problem \ref{problem3} below).
%Unfortunately, the authors do not know if there is a ring which is neither von Neumann regular nor left pure seimpsimple but satisfies that the class $R\operatorname{-PProj}$ of pure projective left $R$-modules  is closed under extensions.

\subsection{Problems}\label{problems} We conclude this paper with some problems that might be interesting.

 If $\mcS$ is the set of isomorphism class of the indecomposable pure injective modules, then, following Remark \ref{dual.results}, $\mathfrak{Cog}_{\mcS} = \Phi$ is the ideal of phantom morphisms in $\RMod$ \cite{FGHT}, which can be characterized as the morphisms $f$ for which $\Tor_1 (-, f) = 0$; see \cite{H1}.
Then, under the assumptions of Remark \ref{dual.results}, one can prove that
$$(\mathfrak{Cog}_{\mcS}^{\grb}, \grb\mbox{-} \Cofilt(\RInj \star \Prod (\mcS)))$$ is a complete cotorsion pair; the case $\grb =1$ comes from~\cite[Proposition 37]{FGHT}. It follows that every module $M$ admits, for every ordinal $\grb,$ a special $\Phi^{\grb}$-precover whose kernel is a $\grb$-cofiltration with factors from $\RInj \star \RPInj.$ This allows us to pose questions concerning the convergence of the projective powers of the phantom ideal to the object ideal of flat modules.

\begin{problem}{\rm
		We already know that the phantom ideal $\Phi$ is a covering ideal by \cite{H1}. It might be interesting to know if the ideal $\Phi^{\grb}$ is still covering for an ordinal $\grb\geq 2$.}
\end{problem}

\begin{problem}
	{\rm Does equality hold for the last inclusion of
		$$\Phi \supseteq \Phi^2 \cdots \supseteq \Phi^{\grb} \supseteq \cdots \supseteq \bigcap_{\grb \in \On} \Phi^{\grb} \supseteq \langle \RFlat \rangle?$$ If so, given a left $R$-module $M,$ find an ordinal $\grb$ for which $\Phi^{\grb} (-,M) = \langle \RFlat \rangle (-,M).$ Moreover, is there an ordianl $\beta$ such that $\Phi^{\grb}=\langle \RFlat \rangle?$}
\end{problem}

 In \cite[Example 8.3]{HR} it is shown an example of a non-pure semisimple, left and right noetherian ring for which the class of $\operatorname{PInj}$ modules (both to the left and to the right) is closed under extensions. Therefore, in view of Theorem \ref{TFH10}, the class of $\operatorname{PProj}$ of pure projective $R$-modules (right or left) cannot be closed under extensions. Now, for a left perfect ring $R$, we know from Xu's result that the class $R\operatorname{-Pinj}$ is closed under extensions if and only if the ring is left pure semisimple.  This suggests us to ask the following problem.

\begin{problem}\label{problem3}
	{\rm Is there an example of a left perfect and non left pure semisimple ring  for which the class $R\operatorname{-PProj}$ of pure projective left $R$-modules  is closed under extensions?}
\end{problem}

\section*{Acknowledgements}

The authors wish to thank Philipp Rothmaler for helpful comments and suggestions and Yifan Chen for helping us to make the three dimensional diagram in the proof of Theorem \ref{TFH4}.


\begin{thebibliography}{gggg}
	
	\bibitem{AR94} J. Ad\'{a}mek and J. Rosick\'{y}, Locally presentable and accessible categories. \emph{London Mathematical
		Society Lecture Note Series}, 189. Cambridge University Press, Cambridge, 1994.
		

	\bibitem{auslander} M. Auslander, On the dimension of modules and algebras III. Global dimension, \emph{Nagoya Math. J.} 9 (1955), 67-77.
	
	
%	\bibitem{AR} M. Auslander, I. Reiten, Applications of contravariantly finite subcategories,
%	\emph{Adv. Math.} 86 (1991), 111-152.
%	
%	
%	\bibitem{ARS} M. Auslander, I. Reiten and S. Smal\o, Representation Theory of Artin Algebras.
%	\emph{Cambridge Studies Adv. Math.}, 36. Cambridge University Press, Cambridge, 1995.
	
	
	\bibitem{Beke} T. Beke, Sheafifiable homotopy model categories,
	\emph{Math. Proc. Camb. Phil. Soc.} 129(3) (2000), 447-475.
	
	\bibitem{Bel08} A. Beligiannis, Some ghost lemmas. Lecture Notes for the Conference
`The Representation Dimension of Artin Algebras'
1-4 May, 2008, Bielefeld, Germany, 2008.
  \url{http://users.uoi.gr/abeligia/ghosts.pdf}.
	
	\bibitem{BCCM07}D. Benson, S. K. Chebolu, J. D. Christensen and J. Miná\u{c},
	The generating hypothesis for the stable module category of a p-group,
	\emph{J. Algebra} 310(1) (2007), 428-433.
	
	
%	\bibitem{BG1} D. Benson and G. Gnacadja, Phantom maps and modular representation theory, I,
%	\emph{Fund. Math.} 161(1-2) (1999), 37-91.
%	
%	
%	\bibitem{BG2} D. Benson and G. Gnacadja, Phantom maps and modular representation theory, II,
%	\emph{Algebr. Represent. Theory} 4(4) (2001), 395-404.
%	
%	
%	\bibitem{BG3} D. Benson, Phantom maps and modular representation theory, III,
%	\emph{J. Algebra} 248(3) (2002), 747-754.
	
	
	\bibitem{BBE}L. Bican, R. El Bashir and E. E. Enochs, All modules have flat covers,
	\emph{Bull. London Math. Soc.} 33(4) (2001), 385-390.
	
	
	\bibitem{Bro82}
	K. S. Brown, Cohomology of  groups. Springer, New York, NY, 1982.
	
	
	\bibitem{B}Th. B\"{u}hler, Exact Categories,
	\emph{Expo. Math.} 28(1) (2010), 1-69.
	
	
	%\bibitem{CH} H. Cartan and S. Eilenberg, \emph{Homological Algebra},
	%Princeton Univ. Press, 1956.
	
	
	%\bibitem{Ca} L. F. Carlson, \emph{Modules and Groups Algebra}, Lectures in Math., EtH-Z\"{u}rich, Birkh\"{a}user, 1996.
	
	\bibitem{CCM08b} S. K. Chebolu, J. D. Christensen and J. Miná\u{c}, Ghosts in modular representation theory. \emph{ Adv. Math.} 217(6) (2008), 2782--2799.
	
	\bibitem{CCM08a} S. K. Chebolu, J. D. Christensen and J. Miná\u{c}, Groups which do not admit ghosts,  \emph{Proc. Am. Math. Soc.} 136(4) (2008), 1171–1179.
	
	
	
	
	
	
	\bibitem{C}J. D. Christensen, Ideals in triangulated categories: Phantoms, ghosts and skeleta,
	\emph{Adv. Math.} 136(2) (1998), 284-339.
	
	
%	\bibitem{CS}J. D. Christensen and N.P. Strickland,
%	Phantom maps and homology theories, \emph{Topology} 37(2) (1998), 339-364.
%	
%	
%	\bibitem{E}P. C. Eklof, Homological algebra and set theory, \emph{Trans. Amer. Math. Soc.} 227
%	(1977), 207- 225.
	
	
	%\bibitem{CB} Crawley-Boevey, W., W., Locally finitely presented
	%addictive categories, Comm. Algebra, \textbf{22}(5) (1994),
	%1641-1674.
	
	\bibitem{E2}E. E. Enochs, Injective and flat covers, envelopes and resolvents, \emph{Israel J. Math.}
	39(3) (1981), 189-209.
	
	
%	\bibitem{EJ2}E. E. Enochs and O. M. G. Jenda, \emph{Relative Homological Algebra},
%	Walter de Gruyter, Berlin-New York, 2000.
	
	
	
	
	
	%\bibitem{EGO}S. Estrada, P.A. Guil Asensio and F. \"{O}zbek, Covering ideals of Morphisms and module representations
	%of the Quiver $\mathbb{A}_2$, \emph{J. Pure Appl. Algebra} 218(10) (2014), 1953-1963.
	
	
	\bibitem{Enochs}E. E. Enochs, Cartan-Eilenberg complexes and resolutions, \emph{J. Algebra} 342 (2011), 16-39.
	
	
	\bibitem{Enochs2}E. E. Enochs, Shortening filtrations, \emph{Sci. China Math.} 55 (4), (2012), 687–693.

    \bibitem{EJ2011}E. E. Enochs and O. M. G. Jenda, \emph{Relative Homological Algebra}, Vol. 2,
	Walter de Gruyter, Berlin-New York, 2011.
	
	
	\bibitem{Fre66} P. Freyd,  Stable Homotopy. \emph{ Proceedings of the Conference on Categorical Algebra}, Springer, Berlin, Heidelberg, 1966.
	
	
	\bibitem{FGHT}  X. H. Fu, P. A. Guil Asensio, I. Herzog and B. Torrecillas,
	Ideal approximation theory, \emph{Adv. Math.} 244 (2013), 750-790.
	
	
	\bibitem{FH} X. H. Fu and I. Herzog, Powers of the phantom ideal, Proc. London Math. Soc. 112(3) (2016), 714-752.
	
	
	\bibitem{FHHZ} X. H. Fu, I. Herzog, J. S. Hu and H. Y. Zhu, Lattice theoretic properties of an approximating ideal,
	\emph{J. Pure Appl. Algebra} 226 (2022), 106986.
	
	
	\bibitem{Gill07}
	J. Gillespie, Kaplansky classes and derived categories,
	\emph{Math. Zeit.} 257(4) (2007), 811--843.
	
%	
%	\bibitem{G} Ph.G. Gnacadja, Phantom maps in the stable module category. \emph{J. Algebra} 201(2) (1998), 686-702.
	
	
	\bibitem{GT} R. G\"{o}bel and J. Trlifaj, \emph{Approximations and Endomorphism
		Algebras of Modules}, Walter de Gruyter, Berlin-New York,  2006.
		
	
	\bibitem{H1}I. Herzog, The phantom cover of a module,\emph{ Adv. Math.} 215(1) (2007), 220-249.
	

\bibitem{HR} I. Herzog and P. Rothmaler, When cotorsion modules are pure injective, \emph{ J. Math. Log.} 9(1) (2009), 63-102.

	
	\bibitem{Hov02} M. Hovey, Cotorsion pairs, model category structures, and representation theory, \emph{Math. Z.} 241(3) (2002), 553–592.
	

\bibitem{HL11}M. Hovey and K. Lockridge, The ghost and weak dimensions of rings and ring spectra,\emph{ Israel J. Math.} 182 (2011), 31-46.
	
	\bibitem{HL}M. Hovey and K. Lockridge, The ghost dimension of a ring,\emph{ Proc. Amer. Math. Soc.} 137(6) (2009), 1907-1913.
	
	
	
	
	\bibitem{HLP07} M. Hovey, K. Lockridge and G. Puninski, The generating hypothesis in the derived category of a ring, \emph{Math. Z.} 256(4) (2007), 789–800.
	
	
	\bibitem{K} B. Keller, Chain complexes and stable categories, \emph{Manuscripta Math.}
	67(4) (1990), 379-417.
	
	
	
	\bibitem{Kel65} G. M. Kelly, Chain maps inducing zero homology maps, \emph{Math. Proc. Camb. Phil. Soc.} 61(4) (1965), 847-854.
	
%	 \bibitem{Kr} H. Krause, Smashing subcategories and the telescope conjecture-an algebraic approach, \emph{Invent. Math.} 139 (2000), 99-133.
	
	
	\bibitem{Loc07} K. H.  Lockridge,
	The generating hypothesis in the derived category of R-modules,
	\emph{J. Pure Appl. Algebra}, 208(2) (2007), 485-495.
	
	
	\bibitem{M1}B. H. Maddox, Absolutely pure modules, \emph{Proc. Amer. Math. Soc.} 18(1) (1967), 155-158.
	
	
	\bibitem{MD}L. X. Mao and N.Q. Ding, $FP$-projective dimensions, \emph{Comm. Algebra} 33(4) (2005), 1153-1170.

    \bibitem{MR} F. Muro and O. Ravent\'os, Transfinite Adams representability, \emph{Adv. Math.} 292 (2016), 111-180.
	
	
%	\bibitem{M}C. A. McGibbon, Phantom maps,  in \emph{Handbook of Algebraic Topology,}
%	Elsevier Science B.V., Amsterdam, 1995, 1209-1257.
%	
%	
%	\bibitem{N} A. Neeman, The Brown representability theorem and phantomless triangulated categories,
%	\emph{J. Algebra} 151(1) (1992), 118-155.
	
	
	%\bibitem{N2} A. Neeman, On a theorem of Brown and Adams,
	%\emph{Topology} 36(3) (1997), 619-645.
	
	%\bibitem{N1} A. Neeman, \emph{Triangulated categories} Ann. of Math. Stud., 148,
	%Princeton University Press, 2001.
	
	
	%\bibitem{S}L. Salce, Cotorsion theories for abelian groups, \emph{Symposia Math.} {\bf XXIII} (1979), 11-32.
    \bibitem{Prest} M.\ Prest, Morphisms between finitely presented modules and infinite-dimensional representations, \emph{Canad. Math. Soc. Conf. Proc. Ser.} 24 (1998), 447-455.
        
	\bibitem{SS11} M. Saor\'in and J. \v{S}\'{t}ov\'{i}\v{c}ek. On exact categories and applications to triangulated adjoints and model structures. \emph{Adv. Math.}, 228 (2011), 968–1007, 2011.

	\bibitem{St} B.\ Stenstr\"{o}m, Pure submodules, \emph{Arkiv f\"{o}r Matematik.} 7(10) (1967), 159-171.
		
	\bibitem{St1} B.\ Stenstr\"{o}m, Coherent rings and $FP$-injective modules, \emph{J. London Math. Soc.} 2(2) (1970), 323-329.
	
	

	%\bibitem{St} B. Stenstr\"{o}m,  \emph{Rings of Quotients,} Springer-Verlag,
	%Berlin-Heidelberg-New York, 1975.
	
	
	
	
	%\bibitem{SS2020}J. Saroch and  J. \u{S}t'oví\v{c}ek. Singular compactness and definability for Sigma-cotorsion and Gorenstein modules, \emph{Selecta Math. (N.S.)} 26 (2020).

	
	
	\bibitem{Sto} J. \v{S}\'{t}ov\'{i}\v{c}ek, Deconstructibility and the Hill Lemma in Grothendieck categories, \emph{Forum. Math.} 25(1) (2013), 193-219.
	
	
	\bibitem{Sto13} J. \v{S}\'{t}ov\'{i}\v{c}ek,  Exact model categories, approximation theory, and cohomology of quasi-coherent sheaves. \emph{Advances in representation theory of algebras}, 297–367, EMS Ser. Congr. Rep., Eur. Math. Soc., Zürich, 2013.

\bibitem{Sto10} J. \v{S}\'{t}ov\'{i}\v{c}ek, Telescope conjecture, idempotent ideals, and the transfinite radical, \emph{Trans. Amer. Math. Soc.} 362(3) (2010), 1475-1489.
	
	
	\bibitem{X}  J. Z. Xu,  \emph{Flat covers of modules,} Lecture Notes in Math. 1634. Springer-Verlag, Berlin-Heidelberg-New York, 1996.
	
\end{thebibliography}
\end{document}